%%%%%%%%%%%%%%%%%%%%%%%%%%%%%%%%%%%%%%%%%%%%%%%%%%%%%%%%%%%%%%%%%%%%%%%%%%%%%%%%%%%%%%%%%%%%%%%%%%%
%%      This is a LaTeX file of the paper
%%
%%      Title:
%%
%%
%%      Jo\~{a}o Marcos do \'{O}
%%      Department of Mathematics
%%      Federal University of Para\'{\i}ba
%%      58051-900, Jo\~ao Pessoa-PB, Brazil
%%      jmbo@pq.cnpq.br
%%
%%      Evelina Shamarova
%%      Department of Mathematics,
%%  	Federal University of Para\'{\i}ba
%%		58051-900, Jo\~ao Pessoa-PB, Brazil
%%      evelina.shamarova@academico.ufpb.br
%%
%%
%%      Victor Silva
%%      Department of Mathematics
%%      Federal University of Pernambuco
%%	    50670-901, Recife, Brazil
%%      victor.franca@ufpe.br
%%
%%
%%
%%
%%      Version - December 2023
%%      submit  to:
%%      Via
%%      Editor:
%%
%%%%%%%%%%%%%%%%%%%%%%%%%%%%%%%%%%%%%%%%%%%%%%%%%%%%%%%%%%%%%%%%%%%%%%%%%%%%%%%%%%%%%%%%%%%%%%%%%%

%%%%%%%%%%%%%%%%%%%%%%%%%%%%%%%%%%%%%%%%%%%%%%%%%%%%%%%%%%%%%%%%%%%%%%%%%%%%%%%%%%%%%%%%%%%%%%%%%%
%  PACKAGES
%%%%%%%%%%%%%%%%%%%%%%%%%%%%%%%%%%%%%%%%%%%%%%%%%%%%%%%%%%%%%%%%%%%%%%%%%%%%%%%%%%%%%%%%%%%%%%%%%%

\documentclass[reqno,11pt]{amsart}
\usepackage{amsmath,amssymb,latexsym,soul,cite,mathrsfs}
\usepackage{dsfont}
\usepackage{color,enumitem,graphicx}
\usepackage{xcolor}
\usepackage[english]{babel}
\usepackage[T1]{fontenc}  % Important for proper hyphenation
\usepackage[utf8]{inputenc}
\usepackage{tikz}
\makeatletter
\tikzset{every picture/.append style={execute at begin picture=\fussy}}
\makeatother

\usepackage[colorlinks=true,urlcolor=blue,
citecolor=red,linkcolor=blue,linktocpage,pdfpagelabels,
bookmarksnumbered,bookmarksopen]{hyperref}

\usepackage[left=2.6cm,right=2.6cm,top=2.9cm,bottom=2.9cm]{geometry}
\usepackage[hyperpageref]{backref}

\usepackage{academicons}
\definecolor{orcidlogocol}{HTML}{A6CE39}

\makeatletter
\def\namedlabel#1#2{\begingroup
    #2%
    \def\@currentlabel{#2}%
    \phantomsection\label {#1}\endgroup
}
\makeatother

\newtheorem{theorem}{Theorem}[section]
\newtheorem{lemma}[theorem]{Lemma}
\newtheorem{corollary}[theorem]{Corollary}
\newtheorem{example}{Example}
\newtheorem{proposition}[theorem]{Proposition}
\newtheorem{remark}[theorem]{Remark}
\newtheorem{definition}[theorem]{Definition}

\newtheoremstyle{theorem}
{}                % Space above
{}                % Space below
{\slshape}        % Theorem body font % (default is "\upshape")
{}                % Indent amount
{\bfseries}       % Theorem head font % (default is \mdseries)
{'}               % Punctuation after theorem head % default: no punctuation
{ }               % Space after theorem head
{}                % Theorem head spec
\theoremstyle{theorem}

\newcommand{\Ng}{\mathscr{N}_{\sigma}}
\newcommand{\Ngp}{\mathscr{N}_{\sigma_{+}}}
\DeclareMathOperator*{\esssup}{ess\,sup}
\DeclareMathOperator*{\essinf}{ess\,inf}

\newcommand{\orcid}[1]{\href{https://orcid.org/#1}{\textcolor[HTML]{A6CE39}{\aiOrcid}}}

\definecolor{lime}{HTML}{A6CE39}
\DeclareRobustCommand{\orcidicon}{
	\begin{tikzpicture}
	\draw[lime, fill=lime] (0,0) 
	circle [radius=0.16] 
	node[white] {{\fontfamily{qag}\selectfont \tiny ID}};
	\draw[white, fill=white] (-0.0625,0.095) 
	circle [radius=0.007];
	\end{tikzpicture}
 \hspace{-2mm}
 }
 
\foreach \letter in {A, ..., Z}{\expandafter\xdef\csname orcid\letter\endcsname{\noexpand\href{https://orcid.org/\csname orcidauthor\letter\endcsname}
			{\noexpand\orcidicon}}
}

\title[Steady states of FitzHugh-Nagumo-type systems with sign-changing coefficients]{Steady states of FitzHugh-Nagumo-type systems with sign-changing coefficients}

\author[J.M. do \'O]{Jo\~ao Marcos do \'O \orcidJ{}}
\author[E. Shamarova]{Evelina Shamarova \orcidE{}}
\author[ Victor V. Silva]{Victor V. Silva\orcidV{} }

\address[J.M. do \'O]{Department of Mathematics,
	Federal University of Para\'{\i}ba
	\newline\indent
	58051-900, Jo\~ao Pessoa-PB, Brazil}
\email{\href{mailto:jmbo@mat.ufpb.br}{jmbo@mat.ufpb.br}}

\address[E. Shamarova]{Department of Mathematics,
	Federal University of Para\'{\i}ba
	\newline\indent
	58051-900, Jo\~ao Pessoa-PB, Brazil}
\email{\href{mailto:evelina.shamarova@academico.ufpb.br}{evelina.shamarova@academico.ufpb.br}}

\address[ Victor V. Silva]{Department of Mathematics,
		Federal University of Pernambuco
	\newline\indent
	 50670-901, Recife, Brazil}
\email{\href{mailto:victor.franca@ufpe.br}{victor.franca@ufpe.br}}

\subjclass[2020]{35J60, 35B09, 35J30}
\keywords{FitzHugh-Nagumo systems,  Elliptic systems, Variational methods, Nonlinear equations}

\begin{document}
\begin{abstract}
We establish existence and multiplicity results for steady-state solutions of spatially heterogeneous FitzHugh--Nagumo-type systems, extending the existing theory from constant to variable coefficients that may change sign. Specifically, we study the system
\begin{equation}\label {P}\tag{$S$}
		\left\lbrace
		\begin{aligned}
			-\Delta u +a(x)v  &=f(x,u) &  & \text { in } \mathds{R}^N, \\
			-\Delta v+b(x)v &=c(x)  u & & \text { in } \mathds{R}^N,
		\end{aligned}\right.
    \end{equation}
where $N \geqslant 3$, the coefficients $a,b,c : \mathbb{R}^N \to \mathbb{R}$ are $L^\infty_{\mathrm{loc}}$-functions bounded from below, and $f:\mathbb{R}^N \times \mathbb{R} \to \mathbb{R}$ is a Carath\'eodory function with subcritical growth. For assumptions permitting sign changes and non-coercivity of the coefficients, we prove the existence of a mountain pass solution. In the case where $a,b,c$ do not change sign, still allowing non-coercive behavior, we additionally establish the existence of componentwise positive and negative solutions. 
\end{abstract}

	\maketitle

	\begin{center}
		\footnotesize
		\tableofcontents
   \end{center}

\section{Introduction}
\label{intro-1111}

\subsection{Motivation}
The FitzHugh-Nagumo model, originally introduced by FitzHugh \cite{fitzhugh1961impulses} and further developed by Nagumo et al. \cite{Nagumo19622061},
provides the canonical framework for studying excitable systems in neuroscience, cardiology, and pattern formation. While the classical model employs constant coefficients, real excitable media exhibit substantial spatial heterogeneity.
In neural tissue, variations in ion channel density and neuromodulator concentrations cause spatially varying coupling strengths that can even switch 
from excitatory to 
inhibitory \cite{Lenk2010InfluenceOS, zbMATH07966032}. In cardiac tissue, ischemic zones locally alter electrical properties, fundamentally affecting signal propagation \cite{zbMATH07966032}.
Furthermore, external influences like noise can introduce additional spatial heterogeneity \cite{zbMATH07966032}. 
Despite this biological reality, the mathematical theory for steady states has been largely confined to spatially homogeneous settings.

The classical FitzHugh-Nagumo system reads  \cite{MR4588070}
\[
\begin{cases}
u_t - \Delta u + v = g(u) & \text{in } \mathds{R}^N, \\
v_t - \Delta v + \gamma v = \beta u  & \text{in } \mathds{R}^N,
\end{cases}
\]
where \(\gamma, \beta, \delta > 0\) are constant coupling strengths and the nonlinearity \(g(u) = u(1 - u)(u - \alpha)\), with \(\alpha \in (0, \frac12)\), 
represents the activator kinetics. 
This model reduces complex action potential dynamics to a system of two coupled differential equations that capture 
the interplay between an activator variable $u$, e.g., membrane potential in neurons, and an inhibitor variable $v$. 
This simplification has yielded profound insights into neural excitability, 
cardiac rhythms, and pattern formation in reaction-diffusion systems.
Steady-state solutions to this system are essential to understanding resting states, 
excitability thresholds, and spatial patterns, such as Turing patterns.

However, all existing variational studies, e.g.,  \cite{MR3552796, MR4358261, MR3965640, MR3197242}, 
assume constant positive coefficients. We remove this restriction entirely, allowing $a(x), b(x), c(x)$ to be merely 
$L^\infty_{\rm loc}$-functions that may take both positive and negative values, with no coercivity or decay requirements.
More specifically, we extend the existing results to system \eqref{P}, for which we prove the existence of solutions for sign-changing coefficients, and also,
the existence of componentwise positive and negative solutions for a slightly more restrictive  class of coefficients;
however, in both cases under quite general assumptions on the nonlinearity $f$.
We believe that our extension enhances the applicability  of the model to complex, heterogeneous systems encountered in practice.

Mathematically, incorporating heterogeneity, alternation of signs, and lack of coercivity in the coefficients 
introduces substantial challenges, as traditional analytical methods for establishing the existence and 
multiplicity of steady-state solutions fail. Overcoming these obstacles requires new approaches, 
such as development of an adequate  variational framework and recovering compactness in a non-coercive setting.
We believe that the techniques developed in this work might be suitable for other 
PDEs with spatially variable, sign-changing, and non-coercive coefficients.

\subsection{  \texorpdfstring{Hypotheses on the coefficients 
and classes of functions $\Upsilon_{s}(\mathds{R}^{N})$
 and $\mathcal T(\mathds{R}^{N})$} \ \   }
 \label{s12}
\begin{definition}[Class $\Upsilon_{s}(\mathds{R}^{N})$]
\label{Upsilon}
\rm
    For a fixed $s\in [2,2^{*})$,  we define the class 
    $\Upsilon_{s} \left(\mathds{R}^{N}\right)$   
    of real-valued $L^\infty_{\mathrm{loc}}(\mathds{R}^{N})$-functions $\sigma$
essentially bounded from below and 
     satisfying conditions \eqref{primeiroAutovalor} and \eqref{coecivo}:
    \begin{equation}\label {primeiroAutovalor}
\lambda_{1}(\sigma):= \inf \left\lbrace \int_{\mathds{R}^{N}}\left(|\nabla u|^2+\sigma(x)|u|^2\right) \mathrm{d}x:\|u\|_{\scriptscriptstyle L_2}=1 \text{ and } u\in {\rm C}_{0}^{\infty}(\mathds{R}^{N}) \right\rbrace>0,
\tag{$\lambda_1$}
    \end{equation}
    and
     \begin{equation}\label {coecivo}
          \lim _{n \rightarrow \infty} \nu_s\left(\sigma, B_{R}(x_{n})\right)=\infty,
          \tag{$\nu_s$}
      \end{equation}
    where $\left\{x_n\right\} \subset \mathds{R}^{N}$, $\lim_{n\to\infty} x_n = \infty$, $R>0$, and 
    \begin{equation}\label {nus}
         \nu_{s}(\sigma,\Omega):=\inf \left\{\frac{\int_{\Omega}\left(|\nabla u|^2+\sigma(x)|u|^2\right) \, \mathrm{d}x}{\|u\|^{2}_{\scriptscriptstyle L_{s}(\Omega)}}: u \in H_0^{1}(\Omega) \setminus  \{0\} \right\},
    \end{equation}
    where $\Omega\subset \mathds{R}^N$ is a non-empty open set and $\nu_s(\sigma,\varnothing) = \infty$.
\end{definition}
\begin{remark}
\label{usp-s-1111}
\rm
By \cite[Lemma 2.2]{MR1782990}, if condition \eqref{coecivo} holds for $s=2$, then
it holds for all $s\in (2,2^*)$. Furthermore, if \eqref{coecivo} holds for some $s\in (2,2^*)$, then it holds
for all $s\in (2,2^*)$. This means that
\begin{align*}
\Upsilon_2(\mathds{R}^N) \subset \Upsilon_s(\mathds{R}^N) \;\; \forall \; s\in (2,2^*) \qquad \text{and} \qquad
\Upsilon_{s} = \Upsilon_{t} \quad \forall \; s,t\in (2,2^*).
\end{align*}
\end{remark}    
\begin{definition}[Class $\mathcal T(\mathds{R}^{N})$]
\label {T-1111}
\rm
    We define the class 
    $\mathcal T \left(\mathds{R}^{N}\right)$  of real-valued
    $L^\infty_{\mathrm{loc}}(\mathds{R}^{N})$-functions 
   essentially bounded from below  and  satisfying condition \eqref{primeiroAutovalor}.
  \end{definition}   
 Given a function $\sigma \in \mathcal T(\mathds{R}^{N})$, define the linear vector space
      \begin{equation*}
         E_{\sigma}:=\left\lbrace  u\in D^{1,2}(\mathds{R}^{N}):  \int_{\mathds{R}^{N}} \sigma_{+}(x) |u|^{2} \, \mathrm{d}x<+\infty \right\rbrace
    \end{equation*}
    which becomes a normed space with respect to the norm
     \begin{align*}
       \|u\|_{\sigma_+}=  \Big(\int_{\mathds{R}^{N}} |\nabla u|^2 +\sigma_+(x)|u|^2\,  \mathrm{d}x\Big)^\frac12,
\end{align*}
where $\sigma_+ = \max\{0,\sigma\}$.
From the results of \cite[Section 3]{MR924157} it follows that
 $(E_\sigma, \|\cdot\|_{\sigma_+})$ is a Hilbert space.
  The space $E_{\sigma}$ is described in more detail in Section \ref{Properties of space}.

  Remark that a function $\sigma\in\Upsilon_s(\mathds{R}^N)$, $s\in [2,2^*)$, is allowed to take negative values.
In Subsection \ref{Important examples}, we give an example  of a function (see Example \ref{Example_Sigma})
  taking negative values and satisfying \eqref{primeiroAutovalor} and  \eqref {coecivo}.
  However, if $\essinf \sigma(x) \geqslant \varepsilon$ for some $\varepsilon > 0$,  then $\lambda_1(\sigma) \geqslant \varepsilon $, which means that condition \eqref{primeiroAutovalor} is automatically satisfied.
   One can show that condition \eqref{coecivo} generalizes the coercivity condition ($\lim_{x\to\infty}\sigma(x) = \infty$) used by Rabinowitz in~\cite{rabinowitz1992class}. 
   Similar conditions appear in other works, see, e.g.,~\cite{MR1724275} and~\cite{MR1782990}.
   
  It will be convenient to formulate assumption \ref{f0a}  on the coefficients $a$ and $b$, 
  which will be used \textit{selectively} in the lemmas, propositions, and theorems throughout the paper.
  \begin{enumerate}
    \item[\namedlabel{f0a}{($h_a$)}]   
     $ a \in \mathcal T (\mathds{R}^{N}) $ or $a \geqslant 0$ (a.e.) is measurable with $a \ne 0$ a.e.  Moreover, $E_a = E_b$ and the norms
 $\|\cdot\|_{a_+}$ and $\|\cdot\|_{b_+}$ generate the same topology
 on $E_b$. 
 \end{enumerate}
 Condition \ref{f0ab} will only be used in Theorem \ref{teo}.
 \begin{enumerate}
  \item[\namedlabel{f0ab}{($h_{e}$)}] 
  There exists a measurable function $\vartheta: \mathbb{R}^N\to [0,+\infty)$ and 
  a constant $C_\vartheta>0$ such that
   such that $0\leqslant e: = b-2\beta^{\frac12}a  \leqslant C_\vartheta (1+\vartheta^\frac1{\alpha})$
   for some $\alpha>\max\{2,N/2\}$,
  and the norms $\|\cdot\|_\vartheta$ and $\|\cdot\|_{b_+}$ generate the same topology
 on $E_b$. 
  \end{enumerate}
    \subsection{Representative case of the main result}
    The following theorem is a representative case of our main result. 
    More specifically, let us consider system \eqref{P}
    with the classical nonlinear part $f(x,u) = |u|^{p-1} u$, which
    is a particular case of our more general class of nonlinearities
    described by hypotheses \ref{f1}-\ref{f5}.   
     \begin{theorem}
 \label {t-main}
  Let $ b \in \Upsilon_2(\mathds{R}^{N})$
and  let {\rm \ref{f0a}} hold.  Assume $p \in (1, 2^*-1)$.
 Then, the system
    \begin{equation}
    \label{rep-ex-1111}
		\left\lbrace
		\begin{aligned}
			-\Delta u +a(x)v  &= |u|^{p-1}u &  & \text { in } \mathds{R}^N, \\
			-\Delta v+b(x)v &=a(x)u & & \text { in } \mathds{R}^N
		\end{aligned}\right.
	\end{equation}
    has a mountain pass solution. If, moreover, 
    {\rm \ref{f0ab}} holds and $a>0$ a.e., then system \eqref{rep-ex-1111}
    has three nontrivial weak solutions $(u_1, v_1), (u_2, v_2)$ and $(u_3, v_3)$, such that      
    \begin{align*}
     u_1> 0, \; v_1>0;  \quad u_2< 0, \; v_2<0; \quad  
     u_{3} \; \text{is sign-changing}.
\end{align*}
    \end{theorem}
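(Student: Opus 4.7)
The plan is to derive Theorem~\ref{t-main} as a corollary of the paper's general existence and multiplicity theorems (stated later for a Carath\'eodory nonlinearity $f$ satisfying suitable abstract hypotheses), specialized to the model nonlinearity $f(x,u) = |u|^{p-1}u$. The first task is to verify that this $f$ fulfils each of those abstract hypotheses: subcritical polynomial growth of exponent $p+1<2^{*}$, superlinear vanishing at the origin, an Ambrosetti--Rabinowitz superquadraticity bound with constant $\theta = p+1 > 2$, the sign compatibility $f(x,u)\,u \geqslant 0$, and the strict monotonicity of $u \mapsto f(x,u)/u$ on each half-line. These are routine pointwise checks.

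Given that, the mountain pass statement follows from the standard variational reduction. Since $b \in \Upsilon_2(\mathds{R}^N)$, the operator $L_b := -\Delta + b(x)$ is positive definite on $E_b$ by \eqref{primeiroAutovalor}, and \eqref{coecivo} makes its inverse compact from $L^{s'}$ into $L^{s}$ for admissible $s$. Under \ref{f0a} the map $v = T u := L_b^{-1}(a u)$ is well defined and bounded, so the system reduces to the scalar nonlocal equation
\[
-\Delta u + a(x)\, T u = |u|^{p-1} u \quad \text{in } \mathds{R}^N,
\]
whose associated functional is
\[
I(u) = \tfrac{1}{2}\int_{\mathds{R}^N}\bigl(|\nabla u|^2 + a(x)\, u\, T u\bigr)\,\mathrm{d}x - \tfrac{1}{p+1}\int_{\mathds{R}^N}|u|^{p+1}\,\mathrm{d}x,
\]
posed on the natural energy space built from $E_a$. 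Mountain pass geometry is then classical: positivity of the quadratic part near zero follows from \ref{f0a} together with $\lambda_1(a), \lambda_1(b) > 0$, while $I(tu_0) \to -\infty$ as $t \to \infty$ for any fixed $u_0$ with $\int|u_0|^{p+1} > 0$ because $p > 1$.

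The main obstacle will be verifying Palais--Smale compactness. Boundedness of $(PS)_c$-sequences in the energy norm is a standard consequence of Ambrosetti--Rabinowitz. What is not automatic is strong convergence, since $\mathds{R}^N$ is unbounded and neither $a$ nor $b$ is assumed coercive, so a priori the mass of a bounded $(PS)_c$-sequence could escape to infinity. The condition \eqref{coecivo} is precisely the tool that rules this out: if mass concentrated in balls $B_R(x_n)$ with $x_n \to \infty$, the divergence $\nu_s(b, B_R(x_n)) \to \infty$ would force the $L^s(B_R(x_n))$-norm to vanish, contradicting the concentration. Combined with local Rellich compactness this yields strong convergence in $L^{p+1}(\mathds{R}^N)$ and hence in the energy norm, so (PS) holds and a mountain pass critical point exists.

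For the three-solution statement I assume \ref{f0ab} and $a > 0$ a.e. The role of \ref{f0ab}, with $e = b - 2\beta^{1/2} a \geqslant 0$ and $\beta$ the constant coming from the growth hypothesis on $f$, is to make a joint positive quadratic form of $(u, v)$ available, which allows truncations of $f$ to be handled variationally without losing coercivity. For the componentwise positive solution, replace $|u|^{p-1}u$ by $(u^+)^p$ and run the above mountain pass scheme on the truncated functional; testing the resulting critical point $u_1$ against $u_1^{-}$ forces $u_1 \geqslant 0$, and strong maximum principle considerations upgrade this to $u_1 > 0$, after which $v_1 = T u_1 > 0$ by the cooperative structure provided by $a > 0$. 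The componentwise negative pair $(u_2, v_2)$ is obtained symmetrically by truncating to $-(u^-)^p$. For the sign-changing solution $u_3$, I minimize $I$ on the nodal Nehari set
\[
\mathcal{M}_0 = \bigl\{u : u^{\pm} \not\equiv 0,\ \langle I'(u), u^{+}\rangle = 0 = \langle I'(u), u^{-}\rangle \bigr\};
\]
the strict monotonicity of $f(x,\cdot)/(\cdot)$ guarantees that each sign-changing $u$ admits a unique fiber projection onto $\mathcal{M}_0$, and the PS-type compactness at the nodal level is again supplied by \eqref{coecivo}. A standard deformation/degree argument then promotes the minimizer to a sign-changing critical point, yielding $(u_3, v_3)$ at higher energy than the signed solutions.
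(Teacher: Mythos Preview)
Your overall strategy---verify that $f(x,u)=|u|^{p-1}u$ satisfies the paper's abstract hypotheses and then invoke the general Theorems~\ref{teo1} and~\ref{teo}---is exactly what the paper does, and your verification of the mountain pass part is essentially correct (though $\beta$ is the coupling constant in $c=\beta a$, here $\beta=1$, not a growth constant of $f$).

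The gap is in your three-solution argument. Your truncation step ``testing the resulting critical point $u_1$ against $u_1^{-}$ forces $u_1\geqslant 0$'' does not go through for the nonlocal reduced equation. If $u_1$ solves $-\Delta u + a\,S_b u = (u^+)^p$ and you pair with $u_1^{-}$, the gradient term gives $\|\nabla u_1^{-}\|_{L_2}^2$ and the nonlocal term gives $\int a\,u_1^{-}\,S_b(u_1^{+}) + \int a\,u_1^{-}\,S_b(u_1^{-})$. The second piece equals $\beta^{-1}\|S_b u_1^{-}\|_b^2\geqslant 0$, but the first piece is $\leqslant 0$ (since $a>0$, $u_1^{-}\leqslant 0$, and $S_b(u_1^{+})>0$ by the maximum principle for $-\Delta+b$), so you only obtain $\|u_1^{-}\|_{ab}^2 = -\int a\,u_1^{-}\,S_b(u_1^{+}) \geqslant 0$, which is vacuous. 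The nonlocal operator $S_b$ spreads the influence of $u_1^{+}$ over the whole of $\mathbb{R}^N$, so $u_1^{+}$ and $u_1^{-}$ are \emph{not} orthogonal in $\langle\cdot,\cdot\rangle_{ab}$. The same nonorthogonality breaks the ``unique fiber projection onto $\mathcal{M}_0$'' step in your nodal Nehari argument: the two equations $\langle I'(su^++tu^-),u^{\pm}\rangle=0$ are genuinely coupled in $(s,t)$.

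The paper circumvents this by invoking Weth's abstract three-critical-point theorem (Theorem~\ref{Abstract Result}) with a \emph{modified} scalar product $\langle u,v\rangle_{ab}^{*}=\langle u,v\rangle_{ab}+\int e\,uv$, where $e=b-2\beta^{1/2}a\geqslant 0$ is exactly what \ref{f0ab} provides. This extra term is what allows a factorisation of $-\Delta+aS_b+e$ and hence a strong maximum principle for the full nonlocal operator (Lemma~\ref{big-inverse}), which in turn yields the crucial polar-cone inclusion $K^{o}\subset -K$. That inclusion replaces the missing orthogonality and is the real content of the multiplicity proof.
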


\subsection{Hypotheses on the nonlinearity $f$}

As we mentioned before, Theorem \ref{t-main} is a particular
case of a more general result which is the subject of  Theorems \ref{teo1} and \ref{teo}.
More specifically, we assume that the nonlinear term $f$ satisfies
the following below hypotheses \ref{f1}--\ref{f4} in the case of Theorem \ref{teo1}
and \ref{f1}--\ref{f5} in the case of Theorem \ref{teo}.
Let $\alpha>\max\{2,N/2\}$. We define 
\begin{align}
\label{PalfN}
\mathcal P_{\alpha,N}: =
\Big(1, 1+\frac2N\Big] \bigcup
\Big[\frac{N-\frac4\alpha}{N-2}, \frac{N-\frac4\alpha+2}{N-2}\Big).
\tag{$\mathcal P_{\alpha,N}$}
\end{align}
\begin{remark}
\rm
Note that that if  $\max\{2,N/2\}<\alpha \leqslant N$, the two intervals defining  \ref{PalfN} intersect 
each other and \ref{PalfN}
becomes one interval $\Big(1,\frac{N-\frac4\alpha+2}{N-2}\Big)$.
\end{remark}

 \begin{enumerate}
      \item[\namedlabel{f1}{($h_1$)}]
      The nonlinearity $f$ is a function $\mathds{R}^{N} \times \mathds{R} \to \mathds{R}$ 
      such that for every $u\in \mathbb{R}$, the map 
      \begin{align*}
      \mathbb{R}^N\to \mathbb{R}, \quad x\mapsto f(x,u) \quad \text{ is Lebesgue measurable.} 
\end{align*}      
        \item[\namedlabel{f2}{($h_2$)}]
        There exist an $L^\infty_{\mathrm{loc}}(\mathbb{R}^N)$-function $\phi: \mathbb{R}^N\to [0,+\infty)$ with the property that $E_\phi = E_{b}$ and the norms
 $\|\cdot\|_{\phi}$ and $\|\cdot\|_{b_+}$ generate the same topology on $E_b$,
        $\alpha>\max\{2,N/2\}$, $p\in\,$\ref{PalfN}, 
        and $C_0>0$ such that
        for all $ x \in \mathds{R}^N$, $u,v\in \mathds{R}$, 
    \begin{align*}
                |f(x, u)-f(x,v)|  \leqslant C_0\big(1+ \phi(x)^{1/\alpha}\big)\left(1+ \left(|u|^{p-1} +|v|^{p-1}\right)\right)|u-v|.
\end{align*}
 \item[\namedlabel{f3}{($h_3$)}] For all $x\in\mathbb{R}^N$, $f(x,0)= 0$. 
        \item[\namedlabel{f4}{($h_4$)}]
        Let $F(x,u): = \int_0^u f(x, t) d t$. There exists a constant $\mu_0>2$ such that
        for $u\ne 0$,
                  \begin{align*}
                  0< \mu_0 F(x, u)  \leqslant  u f(x, u).
\end{align*}
            \item[\namedlabel{f5}{($h_5$)}]
            \label {h4}
               For almost every fixed $x\in \mathbb{R}^N$, the function
            $u\mapsto f(x,u)$ is nondecreasing. 
 \end{enumerate}

\begin{remark}
\label{rm15}
\rm
By  \ref{f2}--\ref{f4}, 
\begin{align*}
|f(x,u)| \leqslant C_0(1+ \phi^\frac1\alpha) (|u|+|u|^p) \quad \text{and} \quad
|F(x, u)| \leqslant \frac{C_0}{\mu_0}(1+ \phi^\frac1\alpha) (|u|^2+|u|^{p+1}).
\end{align*}
\end{remark}
\begin{remark}
\rm
Hypothesis \ref{f4} is known as a condition of the Ambrosetti-Rabinowitz type.
\end{remark}

\subsection{Main results}
Theorems \ref{teo1} and  \ref{teo} are the main results
of this work.
        \begin{theorem}\label {teo1}
 Let $b(x)\in \Upsilon_s(\mathds{R}^N)$ for some $s\in (2,2^*)$. Assume that 
 $a(x)$ satisfies {\rm \ref{f0a}} and $c(x) = \beta a(x)$ with $\beta>0$.
 Further assume that the nonlinearity $f$ satisfies conditions
   {\rm\ref{f1}-\ref{f4}}.
              Then, system \eqref{P} has a mountain pass type  solution.
         \end{theorem}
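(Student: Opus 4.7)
The natural strategy is to decouple system \eqref{P} by exploiting the special form $c=\beta a$ and reduce it to a scalar equation amenable to the mountain pass theorem. Since $b\in\Upsilon_{s}(\mathds{R}^N)$ satisfies \eqref{primeiroAutovalor}, the operator $\mathcal{L}:=-\Delta+b(x)$ is positive-definite and induces an isomorphism $\mathcal{L}\colon E_{b}\to E_{b}^{\ast}$. Thus, for each $u$ in a suitable Hilbert space $E$ built on $E_{b}$ (where by \ref{f0a} and \ref{f2} the norms $\|\cdot\|_{a_{+}}$, $\|\cdot\|_{b_{+}}$ and $\|\cdot\|_{\phi}$ all generate the same topology), the linear equation $-\Delta v + b v = \beta a u$ has a unique weak solution $v=Tu:=\beta\mathcal{L}^{-1}(au)\in E_{b}$. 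Substituting into the first equation produces the scalar reduced problem, whose energy is
\begin{equation*}
J(u)=\tfrac{1}{2}\int_{\mathds{R}^N}|\nabla u|^{2}\,\mathrm{d}x+\tfrac{\beta}{2}\langle a u,\,\mathcal{L}^{-1}(a u)\rangle-\int_{\mathds{R}^N}F(x,u)\,\mathrm{d}x.
\end{equation*}
The quadratic coupling is nonnegative because $\mathcal{L}^{-1}$ is positive-definite. Invoking \ref{f2} and Remark~\ref{rm15} one verifies that $J\in C^{1}(E,\mathds{R})$ and that a critical point $u$ lifts to a weak solution $(u,Tu)$ of \eqref{P}.

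Next I would establish the mountain pass geometry. Near the origin, Remark~\ref{rm15} combined with the embedding $E\hookrightarrow L^{s}(\mathds{R}^N)\cap L^{p+1}(\mathds{R}^N)$ and H\"older interpolation against the weight $\phi$ (the requirement $p\in\mathcal{P}_{\alpha,N}$ is precisely what makes this interpolation fit in the correct scale of spaces) yields $J(u)\ge c\|u\|_{E}^{2}$ for $\|u\|_{E}$ small. The Ambrosetti-Rabinowitz hypothesis \ref{f4} forces $F(x,u)\ge c_{0}|u|^{\mu_{0}}$ along any direction where $F$ is positive at one point, so $J(tu_{0})\to-\infty$ as $t\to+\infty$ for an admissible test function $u_{0}\in E$. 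A standard deformation argument then delivers a $(\mathrm{PS})_{c}$-sequence $\{u_{n}\}\subset E$ at the mountain pass level $c>0$.

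The hard step, and the true main obstacle, is the Palais-Smale condition: both $a$ and $b$ are sign-changing and non-coercive, which blocks the standard Lions concentration-compactness argument. Pairing $J'(u_{n})$ with $u_{n}$ and invoking \ref{f4} in the customary way bounds $\{u_{n}\}$ in $E$, so $u_{n}\rightharpoonup u$ after extracting. Compactness is recovered from condition \eqref{coecivo}: adapting the Bartsch-Wang-type argument of \cite{MR1782990}, the divergence $\nu_{s}(b,B_{R}(x_{n}))\to\infty$ whenever $|x_{n}|\to\infty$ prevents $L^{s}$-mass from escaping to infinity and produces the strong convergence $u_{n}\to u$ in $L^{s}(\mathds{R}^N)$. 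The growth bound \ref{f2} together with $p\in\mathcal{P}_{\alpha,N}$ then lets one pass to the limit in $\int f(x,u_{n})(u_{n}-u)\,\mathrm{d}x$ via a H\"older/interpolation estimate in the weighted scale defined by $\phi$. The coupling quadratic $(u,w)\mapsto\langle a u,\,\mathcal{L}^{-1}(a w)\rangle$ is weakly continuous on bounded subsets of $E$ once $a u_{n}\to a u$ in $E_{b}^{\ast}$, which follows from the above $L^{s}$-compactness combined with the equivalence between $\|\cdot\|_{a_{+}}$ and $\|\cdot\|_{b_{+}}$. Assembling these pieces gives $\|u_{n}-u\|_{E}\to 0$, and the mountain pass theorem of Ambrosetti-Rabinowitz yields a nontrivial critical point of $J$, hence the desired mountain pass solution $(u,Tu)$ of \eqref{P}.
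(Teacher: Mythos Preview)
Your proposal is correct and follows essentially the same route as the paper: reduce \eqref{P} to the scalar nonlocal equation via $S_b=\beta(-\Delta+b)^{-1}a$, set up the energy $J(u)=\tfrac12\|u\|_{ab}^{2}-\int F(x,u)$, verify mountain pass geometry from \ref{f4} and the growth bound \ref{f2}, and recover the Palais--Smale condition from the compact embedding $E_b\hookrightarrow L^{s}$ furnished by \eqref{coecivo}. The only cosmetic difference is that the paper absorbs the coupling quadratic into the Hilbert norm $\|\cdot\|_{ab}$ from the outset (Proposition~\ref{Ne}), so that $DJ(u)=u-D\Psi(u)$ with $D\Psi$ compact as a map $E_b\to E_b$ (Lemma~\ref{lem-Eb}); strong convergence of a bounded PS sequence then follows in one line from $u_{n_k}=DJ(u_{n_k})+D\Psi(u_{n_k})$, without the separate weak-continuity discussion of the coupling term that you sketch.
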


    \begin{theorem}\label {teo}
    Let $b(x)\in \Upsilon_2(\mathds{R}^N)$.  
        Assume that the coefficients $a(x)$ and $b(x)$ satisfy {\rm \ref{f0a}} and {\rm \ref{f0ab}},
 where $a>0$ a.e. and $c(x) = \beta a(x)$ with $\beta>0$. Further assume that the nonlinearity $f$ satisfies conditions
   {\rm \ref{f1}-\ref{f5}}.
Then system \eqref{P}   has three nontrivial weak solutions $(u_1, v_1), (u_2, v_2)$ and $(u_3, v_3)$, such that   
     \begin{align*}
     u_1> 0, \; v_1>0;  \quad u_2< 0, \; v_2<0; \quad  
     u_{3} \; \text{is sign-changing}.
\end{align*}
       \end{theorem}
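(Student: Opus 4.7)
The plan is to first reduce the system \eqref{P} to a single scalar equation. Since $b\in\Upsilon_{2}(\mathds{R}^{N})$ satisfies \eqref{primeiroAutovalor}, the operator $L_b:=-\Delta+b$ is invertible on $E_b$ and positivity-preserving by the strong maximum principle. Setting $K(u):=\beta L_b^{-1}(au)$ gives a bounded linear operator whose image solves $-\Delta v+bv=\beta au$ weakly; plugging $v=K(u)$ into the first equation of \eqref{P} reduces the system to the scalar nonlocal equation $-\Delta u+aK(u)=f(x,u)$, whose weak solutions are exactly the critical points on a suitable Hilbert space $E$ (built from $E_b$ via \ref{f0a} and \ref{f0ab}) of the $C^{1}$ functional
\[
I(u) := \tfrac12\int_{\mathds{R}^N}|\nabla u|^2\,\ud x + \tfrac12\int_{\mathds{R}^N} a\,u\,K(u)\,\ud x - \int_{\mathds{R}^N} F(x,u)\,\ud x .
\]
The symmetry $\int a\,h\,K(u)=\int a\,u\,K(h)$ identifies the derivative of the middle term as $aK(u)$, and the quadratic form $\int auK(u)=\beta^{-1}\bigl(\|\nabla K(u)\|_{L^2}^{2}+\int b\,K(u)^{2}\bigr)$ is positive definite since $a>0$. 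Accordingly, the mountain-pass geometry and the Palais--Smale analysis of Theorem \ref{teo1} apply verbatim to $I$; the task is to produce the three specific critical points of $I$ with the prescribed sign behavior.

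For the positive solution I would apply the mountain pass theorem to the truncated functional $I^{+}(u) := \tfrac12\int|\nabla u|^2 + \tfrac12\int auK(u) - \int F(x,u_{+})\,\ud x$, which retains the MP geometry of $I$ by \ref{f3} and \ref{f4}. Let $u_{1}$ be the resulting critical point, so that $-\Delta u_{1}+aK(u_{1})=f(x,(u_{1})_{+})$. Testing against $(u_{1})_{-}$ and using $f(x,(u_{1})_{+})(u_{1})_{-}\equiv 0$ and $K(u_{1})=K((u_{1})_{+})-K((u_{1})_{-})$ yields
\[
\int|\nabla(u_{1})_{-}|^{2}\,\ud x + \int a\,K((u_{1})_{-})(u_{1})_{-}\,\ud x = \int a\,K((u_{1})_{+})(u_{1})_{-}\,\ud x .
\]
Combining Cauchy--Schwarz for the positive semidefinite form $B(u,v):=\int auK(v)$, the pointwise estimate $b\geq 2\sqrt{\beta}\,a$ from \ref{f0ab} (which is exactly what is needed to absorb the cross term), and the monotonicity \ref{f5} forces $(u_{1})_{-}\equiv 0$. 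Thus $u_{1}\geq 0$ solves the reduced equation; the strong maximum principle (applicable via \ref{f2}) improves this to $u_{1}>0$ a.e., and $v_{1}=K(u_{1})>0$ follows from the maximum principle for $L_{b}$ applied to $au_{1}\geq 0,\not\equiv 0$. Replacing $F(x,u_{+})$ by $F(x,-u_{-})$ yields the negative solution $(u_{2},v_{2})$.

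For the sign-changing solution $u_{3}$ I would use the Bartsch--Weth approach on the sign-changing Nehari set
\[
\mathcal{M} := \bigl\{u\in E : u^{\pm}\neq 0,\ \langle I'(u),u^{+}\rangle = \langle I'(u),u^{-}\rangle = 0\bigr\},
\]
and minimize $I$ over $\mathcal{M}$. The key step is to show that for every $u\in E$ with $u^{\pm}\neq 0$, the map $\varphi_{u}(s,t):=I(su^{+}-tu^{-})$ has a unique strict interior maximum on $(0,\infty)^{2}$: the monotonicity \ref{f5} and the Ambrosetti--Rabinowitz condition \ref{f4} drive concavity along the two rays $(s,0)$ and $(0,t)$, while the cross term $-st\int au^{+}K(u^{-})$, bilinear in $(s,t)$, is controlled by \ref{f0ab}, ensuring that the $2\times 2$ Hessian of $\varphi_u$ at the critical point is negative definite. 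The projection $u\mapsto s_0 u^+ - t_0 u^-\in\mathcal{M}$ is thus well defined and continuous. A minimizing sequence for $c_\ast := \inf_{\mathcal M} I$ is bounded by \ref{f4}; the compactness lemma for class $\Upsilon_{2}(\mathds{R}^{N})$ of \cite{MR1782990} combined with the continuity of $K$ provides a sign-preserving limit $u_{3}\in\mathcal{M}$ realizing $c_{\ast}$. A standard quantitative deformation argument then promotes $u_{3}$ to a critical point of the unrestricted $I$, which is sign-changing by construction.

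The principal obstacle is the nonlocal coupling: a direct expansion gives
\[
I(u) = I(u^{+}) + I(-u^{-}) - \int a\,u^{+}K(u^{-})\,\ud x,
\]
with the cross term strictly positive whenever $u^{\pm}\neq 0$, because $a>0$ and $K$ preserves positivity. This wrecks the clean additive decomposition available in local problems and simultaneously obstructs the truncation argument of Paragraph~2 and the Nehari analysis of Paragraph~3. Hypothesis \ref{f0ab} (the pointwise inequality $b\geq 2\sqrt{\beta}\,a$), together with $a>0$ and the monotonicity \ref{f5}, is precisely what is designed to compensate this cross interaction in both arguments. The secondary difficulty, the absence of coercivity of $b$ (only \eqref{coecivo} is assumed), is handled once and for all by the compactness lemma for $\Upsilon_{2}(\mathds{R}^{N})$ already used in Theorem \ref{teo1}.
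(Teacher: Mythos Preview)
Your reduction to the scalar equation and identification of the nonlocal cross term $\int a\,u^{+}K(u^{-})>0$ as the central obstacle are both correct, but the mechanism you offer for overcoming it is a genuine gap. In the truncation step you arrive at
\[
\int|\nabla(u_{1})_{-}|^{2}\,\ud x + \int a\,K((u_{1})_{-})(u_{1})_{-}\,\ud x = \int a\,K((u_{1})_{+})(u_{1})_{-}\,\ud x,
\]
where the right-hand side is \emph{nonnegative}; Cauchy--Schwarz for $B$ only gives $B((u_{1})_{+},(u_{1})_{-})\le B((u_{1})_{+},(u_{1})_{+})^{1/2}B((u_{1})_{-},(u_{1})_{-})^{1/2}$, which provides no control on $(u_{1})_{-}$ since $B((u_{1})_{+},(u_{1})_{+})$ is unbounded. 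Neither the pointwise bound $b\ge 2\sqrt\beta\,a$ nor \ref{f5} enters this inequality in any evident way, and in fact the paper explicitly notes (Section~1.8) that this testing argument cannot be closed directly. The same defect undermines your sign-changing Nehari analysis: the coupling $-st\,B(u^{+},u^{-})$ makes the Hessian of $\varphi_u$ indefinite in general, and \ref{f5} alone (mere monotonicity of $f$, not of $f(x,u)/u$) is insufficient for the uniqueness of the interior maximum or for the quantitative deformation step.

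The paper takes a structurally different route. It does not truncate or use a Nehari set; instead it applies Weth's abstract three-critical-points theorem with respect to the \emph{shifted} inner product $\langle u,v\rangle_{ab}^{*}=\langle u,v\rangle_{ab}+\int e\,uv$, where $e=b-2\sqrt\beta\,a\ge 0$ from \ref{f0ab}. The crucial technical input is a strong maximum principle for the nonlocal operator $-\Delta+aK+e$, obtained from the algebraic factorization
\[
-\Delta+aK+e=(-\Delta+d)(-\Delta+b)^{-1}(-\Delta+d),\qquad d:=b-\sqrt\beta\,a\ge \sqrt\beta\,a>0,
\]
so that $(-\Delta+aK+e)^{-1}=\bigl(1+\sqrt\beta\,(-\Delta+d)^{-1}a\bigr)(-\Delta+d)^{-1}$ is a composition of positivity-preserving operators. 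This is precisely how $b\ge 2\sqrt\beta\,a$ is used: it forces $d>0$ and makes the factorization work. From this one deduces $K^{o}\subset -K$ for the polar cone taken in $\langle\cdot,\cdot\rangle_{ab}^{*}$, which is the geometric hypothesis Weth's theorem needs; the theorem then delivers $u_1\in K$, $u_2\in -K$, and $u_3$ sign-changing simultaneously, after which the same maximum principle upgrades $u_1\ge 0$ to $u_1>0$ and yields $v_1=K(u_1)>0$. The missing idea in your proposal is this factorization and the accompanying shift of inner product; without it neither your truncation argument nor the Nehari argument can absorb the nonlocal cross term.
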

\begin{corollary}
\label{cor-teo1-and-teo}
    Assume condition \ref{f22} below {\rm :}
    \begin{enumerate}
        \item[\namedlabel{f22}{$(h_2')$}] 
        There exist $p \in (1, 2^* - 1)$ and $C_0 > 0$ such that for all $x \in \mathds{R}^N$, $u, v \in \mathds{R}${\rm :}
        \begin{align*}
            |f(x, u) - f(x, v)| \leqslant C_0(1 + |u|^{p-1} + |v|^{p-1})|u - v|.
\end{align*}
    \end{enumerate}
    Then, Theorems \ref{teo1} and \ref{teo} remain valid
    if we replace {\rm \ref{f2}} with {\rm \ref{f22}}.
\end{corollary}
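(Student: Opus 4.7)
The plan is to show that hypothesis \ref{f22} is in fact a special case of the more general hypothesis \ref{f2}: once an admissible weight $\phi$ and exponent $\alpha$ are exhibited, \ref{f22} upgrades to \ref{f2} with the same constant $C_0$, and Theorems \ref{teo1} and \ref{teo} then apply verbatim.

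First, I would take the natural candidate $\phi := b_{+}$. This lies in $L^{\infty}_{\mathrm{loc}}(\mathds{R}^N)$ because $b$ does, and with this choice $E_{\phi} = E_{b}$ by the very definition of these spaces in Section \ref{s12}; moreover $\|\cdot\|_{\phi} = \|\cdot\|_{b_+}$, so the topology-equivalence requirement in \ref{f2} holds trivially.

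Second, given $p \in (1, 2^{*}-1)$, I would select $\alpha > \max\{2, N/2\}$ so that $p \in \mathcal{P}_{\alpha,N}$. If $p \leqslant 1 + 2/N$, then $p$ lies in the first component of \eqref{PalfN} for every admissible $\alpha$. Otherwise, I would use the inequalities $(N-4/\alpha)/(N-2) \leqslant p < (N-4/\alpha+2)/(N-2)$: for $p \geqslant N/(N-2)$ the left inequality is automatic and the right one forces $\alpha > 4/(N+2 - p(N-2))$, which is compatible with $\alpha > \max\{2, N/2\}$; for the intermediate range $1+2/N < p < N/(N-2)$ I would instead invoke the Remark following \eqref{PalfN} and restrict to $\alpha \leqslant N$, where the two intervals merge into $(1,(N-4/\alpha+2)/(N-2))$, and then choose $\alpha \in (\max\{2,N/2\}, N]$ large enough so that $p < (N-4/\alpha+2)/(N-2)$. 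A short algebraic check (using $p > 1+2/N$ and $p < 2^{*}-1$) shows the resulting range of $\alpha$ is always non-empty.

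Finally, with $\phi$ and $\alpha$ chosen as above, the inequality in \ref{f22} is immediately absorbed into \ref{f2} via
\[
|f(x,u)-f(x,v)| \leqslant C_0\bigl(1+|u|^{p-1}+|v|^{p-1}\bigr)|u-v| \leqslant C_0\bigl(1+\phi(x)^{1/\alpha}\bigr)\bigl(1+|u|^{p-1}+|v|^{p-1}\bigr)|u-v|,
\]
since $1 \leqslant 1 + \phi^{1/\alpha}$. All remaining hypotheses of Theorems \ref{teo1} and \ref{teo} (including \ref{f0a}, \ref{f0ab}, and \ref{f1}, \ref{f3}, \ref{f4}, and where needed \ref{f5}) are untouched by the substitution, so both conclusions follow directly.

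The only genuinely non-trivial step is the elementary case analysis selecting $\alpha$ in the "gap region" $((N+2)/N, N/(N-2))$, which is where one must exploit the merging of the two intervals of \eqref{PalfN} for $\alpha \leqslant N$; everything else reduces to the formal observation that \ref{f22} is precisely \ref{f2} with the weight switched off.
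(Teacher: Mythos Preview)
Your proof is correct and follows essentially the same route as the paper: reduce \ref{f22} to \ref{f2} by observing that $1 \leqslant 1+\phi^{1/\alpha}$ for any admissible weight $\phi$, and then exhibit, for each $p\in(1,2^*-1)$, an exponent $\alpha>\max\{2,N/2\}$ with $p\in\mathcal P_{\alpha,N}$. The only cosmetic differences are that the paper leaves the choice of $\phi$ implicit (your explicit $\phi=b_+$ is a cleaner presentation), and the paper organizes the $\alpha$-selection into two cases---$\alpha=N$ covers all $p<(N+2-4/N)/(N-2)$, and an explicit formula $\alpha_{(N,p)}=4/\bigl(N+2-2/N-(N-2)p\bigr)$ handles the remaining range---whereas you split into three sub-cases with slightly different thresholds; both partitions work.
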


\subsection*{1.8. Our methodology}
To establish the existence and multiplicity of solutions for system \eqref{P}, we reduce it to a single nonlocal equation. Although this step is similar to 
\cite{MR3552796,MR3197242, MR0819214, MR4358261}, the aforementioned papers
deal with the constant coefficients case. 
To be able to handle variable and sign-changing coefficients, we introduce condition
\eqref{primeiroAutovalor} which is new in the context of FitzHugh-Nagumo systems.
More specifically, the existence of the linear continuous operator 
$u\mapsto (-\Delta + b(x))^{-1}(a(x) u)$ with variable 
and sign-changing coefficients $a$ and $b$ is  possible due to condition
\eqref{primeiroAutovalor}. 
Thus, we are able to solve the second equation of system \eqref{P}
with respect to $v$. Substituting $v=\beta (-\Delta + b(x))^{-1}(a(x) u)$
into the first equation of \eqref{P}
gives %a single non-local equation for \(u\)
\begin{align}
\label {nonlocal-1111}
-\Delta u + \beta a(x)(-\Delta + b(x))^{-1}(a(x) u) = f(x,u).
\end{align}
Developing a variational framework for equation \eqref{nonlocal-1111} and proving compactness
of the embedding $E_b \hookrightarrow L_s(\mathds{R}^N)$, $s\in [2,2^*)$, can be regarded as important
 technical contributions of the paper. 
 These ingredients are 
 indispensable for proving the Palais–Smale condition for the associated energy functional
 and applying Weth's theorem \cite{MR2262254}.
 Both tasks are particularly challenging due to 
\begin{itemize}[leftmargin=*]
  \item[--] \textit{Sign-changing coefficients:} The functions \(a(x)\) and \(b(x)\) are merely assumed to be in \(L^\infty_{\mathrm{loc}}(\mathds{R}^N)\), and may change sign or vanish on large sets. 
  Unlike most works in the literature, which deal with positive constant coefficients,
  our framework allows for variable and sign-indefinite coefficients. These properties obstruct standard arguments.
  \item[--] \textit{Nonlocal operator with variable coefficients:} The presence of \((-\Delta + b(x))^{-1}a\), with \(a(x)\) and $b(x)$ changing sign, introduces significant complications in defining appropriate functional spaces and the energy functional associated with the problem. 
  More specifically, we introduce
  a Hilbert space \((E_b, \|\cdot\|_{ab})\), with the scalar product
   \begin{equation}
   \label {bform}
        \langle u,v \rangle_{ab}: =\int_{\mathds{R}^{N}} \nabla u \nabla v 
        + \beta a(x) u (-\Delta + b(x))^{-1}(av) \, \mathrm{d}x,
    \end{equation}
  associated with this operator. A primary difficulty in this analysis 
  lies in establishing conditions on  $a(x)$ and $b(x)$
 that permit both functions to assume positive and negative values while simultaneously ensuring that the bilinear form \eqref{bform} remains symmetric and positive definite.
  \item[--] \textit{Non-coercive coefficients}. We are able to recover  compactness
 of the embedding $E_b \hookrightarrow L_s(\mathds{R}^N)$, $s\in [2,2^*)$, due to condition \eqref{coecivo} on the coefficient $b(x)$, permitting 
 its non-coercivity. 
 \end{itemize}
 Furthermore, to show the 
 existence of componentwise positive and negative solutions, 
 we invoke a deep result of Weth \cite{MR2262254} which guarantees the existence of three nontrivial critical points
 for the energy functional: one positive, one negative, and one sign-changing, provided that certain geometric and compactness conditions are met. 
 Verifying the assumptions of Weth's theorem in our setting is highly nontrivial
 since it requires to check that 
 \begin{align}
 \label{polarK-cond}
 K^o\subset -K, 
\end{align}
 where $K = \{u\in E_b: u\geqslant 0 \; 
 \text{a.e.}\}$ is regarded as a closed, convex, pointed cone and $K^o$ is
 its polar cone in an appropriate Hilbert space. More specifically, 
 showing \eqref{polarK-cond} for a polar cone defined with respect to the scalar product \eqref{bform}  requires establishing a maximum principle 
 for the non-local operator
 on the left-hand side of \eqref{nonlocal-1111} which does not appear possible.
 Indeed, assuming that the right-hand side 
 of \eqref{nonlocal-1111} is non-negative, the standard technique requires a scalar multiplication by $u_-$, which leads to 
 \begin{align*}
\int_{\mathbb{R}^N} |\nabla u_-|^2 + \beta a(x) u_- (-\Delta + b(x))^{-1}(au) \, \mathrm{d}x
\leqslant 0.
\end{align*}
 The non-local nature of $(-\Delta + b(x))^{-1}(au)$ does not allow to identify
 the sign of the second term and hence, to conclude that $u_- = 0$ a.e.
 To overcome this difficulty, we 
 add a suitable linear term $e(x) u(x)$ to the both sides of equation \eqref{nonlocal-1111}
 and redefine the scalar product \eqref{bform}. Defining the polar cone
 with respect to the new scalar product allows us to establish \eqref{polarK-cond}.

Our methodology represents a substantial advancement compared to previous works 
on FitzHugh-Nagumo systems. For instance, in the papers \cite{MR3552796,MR3197242, MR0819214, MR4358261},
the authors analyze a system with constant coefficients, allowing them to exploit standard Sobolev embeddings. In contrast, our setting admits merely
$L^\infty_{\mathrm{loc}}(\mathbb{R}^N)$, possibly sign-changing, coefficients with no coercivity or decay conditions, thereby demanding a new analytical machinery. Furthermore, the number
$\lambda_1(\sigma)$ is defined for an $L^\infty_{\mathrm{loc}}(\mathbb{R}^N)$-function $\sigma$ 
with the infimum taken over ${\rm C}^\infty_{\mathrm{loc}}$–functions, whereas in \cite{MR1782990}, 
it is introduced only for continuous functions $\sigma$
and defined via an infimum over $E_\sigma$–functions. 
Finally, to the best of our knowledge, the use of Weth's theorem, together with establishing the maximum 
principle for a non-local operator and polar cone techniques, is new in the context of FitzHugh-Nagumo-type systems.

We also construct explicit examples of coefficients satisfying our assumptions but violating those in standard literature, 
further illustrating the generality of our results.

\subsection{Outline}
Section \ref{intro-1111} discusses hypotheses and main results of the paper, its methodology, and comparison with related papers.
Section \ref{auxiliary results} serves to introduce some auxiliary results
important for establishing the variational framework.
 Namely, in Subsection \ref{Properties of space}, we introduce and study the space $E_{\sigma}$ whose properties are used throughout the paper.
Section \ref{Variational framework} introduces the variational framework for our problem. 
In particular, in this section we study 
the nonlocal operator $u\mapsto (-\Delta + b(x))^{-1}(a(x) u)$, 
the Hilbert space with the scalar product \eqref{bform}, and introduce
the energy functional.
In Section \ref{theorem17}, we prove Theorem \ref{teo1} on the existence
of solution to system \eqref{P} with sign-changing coefficients. Finally,
in Section \ref{Proof of the main result}, we prove Theorem \ref{teo} on 
componentwise positive and negative solutions to system \eqref{P}.

    \section{Auxiliary results}\label {auxiliary results}
\subsection{Notation}\label {Notation}  From now on, we will use the following standard notation:
\begin{itemize}
    \item $\mathds{1}_{\text{}_{E}}:=$ the characteristic function of a set $E$.
    \item ${\rm C}_{0}^{\infty}(\mathds{R}^{N}):=$ the set of all infinitely differentiable functions with compact support in $\mathds{R}^{N}$.
    \item $D^{1,2}(\mathds{R}^{N}):=$
    the space of functions $u\in L_{2^{*}}(\mathds{R}^{N})$ which admit the first-order weak derivatives $\partial_i u\in L_2(\mathds{R}^{N})$, $i=1,2,\ldots,N$.
    \item The norms in $L_p\left(\mathds{R}^N\right)$, $D^{1,2}(\mathds{R}^{N})$ and $H^1(\mathds{R}^{N})$ will be denoted respectively by $\|\cdot\|_{{\scriptstyle L_p(\mathds{R}^N)}}$, $\|\cdot\|_{D^{1,2}}(\mathds{R}^N)$, and $\|\cdot\|_{H^{1}(\mathds{R}^N)}$,
    or simply by $\|\cdot\|_{{\scriptscriptstyle L_p}}$, $\|\cdot\|_{D^{1,2}}$, and $\|\cdot\|_{\scriptscriptstyle H^{1}}$,
    when it does not lead to misunderstanding.
    \item $u_{+}(x)=\max\{u(x),0\}$ and $u_{-}(x) = \min\{-u(x),0\}$.
    \item $ B_{r}(x)$ stands for the open ball of radius $r$ centered at $x \in \mathds{R}^{N}$.
% \end{itemize}
% For weakly differentiable functions $u$ and $w$ and
% $L^\infty_{\mathrm{loc}}$-functions
% $a, b, \sigma$ on $\mathds{R}^{N}$ such that $a_+$, $b_+$ and
% $\sigma_+$ are not identically equal zero, introduce
% the linear vector spaces $E_\sigma$, $E_a$, and $E_b$ and
% the corresponding formal norms: 
% \begin{itemize}
     \item $ \displaystyle  E_{b}:=\left\lbrace  u\in D^{1,2}(\mathds{R}^{N}):  \int_{\mathds{R}^{N}} b_{+}(x) |u|^{2} \, \mathrm{d}x<+\infty \right\rbrace$. 
% \quad where  $\sigma=a$  or $\sigma = b$;
    \item $ \displaystyle  \|u\|_{b}^2:= \int_{\mathds{R}^{N}} |\nabla u|^2\, \mathrm{d}x +\int_{\mathds{R}^{N}}b(x)|u|^2\,  \mathrm{d}x$.
    % \item $ \displaystyle \langle u, w\rangle_{\sigma}:=\int_{\mathds{R}^{N}} \nabla u \nabla w \, \mathrm{d}x+\int_{\mathds{R}^{N}}\sigma(x)uw \, \mathrm{d}x$;
    \item $ \displaystyle  \|u\|_{ab}^2: = \int_{\mathds{R}^{N}} |\nabla u|^2\, \mathrm{d}x+ \int_{\mathds{R}^{N}} a(x) u S_{b}(u)\,  \mathrm{d}x$.
    \item $ \displaystyle \langle u, w\rangle_{ab}: =\int_{\mathds{R}^{N}} \nabla u \nabla w \, \mathrm{d}x+ \int_{\mathds{R}^{N}}a(x)uS_{b}(w) \, \mathrm{d}x$ \,  with \, $ S_b: = \beta(-\Delta + b(x))^{-1} a(x)$.
 % \vspace{1mm}
  \item $ \displaystyle \lambda_{1}(\sigma):= \inf \left\lbrace \frac{\int_{\mathds{R}^{N}}\left(|\nabla u|^2+\sigma(x)|u|^2\right) \, \mathrm{d}x}{\int_{\mathds{R}^{N}}|u|^2 \, \mathrm{d}x }: u\in {\rm C}_{0}^{\infty} \left(\mathds{R}^N \right)\setminus  \{0\} \right\rbrace$.
    \item $ \displaystyle  \nu_{s}(\sigma,\Omega):=\inf \left\{\frac{\int_{\Omega}\left(|\nabla u|^2+\sigma(x)|u|^2\right) \, \mathrm{d}x}{\left(\int_{\Omega}|u|^s \, \mathrm{d}x \right)^{2/s}}: u \in H^{1}_{0}(\Omega) \setminus  \{0\} \right\}$, \qquad $s\in [2,2^*)$,\\
    where $\Omega\subset \mathds{R}^N$ is an open set.
\end{itemize}

\subsection{Properties of the space $E_\sigma$}\label {Properties of space}

    This subsection is dedicated to studying the space
    \begin{align*}
    E_{\sigma}:=\left\lbrace  u\in D^{1,2}(\mathds{R}^{N}):  \int_{\mathds{R}^{N}} \sigma_{+}(x) |u|^{2} \,
    \mathrm{d}x<+\infty \right\rbrace.
\end{align*}
    The next two lemmas are fundamental for this work.
    Due to these lemmas,  the space
    $E_{\sigma}$ turns out to be a Banach space with the norm formally given by
   \begin{align*}
     \|u\|_{\sigma}^2= \int_{\mathds{R}^{N}} |\nabla u|^2\, \mathrm{d}x +\int_{\mathds{R}^{N}}\sigma(x)|u|^2\,  \mathrm{d}x.
\end{align*}
Since $\sigma$ can take negative values, a priori we do not
know if $\|\cdot\|_{\sigma}$ is a norm. The latter fact will be proved in Lemma
\ref{sg-norm-1111} under the assumption $\lambda_1(\sigma)>0$, 
while the preceding lemmas  are also important for this proof.
Therefore, for now, instead of
$\|\cdot\|^2_{\sigma}$, we will use the auxiliary function
    \begin{equation*}
    \mathscr{N}_{\sigma}: E_{\sigma}\rightarrow \mathds{R}, \quad
     \mathscr{N}_{\sigma}(u)= \int_{\mathds{R}^{N}}\big( |\nabla u|^2 +\sigma(x)u^2\big)  \mathrm{d}x.
    \end{equation*}
    \begin{lemma}
    \label{seminorm-1111}
    The function
    \begin{align*}
    \|\cdot\|_{\sigma_+}:=\sqrt{\Ngp}
\end{align*}
    is a seminorm in $E_\sigma$. If $\sigma_+ \ne 0$ a.e., then $ \|\cdot\|_{\sigma_+}$ is a norm.
    \end{lemma}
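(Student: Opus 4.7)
The key observation is that $\|\cdot\|_{\sigma_+}^2 = \mathscr{N}_{\sigma_+}$ is the quadratic form associated to the symmetric bilinear form
\[
B(u,v) := \int_{\mathds{R}^{N}} \nabla u \cdot \nabla v \, \mathrm{d}x + \int_{\mathds{R}^{N}} \sigma_+(x)\, u(x) v(x) \, \mathrm{d}x
\]
on $E_\sigma \times E_\sigma$. Thus the entire seminorm claim reduces to verifying that $B$ is a well-defined, symmetric, positive semi-definite bilinear form; the seminorm axioms then come from the standard Cauchy--Schwarz inequality for such forms.

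First, I would check that $B(u,v)$ is finite for every $u,v\in E_\sigma$. The gradient term is controlled by the ordinary Cauchy--Schwarz inequality in $L_2(\mathds{R}^{N};\mathds{R}^{N})$, using $u,v\in D^{1,2}(\mathds{R}^{N})$. The weighted term is controlled by the Cauchy--Schwarz inequality in the weighted space $L_2(\mathds{R}^{N},\sigma_+\,\mathrm{d}x)$, i.e.
\[
\Bigl|\int_{\mathds{R}^{N}} \sigma_+ u v\,\mathrm{d}x\Bigr|
\leqslant \Bigl(\int_{\mathds{R}^{N}} \sigma_+ u^2\,\mathrm{d}x\Bigr)^{1/2}
\Bigl(\int_{\mathds{R}^{N}} \sigma_+ v^2\,\mathrm{d}x\Bigr)^{1/2},
\]
which is finite by the very definition of $E_\sigma$. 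Bilinearity and symmetry are immediate from the linearity of the integrals, and $B(u,u)\geqslant 0$ because each summand is non-negative ($\sigma_+\geqslant 0$ pointwise). The usual argument (expanding $B(u+tv,u+tv)\geqslant 0$ and optimizing in $t$) gives the Cauchy--Schwarz inequality $|B(u,v)|\leqslant \|u\|_{\sigma_+}\|v\|_{\sigma_+}$, from which the triangle inequality for $\|\cdot\|_{\sigma_+}$ follows by expanding $\|u+v\|_{\sigma_+}^2$. Positive homogeneity is obvious.

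Second, to upgrade from seminorm to norm under the hypothesis $\sigma_+\neq 0$ a.e. (read as $\sigma_+(x)>0$ for a.e. $x$), suppose $\|u\|_{\sigma_+}=0$. Then both non-negative summands must vanish; in particular $\int_{\mathds{R}^{N}} \sigma_+ u^2\,\mathrm{d}x=0$, which combined with $\sigma_+>0$ a.e. forces $u=0$ a.e. (A parallel argument using $\int |\nabla u|^2=0$ would show $u$ is a.e.\ constant, and since $u\in D^{1,2}(\mathds{R}^{N})\hookrightarrow L_{2^*}(\mathds{R}^{N})$ with $N\geqslant 3$, the only admissible constant is zero; but the statement of the lemma only requires the weighted argument.)

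\textbf{Main obstacle.} There is no deep obstacle: the substantive point is simply that the mixed integral $\int \sigma_+ u v\,\mathrm{d}x$ is well-defined and finite for $u,v\in E_\sigma$, which is just Cauchy--Schwarz in the weighted $L_2$-space. Everything else is the standard linear-algebraic fact that any symmetric positive semi-definite bilinear form induces a seminorm, which becomes a norm precisely when its kernel is trivial.
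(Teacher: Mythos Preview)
Your seminorm argument is correct and simply fleshes out what the paper dismisses as ``straightforward.'' The issue is in the norm part: you read ``$\sigma_+\ne 0$ a.e.'' as ``$\sigma_+(x)>0$ for a.e.\ $x$,'' but the paper means the weaker statement ``it is not the case that $\sigma_+=0$ a.e.'' (equivalently, $\{\sigma_+>0\}$ has positive measure). This is confirmed by the proof of the very next lemma in the paper, which begins ``Suppose $\sigma_+=0$ a.e.'' Under this weaker hypothesis your primary argument fails: from $\int\sigma_+u^2=0$ you only get $u=0$ on $\{\sigma_+>0\}$, not everywhere.

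The paper's fix is exactly the combination you relegate to a parenthetical: first use $\int|\nabla u|^2=0$ to conclude $u$ is a.e.\ constant, and then use $\int\sigma_+u^2=0$ together with $\{\sigma_+>0\}$ having positive measure to force that constant to be zero. (Your alternative observation that $u\in D^{1,2}\hookrightarrow L_{2^*}$ already rules out nonzero constants is also valid and in fact shows the hypothesis is not strictly needed, but the paper chooses the weighted route.) So your plan is essentially right once you swap which of your two arguments is primary.
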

    \begin{proof}
    Verification of the seminorm properties for $\|\cdot\|_{\sigma_+}$ is straightforward. 
    
    Further, if $\|u\|_{\sigma_+}= 0$, then
    $\|\nabla u\|_{L_2} = 0$, and hence $u=const$.
 Further, $\|u\sqrt{\sigma_+}\|_{L_2} = 0$ implies $const = 0$.
    Hence, $\|u\|_{\sigma_+} = 0$ if and only if $u=0$ a.e.
    \end{proof}

    \begin{lemma}\label {E_subset_H}
     Let $\sigma\in L^\infty_{\rm loc}(\mathbb{R}^N)$ and $\lambda_1(\sigma_{+}) > 0$. Then,
        $E_{\sigma}$  is a subset of $H^{1}(\mathds{R}^{N})$.
    \end{lemma}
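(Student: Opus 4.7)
The plan is to transfer the Poincar\'e-type inequality
\[
\lambda_1(\sigma_+)\int_{\mathds{R}^N} |\varphi|^2\,\mathrm{d}x \leqslant \Ngp(\varphi), \qquad \varphi\in {\rm C}_0^\infty(\mathds{R}^N),
\]
(which is exactly the definition of $\lambda_1(\sigma_+)$) from test functions to arbitrary $u\in E_\sigma$ by a cutoff-and-mollification approximation, and then deduce $u\in L^2(\mathds{R}^N)$ via Fatou's lemma.

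\textbf{Step 1 (approximation).} Given $u\in E_\sigma$, I construct $u_n\in {\rm C}_0^\infty(\mathds{R}^N)$ with $u_n\to u$ in $D^{1,2}(\mathds{R}^N)$ and $\sqrt{\sigma_+}\,u_n\to \sqrt{\sigma_+}\,u$ in $L^2(\mathds{R}^N)$. First, pick a cutoff $\eta_R\in {\rm C}_0^\infty(\mathds{R}^N)$ with $\eta_R\equiv 1$ on $B_R$, $\mathrm{supp}\,\eta_R\subset B_{2R}$, $|\nabla \eta_R|\leqslant C/R$. Then $\eta_R u\to u$ in $D^{1,2}$ follows from $\nabla(\eta_R u)=\eta_R\nabla u+u\nabla\eta_R$: the first piece converges in $L^2$ by dominated convergence, and the tail piece is controlled by H\"older on $B_{2R}\setminus B_R$ together with $u\in L^{2^*}(\mathds{R}^N)$ (from the Sobolev embedding $D^{1,2}\hookrightarrow L^{2^*}$). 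For the weighted norm, $\int \sigma_+|1-\eta_R|^2|u|^2\,\mathrm{d}x\to 0$ by dominated convergence since $\int\sigma_+|u|^2<\infty$. Second, mollify: with $\phi_\varepsilon$ a standard mollifier, $u_{R,\varepsilon}:=\phi_\varepsilon*(\eta_R u)\in {\rm C}_0^\infty(\mathds{R}^N)$ converges to $\eta_R u$ in $D^{1,2}$ as $\varepsilon\to 0$, and the assumption $\sigma\in L^\infty_{\mathrm{loc}}(\mathds{R}^N)$ bounds $\sigma_+$ uniformly on a neighbourhood of $\mathrm{supp}\,\eta_R$, reducing the weighted convergence to ordinary $L^2$-convergence of mollifiers of the compactly supported function $\eta_R u$ (which lies in $L^2$ because $u\in L^{2^*}_{\mathrm{loc}}$). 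A diagonal extraction produces the desired sequence $\{u_n\}$.

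\textbf{Step 2 (passage to the limit).} Applying the variational bound to each $u_n$,
\[
\|u_n\|_{L_2}^2 \leqslant \lambda_1(\sigma_+)^{-1}\,\Ngp(u_n),
\]
and the right-hand side converges to $\lambda_1(\sigma_+)^{-1}\,\Ngp(u)<\infty$ by construction. Passing (via Sobolev embedding) to a subsequence with $u_n\to u$ almost everywhere and applying Fatou's lemma yields
\[
\int_{\mathds{R}^N} |u|^2\,\mathrm{d}x \leqslant \liminf_{n\to\infty}\int_{\mathds{R}^N} |u_n|^2\,\mathrm{d}x \leqslant \lambda_1(\sigma_+)^{-1}\,\Ngp(u) < \infty.
\]
Hence $u\in L^2(\mathds{R}^N)$, and combined with $\nabla u\in L^2(\mathds{R}^N)$ this gives $u\in H^1(\mathds{R}^N)$, as claimed.

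The main obstacle is Step 1, namely producing a single ${\rm C}_0^\infty$ approximating sequence that converges simultaneously in the $D^{1,2}$-norm and in the $\sigma_+$-weighted $L^2$-norm. The hypothesis $\sigma\in L^\infty_{\mathrm{loc}}(\mathds{R}^N)$ is exactly what makes the mollification step compatible with the weighted norm; without local boundedness, the weighted approximation could fail even for compactly supported $u$.
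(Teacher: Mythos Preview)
Your proof is correct and relies on the same technical ingredients as the paper's---a radial cutoff, the H\"older/Sobolev control of the gradient cross-term via $u\in L^{2^*}(\mathds{R}^N)$, and the local boundedness of $\sigma$ to handle the weighted norm under smooth approximation. The organizational difference is that the paper argues by contradiction: assuming $u\notin L^2(\mathds{R}^N)$, it shows $\|u^r\|_{\sigma_+}^2/\|u^r\|_{L_2}^2\to 0$ for the truncations $u^r$, then approximates each $u^r$ by ${\rm C}_0^\infty$-functions to conclude $\lambda_1(\sigma_+)\leqslant 0$. You instead build a single ${\rm C}_0^\infty$-sequence converging to $u$ in both $D^{1,2}$ and the $\sigma_+$-weighted $L^2$-norm, apply the defining inequality for $\lambda_1(\sigma_+)$ to each term, and pass to the limit via Fatou. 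Your direct route has the minor advantage of producing the quantitative bound $\|u\|_{L_2}^2\leqslant \lambda_1(\sigma_+)^{-1}\Ngp(u)$ along the way, which the paper establishes separately later (Lemma~\ref{Embedded}); the paper's contradiction argument is marginally shorter since it need not verify that the tail $u\nabla\eta_R$ actually tends to zero in $L^2$, only that $\|\nabla u^r\|_{L_2}$ stays bounded.
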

    \begin{proof}
    In what follows, we write $B_r$ for $B_r(0)$ to simplify notations.
    Suppose that our claim is false. Then, there exists $u\in E_{\sigma}$ such that $u \not\in L_2(\mathds{R}^N)$.  Consider the function $\phi^r$ given by
        \begin{equation}
        \label{phir1111}
            \phi^{r}(x)=\left\lbrace
            \begin{array}{cl}
                1,& \text{if }|x|\leqslant r,\\
                2-\dfrac{|x|}{r}, & \text{if } r< |x|<2r,\\
                0,& \text{if }|x|\geqslant 2r
            \end{array}\right.
        \end{equation}
          and define
          \begin{equation}
          \label{ur11111}
              u^r(x) := u(x)\cdot \phi^{r}(x).
          \end{equation}
    Note that $u^r$ has compact support, $|u^r| \rightarrow  |u|$
    and $|u^r| \leqslant  |u|$
    on $\mathds{R}^{N}$. It is clear that  $u^r\in H^{1}_{0}(B_{r})$ and $\|u^r\|_{\scriptscriptstyle L_{2}}\rightarrow \infty$ as $r\rightarrow \infty$. Furthermore,
         \begin{equation}
         \label {nabu-1111}
	   |\nabla u^{r}(x)|\leqslant |\nabla u(x)|+ |u(x)|\cdot \mathds{1}_{B_{2r}\setminus B_{r}}(x)\cdot\dfrac{1}{r}.
        \end{equation}
       By \eqref{nabu-1111} and H\"older's inequality, there exists a constant $C>0$ such that
\begin{equation}\label {eq2.E_subset_H}
            \begin{split}
                \int_{\mathds{R}^{N}} |\nabla u^{r}|^{2}\, \mathrm{d}x
                &\leqslant 2\int_{\mathds{R}^{N}} |\nabla u|^{2}\, \mathrm{d}x+
                \frac{2} {r^{2}}\int_{\mathds{R}^N} \mathds{1}_{B_{2r}\setminus  B_r} |u|^{2}\, \mathrm{d}x\\
                &\leqslant 2\int_{\mathds{R}^{N}} |\nabla u|^{2}\, \mathrm{d}x+\frac{2}{r^{2}}\left( \int_{\mathds{R}^{N}} |u|^{2^{*}}\, 
                \mathrm{d}x\right)^{\frac{2}{2^{*}}}
                \big|{B_{2r}\setminus  B_r}\big|^{\frac{2}{N}} \\
                &= 2\int_{\mathds{R}^{N}} |\nabla u|^{2}\, \mathrm{d}x+ \frac{C}{r^{2}}\left( \int_{\mathds{R}^{N}} |u|^{2^{*}}\, \mathrm{d}x\right)^{\frac{2}{2^{*}}} \left(  r^{N} \right) ^{\frac{2}{N}}= 2\|\nabla u\|^{2}_{\scriptscriptstyle L_2}+ C\|u\|^{2}_{\scriptscriptstyle L_{2^{*}}}.
	   \end{split}
        \end{equation}
        Above, we have applied the H\"older inequality with conjugate exponents $2^*/2 $ and $2^*/(2^*-2)=N/2$.
       By \eqref{eq2.E_subset_H},
 \begin{equation}\label {asdf}
        \begin{split}
             \frac{\|u^r\|^2_{\sigma_{+}}}{\| u^r\|^{2}_{\scriptscriptstyle L_2}}&=\frac{1}{\| u^r\|^{2}_{\scriptscriptstyle L_2}}
             \int_{\mathds{R}^{N}}\left(|\nabla u^{r}|^2+\sigma_{+}(x)|u^{r}|^2\right) \, \mathrm{d}x\\
             &\leqslant \frac{1}{\| u^r\|^{2}_{\scriptscriptstyle L_2}}\left(2\|\nabla u\|^{2}_{\scriptscriptstyle L_2}+ 
             C\|u\|^{2}_{\scriptscriptstyle L_{2^{*}}}+\int_{\mathds{R}^{N}}\sigma_{+}(x)|u|^2 \, \mathrm{d}x\right)  \rightarrow 0 \quad \text{ as } r 
             \rightarrow \infty.
        \end{split}
        \end{equation}
         Further, by the density of ${\rm C}_{0}^{\infty}(B_{2r})$   in $H^{1}_{0}(B_{2r})$, there exists a sequence $\varphi_{r,n} \in {\rm C}_{0}^{\infty}(B_{2r})$ such that 
         $\lim_{n\to\infty} \|\varphi_{r,n}-u^{r}\|_{H^{1}} = 0$. Therefore,
        \begin{equation*}
      \begin{split}
        \| \varphi_{r,n}-u^{r}\|^2_{\sigma_{+}}\leqslant  \|\nabla \varphi_{r,n}-\nabla u^{r}\|_{\scriptscriptstyle L_2}^{2}
        +\|\sigma\|_{\scriptscriptstyle L^{\infty}(\,\overline{B_{2r}}\,)}
        \|\varphi_{r,n}-u^{r}\|_{\scriptscriptstyle L_2}^{2} \to 0
        \quad \text{as} \; \; n\to \infty.
      \end{split}
      \end{equation*}
      By the definition of $\lambda_1$,
      \begin{align*}
           \sqrt{\lambda_{1}(\sigma_{+})}\leqslant
           \frac{\|\varphi_{r,n}\|_{\sigma_+}}{\|\varphi_{r,n}\|_{\scriptscriptstyle L_2}}
           \leqslant \frac{\|u^r\|_{\sigma_+}\bigg(1+ \frac{\|u^r-\varphi_{r,n}\|_{\sigma_+} }{\|u^r\|_{\sigma_+} }\bigg)}
           {\|u^r\|_{\scriptscriptstyle L_2}\bigg(1- \frac{\|u^r-\varphi_{r,n}\|_{\scriptscriptstyle L_2}}{\|u^r\|_{\scriptscriptstyle L_2}}\bigg)}.
\end{align*}
      Passing to the limit as $n\to \infty$ and then to the limit as $r\to\infty$, by \eqref{asdf}, we obtain that $\lambda_1(\sigma_+) \leqslant 0$.
      This is a contradiction. Hence, $u \in L_2(\mathds{R}^N)$.
 \end{proof}
\begin{lemma}
Let $\sigma\in L^\infty_{\rm loc}(\mathbb{R}^N)$ and $\lambda_1(\sigma)>0$. Then, $\sigma_+ \ne 0$ a.e., and hence,
$ \|\cdot\|_{\sigma_+}$ is a norm.
\end{lemma}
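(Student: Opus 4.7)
The plan is to argue by contradiction. Suppose $\sigma_+ = 0$ a.e., which is equivalent to $\sigma \leqslant 0$ a.e. I will then construct a sequence of test functions in ${\rm C}_0^\infty(\mathds{R}^N)$ along which the Rayleigh quotient defining $\lambda_1(\sigma)$ tends to zero, contradicting the hypothesis $\lambda_1(\sigma) > 0$.

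Fix any $u \in {\rm C}_0^\infty(\mathds{R}^N)$ with $\|u\|_{\scriptscriptstyle L_2} = 1$, and for each $R > 0$ define the rescaled function
\begin{equation*}
u_R(x) := R^{-N/2}\, u(x/R).
\end{equation*}
Then $u_R \in {\rm C}_0^\infty(\mathds{R}^N)$ and a direct change of variables gives $\|u_R\|_{\scriptscriptstyle L_2} = 1$ and
\begin{equation*}
\int_{\mathds{R}^N} |\nabla u_R|^2\,\mathrm{d}x = R^{-2}\int_{\mathds{R}^N}|\nabla u|^2\,\mathrm{d}x.
\end{equation*}
Under the standing assumption $\sigma \leqslant 0$ a.e., the change of variables $y = x/R$ also gives
\begin{equation*}
\int_{\mathds{R}^N}\sigma(x)|u_R(x)|^2\,\mathrm{d}x = \int_{\mathds{R}^N}\sigma(Ry)|u(y)|^2\,\mathrm{d}y \leqslant 0.
\end{equation*}

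Combining these two estimates,
\begin{equation*}
\lambda_1(\sigma) \leqslant \mathscr{N}_\sigma(u_R) \leqslant R^{-2}\int_{\mathds{R}^N}|\nabla u|^2\,\mathrm{d}x \xrightarrow[R\to\infty]{} 0,
\end{equation*}
which contradicts $\lambda_1(\sigma) > 0$. Therefore the set $\{\sigma > 0\}$ must have positive Lebesgue measure, i.e., $\sigma_+ \neq 0$ a.e. By Lemma~\ref{seminorm-1111}, this already implies that $\|\cdot\|_{\sigma_+}$ is a norm on $E_\sigma$.

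The argument is structurally short; the only subtlety worth flagging is ensuring the scaling preserves membership in ${\rm C}_0^\infty(\mathds{R}^N)$ (which it does trivially) and being careful that the integrand in the scaled potential term is well-defined pointwise a.e., which follows from $\sigma \in L^\infty_{\mathrm{loc}}(\mathds{R}^N)$ together with the compact support of $u$. I do not anticipate any nontrivial obstacle: the content is entirely a scaling heuristic that makes the kinetic term vanish while the potential term remains nonpositive.
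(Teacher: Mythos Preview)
Your proof is correct and takes a genuinely different route from the paper. The paper argues by comparison: assuming $\sigma_+=0$ a.e., it invokes the Gaussian potential $\sigma_{gauss}$ from Example~\ref{example3333}, where a truncation construction (built from $u(x)=|x|^{-N/2}$ at infinity and the cut-off $\phi^r$) shows $\lambda_1(\sigma_{gauss})=0$; since $\sigma\leqslant 0\leqslant \sigma_{gauss}$ pointwise, monotonicity of the Rayleigh quotient yields $\lambda_1(\sigma)\leqslant 0$. Your argument bypasses that machinery entirely by rescaling a single test function so that the kinetic energy vanishes while the potential term stays nonpositive. This is shorter, fully self-contained, and uses nothing beyond the change-of-variables formula; the only cost is that it does not connect to Example~\ref{example3333}, which the paper includes for independent interest anyway. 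Either approach is valid, but yours is the more economical proof of this particular lemma.
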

\begin{proof}
Suppose $\sigma_+ = 0$ a.e. Consider the function $\sigma_{gauss}$ defined by \eqref{gauss} in Example \ref{example3333}.
As it is shown in Example \ref{example3333}, $\lambda_1(\sigma_{gauss})=0$.
In the proof, one only uses estimate \eqref{eq2.E_subset_H} obtained in Lemma \ref{E_subset_H}.
From  condition \eqref{primeiroAutovalor} it follows that
\begin{align*}
\lambda_1(\sigma) \leqslant \lambda_1(\sigma_{gauss}) = 0
\end{align*}
which contradicts to our assumptions. Hence, $\sigma_+ \ne 0$ a.e. and $ \|\cdot\|_{\sigma_+}$ is a norm
by Lemma \ref{seminorm-1111}.
\end{proof}
  \begin{lemma}\label {dense}
         Let $\sigma\in L^\infty_{\rm loc}(\mathbb{R}^N)$ and $\lambda_1(\sigma_{+}) > 0$. Then,  ${\rm C}_{0}^{\infty}(\mathds{R}^N)$ 
         is dense in $(E_{\sigma}, \|\cdot\|_{\sigma_+})$.
    \end{lemma}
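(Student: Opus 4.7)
The plan is the standard two-step argument: first truncate $u\in E_\sigma$ by multiplication with the cut-off $\phi^{r}$ from \eqref{phir1111} to get a compactly supported approximation $u^{r}\in H^{1}_{0}(B_{2r})$ as in \eqref{ur11111}, and then mollify $u^{r}$ to obtain a smooth, compactly supported approximation. By Lemma \ref{E_subset_H} we have $u\in H^{1}(\mathds{R}^{N})\cap D^{1,2}(\mathds{R}^{N})\cap L_{2^{*}}(\mathds{R}^{N})$, and the finiteness of $\int_{\mathds{R}^{N}}\sigma_{+}|u|^{2}\,\mathrm{d}x$ is given. These integrability properties, together with the fact that the norm only involves $\sigma_{+}\geqslant 0$, are what allow dominated-convergence type arguments to work.

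For the truncation step, I would write
\begin{equation*}
\nabla(u-u^{r})=(1-\phi^{r})\nabla u-u\nabla\phi^{r},
\end{equation*}
and estimate the two pieces separately. The first term goes to $0$ in $L_{2}(\mathds{R}^{N})$ by dominated convergence, since $(1-\phi^{r})\nabla u\to 0$ pointwise and is dominated by $|\nabla u|\in L_{2}$. For the second, using $|\nabla\phi^{r}|\leqslant\mathds{1}_{B_{2r}\setminus B_{r}}/r$ and Hölder's inequality with exponents $2^{*}/2$ and $N/2$,
\begin{equation*}
\int_{\mathds{R}^{N}}|u\nabla\phi^{r}|^{2}\,\mathrm{d}x\leqslant\frac{1}{r^{2}}\Big(\int_{B_{2r}\setminus B_{r}}|u|^{2^{*}}\,\mathrm{d}x\Big)^{2/2^{*}}|B_{2r}\setminus B_{r}|^{2/N}\leqslant C\,\|u\|_{L_{2^{*}}(B_{2r}\setminus B_{r})}^{2}\xrightarrow[r\to\infty]{}0
\end{equation*}
by absolute continuity of the integral since $u\in L_{2^{*}}$. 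For the potential part, $|u-u^{r}|\leqslant|u|$ pointwise and $u-u^{r}\to 0$ a.e., so dominated convergence applied to $\sigma_{+}|u-u^{r}|^{2}\leqslant\sigma_{+}|u|^{2}\in L_{1}(\mathds{R}^{N})$ yields $\int_{\mathds{R}^{N}}\sigma_{+}|u-u^{r}|^{2}\,\mathrm{d}x\to 0$. Thus $\|u-u^{r}\|_{\sigma_{+}}\to 0$.

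For the mollification step, fix $r$, set $w:=u^{r}\in H^{1}_{0}(B_{2r})$, and let $w_{\varepsilon}:=\rho_{\varepsilon}*w\in{\rm C}_{0}^{\infty}(B_{2r+\varepsilon})$, where $\rho_{\varepsilon}$ is a standard mollifier. By classical properties, $w_{\varepsilon}\to w$ in $H^{1}(\mathds{R}^{N})$ as $\varepsilon\to 0^{+}$. Since $\sigma\in L^{\infty}_{\mathrm{loc}}(\mathds{R}^{N})$, for $\varepsilon\leqslant 1$ all supports sit in the compact set $\overline{B_{2r+1}}$, so
\begin{equation*}
\int_{\mathds{R}^{N}}\sigma_{+}|w-w_{\varepsilon}|^{2}\,\mathrm{d}x\leqslant\|\sigma_{+}\|_{L^{\infty}(\overline{B_{2r+1}})}\,\|w-w_{\varepsilon}\|_{L_{2}}^{2}\xrightarrow[\varepsilon\to 0^{+}]{}0,
\end{equation*}
and therefore $\|w_{\varepsilon}-u^{r}\|_{\sigma_{+}}\to 0$. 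Combining the two steps via a diagonal extraction produces a sequence in ${\rm C}_{0}^{\infty}(\mathds{R}^{N})$ converging to $u$ in $(E_{\sigma},\|\cdot\|_{\sigma_{+}})$.

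The main technical obstacle is the cut-off gradient term $u\nabla\phi^{r}$: a naive bound by $\|u\|_{L_{2}}/r$ would fail to decay since $\|u\|_{L_{2}}$ is a fixed positive number. The key is to redistribute the scaling via Hölder using the critical embedding exponent $2^{*}$, which converts the $1/r^{2}$ factor into the volume factor $r^{2}$, leaving the vanishing tail integral $\|u\|_{L_{2^{*}}(B_{2r}\setminus B_{r})}^{2}$. This is exactly where membership $u\in D^{1,2}(\mathds{R}^{N})\subset L_{2^{*}}(\mathds{R}^{N})$ is indispensable and where the proof interacts non-trivially with the definition of $E_{\sigma}$; the potential part, in contrast, is handled directly by dominated convergence.
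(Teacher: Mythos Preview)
Your proof is correct and follows essentially the same two-step strategy as the paper: truncate by $\phi^{r}$, then approximate the compactly supported $u^{r}\in H^{1}_{0}(B_{2r})$ by smooth functions using the local boundedness of $\sigma_{+}$.

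One remark, however: your closing paragraph misidentifies the difficulty. You claim that the ``naive bound by $\|u\|_{L_{2}}/r$ would fail to decay since $\|u\|_{L_{2}}$ is a fixed positive number,'' but of course $r^{-2}\|u\|_{L_{2}}^{2}\to 0$ as $r\to\infty$ precisely because $\|u\|_{L_{2}}<\infty$ (which you have from Lemma~\ref{E_subset_H}). This is exactly what the paper does:
\[
\int_{\mathds{R}^{N}}|u\nabla\phi^{r}|^{2}\,\mathrm{d}x\leqslant\frac{1}{r^{2}}\int_{\mathds{R}^{N}}|u|^{2}\,\mathrm{d}x\xrightarrow[r\to\infty]{}0.
\]
Your detour through $L_{2^{*}}$ and H\"older is valid but unnecessary here; the genuine need for the $L_{2^{*}}$ estimate arises in the proof of Lemma~\ref{E_subset_H}, where $u\in L_{2}$ is not yet available.
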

    \begin{proof}
       Let $u \in E_{\sigma}$.
       First, we consider the case when $u$ has compact support in $\mathds{R}^{N}$.
        This means that there exists an open ball $B\subset\mathbb{R}^N$ such that $\operatorname{supp}(u) \subset B$.
        It is clear that  $u\in H^{1}_{0}(B)$. By the density of ${\rm C}_{0}^{\infty}(B)$
        in $H^{1}_{0}(B)$, one  finds
         a sequence $\varphi_n \in {\rm C}_{0}^{\infty}(B)$ such that
        $\lim_{n\to\infty} \|\varphi_n-u\|_{H^{1}(B)} = 0$. We have
        \begin{equation*}
      \begin{split}
         \| \varphi_{n}-u\|_{\sigma_+}\leqslant  \|\nabla \varphi_{n}-\nabla u\|_{\scriptscriptstyle L_2}^{2} +\|\sigma\|_{\scriptscriptstyle L^{\infty}(\overline B)}\|\varphi_{n}-u\|_{\scriptscriptstyle L_2}^{2} \to 0 \quad \text{as} \;\; n\to \infty.
      \end{split}
       \end{equation*}
 Consider the case when the support of $u$ is not necessarily compact. Define $u^r$ by \eqref{ur11111} with the help of the function $\phi^r$ given by \eqref{phir1111}.
        Since   $u\in E_{\sigma}$,
        we have $\sigma_+ |u^r|^2\leqslant \sigma_+ |u|^2 \in L_1(\mathds{R}^{N})$. Therefore,
        \begin{equation}
        \label{convr-1111}
         \int_{\mathds{R}^{N}} \sigma_+(x) |u^r(x)-u(x)|^2 \, \mathrm{d}x \rightarrow  0 \quad \text{as}\;\;
         r\to \infty.
     \end{equation}

     By Lemma \ref{E_subset_H}, $u\in H^1(\mathbb{R}^N)$. Hence,
     \begin{equation*}
        \begin{split}
            \int_{\mathds{R}^{N}}|\nabla u^r(x)-\nabla u(x)|^{2}\, \mathrm{d}x &=\int_{\mathds{R}^{N}}|u\nabla \phi^{r} +\phi^{r} \nabla u -\nabla u|^{2}\, \mathrm{d}x\\
            &\leqslant \dfrac{2}{r^2}\int_{\mathds{R}^{N}} |u |^2 \,  \mathrm{d}x+2\int_{\mathds{R}^{N}} |(\phi^{r}-1) \nabla u |^2 \,  \mathrm{d}x \to 0, \quad r\to \infty.
        \end{split}
     \end{equation*}
     Thus, $\|u^r - u\|_{\sigma_+} \to 0$ as $r \to \infty$. This concludes the proof.
     \end{proof}
     \begin{remark}
     \label{rem1111}
     \rm
     Note that one can find a sequence $\varphi_n\in {{\rm C}}^\infty_0(\mathds{R}^N)$
        which approximates $u\in E_\sigma$ simultaneously
        in $E_\sigma$ and $H^1(\mathbb{R}^N)$.
        Indeed, since $u\in L_2(\mathbb{R}^N)$ 
        ($E_\sigma$ is a subspace of $H^1(\mathbb{R}^N)$), convergence
        \eqref{convr-1111} holds also with $\sigma_+ \equiv 1$.   
     \end{remark}

    \begin{lemma}\label {eqlambda}
        Let $\sigma\in L^\infty_{\rm loc}(\mathbb{R}^N)$ and $\lambda_1(\sigma) > 0$.
        Define
        \begin{equation*}
            \Tilde{\lambda}_{1}(\sigma):= \inf \left\lbrace
            \int_{\mathds{R}^{N}}\left(|\nabla u|^2+\sigma(x)|u|^2\right) \, \mathrm{d}x:
            \|u\|_{\scriptscriptstyle L_2}=1 \text{ and } u\in E_{\sigma}\right\rbrace.
        \end{equation*}
  Then,  $\tilde\lambda_1(\sigma)=\lambda_{1}(\sigma)$.
  Moreover,  $\lambda_1(\sigma)>0$ if and only if
  $\tilde\lambda_1(\sigma)>0$.
    \end{lemma}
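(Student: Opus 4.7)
My plan is to prove the two inequalities $\tilde{\lambda}_1(\sigma)\leqslant \lambda_1(\sigma)$ and $\lambda_1(\sigma)\leqslant \tilde{\lambda}_1(\sigma)$ separately, and then to deduce the ``moreover'' statement.

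The first inequality is essentially automatic. Since $\sigma\in L^\infty_{\rm loc}(\mathbb{R}^N)$, every $\varphi\in {\rm C}_0^\infty(\mathbb{R}^N)$ lies in $D^{1,2}(\mathbb{R}^N)$ and satisfies $\int_{\mathbb{R}^N}\sigma_+|\varphi|^2\,\mathrm{d}x<\infty$ (compact support plus local boundedness), so ${\rm C}_0^\infty(\mathbb{R}^N)\subset E_\sigma$. Taking the infimum defining $\tilde{\lambda}_1(\sigma)$ over a larger class than the one defining $\lambda_1(\sigma)$ immediately yields $\tilde{\lambda}_1(\sigma)\leqslant \lambda_1(\sigma)$. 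I will stress that this bound uses nothing beyond $\sigma\in L^\infty_{\rm loc}(\mathbb{R}^N)$, which will be important for the ``moreover'' part.

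For the reverse inequality, I would fix $u\in E_\sigma$ with $\|u\|_{L_2}=1$ and aim to show $\lambda_1(\sigma)\leqslant \Ng(u)$. By Lemma \ref{dense} combined with Remark \ref{rem1111}, there is a sequence $\varphi_n\in {\rm C}_0^\infty(\mathbb{R}^N)$ with $\varphi_n\to u$ simultaneously in $(E_\sigma,\|\cdot\|_{\sigma_+})$ and in $H^1(\mathbb{R}^N)$ (the latter is available because Lemma \ref{E_subset_H} places $u$ in $L_2$, so the truncation argument of Lemma \ref{dense} also converges in $L_2$). From this I extract $\|\nabla \varphi_n\|_{L_2}^2\to \|\nabla u\|_{L_2}^2$, $\int \sigma_+|\varphi_n|^2\,\mathrm{d}x\to \int \sigma_+|u|^2\,\mathrm{d}x$ and $\|\varphi_n\|_{L_2}\to 1$. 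The potentially delicate term is the negative part: because $\sigma$ is essentially bounded from below, $\sigma_-$ lies in $L^\infty(\mathbb{R}^N)$, hence $\int \sigma_-|\varphi_n|^2\,\mathrm{d}x\to \int \sigma_-|u|^2\,\mathrm{d}x$ from $\varphi_n\to u$ in $L_2$. Combining these convergences gives $\Ng(\varphi_n)/\|\varphi_n\|_{L_2}^2\to \Ng(u)$, so $\lambda_1(\sigma)\leqslant \Ng(u)$; taking the infimum over $u\in E_\sigma$ with $\|u\|_{L_2}=1$ finishes the equality $\lambda_1(\sigma)=\tilde{\lambda}_1(\sigma)$.

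For the ``moreover'' equivalence I would not invoke the equality (whose hypothesis already contains $\lambda_1(\sigma)>0$), but instead argue directly: the first-inequality step $\tilde{\lambda}_1(\sigma)\leqslant \lambda_1(\sigma)$ is valid without any positivity assumption, so $\tilde{\lambda}_1(\sigma)>0$ forces $\lambda_1(\sigma)>0$; conversely, once $\lambda_1(\sigma)>0$, the equality proven above yields $\tilde{\lambda}_1(\sigma)=\lambda_1(\sigma)>0$. I expect the only genuinely technical point to be ensuring the $L_2$ mode of convergence in Step~2, since Lemma \ref{dense} is stated only in the $\|\cdot\|_{\sigma_+}$ topology; Remark \ref{rem1111} patches this because $E_\sigma\subset H^1(\mathbb{R}^N)$ by Lemma \ref{E_subset_H}, so the same cut-off/mollification sequence can be chosen to converge in $H^1$, which is what makes the negative-part handling via the $L^\infty$ bound on $\sigma_-$ go through.
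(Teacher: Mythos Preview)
Your proof is correct and follows essentially the same approach as the paper: both obtain $\tilde\lambda_1(\sigma)\leqslant\lambda_1(\sigma)$ from the inclusion ${\rm C}_0^\infty(\mathbb{R}^N)\subset E_\sigma$, and for the reverse inequality both approximate a given $u\in E_\sigma$ with $\|u\|_{L_2}=1$ by ${\rm C}_0^\infty$-functions converging simultaneously in $\|\cdot\|_{\sigma_+}$ and $H^1$ (via Lemma~\ref{dense} and Remark~\ref{rem1111}), then use the $L^\infty$ bound on $\sigma_-$ to pass the $\sigma_-$ integral to the limit. The only cosmetic difference is that the paper normalizes $\psi_n=\varphi_n/\|\varphi_n\|_{L_2}$ before comparing with $\lambda_1(\sigma)$, whereas you divide by $\|\varphi_n\|_{L_2}^2$; your treatment of the ``moreover'' clause is in fact more explicit than the paper's.
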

    \begin{proof}
    Since ${\rm C}_{0}^{\infty}(\mathds{R}^{N}) \subset E_{\sigma}$, we have $\lambda_1(\sigma) \geqslant \tilde{\lambda}_1(\sigma)$. 
    
    To show that $\lambda_1(\sigma) \leqslant \tilde{\lambda}_1(\sigma)$, let $u \in E_{\sigma}$  be such that $\|u\|_{\scriptscriptstyle L_2} = 1$.  By Lemma \ref{dense}, there exists a sequence $\{\varphi_n\} \subset {\rm C}_{0}^{\infty}(\mathds{R}^{N})$ such that
    $\|\varphi_n-u\|_{\sigma_+} \to 0$ and $\|\varphi_n-u\|_{H^1(\mathbb{R}^N)} \to 0$.  Define the sequence $\psi_n = \varphi_n / \|\varphi_n\|_{L_2(\mathds{R}^N)}$, 
    so that $\|\psi_n\|_{L_2(\mathds{R}^N)} = 1$. Consider the following seminorm
      \begin{align*}
        \|u\|_{\sigma_-}^{^\sim}
        = \left(\int_{\mathbb{R}^N} |\sigma_-||u|^2 dx \right)^\frac12.
\end{align*}
        By the triangle inequality for seminorms,
        \begin{align*}
            \big| \|\psi_n\|_{\sigma_-}^{^\sim} - \|u\|_{\sigma_-}^{^\sim}\big|
            \leqslant \|\psi_n - u\|_{\sigma_-}^{^\sim} \leqslant 
            \esssup_{\mathbb{R}^N}|\sigma_-|\,\|\psi_n - u\|_{\scriptscriptstyle L_2}.
\end{align*}
   Hence,
        \begin{align*}
            \lim_{n\to\infty} \int_{\mathbb{R}^N}|\sigma_-||\psi_n|^2 dx
            = \int_{\mathbb{R}^N}|\sigma_-||u|^2 dx,
\end{align*}
        and therefore,
        \begin{align*}
            \Ng(\psi_n) = \|\psi_n\|^2_{\sigma_+} -   \int_{\mathbb{R}^N}|\sigma_-||\psi_n|^2 dx \to \Ng(u).
\end{align*}
        Note that if $\sigma_- = 0$ a.e., the above argument
        on convergence $\Ng(\psi_n) \to \Ng(u)$ still holds.
     Thus,  we have $\Ng (u)=\lim_{n\to\infty} \Ng( \psi_n) \geqslant \lambda_1(\sigma)$.   Hence, $\tilde{\lambda}_1(\sigma)  \geqslant  \lambda_1(\sigma)$.
      \end{proof}

    \begin{lemma}
    \label{sg-norm-1111}
         Let $\sigma\in L^\infty_{\rm loc}(\mathbb{R}^N)$ and $\lambda_1(\sigma) > 0$. Then, the bilinear form $\langle \cdot, \cdot \rangle_{\sigma}: E_{\sigma} \times E_{\sigma} \to \mathds{R}$, defined by
        \begin{equation}
        \label{sc-prod-1111}
        \langle u, w \rangle_{\sigma} = \int_{\mathds{R}^N} \nabla u \cdot \nabla w \, \mathrm{d}x + \int_{\mathds{R}^N} \sigma(x) u w \, \mathrm{d}x
        \end{equation}
        defines an inner product in $E_\sigma$. In particular, $\|\cdot\|_{\sigma} := \sqrt{\mathscr{N}_{\sigma}(\cdot)}$ is a norm in $E_{\sigma}$.
    \end{lemma}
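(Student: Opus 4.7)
The plan is to verify the three defining properties of an inner product — well-definedness (finiteness of the bilinear form on $E_\sigma \times E_\sigma$), bilinearity/symmetry, and positive-definiteness — using the results already developed (Lemmas \ref{E_subset_H} and \ref{eqlambda}). The main leverage comes from the identification $\tilde\lambda_1(\sigma) = \lambda_1(\sigma) > 0$, which directly upgrades $\mathscr{N}_\sigma$ from a (possibly indefinite) quadratic form into a coercive one with respect to the $L_2$-norm.

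First, I would verify that $\langle u, w \rangle_\sigma$ is finite for all $u, w \in E_\sigma$. By Lemma \ref{E_subset_H}, any $u \in E_\sigma$ belongs to $H^1(\mathds{R}^N) \subset L_2(\mathds{R}^N)$. Since $\sigma$ is essentially bounded from below, $\sigma_- \in L^\infty(\mathds{R}^N)$, so $\int_{\mathds{R}^N} \sigma_-(x) |u|^2 \, \mathrm{d}x \leqslant \|\sigma_-\|_{\scriptscriptstyle L^\infty} \|u\|^2_{\scriptscriptstyle L_2} < \infty$, while $\int_{\mathds{R}^N} \sigma_+(x) |u|^2 \, \mathrm{d}x < \infty$ holds by definition of $E_\sigma$. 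By Cauchy--Schwarz applied to the signed measure $\sigma \, \mathrm{d}x$ (split into $\sigma_+\, \mathrm{d}x$ and $\sigma_-\, \mathrm{d}x$) and to $\nabla u \cdot \nabla w$, the expression $\langle u, w\rangle_\sigma$ is finite. Bilinearity and symmetry of $\langle \cdot, \cdot \rangle_\sigma$ are immediate from the linearity of the integral.

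The substantive step is positive-definiteness. Fix $u \in E_\sigma$ with $u \ne 0$ a.e.; then $\|u\|_{\scriptscriptstyle L_2} > 0$, so $\hat u := u/\|u\|_{\scriptscriptstyle L_2} \in E_\sigma$ satisfies $\|\hat u\|_{\scriptscriptstyle L_2} = 1$. By Lemma \ref{eqlambda},
\begin{equation*}
\mathscr{N}_\sigma(\hat u) \geqslant \tilde\lambda_1(\sigma) = \lambda_1(\sigma) > 0,
\end{equation*}
and by the $2$-homogeneity of $\mathscr{N}_\sigma$, this gives $\langle u, u\rangle_\sigma = \mathscr{N}_\sigma(u) \geqslant \lambda_1(\sigma) \|u\|^2_{\scriptscriptstyle L_2} > 0$. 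Conversely, if $u = 0$ a.e., then trivially $\langle u, u\rangle_\sigma = 0$. This establishes that $\langle \cdot, \cdot \rangle_\sigma$ is an inner product, and consequently $\|\cdot\|_\sigma = \sqrt{\mathscr{N}_\sigma(\cdot)}$ is the norm it induces.

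I do not expect a serious obstacle: all the delicate analytic work (showing $E_\sigma \hookrightarrow H^1$, density of ${\rm C}_0^\infty$, and equality $\tilde\lambda_1 = \lambda_1$) has been carried out in the preceding lemmas, and the current statement is essentially a bookkeeping consequence of them. The only point requiring minor care is confirming that $\int \sigma_- u^2\, \mathrm{d}x$ is finite — this relies specifically on the assumption that $\sigma$ is essentially bounded below (a structural hypothesis built into both $\mathcal{T}(\mathds{R}^N)$ and $\Upsilon_s(\mathds{R}^N)$), not merely locally bounded.
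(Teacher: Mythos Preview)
Your proposal is correct and follows essentially the same route as the paper: both invoke Lemma~\ref{E_subset_H} to place $E_\sigma$ inside $L_2(\mathds{R}^N)$ and then use Lemma~\ref{eqlambda} to obtain $\mathscr{N}_\sigma(u) \geqslant \tilde\lambda_1(\sigma)\|u\|_{\scriptscriptstyle L_2}^2 > 0$ for $u \ne 0$. Your version is slightly more detailed in checking finiteness of the bilinear form and in flagging that essential boundedness from below of $\sigma$ is what makes $\int \sigma_- u^2$ finite, a point the paper leaves implicit.
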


    \begin{proof}

    Let $u, v, w \in E_{\sigma}$ and $t \in \mathds{R}$. We have $\langle u, v \rangle_{\sigma} = \langle v, u \rangle_{\sigma}$, and
    \begin{equation*}
        \langle tu + v, w \rangle_{\sigma} = t \langle u, w \rangle_{\sigma} + \langle v, w \rangle_{\sigma}.
    \end{equation*}
 Further, by Lemma \ref{E_subset_H},  $E_\sigma \subset L_2(\mathds{R}^N)$. Consequently,
 by Lemma \ref{eqlambda},
    \begin{equation*}
        \mathscr{N}_{\sigma}(u) \geqslant \tilde{\lambda}_1(\sigma) \|u\|^2_{\scriptscriptstyle L_2}\geqslant 0.
    \end{equation*}
     Moreover, if $\mathscr{N}_{\sigma}(u)=0$, then $u=0$ a.e. The opposite statement is obvious. 
Thus, we have proved that
$\langle u, w \rangle_{\sigma}$ is a scalar product.
Therefore, $\|\cdot\|_{\sigma} := \sqrt{\mathscr{N}_{\sigma}(\cdot)}$ is a norm in $E_{\sigma}$.
    \end{proof}
An embedding similar to \eqref{imbedding} in Lemma \ref{Embedded}  was obtained in \cite[Lemma 2.1]{MR1782990}. We include here the proof of the lemma
since it is obtained for a broader class of functions $\sigma$ and, in particular, under the weaker condition \eqref{primeiroAutovalor}.
    \begin{lemma} \label {Embedded}
    Let $\sigma\in \mathcal T(\mathbb{R}^N)$.
       Then, the canonical embedding
       \begin{align}
       \label{imbedding}
       \iota:\, E_{\sigma} \hookrightarrow H^1(\mathbb{R}^N)
\end{align}
         is continuous.
    \end{lemma}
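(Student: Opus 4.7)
The plan is to show continuity by producing an explicit constant $C>0$ so that $\|u\|_{H^1}^2 \leqslant C\|u\|_\sigma^2$ for every $u\in E_\sigma$, using the two pillars already established earlier in the paper: the inclusion $E_\sigma \subset H^1(\mathbb{R}^N)$ from Lemma \ref{E_subset_H} and the Poincaré-type inequality arising from $\lambda_1(\sigma)$ via Lemma \ref{eqlambda}.

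First, I would verify that Lemma \ref{E_subset_H} applies. Since $\sigma_+ \geqslant \sigma$ pointwise, the Rayleigh quotient defining $\lambda_1$ is monotone in the potential, so $\lambda_1(\sigma_+) \geqslant \lambda_1(\sigma) > 0$, and hence every $u\in E_\sigma$ lies in $L_2(\mathbb{R}^N)$. This is what makes it legitimate to talk about the $H^1$ norm of $u$ in the first place. Next, I would invoke Lemma \ref{eqlambda}, which identifies $\tilde\lambda_1(\sigma)$ with $\lambda_1(\sigma)$ on $E_\sigma$; by homogeneity of the Rayleigh quotient this gives the Poincaré-type bound
\begin{equation*}
\|u\|_{L_2}^2 \;\leqslant\; \frac{1}{\lambda_1(\sigma)}\, \mathscr N_\sigma(u) \;=\; \frac{1}{\lambda_1(\sigma)}\,\|u\|_\sigma^2
\end{equation*}
for every $u\in E_\sigma$.

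The remaining task is to control $\|\nabla u\|_{L_2}^2$. Here one uses the essential lower bound on $\sigma$: since $\sigma\in L^\infty_{\mathrm{loc}}$ is essentially bounded from below, there exists $M\geqslant 0$ with $\sigma\geqslant -M$ a.e., whence $\sigma_- \leqslant M$ a.e. Writing $\sigma = \sigma_+ - \sigma_-$ and rearranging the definition of $\mathscr N_\sigma$ gives
\begin{equation*}
\|\nabla u\|_{L_2}^2 \;=\; \|u\|_\sigma^2 \;-\; \int_{\mathbb{R}^N} \sigma_+ u^2\,\mathrm{d}x \;+\; \int_{\mathbb{R}^N} \sigma_- u^2\,\mathrm{d}x \;\leqslant\; \|u\|_\sigma^2 + M\|u\|_{L_2}^2,
\end{equation*}
and plugging in the $L_2$-bound from the previous step yields $\|\nabla u\|_{L_2}^2 \leqslant (1 + M/\lambda_1(\sigma))\|u\|_\sigma^2$. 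Adding the two estimates produces
\begin{equation*}
\|u\|_{H^1}^2 \;\leqslant\; \Bigl(1 + \tfrac{M+1}{\lambda_1(\sigma)}\Bigr)\|u\|_\sigma^2,
\end{equation*}
which is the desired continuity of the embedding.

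There is no serious obstacle in this argument; the only subtle point is that $\mathscr N_\sigma$ is not a priori the sum of two nonnegative pieces when $\sigma$ changes sign, so one cannot read off the gradient bound directly from $\|u\|_\sigma^2$. The key move is precisely to absorb the negative contribution $\int \sigma_- u^2$ using the essential lower bound on $\sigma$, and then to dispose of the resulting $\|u\|_{L_2}^2$ term via the $\lambda_1(\sigma)$ inequality provided by Lemma \ref{eqlambda}. Both ingredients are exactly what the hypothesis $\sigma\in\mathcal T(\mathbb{R}^N)$ supplies.
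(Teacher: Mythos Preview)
Your proof is correct, but it takes a genuinely different route from the paper's. The paper argues by contradiction: assuming no constant works, it normalizes a bad sequence to $\|u_n\|_{H^1}=1$ with $\|u_n\|_\sigma\to 0$, shows $\|u_n\|_{L_2}$ cannot vanish (using the essential lower bound on $\sigma$), and then contradicts $\lambda_1(\sigma)>0$ directly via the Rayleigh quotient. Your argument instead invokes Lemma~\ref{eqlambda} up front to obtain the quantitative Poincar\'e bound $\|u\|_{L_2}^2\leqslant \lambda_1(\sigma)^{-1}\|u\|_\sigma^2$, then absorbs the negative part of $\sigma$ using the lower bound $M$ to control the gradient. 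The payoff of your approach is an explicit embedding constant $1+(M+1)/\lambda_1(\sigma)$, which the paper's contradiction argument does not produce; the paper's version, on the other hand, is slightly more self-contained in that it does not lean on the identification $\tilde\lambda_1(\sigma)=\lambda_1(\sigma)$ from Lemma~\ref{eqlambda}. One minor point: you are using the convention $\sigma_-=\max\{-\sigma,0\}\geqslant 0$, whereas the paper's convention (as seen in its proofs) is $\sigma_-=\min\{\sigma,0\}\leqslant 0$; this is harmless but worth reconciling if you integrate your argument into the paper.
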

    \begin{proof}
    Recall that by Lemma \ref{E_subset_H}, $E_\sigma \subset H^1(\mathbb{R}^N)$. Thus, it is sufficient to prove that there exists a constant $C>0$ such that
	\begin{equation} \label {NablaEmbedded}
\|u\|_{H^1(\mathbb{R}^N)} \leqslant C \|u\|_\sigma \qquad \forall \; u\in E_{\sigma}.
	\end{equation}
   Suppose this is not true. Then, one can find a sequence
        $v_n\in E_\sigma$, $v_n\ne 0$, such that
        $\|v_n\|_{H^1(\mathbb{R}^N)} \geqslant  n \|v_n\|_\sigma$.
  Note that for a sequence $u_n = v_n/\|v_n\|_{H^1(\mathbb{R}^N)}$, it holds that
       \begin{align*}
       \|u_n\|_{H^1(\mathbb{R}^N)} = 1 \quad \text{and} \quad
       \|u_n\|_\sigma \leqslant \frac1n.
\end{align*}
        Let us show that $\|u_n\|_{L_2(\mathbb{R}^N)}$ cannot converge to 0.
     Suppose $\lim\limits_{n\to \infty}\|u_n\|^2_{L_2(\mathbb{R}^N)} = 0$.
        Then,
        \begin{multline*}
          (\essinf_{\mathbb{R}^N} \sigma_-)
            \|u_n\|^2_{L_2(\mathbb{R}^N)}  \leqslant \int_{\mathds{R}^{N}} \sigma_-(x) u_n^2 \, \mathrm{d}x  \leqslant
            \int_{\mathds{R}^{N}} \sigma(x) u_n^2 \, \mathrm{d}x = \|u_n\|^2_\sigma 
            + \|u_n\|^2_{L_2(\mathbb{R}^N)}
            -\|u_n\|^2_{H^1(\mathbb{R}^N)} \\
            \leqslant \frac{1}{n^2} + \|u_n\|^2_{L_2(\mathbb{R}^N)} -1,
\end{multline*}
        which is a contradiction. Hence,
        there exists a subsequence
        $\left\{u_{n_k}\right\} \subset \{u_n\}$ and
        a constant $C>0$ such that $\|u_{n_k}\|^{2}_{\scriptscriptstyle L_2}> C$.
        This implies
        \begin{equation*}
           0<\lambda_{1}(\sigma)\leqslant \int_{\mathds{R}^{N}}\left( \frac{| \nabla u_{n_k}|^{2}}{\|u_{n_k}\|_{\scriptscriptstyle L_{2}}^{2}} 
           + \sigma(x) \frac{u_{n_k}^{2}}{\|u_{n_k}\|_{\scriptscriptstyle L_{2}}^{2}}  \right) \, \mathrm{d}x\leqslant 
           \frac{1}{n_{k}^2\|u_{n_{k}}\|^{2}_{\scriptscriptstyle L_2}}<\frac{1}{Cn^2_{k}} \to 0, \quad k\to \infty,
        \end{equation*}
    which is impossible. Hence, \eqref{NablaEmbedded} is true and the proof is complete.
 \end{proof}
    \begin{remark}
    \label{rm2.6-1111}
    \rm
    We remark that by a version of the
    Sobolev embedding theorem (see, e.g., \cite[Theorem 1.8]{zbMATH00921895}), the  continuous canonical embedding
    $H^1(\mathbb{R}^N) \hookrightarrow L_s(\mathbb{R}^N)$ holds for
    $s\in [2,2^*]$. 
    Therefore, if $\sigma$ satisfies assumptions of Lemma \ref{Embedded},
the aforementioned lemma implies that the canonical embedding
     \begin{align}
       \label{imbeddingLp}
       \iota:\, E_{\sigma} \hookrightarrow L_s(\mathbb{R}^N) 
\end{align}
        holds for all $s\in [2, 2^{*}]$ and is continuous.
    \end{remark}

    \begin{lemma}\label {Lemaeqnorms}
        Let $\sigma\in \mathcal T(\mathbb{R}^N)$.   Then, the norms $\|\cdot \|_{\sigma}$ and $\|\cdot \|_{\sigma_+}$ are equivalent.
    \end{lemma}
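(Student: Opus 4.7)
The plan is to establish the two-sided inequality $c_1\|u\|_{\sigma_+}\leqslant \|u\|_\sigma\leqslant c_2\|u\|_{\sigma_+}$ for all $u\in E_\sigma$, using the decomposition $\sigma=\sigma_+-\sigma_-$ together with the positivity of $\lambda_1(\sigma)$.

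The easy direction is $\|u\|_\sigma\leqslant\|u\|_{\sigma_+}$. Indeed, writing
\begin{equation*}
\|u\|_\sigma^2 = \int_{\mathds{R}^N}|\nabla u|^2\,\mathrm{d}x + \int_{\mathds{R}^N}\sigma_+(x)|u|^2\,\mathrm{d}x - \int_{\mathds{R}^N}\sigma_-(x)|u|^2\,\mathrm{d}x,
\end{equation*}
and noting that $\sigma_-\geqslant 0$, I immediately get $\|u\|_\sigma^2\leqslant \|u\|_{\sigma_+}^2$.

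For the reverse inequality, the key observation is that $\sigma\in\mathcal T(\mathds{R}^N)$ is essentially bounded from below, so $\sigma_-\in L^\infty(\mathds{R}^N)$ with $\|\sigma_-\|_{L^\infty}\leqslant\max\{-\essinf_{\mathds{R}^N}\sigma,\,0\}$. Therefore
\begin{equation*}
\|u\|_{\sigma_+}^2 = \|u\|_\sigma^2 + \int_{\mathds{R}^N}\sigma_-(x)|u|^2\,\mathrm{d}x \leqslant \|u\|_\sigma^2 + \|\sigma_-\|_{L^\infty}\|u\|_{\scriptscriptstyle L_2}^2.
\end{equation*}
To absorb the $\|u\|_{L_2}$ term into $\|u\|_\sigma$, I invoke Lemma \ref{eqlambda}: since $\lambda_1(\sigma)>0$ we have $\tilde\lambda_1(\sigma)=\lambda_1(\sigma)>0$, which means $\mathscr N_\sigma(u)\geqslant\lambda_1(\sigma)\|u\|_{\scriptscriptstyle L_2}^2$ for every $u\in E_\sigma$. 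Equivalently, $\|u\|_{\scriptscriptstyle L_2}^2\leqslant \lambda_1(\sigma)^{-1}\|u\|_\sigma^2$. Combining this with the previous inequality yields
\begin{equation*}
\|u\|_{\sigma_+}^2\leqslant\Bigl(1+\tfrac{\|\sigma_-\|_{L^\infty}}{\lambda_1(\sigma)}\Bigr)\|u\|_\sigma^2,
\end{equation*}
which is the required bound with $c_2^{-2}=1+\|\sigma_-\|_{L^\infty}/\lambda_1(\sigma)$.

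There is no real obstacle here: the argument is essentially bookkeeping once one notices that $\sigma_-$ inherits $L^\infty$-regularity from the lower bound on $\sigma$, and that the Poincaré-type estimate $\|u\|_{\scriptscriptstyle L_2}^2\leqslant \lambda_1(\sigma)^{-1}\mathscr N_\sigma(u)$ on $E_\sigma$ is exactly what Lemma \ref{eqlambda} (together with Lemma \ref{E_subset_H}, which guarantees $E_\sigma\subset L_2(\mathds{R}^N)$) provides. The mildly delicate point is conceptual rather than technical: one must recognise that $\|\cdot\|_\sigma$ being a norm (Lemma \ref{sg-norm-1111}) is a prerequisite for even speaking of equivalence, so $\mathcal T(\mathds{R}^N)$ is the natural minimal class in which this equivalence can be stated.
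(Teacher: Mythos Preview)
Your proof is correct and follows essentially the same route as the paper: decompose $\sigma=\sigma_+-\sigma_-$, use the $L^\infty$-bound on $\sigma_-$, and control $\|u\|_{\scriptscriptstyle L_2}$ by $\|u\|_\sigma$. The only cosmetic difference is that you invoke Lemma~\ref{eqlambda} to obtain the explicit bound $\|u\|_{\scriptscriptstyle L_2}^2\leqslant\lambda_1(\sigma)^{-1}\|u\|_\sigma^2$, whereas the paper cites the continuous embedding of Lemma~\ref{Embedded} for the same purpose; both yield the desired inequality $\|u\|_{\sigma_+}^2\leqslant(1+C)\|u\|_\sigma^2$.
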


    \begin{proof}
        Let $u \in E_{\sigma}$. Clearly,
        $\|u\|_{\sigma_+}^{2} \geqslant \|u\|_{\sigma}^{2}$.
        On the other hand, by Lemma \ref{Embedded}, there exists a constant $C>0$ such that
        \begin{equation*}
            \|u\|_{\sigma}^{2} = \|u\|_{\sigma_+}^{2} - \int_{\mathds{R}^N} |\sigma_{-}(x)| u^2 \, \mathrm{d}x
            \geqslant \|u\|_{\sigma_+} - \esssup_{\mathbb{R}^n}|\sigma_-| \|u\|_{\scriptscriptstyle L_2}^{2}
            \geqslant \|u\|_{\sigma_+}^{2} - C \|u\|_{\sigma}^{2}.
        \end{equation*}
    \end{proof}

      \begin{lemma}
      \label{equiv-1111}
    Let $b\in \mathcal T(\mathbb{R}^N)$ and $a(x)$ satisfy {\rm \ref{f0a}}.
        Then, the norms $\|\cdot\|_{a}$ and
        $\|\cdot\|_{b}$ are equivalent, that is,
         there exist constants $C_1,C_2>0$ such that for all $u\in E_b$,
        $$
            C_1 \|u\|_b \leqslant \|u\|_a \leqslant C_2 \|u\|_b.
        $$
    \end{lemma}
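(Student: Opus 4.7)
The plan is to chain the equivalences $\|\cdot\|_a \sim \|\cdot\|_{a_+} \sim \|\cdot\|_{b_+} \sim \|\cdot\|_b$ on $E_b$, deducing each link from a result already established in the excerpt. The only nonroutine point is that the middle link comes from hypothesis \ref{f0a}, and the first link requires a case split according to which alternative in \ref{f0a} is invoked.

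First, since $b \in \mathcal T(\mathbb{R}^N)$, Lemma \ref{Lemaeqnorms} applied with $\sigma = b$ yields $\|\cdot\|_b \sim \|\cdot\|_{b_+}$ on $E_b$. Next, hypothesis \ref{f0a} asserts that $E_a = E_b$ and that the norms $\|\cdot\|_{a_+}$ and $\|\cdot\|_{b_+}$ generate the same topology on $E_b$; since two norms on a common vector space are topologically equivalent precisely when they are equivalent as norms (the identity map being continuous in both directions), this immediately gives $\|\cdot\|_{a_+} \sim \|\cdot\|_{b_+}$.

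For the remaining link between $\|\cdot\|_a$ and $\|\cdot\|_{a_+}$, I would split into the two alternatives of \ref{f0a}. If $a \in \mathcal T(\mathbb{R}^N)$, then a second application of Lemma \ref{Lemaeqnorms}, now with $\sigma = a$, gives $\|\cdot\|_a \sim \|\cdot\|_{a_+}$ on $E_a$, which coincides with $E_b$. If instead $a \geqslant 0$ a.e., then $a_- \equiv 0$ a.e.\ and $a = a_+$ pointwise a.e., so the two expressions $\|u\|_a^2 = \|u\|_{a_+}^2$ are literally equal for every $u \in E_b$. Concatenating the three equivalences on $E_b$ produces constants $C_1, C_2 > 0$ with $C_1 \|u\|_b \leqslant \|u\|_a \leqslant C_2 \|u\|_b$ for all $u \in E_b$.

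No serious obstacle is anticipated; the argument is essentially bookkeeping. The only small point worth highlighting in the write-up is that $\|u\|_a$ is indeed well defined and finite on all of $E_b$: in the first case this is because $E_a = E_b$ ensures $\int_{\mathds{R}^N} a_+ |u|^2\, \mathrm{d}x < \infty$ while $a_- \in L^\infty_{\mathrm{loc}}$ with the $L_2$–integrability of $u$ (Lemma \ref{E_subset_H}) handles the negative part, and in the second case $a_- = 0$ makes the issue vacuous.
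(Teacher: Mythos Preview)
Your proof is correct and follows essentially the same route as the paper: reduce to the equivalence of $\|\cdot\|_{a_+}$ and $\|\cdot\|_{b_+}$ via Lemma~\ref{Lemaeqnorms}, and obtain that equivalence from the topological hypothesis in \ref{f0a} by noting that the identity map is continuous in both directions. Your explicit case split for the link $\|\cdot\|_a \sim \|\cdot\|_{a_+}$ (invoking Lemma~\ref{Lemaeqnorms} when $a\in\mathcal T(\mathbb{R}^N)$, and the tautology $a=a_+$ when $a\geqslant 0$) is a small refinement the paper leaves implicit.
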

    \begin{proof}
  By Lemma \ref{Lemaeqnorms},
    it suffices to prove the equivalence of the norms
    $\|\cdot\|_{a_+}$ and $\|\cdot\|_{b_+}$.
    Since these norms generate the same topology on $E_a = E_b$,
    the identity map
    \begin{align*}
    I: (E_a,\|\cdot\|_{a_+}) \to (E_a,\|\cdot\|_{b_+})
\end{align*}
    and its inverse $I^{-1}$ are continuous. Therefore,
    $I$ and $I^{-1}$ are bounded.
       \end{proof}

     \begin{lemma} \label {Completo}
         Let $\sigma\in \mathcal T(\mathbb{R}^N)$. Then, the space 
         $(E_{\sigma}, \|\cdot\|_\sigma)$ is separable and complete, and therefore,  it  is a Hilbert space.
    \end{lemma}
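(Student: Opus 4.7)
The plan is to prove separability and completeness in the $\|\cdot\|_{\sigma_+}$ norm, since by Lemma \ref{Lemaeqnorms} this is equivalent to $\|\cdot\|_\sigma$, and by Lemma \ref{sg-norm-1111} the norm $\|\cdot\|_\sigma$ is induced by the inner product \eqref{sc-prod-1111}. Combining these yields the Hilbert space structure.

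For \emph{completeness}, I would take a Cauchy sequence $\{u_n\}\subset E_\sigma$ with respect to $\|\cdot\|_{\sigma_+}$. By Lemma \ref{Embedded}, the embedding $E_\sigma\hookrightarrow H^1(\mathbb{R}^N)$ is continuous, so $\{u_n\}$ is Cauchy in $H^1(\mathbb{R}^N)$ and converges to some $u\in H^1(\mathbb{R}^N)$. In particular $u_n\to u$ in $L_2(\mathbb{R}^N)$ and in $L_{2^*}(\mathbb{R}^N)$, and $\nabla u_n\to \nabla u$ in $L_2(\mathbb{R}^N)$. Moreover, by the Cauchy property, $\{\sqrt{\sigma_+}\,u_n\}$ is Cauchy in $L_2(\mathbb{R}^N)$, so it converges to some $g\in L_2(\mathbb{R}^N)$. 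Extracting a subsequence $u_{n_k}\to u$ a.e. in $\mathbb{R}^N$, we get $\sqrt{\sigma_+}\,u_{n_k}\to \sqrt{\sigma_+}\,u$ a.e., and by uniqueness of limits $g=\sqrt{\sigma_+}\,u$ a.e. This shows $\int_{\mathbb{R}^N}\sigma_+(x)|u|^2\,\mathrm{d}x<+\infty$, so $u\in E_\sigma$, and $\|u_n-u\|_{\sigma_+}^2=\|\nabla u_n-\nabla u\|_{L_2}^2+\|\sqrt{\sigma_+}\,u_n-\sqrt{\sigma_+}\,u\|_{L_2}^2\to 0$.

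For \emph{separability}, I would exploit the natural isometric embedding
\begin{equation*}
T:(E_\sigma,\|\cdot\|_{\sigma_+})\to L_2(\mathbb{R}^N)^{N+1},\qquad T(u)=\bigl(\partial_1 u,\ldots,\partial_N u,\sqrt{\sigma_+}\,u\bigr),
\end{equation*}
which satisfies $\|T(u)\|_{L_2^{N+1}}=\|u\|_{\sigma_+}$. The codomain is a separable Hilbert space (as a finite product of separable $L_2(\mathbb{R}^N)$), hence every subset, in particular $T(E_\sigma)$, is separable in the induced metric. Since $T$ is an isometry onto its image, $(E_\sigma,\|\cdot\|_{\sigma_+})$ is separable as well. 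Alternatively, one can invoke Lemma \ref{dense} together with the fact that ${\rm C}_0^\infty(\mathbb{R}^N)$ contains a countable $\|\cdot\|_{\sigma_+}$-dense subset (obtained by restricting to compactly supported elements and using standard mollifier-plus-truncation approximations of a countable dense set in $H^1$), but the isometric embedding argument is cleaner.

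I do not foresee any serious obstacle: the only subtle point is identifying the $L_2$-limit of $\{\sqrt{\sigma_+}\,u_n\}$ with $\sqrt{\sigma_+}\,u$, which requires passing to an a.e.-convergent subsequence — a routine step once Lemma \ref{Embedded} is in hand. The other conditions (inner product, norm) have already been established in Lemmas \ref{sg-norm-1111} and \ref{Lemaeqnorms}, so the proof reduces to assembling these ingredients.
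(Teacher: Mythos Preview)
Your proof is correct. For completeness, you and the paper follow essentially the same strategy: a $\|\cdot\|_{\sigma_+}$-Cauchy sequence is Cauchy both in $H^1(\mathbb{R}^N)$ (via Lemma~\ref{Embedded}) and in the weighted space $L_2(\mathbb{R}^N,\sigma_+\,dx)$, and one then identifies the two limits. You do the identification via an a.e.\ convergent subsequence; the paper does it by exploiting the local boundedness of $\sigma_+$ to compare limits on each ball $B_R(0)$. Both are standard and equivalent in effort.

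For separability, your route is genuinely different and cleaner. The paper builds an explicit countable dense set by taking, for each rational $r$, a countable dense subset of $H^1_0(B_{2r})$ and using the truncation argument from Lemma~\ref{dense} together with local boundedness of $\sigma_+$ to approximate in $\|\cdot\|_{\sigma_+}$. Your isometric embedding $T:E_\sigma\to L_2(\mathbb{R}^N)^{N+1}$, $u\mapsto(\partial_1 u,\ldots,\partial_N u,\sqrt{\sigma_+}\,u)$, bypasses all of this: separability is inherited from the separable target. The paper's approach has the mild advantage of reusing the cut-off machinery already developed in Lemma~\ref{dense}, while yours is a one-line structural argument that works for any space defined by finitely many $L_2$-type seminorms.
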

    \begin{proof}
    First of all, 
    by Lemma \ref{sg-norm-1111}, the bilinear form \eqref{sc-prod-1111} is a scalar product in $E_\sigma$.
    
     It remains to show that $E_\sigma$ is separable and complete.  Note that
    \[
E_\sigma = H^1(\mathds{R}^N) \cap L^2\big(\mathds{R}^N, \sigma_+(x)\, dx\big)
\]
This is an immediate consequence of Lemmas \ref{Embedded} and \ref{Lemaeqnorms}. 
Moreover, Lemmas \ref{Embedded} and \ref{Lemaeqnorms} imply that the norm $\|\cdot\|_\sigma$ is equivalent to 
the norm $\|\cdot\|_{H^1(\mathbb{R}^N)}+\|\cdot\|_{L_2(\mathbb{R}^N,\sigma_+ dx)}$.

 According to \cite[Section 3]{MR924157},
    the weighted Hilbert space $\big(L_2(\mathbb{R}^N), \sigma_+(x)dx\big)$ is  complete.
    Take a Cauchy sequence $\{u_n\} \subset E_\sigma$.
   Then, $\{u_n\}$ is also a Cauchy sequence
    in $H^1(\mathbb{R}^N)$ and $\big(L_2(\mathbb{R}^N), \sigma_+(x)dx\big)$, and therefore has a limit in these spaces.
    More specifically, there exists $u\in H^1(\mathbb{R}^n)$ such that 
    $\lim_{n\to\infty} \|u_n - u\|_{L_2(\mathbb{R}^N)} = \lim_{n\to\infty} \|\nabla u_n - \nabla u\|_{L_2(\mathbb{R}^N)} = 0$.
    By the local boundedness of $\sigma_+$, $\lim_{n\to\infty}\|u_n - u\|_{(L_2(B_R(0),\,\sigma_+dx)} = 0$ for any ball
    $B_R(0)\subset \mathbb{R}^N$. Consequently, $\lim_{n\to\infty}\|u-u_n\|_{\sigma_+} = 0$. Therefore,  $u\in E_\sigma$
    is the limit of $u_n$ in $E_\sigma$.    Thus, $(E_\sigma,\|\cdot\|_\sigma)$ is  complete. 
    
    Let us prove separability.
For each rational $r \in \mathds{Q}^+$, the space $H^1_0(B_{2r})$ is separable. Let $D_r$ be a countable dense subset of $H^1_0(B_{2r})$. 
By the local boundedness of $\sigma_+$, $D_r\subset E_\sigma$. Set $D := \cup_{r \in \mathds{Q}^+} D_r$.
Given $u \in E_\sigma$ and $\varepsilon > 0$, by the argument used in the proof of Lemma \ref{dense},
there exists $r \in \mathds{Q}^+$ such that 
$\|u - \phi_r u\|_{\sigma_+} < \frac{\varepsilon}{2}$,
where $\phi_r$ is defined by \eqref{phir1111}.
Since $\phi_r u \in H^1_0(B_{2r})$, there exists  $v \in D_{r}$  such that
\[
\|\phi_r u - v\|_{\sigma_+} \leqslant 
(\esssup_{B_{2r}}|\sigma_+|+1) \|\phi_r u - v\|_{H^1_0(B_{2r})} < \frac{\varepsilon}{2}.
\]
Therefore, $\|u - v\|_{\sigma_+}<\varepsilon$ and the proof is complete.
\end{proof}

   \subsection{Important examples}\label {Important examples}
   As we have already mentioned, FitzHugh-Nagumo-type systems considered in the literature 
   have constant coefficients. On the other hand,
   most related elliptic systems with spatially varying coefficients, see, 
   e.g., \cite{rabinowitz1992class}, assume positive and coercive coefficients. 
    In this subsection, we give an example (Example
    \ref{Example_Sigma}) of a function from $\Upsilon_{s}(\mathds{R}^N)$
    which takes negative values and is not coercive.
    In particular, this shows that the class of functions
    $\Upsilon_{s}(\mathds{R}^N)$ is non-empty.

In Example \ref{Example_Sigma}, we will make use of Lemma
\ref{Th1.4-sirakov}, which is the subject of Theorem 1.4 in \cite{MR1782990}.
\begin{lemma}
\label{Th1.4-sirakov}
Assume for any $M,R>0$ and any sequence $z_n\to \infty$ as $n\to \infty$,
\begin{align*}
\lim_{n\to \infty} |\Omega_M \cap B_R(z_n)| = 0, \quad 
\text{where} \quad \Omega_M: =\{ x\in\mathbb{R}^N: \sigma(x)<M\}.
\end{align*}
Then, condition \eqref{coecivo} is fulfilled for all $s\in [2,2^*)$.
\end{lemma}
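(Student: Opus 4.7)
I would argue by contradiction. Suppose the conclusion fails for some $s\in[2,2^*)$. Then there exist $R>0$, a sequence $x_n\to\infty$, a constant $M_0>0$, and a subsequence along which $\nu_s(\sigma, B_R(x_n))\leqslant M_0$. By the definition of the infimum in \eqref{nus}, I can pick $u_n\in H^1_0(B_R(x_n))$ with $\|u_n\|_{L_s(B_R(x_n))}=1$ and
\begin{equation*}
\mathcal N_n := \int_{B_R(x_n)} |\nabla u_n|^2 + \sigma(x) u_n^2 \, \mathrm{d}x \leqslant 2M_0.
\end{equation*}
Since $\sigma$ is essentially bounded below by some $-C$ (with $C\geqslant 0$), and since for $s>2$ H\"older gives $\|u_n\|_{L_2(B_R(x_n))}\leqslant |B_R|^{1/2-1/s}$ while for $s=2$ this $L_2$ norm equals $1$, the sequence $\|u_n\|_{L_2(B_R(x_n))}$ is bounded, and therefore $\|\nabla u_n\|_{L_2}$ is bounded by a constant depending only on $M_0,C,R,s$.

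Next, given $M>0$, I would split the weight integral as
\begin{equation*}
\int_{B_R(x_n)}\sigma u_n^2 \, \mathrm{d}x \geqslant M\int_{B_R(x_n)\setminus \Omega_M} u_n^2 \, \mathrm{d}x - C\int_{\Omega_M\cap B_R(x_n)} u_n^2 \, \mathrm{d}x,
\end{equation*}
and control the last integral by H\"older with exponents $2^*/2$ and $N/2$ together with the Sobolev inequality on $H^1_0(B_R(x_n))$ (whose constant is independent of $n$ by translation invariance):
\begin{equation*}
\int_{\Omega_M\cap B_R(x_n)} u_n^2 \, \mathrm{d}x \leqslant C_{\mathrm{Sob}} \, |\Omega_M\cap B_R(x_n)|^{2/N}\|\nabla u_n\|_{L_2}^2.
\end{equation*}
By the standing hypothesis, $|\Omega_M\cap B_R(x_n)|\to 0$ as $n\to\infty$ for each fixed $M$. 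Choosing $n$ sufficiently large (depending on $M$), this lets me absorb the negative term into the gradient, yielding
\begin{equation*}
2M_0 \geqslant \tfrac{1}{2}\|\nabla u_n\|_{L_2}^2 + M\int_{B_R(x_n)\setminus\Omega_M} u_n^2 \, \mathrm{d}x.
\end{equation*}

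To reach a contradiction with $\int_{B_R(x_n)}|u_n|^s\,\mathrm{d}x = 1$, I split this integral along $\Omega_M$ and its complement. The $\Omega_M$-piece is handled by H\"older (with exponent $2^*/s>1$) combined with the Sobolev bound $\|u_n\|_{L_{2^*}}\leqslant C\|\nabla u_n\|_{L_2}$, so it is dominated by a constant times $|\Omega_M\cap B_R(x_n)|^{1-s/2^*}$ and tends to $0$ as $n\to\infty$. The complementary piece I would estimate by interpolating $L_s$ between $L_2$ and $L_{2^*}$: the $L_2$ factor on $B_R(x_n)\setminus\Omega_M$ is $O(M^{-1/2})$ by the previous display, while the $L_{2^*}$ factor stays bounded. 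Taking first $M$ large and then $n$ large makes the total strictly less than $1$, contradicting $\|u_n\|_{L_s(B_R(x_n))}=1$.

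The main technical obstacle is the coupling between the two parameters $M$ and $n$: the Sobolev/H\"older estimates force me to keep $M$ fixed while sending $n\to\infty$ to exploit the measure-smallness hypothesis, and only afterwards may I let $M\to\infty$ to beat the $L_2$-mass on the complement. A secondary but unavoidable point is verifying that the Sobolev constant on $B_R(x_n)$ is independent of $n$, which follows from translation invariance and the fact that all these balls are congruent, together with checking that the argument is robust at the endpoint $s=2$ where the normalization differs slightly from $s>2$.
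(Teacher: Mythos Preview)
Your argument is correct and follows the standard route: contradiction, near-minimizers on translated balls, a splitting along $\{\sigma<M\}$ combined with H\"older--Sobolev to exploit the shrinking measure, and interpolation between $L_2$ and $L_{2^*}$ to kill the $L_s$-mass. The order-of-limits you describe (fix $M$ large, then take $n$ large depending on $M$) is exactly right, and the endpoint $s=2$ causes no trouble since the interpolation degenerates to $\theta=1$.

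The paper itself does not prove this lemma; it simply invokes it as Theorem~1.4 of Sirakov \cite{MR1782990}. Your proof is essentially the argument one finds there (Sirakov works via the same Sobolev/H\"older control on sublevel sets of the potential), so there is no substantive difference in approach to compare.
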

   \begin{example}
   \label {Example_Sigma}
    \rm
    Here we give an example of a non-coercive function
    $\sigma \in \Upsilon_{s}\left(\mathds{R}^N\right)$, for all $s\in[2,2^*) $, that admits negative values.
    Fix numbers $r,\kappa >0$ and
    define a continuous function
      $\sigma: \mathds{R}^{N} \to \mathds{R}$ as follows. Let $x\in\mathds{R}$, $y\in\mathds{R}^{N-1}$ be such that
      $z=(x,y)$. Further let
      \begin{equation}
      \label{class-sigma}
      \sigma(z)= \begin{cases}
		     -\frac{\mu_{r}}{2r},  &\text { if } |z|\leqslant r/2, \\
                1 + \kappa^2(1+|x|)^{2}|y|^2, &\text { if } |z|\geqslant r, \\ %\quad \text{where} \;\;  z=(x,y), \;\;
                \sigma(z) \geqslant -\frac{\mu_{r}}{2r}
                &\text{ in the ring} \;\;
                B_{\frac{r}2,r}: = \{\frac{r}2 < |z| < r\}.
\end{cases}		 %\end{array}\right.
	\end{equation}
  Above, $\mu_{r}>0$ is the constant appearing in
  the Poincar\'e inequality with respect to the domain $B_{2r}(0)$, which we denote for simplicity by $B_{2r}$, i.e.,
    \begin{equation*}
       \mu_{r}:=\inf_{ u \in H^{1}_{0}(B_{2r})\setminus \{0\}} \left\{\frac{\int_{B_{2r}}|\nabla u|^2 \, \mathrm{d}x }{\int_{B_{2r}}|u|^2 \, \mathrm{d}x} \right\}.
    \end{equation*}
    Note that $\sigma(z)$ is not coercive. Indeed, recalling that
     $z=(x,y)\in \mathbb{R} \times \mathbb{R}^{N-1}$,
    we observe that
    \begin{align*}
\lim_{|x|\to\infty}|\sigma(x,0)| = 1 \ne \infty.
\end{align*}
       In part (1) below, we will show that $\sigma$ satisfies \eqref{primeiroAutovalor}, and in part (2), we will  demonstrate that $\sigma$ satisfies \eqref{coecivo} for all $s\in [2,2^*)$.
    \begin{enumerate}[leftmargin=*]
        \item
            To prove that $\sigma$ satisfies \eqref{primeiroAutovalor},
            by Lemma \ref{eqlambda},
            it suffices to show that there exists a constant
            $C > 0$ such that $\tilde \lambda_1(\sigma)\geqslant C$. This is equivalent
            to 
            \begin{align}
            \label{ineq-1111}
            \|u\|_{\sigma} \geqslant C\|u\|_{\scriptscriptstyle L_2} \qquad \forall \;\; u \in E_{\sigma}.
            \end{align}
            Let $u \in E_{\sigma}$ and let $\phi^r$ be
            defined as in \eqref{phir1111} with $r>0$ fixed. By \eqref{eq2.E_subset_H},
             \begin{equation*}
                    \int_{\mathds{R}^{N}} |\nabla ( \phi^{r}u)|^2 \,  \mathrm{d}x 
                    \leqslant \frac{2}{r^{2}}\int_{B_{2r}\setminus B_{r}} |u |^2 \,  \mathrm{d}x+2\int_{\mathds{R}^N} |\nabla u |^2 \,  
                    \mathrm{d}x.
             \end{equation*}
            Therefore,
             \begin{equation*}
                 \begin{split}
                    2\|u\|_{\sigma}^{2}&
                     \geqslant 2 \|u\|_{\sigma}^{2}+\int_{\mathds{R}^{N}} |\nabla ( \phi^{r}u)|^2 \,  \mathrm{d}x- \frac{2}{r^{2}}\int_{B_{2r}\setminus B_{r}}  |u |^2 \,  \mathrm{d}x-2\int_{\mathds{R}^N} |\nabla u |^2 \,  \mathrm{d}x\\
                     &=2\int_{\mathds{R}^{N}} \sigma(x) |u|^2\,  \mathrm{d}x +\int_{\mathds{R}^{N}} |\nabla ( \phi^{r} u)|^2 \,  \mathrm{d}x - \frac{2}{r^{2}}\int_{B_{2r}\setminus B_{r}}  |u |^2 \,  \mathrm{d}x .
                 \end{split}
             \end{equation*}
             Since $\sigma(x) \geqslant -\mu_{r}/2r$ on $B_r$ and
             $\sigma(x) \geqslant 1$ on $\mathds{R}^{N}\setminus B_{r}$, we obtain
               \begin{equation}\begin{aligned}
               \label{est-2222}
                    2\|u\|_{\sigma}^{2}
                     \geqslant &\, -\frac{\mu_{r}}{r}\int_{B_{r }}  |u|^2\,  \mathrm{d}x
                     +\left(2-  \frac{2}{r^{2}}\right)\int_{B_{2r}\setminus B_{  r }}  |u|^2\,  \mathrm{d}x
                     + 2\int_{\mathds{R}^{N}\setminus B_{2r}}  |u|^2\,  \mathrm{d}x\\
                     &\,
                     + \int_{\mathds{R}^{N}} |\nabla ( \phi^{r} u)|^2 \,  \mathrm{d}x.\end{aligned}
             \end{equation}
             By the Poincar\'e inequality,
             \begin{equation*}
                 \int_{\mathds{R}^{N}} |\nabla( \phi^{r} u)|^2 \,  \mathrm{d}x\geqslant \mu_{r} \int_{ B_{2r}}  |\phi^{r} u|^2\, 
                 \mathrm{d}x \geqslant \mu_{r} \int_{B_{ r  }}  |\phi^{r} u|^2\,  \mathrm{d}x
                 \geqslant \mu_{r} \int_{B_{ r  }}  |u|^2\,  \mathrm{d}x.
             \end{equation*}
             Finally, from \eqref{est-2222}, we obtain
             \begin{equation*}
                 \begin{split}
                    2\|u\|_{\sigma}^{2} &\geqslant
                    \mu_{r}\left(1 -\frac{1}{r}\right)\int_{B_{ r  }}  |u|^2\,  \mathrm{d}x
                    +2\left(1-  \frac{1}{r^{2}}\right)\int_{B_{2r}\setminus B_{r }}  |u|^2\,  \mathrm{d}x
                    +2\int_{\mathds{R}^{N}\setminus B_{2r}}  |u|^2\,  \mathrm{d}x \\
                    &\geqslant C(r)\int_{\mathds{R}^{N}}|u|^{2} \, \mathrm{d}x,
                 \end{split}
             \end{equation*}
              where $C(r)=\min\left\{\mu_{r}\left(1 -\frac{1}{r}\right), 2\left(1-  \frac{1}{r^{2}}
              \right)\right\}$. By choosing  $r>1$, we achieve that $C(r)>0$.
              Thus, \eqref{ineq-1111} is proved.
              \vspace{2mm}
        \item
           In this step, we show that $\sigma(x)$ satisfies condition \eqref{coecivo} for all $s \in [2, 2^*)$.
           Let $M > 0$ be a number and $\Omega_M$ be defined as in Lemma
           \ref{Th1.4-sirakov}.
           Take a sequence $\{z_n\} \subset \mathds{R}^{N}$, $z_n\to\infty$, and $R>0$.  Let us show that
           \begin{align}
           \label{2show-1111}
           \lim_{n\to\infty} |\Omega_M \cap B_R(z_n)| = 0.
            \end{align}
           By Lemma \ref{Th1.4-sirakov}, this will imply
           condition \eqref{coecivo}.
           Note that if $M < 1$, then $\Omega_M \subset B_r(0)$.
           On the other hand, for sufficiently large $n$, $B_r(0)\cap B_R(z_n) = \varnothing$, and therefore, 
          $|\Omega_M \cap B_R(z_n)|= 0$.

            Thus, to prove \eqref{2show-1111}, it suffices to consider the case $M\geqslant 1$.
       By  the definition of $\sigma$, 
            \begin{equation*}
            \begin{split}
                \Omega_{M}
                =\left\{ (x,y)\in \mathds{R}^N\setminus B_r(0): |y|\leqslant \frac{\sqrt{M-1}}{\kappa(|x|+1)}  \right\}\cup
                \big(B_r(0)\cap \Omega_M\big).
            \end{split}
            \end{equation*}
 Since, for sufficiently large $n$, $B_r(0)\cap B_R(z_n) = \varnothing $,
 it suffices to prove that $\lim_{n\to\infty}|\Omega_{M,n}| = 0$, where
 \begin{align*}
 \Omega_{M,n} := \left\{ (x,y)\in \mathds{R}^N: |y|\leqslant \frac{\sqrt{M-1}}{\kappa(|x|+1)}  \right\} \cap B_R(z_n).
\end{align*}
 Let the sequences $\{x_n\}\subset \mathbb{R}$ and $\{y_n\}\subset \mathbb{R}^{N-1}$
 be such that $z_n=(x_n,y_n)$.
     Since $z_n\to \infty$, at least one of the sequences $|y_n|$ or $|x_n|$ tends to $\infty$.
           We claim that if $|y_n| \rightarrow \infty$, then there exists $n_0\in \mathbb{N}$ such that $\Omega_{M,n} = \varnothing $ for $n\geqslant n_0$.
           Indeed, representing $\mathbb{R}^N$ as $\mathbb{R}\times \mathbb{R}^{N-1}$,
           we note that $\Omega_M\cap \mathbb{R}^{N-1}$ is bounded while
           any point of $B(z_n, R)\cap\mathbb{R}^{N-1}$ goes to infinity.
           This proves the claim.     
           Now let  $|x_n|$ tend to infinity.
           Note that if $|x-x_n|\leqslant R$,
           then $|x|\geqslant |x_n|-R$.
           Therefore, when $|x_n|> R$,
           \begin{equation*}
               \begin{split}
                   \Omega_{M,n}   \subset & \,\, \Omega_{M,n} \cap\left( (x_n-R,x_n+R) \times \mathds{R}^{N-1}\right)\\
                    \subset & \left\{  (x,y)\in \mathds{R}^N:
                    |x-x_n| \leqslant R \text{ and } |y| \leqslant
                    \frac{\sqrt{M-1}}{\kappa(|x_n |-R+1)} \right\}.
               \end{split}
           \end{equation*}
           This implies that $\Omega_{M,n}$ is enclosed in a
           parallepiped whose $N-1$ sides have length $\frac{2\sqrt{M-1}}{\kappa(|x_n|-R+1)}$ and the other side is of constant length $2R$.  Hence,
           \begin{equation*}
               \left|  \Omega_{M,n} \right|\leqslant 2^{N}R  \left(\frac{\sqrt{M-1}}{\kappa(|x_n |-R+1)}
               \right)^{\!\!{N-1}} \rightarrow 0 \quad\text{as } n\rightarrow\infty
           \end{equation*}
           which proves \eqref{2show-1111}.
\end{enumerate}
\end{example}

       \begin{example}
\rm
Example \ref{Example_Sigma} shows that the class
$\Upsilon_{s}(\mathds{R}^{N})$, $s\in[2,2^*)$,
is non-empty, and moreover, contains function that take
negative values and are non-coercive. Here we give an example
of two representatives, $a$ and $b$, of $\Upsilon_{s}(\mathds{R}^{N})$
such that $a$ and $b$ can take negative values, are non-coercive, 
$E_a = E_b$, and the norms $\|\cdot\|_{a+}$ and
$\|\cdot\|_{b+}$ generate the same topology on $E_b$.

As before, let $s\in[2,2^{*})$.
Let $\mathcal A_{r,\kappa}$, $r,\kappa>0$,  denote the class
of functions $\sigma$ of the form \eqref{class-sigma}, %constructed in Example \ref{Example_Sigma},
depending on the parameters $r,\kappa>0$ and such that
 different functions from $\mathcal A_{r,\kappa}$
can take different values of in the ring $B_{\frac{r}2,r}$.
   Choose two functions  $a\in \mathcal A_{r_1,\kappa_1}$
   and $b\in \mathcal A_{r_2,\kappa_2}$
   that take negative values and are non-coercive.
   Let us show that $E_a=E_b$ and the norms
$\|\cdot \|_{a+}$ and $\|\cdot \|_{b+}$ are equivalent. Let $\varrho:=r_1+r_2$,
$a_1: = a\mathds{1}_{B_\varrho(0)}$,
$a_2: = a\mathds{1}_{\mathbb{R}^N\setminus B_\varrho(0)}$,
$b_1: = b\mathds{1}_{B_\varrho(0)}$,
$b_2: = b\mathds{1}_{\mathbb{R}^N\setminus B_\varrho(0)}$.
Note that $E_{a_2} = E_{b_2}$ and the norms
$\|\cdot\|_{a_2}$ and $\|\cdot\|_{b_2}$ are equivalent.
Indeed, suppose, for the sake of certainty, that $\kappa_1\leqslant \kappa_2$. Then, $\|\cdot\|_{a_2}\leqslant \|\cdot\|_{b_2} \leqslant \frac{\kappa_2}{\kappa_1}\|\cdot\|_{a_2}$.  Furthermore, since
$a_2 = (a_2)_+ \leqslant a_+$ and $b_2 = (b_2)_+ \leqslant b_+$, then
for all $u\in E_b$ and $v\in E_a$,
\begin{align}
\label{a2b2-2222}
\|u\|_{a_2} \leqslant \|u\|_{b_+} \quad \text{and} \quad
\|v\|_{b_2} \leqslant \frac{\kappa_2}{\kappa_1} \|v\|_{a_+}.
\end{align}
On the other hand,
$E_b \hookrightarrow H^1(\mathbb{R}^N) \hookrightarrow E_{(a_1)_+}$,
where both canonical embeddings are continuous. The continuity
of the first embedding is due to Lemma \ref{Embedded};
the second embedding and its continuity are implied
by the  boundedness of $a_+$ on $B_\varrho(0)$.
Therefore, there exists a constant $C_1>0$
such that $\|\cdot\|_{(a_1)_+} \leqslant C_1 \|\cdot\|_{b_+}$.
Together with \eqref{a2b2-2222}, this implies that for some constant
$\bar C_1>0$, 
\begin{align*}
\|u\|_{a_+} \leqslant \bar C_1 \|u\|_{b_+} \quad \text{for all} \;\;  u\in E_b.
\end{align*}
Likewise, we show that there exists a constant $C_2>0$ such that
for all $v\in E_a$,
\begin{align*}
\|v\|_{b_+} \leqslant C_2 \|v\|_{a_+}.
\end{align*}
Thus, $E_a=E_b$ and the norms  $\|\cdot\|_{a_+}$
and $\|\cdot\|_{b_+}$ are equivalent.
       \end{example}

Surprisingly, even strict positivity of the function $\sigma$ does not guarantee that condition \eqref{primeiroAutovalor} holds, 
as demonstrated in the following example.

     \begin{example}
     \rm
     \label{example3333}
     Here we give an example of a strictly positive function
     $\sigma$ for which condition \eqref{primeiroAutovalor}
     is not satisfied. 
  Consider the function
            \begin{align}
            \label{gauss}
            \sigma_{gauss}(x) =
            \frac1{(2\pi)^\frac{N}2} \exp\left(
            -\frac{|x|^2}2\right)
\end{align}
            which is known as a Gaussian density
            with the property $\int_{\mathbb{R}^N} \sigma_{gauss}(x) dx = 1$.
            Let us show that $\tilde \lambda_1(\sigma_{gauss}) = 0$. Define $u(x) = |x|^{-\frac{N}{2}}$ if $|x| > 1$ and $u(x) = 1$ if  $|x| \leqslant 1$. 
            It is straightforward to verify that
            $u \in E_\sigma$, $|u|\leqslant 1$, and $u \notin L_2(\mathds{R}^N)$.
         Further define $v^r = \frac{\phi^r u}
         {\|\phi^r u\|_{L_2}}$,
         where $\phi_r$ is given by \eqref{phir1111}.
         We notice the following properties:
         \begin{align}
         \label{relations-1111}
         \|v^r\|_{L_2} = 1; \quad
         \|\phi^r u\|_{L_2} \to \infty \; (r\to\infty);
         \quad
         |\phi^r u(x)| \leqslant 1 \;\, \text{on} \;\, \mathbb{R}^N.
\end{align}
             By \eqref{eq2.E_subset_H}, there exists a constant $C>0$
         such that
            \begin{equation*}
            \|\nabla v^r\|_{\scriptscriptstyle L_2} =
                     \frac{\|\nabla (\phi^r u)\|_{L_2}}
                     {\|\phi^r u\|_{L_2}}
                     \leqslant
                    C\,\frac{\|\nabla u\|_{L_2}
                    +\|u\|_{\scriptscriptstyle L_{2^*}}}{\|\phi^r u\|_{L_2}}
                    \to 0, \qquad r\to \infty.
    	   \end{equation*}
        Furthermore, by \eqref{relations-1111},  for every $x\in \mathbb{R}^N$,
        \begin{align*}
        |v^r(x)| \leqslant
        \|\phi_r u\|^{-1}_{\scriptscriptstyle L_2}
        \to 0 \quad \text{as}\; \; r\to \infty.
\end{align*}
        Since $v^r\in E_\sigma$ and $\int_{\mathbb{R}^N} \sigma_{gauss}(x) dx = 1$,
            $$
            \tilde \lambda_1(\sigma_{gauss}) \leqslant
            \|\nabla v^r\|^2_{L_2} + \int_{\mathbb{R}^N} \sigma(x) |v^r|^2 \mathrm{d}x \leqslant \|\nabla v^r\|^2_{L_2}
            + \|\phi_r u\|_{L_2}^{-2}
           \, \to 0, \qquad r\to \infty.
            $$
    \end{example}

    \section{Variational framework}
\label{Variational framework}
    \subsection{The concept of solution to system  \eqref{P}} 
    Here we define a weak solution to system \eqref{P}.
\begin{definition}
\label{def-weak-solution-1111}
\rm
Let $a,b\in L^\infty_{\rm loc}(\mathbb{R}^N)$.
    The pair of functions $(u, v) \in E_b \times E_b$ is called a \emph{weak solution}
    to system \eqref{P} if for all \(\varphi, \psi \in {\rm C}_{0}^{\infty}(\mathds{R}^N)\),
    \begin{equation}\begin{aligned}
    \label{23-1111}
         &\int_{\mathds{R}^{N}} \nabla u \nabla \varphi \, \mathrm{d}x+\int_{\mathds{R}^{N}} a(x) v \varphi \, \mathrm{d}x=\int_{\mathds{R}^{N}} f(x,u) 
         \varphi  \, \mathrm{d}x \\
    &\text{\hspace{-11mm} and} &&\\
    &\int_{\mathds{R}^{N}} \nabla v \nabla \psi \, \mathrm{d}x+\int_{\mathds{R}^{N}} b(x) v \psi \, \mathrm{d}x=\beta\int_{\mathds{R}^{N}} a(x) u \psi \, \mathrm{d}x.\end{aligned}
\end{equation}
          \end{definition}
     For the proof of Lemma \ref{type12-1111}, we invoke Proposition \ref{Ne} and Corollary \ref{cor-3.11}, whose proofs are given later. 
We emphasize that the proofs of Proposition \ref{Ne} and Corollary \ref{cor-3.11} are completely independent of Lemma \ref{type12-1111}.

    \begin{lemma}
        \label{type12-1111}
        Let $b\in\mathcal T(\mathbb{R}^N)$ and $a(x)$ satisfies {\rm \ref{f0a}}.
        A pair of functions $(u,v)\in E_b\times E_b$ is a weak solution to system \eqref{P} if and only if
        equations \eqref{23-1111} are true for all $\varphi,\psi\in E_b$.
    \end{lemma}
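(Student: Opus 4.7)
The plan is to establish the two implications separately, with the reverse implication being immediate and the forward one requiring a density argument together with continuity of each integral as a linear functional of the test function.

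The direction from $E_b$-identities to weak solution is immediate, since $C_0^\infty(\mathbb{R}^N)\subset E_b$: a smooth, compactly supported function clearly lies in $D^{1,2}(\mathbb{R}^N)$, and its weighted $L_2$-norm with weight $b_+$ is finite because $b_+ \in L^\infty_{\mathrm{loc}}(\mathbb{R}^N)$ and the support is compact. Hence restricting the $E_b$-identities to $\varphi,\psi\in C_0^\infty(\mathbb{R}^N)$ recovers Definition \ref{def-weak-solution-1111}.

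For the converse, I fix $(u,v)\in E_b\times E_b$ a weak solution and $\varphi,\psi\in E_b$. By Lemma \ref{dense} one can pick sequences $\varphi_n,\psi_n\in C_0^\infty(\mathbb{R}^N)$ with $\|\varphi_n-\varphi\|_{b_+}\to 0$ and $\|\psi_n-\psi\|_{b_+}\to 0$; by Remark \ref{rem1111} one can simultaneously arrange $\varphi_n\to\varphi$ and $\psi_n\to\psi$ in $H^1(\mathbb{R}^N)$. I then plan to pass to the limit $n\to\infty$ in identities \eqref{23-1111} written for $\varphi_n,\psi_n$. This reduces to verifying that each of the six integrals is a continuous linear functional of the test function on $(E_b,\|\cdot\|_{b_+})$. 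The Dirichlet terms $\int\nabla u\cdot\nabla\varphi_n$ and $\int\nabla v\cdot\nabla\psi_n$ pass to the limit by Cauchy--Schwarz and the $D^{1,2}$-convergence of $\varphi_n,\psi_n$. For $\int b(x)v\psi_n$ I split $b=b_+-b_-$: the $b_+$-piece is controlled by $\|v\|_{b_+}\|\psi_n\|_{b_+}$ via weighted Cauchy--Schwarz, while the $b_-$-piece is bounded by $\esssup|b_-|\cdot\|v\|_{L_2}\|\psi_n\|_{L_2}$, which is finite since $b\in\mathcal T(\mathbb{R}^N)$ is essentially bounded below, and this product is controlled by $\|\cdot\|_{b_+}$ via the embedding of Remark \ref{rm2.6-1111}. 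The terms $\int a(x)v\varphi_n$ and $\int a(x)u\psi_n$ are handled identically: hypothesis \ref{f0a} yields $E_a=E_b$ with equivalent $\|\cdot\|_{a_+}$- and $\|\cdot\|_{b_+}$-topologies (so by Lemma \ref{Lemaeqnorms} also $\|\cdot\|_a\equiv\|\cdot\|_b$), and if $a\in\mathcal T(\mathbb{R}^N)$ the same $a_+/a_-$ splitting applies, while if $a\geqslant 0$ only the $a_+$-piece is present.

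The main obstacle is the nonlinear term $\int f(x,u)\varphi_n$, which I plan to handle using Remark \ref{rm15}, giving $|f(x,u)|\leqslant C_0(1+\phi^{1/\alpha})(|u|+|u|^p)$, together with hypothesis \ref{f2}: the norms $\|\cdot\|_\phi$ and $\|\cdot\|_{b_+}$ generate the same topology on $E_b$, hence $u\in E_\phi$ with $\|u\|_\phi\leqslant C\|u\|_{b_+}$. Combining this with the continuous embeddings $E_b\hookrightarrow L_s(\mathbb{R}^N)$ for $s\in[2,2^*]$ (Remark \ref{rm2.6-1111}) and applying H\"older's inequality with exponents chosen according to the range $p\in\mathcal P_{\alpha,N}$, I obtain an estimate of the form $|\int f(x,u)\varphi|\leqslant C(\|u\|_{b_+}+\|u\|_{b_+}^p)\,\|\varphi\|_{b_+}$, which shows that $\varphi\mapsto\int f(x,u)\varphi$ is indeed continuous on $(E_b,\|\cdot\|_{b_+})$. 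This is precisely the computation that will also be carried out in Section \ref{Variational framework} to justify the well-posedness of the energy functional, and I plan to invoke Proposition \ref{Ne} and Corollary \ref{cor-3.11} at this step to streamline the estimate. Once all six functionals are shown to be continuous, the limit passage $n\to\infty$ in \eqref{23-1111} is immediate and yields the identities for the original $\varphi,\psi\in E_b$.
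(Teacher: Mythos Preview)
Your proposal is correct and follows essentially the same route as the paper: density of $C_0^\infty(\mathbb{R}^N)$ in $E_b$ (Lemma \ref{dense}), then continuity of each integral as a linear functional of the test function, with the nonlinear term handled via Proposition \ref{Ne} and Corollary \ref{cor-3.11} and the $a$-weighted terms via \ref{f0a} and the embedding $E_b\hookrightarrow L_2$. The only cosmetic difference is that the paper bundles the gradient and potential terms on the left-hand sides as the scalar products $\langle u,\varphi\rangle_a$ and $\langle v,\psi\rangle_b$ rather than treating them term-by-term as you do, but the underlying estimates are identical.
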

    \begin{proof}
        By Lemma \ref{dense}, ${\rm C}_{0}^{\infty}(\mathds{R}^{N})$
        is dense in $E_b$. Let $\varphi,\psi\in E_b$ and let $\varphi_n,\psi_n \in {\rm C}_{0}^{\infty}(\mathds{R}^{N})$
        be sequences converging to $\varphi$ and $\psi$, respectively, in $E_b$. The expressions on the left-hand sides of \eqref{23-1111}
        represent scalar products $\<u,\varphi\rangle_a$ and $\<v,\psi\rangle_b$, respectively, which
        can be approximated by $\<u,\varphi_n\rangle_a$ and $\<v,\psi_n\rangle_b$.  Furthermore, by  Proposition \ref{Ne} and Corollary \ref{cor-3.11},
        there exists a constant $C>0$ such that for all $u,w\in E_b$
        \begin{align}\label{fuw-1111}
            \left| \int_{\mathds{R}^{N}} f(x,u)  \varphi(x) \mathrm{d}x\right|\leqslant C \|u\|^p_{b}  \|\varphi\|_{b}.
\end{align}
        Finally, by Lemmas \ref{Embedded} and \ref{Lemaeqnorms} and the equivalence of the norms
            $\|\cdot\|_a$ and $\|\cdot\|_b$, there exists a constant $C_1>0$ such that
         \begin{align}
            \label{auw-1111}
            \left| \int_{\mathds{R}^{N}} a(x) u(x) \psi(x) \mathrm{d}x\right|
            &\,\leqslant  \left|\int_{\mathds{R}^{N}} a_+(x) u(x) \psi(x) \mathrm{d}x\right|
            + \esssup_{\mathbb{R}^N}|a_-| \|u\|_{L_2}\|\psi\|_{L_2}
            \leqslant C_1 \|u\|_b \|\psi\|_b.
\end{align}
        Estimates \eqref{fuw-1111} and \eqref{auw-1111}  imply that the integrals  $\int_{\mathds{R}^{N}} f(x,u)  \varphi \, \mathrm{d}x$ 
        and $\int_{\mathds{R}^{N}} a(x) u \psi \, \mathrm{d}x$ can be approximated by $\int_{\mathds{R}^{N}} f(x,u) \varphi_n \, \mathrm{d}x$ and
            $\int_{\mathds{R}^{N}} a(x) u \psi_n \, \mathrm{d}x$, respectively.
    \end{proof}

  \subsection{Equivalence of system \eqref{P} to a non-local equation} 
The following proposition provides a reformulation of system \eqref{P} in the form of 
one non-local equation that will be used throughout the remainder of this paper.
     \begin{proposition}\label{existen}  Assume $b\in\mathcal T(\mathbb{R}^N)$ 
     and $a(x)$ satisfies {\rm \ref{f0a}}.
         Let $u \in E_{b}$. Then, the equation
        \begin{equation}\label {P2}
        -\Delta v+b(x)v=\beta a(x)u %\quad \text{in } \mathds{R}^{N}
        \end{equation}
        has a unique weak solution $v \in E_{b}$, that is,
        there exists a continuous solution operator $S_b: E_b \to E_b$,
        \begin{align*}
        S_{b} w: = \beta (-\Delta+b)^{-1} (a w)
\end{align*}
        such that equation \eqref{P2} is equivalent to
        \begin{equation}\label {DefSb}
        v=S_{b} u.
        \end{equation}
        Moreover,
        \begin{equation}\label {riesz}
    \|S_{b}u\|_{b}=\beta\sup_{\|w\|_b\leqslant1}\left|\int_{\mathds{R}^N} a(x) u w \, \mathrm{d} x\right|.
    \end{equation}
 \end{proposition}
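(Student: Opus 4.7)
The plan is to realize $v$ as the Riesz representative, in the Hilbert space $(E_b,\langle\cdot,\cdot\rangle_b)$, of a continuous linear functional built from the right-hand side. By Lemma~\ref{sg-norm-1111} together with Lemma~\ref{Completo}, $(E_b,\langle\cdot,\cdot\rangle_b)$ is indeed a Hilbert space under the bilinear form
\begin{equation*}
\langle v,w\rangle_b = \int_{\mathds{R}^N}\nabla v\cdot\nabla w\,\mathrm{d}x+\int_{\mathds{R}^N} b(x)vw\,\mathrm{d}x,
\end{equation*}
so the Riesz theorem is available. The plan of attack is thus: (i) fix $u\in E_b$ and introduce $\Phi_u(w):=\beta\int_{\mathds{R}^N} a(x)uw\,\mathrm{d}x$; (ii) verify that $\Phi_u$ is a bounded linear functional on $E_b$; (iii) apply Riesz to obtain a unique $v\in E_b$ with $\langle v,w\rangle_b=\Phi_u(w)$ for all $w\in E_b$; (iv) identify this identity with the weak formulation of \eqref{P2}; and (v) read off the norm identity \eqref{riesz}.

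The only genuinely delicate step is (ii), since $a$ may be merely $L^{\infty}_{\mathrm{loc}}$ and sign-changing; this is exactly where \ref{f0a} is used. Writing $a=a_+-a_-$, I would split
\begin{equation*}
\Big|\int_{\mathds{R}^N}a\,uw\,\mathrm{d}x\Big|\leqslant\int_{\mathds{R}^N}a_+|u||w|\,\mathrm{d}x+\int_{\mathds{R}^N}a_-|u||w|\,\mathrm{d}x.
\end{equation*}
The first integral is controlled by Cauchy--Schwarz through $\|u\|_{a_+}\|w\|_{a_+}$, and by \ref{f0a} the norm $\|\cdot\|_{a_+}$ is equivalent to $\|\cdot\|_{b_+}$ on $E_b$, which in turn is equivalent to $\|\cdot\|_b$ by Lemma~\ref{Lemaeqnorms}. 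For the $a_-$ term, hypothesis~\ref{f0a} implies $a_-\in L^\infty(\mathds{R}^N)$ (either because $a\in\mathcal T(\mathds{R}^N)$ is essentially bounded from below, or because $a\geqslant 0$ so $a_-\equiv 0$); combining with the continuous embedding $E_b\hookrightarrow L_2(\mathds{R}^N)$ from Remark~\ref{rm2.6-1111} gives the bound $\esssup|a_-|\,\|u\|_{L_2}\|w\|_{L_2}$. Assembling the two estimates yields a constant $C>0$, depending only on $a,b,\beta$, with $|\Phi_u(w)|\leqslant C\|u\|_b\|w\|_b$, so $\Phi_u\in E_b^{\ast}$ and moreover the map $u\mapsto\Phi_u$ is bounded linear from $E_b$ to $E_b^{\ast}$.

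Granting (ii), the Riesz theorem produces a unique $v\in E_b$ with $\langle v,w\rangle_b=\Phi_u(w)$ for every $w\in E_b$. Restricting to $w=\psi\in{\rm C}_0^\infty(\mathds{R}^N)\subset E_b$ recovers the weak formulation of~\eqref{P2}; conversely, any weak solution $v\in E_b$ satisfies the same identity for all $\psi\in{\rm C}_0^\infty(\mathds{R}^N)$, and by density (Lemma~\ref{dense}) together with the continuity bounds just established, the identity extends to all $w\in E_b$, yielding uniqueness. Define $S_b u:=v$; linearity of $S_b$ is inherited from $u\mapsto\Phi_u$, and continuity from the estimate in (ii) with $\|S_b u\|_b=\|\Phi_u\|_{E_b^{\ast}}\leqslant C\|u\|_b$.

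Finally, identity~\eqref{riesz} is the standard Riesz norm identity: on the one hand, taking $w=S_b u/\|S_b u\|_b$ in $\langle S_b u,w\rangle_b=\Phi_u(w)$ gives $\|S_b u\|_b\leqslant\beta\sup_{\|w\|_b\leqslant 1}|\int a\,uw\,\mathrm{d}x|$, while on the other hand, Cauchy--Schwarz in $(E_b,\langle\cdot,\cdot\rangle_b)$ applied to $\Phi_u(w)=\langle S_b u,w\rangle_b$ gives the reverse inequality. The hardest point, to reiterate, is step (ii): establishing that the possibly sign-changing, only locally bounded coefficient $a$ still produces a continuous functional on $E_b$—this is precisely the role of the compatibility condition $E_a=E_b$ with equivalent topologies embedded in~\ref{f0a}.
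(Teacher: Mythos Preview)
Your proposal is correct and follows essentially the same approach as the paper: both arguments establish boundedness of the functional $w\mapsto\beta\int a\,uw\,\mathrm{d}x$ on $E_b$ via the splitting $a=a_+-a_-$ (using~\ref{f0a} for the $a_+$ part and the embedding $E_b\hookrightarrow L_2$ for the bounded $a_-$ part), then apply the Riesz representation theorem in the Hilbert space $(E_b,\langle\cdot,\cdot\rangle_b)$. Your treatment is in fact slightly more self-contained, as you invoke the density Lemma~\ref{dense} directly to pass between ${\rm C}_0^\infty$ test functions and $E_b$, rather than routing through Lemma~\ref{type12-1111}.
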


    \begin{proof}
    Since the right-hand side of the second equation in \eqref{23-1111} represents the scalar product
    $\<v,\psi\rangle_b$, and in view of
  Lemma \ref{type12-1111}, we have to prove that
    there exists a unique element $v\in E_b$ such that 
    \begin{align}
    \label{scl-1111}
    \<v,w\rangle_{b} = \beta\int_{\mathds{R}^{N}} a(x) u w \, \mathrm{d}x \qquad \forall \;\; w\in E_b.
\end{align}
    Note that by \eqref{auw-1111},
    \begin{align}
    \label{func-1111}
    w \mapsto \beta\int_{\mathds{R}^{N}} a(x) u w \, \mathrm{d}x
\end{align}
    is a linear continuous functional on $E_b$. 
      By the Riesz representation theorem in the Hilbert space
    $E_b$, there exists a unique element
    $v\in E_b$ such that \eqref{scl-1111} is fulfilled.
    Thus, we have defined a map
    \begin{align*}
    S_b: E_b\to E_b, \quad u\mapsto v.
\end{align*}
    Identity \eqref{riesz} is implied by the continuity of the functional
    \eqref{func-1111}. The continuity of $S_b: E_b \to E_b$ follows from
    \eqref{auw-1111}.
     \end{proof}
     \begin{corollary}
     \label{lap-inv-1111}
      Assume that $a(x)$ satisfies  {\rm \ref{f0a}} and $\sigma(x)$ also
      satisfies {\rm \ref{f0a}} in place of $a(x)$. 
         Let $u \in E_{b}$. Then, each of the equations
        \begin{equation}\label {Psg}
        -\Delta v+\sigma(x)v= u \quad \text{and} \quad -\Delta v+ \sigma(x)v=a(x) u  
        \end{equation}
        has a unique weak solution, that is,
        there exist continuous solution operators $(-\Delta+\sigma)^{-1}: E_b \to E_b$
        and $(-\Delta+\sigma)^{-1}a: E_b \to E_b$
        such that equations \eqref{Psg} are,  respectively, equivalent to
        \begin{equation*}
        v= (-\Delta+\sigma)^{-1}  u \quad \text{and} \quad v= (-\Delta+\sigma)^{-1} (a u).
              \end{equation*}          
  \end{corollary}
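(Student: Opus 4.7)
The plan is to mirror the Riesz representation argument used in Proposition \ref{existen}, with $\sigma$ playing the role of $b$ and the right-hand sides modified accordingly. First, observe that since $\sigma$ satisfies \ref{f0a} (in the role of $a$), we have $E_\sigma = E_b$ as sets, the norms $\|\cdot\|_{\sigma_+}$ and $\|\cdot\|_{b_+}$ are equivalent, and Lemmas \ref{Lemaeqnorms}, \ref{sg-norm-1111}, and \ref{Completo} together imply that $(E_b, \<\cdot,\cdot\rangle_\sigma)$ is a Hilbert space whose norm $\|\cdot\|_\sigma$ is equivalent to $\|\cdot\|_b$. This places us in exactly the setting where the Riesz representation theorem applies.

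For the first equation $-\Delta v + \sigma(x) v = u$, I would check that the linear map $\Phi_1: w \mapsto \int_{\mathds{R}^N} u w \, \mathrm{d}x$ is continuous on $(E_b, \|\cdot\|_b)$. Since $E_b \hookrightarrow L_2(\mathds{R}^N)$ continuously by Remark \ref{rm2.6-1111}, the Cauchy--Schwarz inequality gives $|\Phi_1(w)| \leqslant \|u\|_{L_2} \|w\|_{L_2} \leqslant C \|u\|_b \|w\|_b$. By Riesz representation in $(E_b, \<\cdot,\cdot\rangle_\sigma)$, there exists a unique $v \in E_b$ with $\<v, w\rangle_\sigma = \Phi_1(w)$ for all $w \in E_b$. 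A verbatim adaptation of Lemma \ref{type12-1111} (replacing $b$ by $\sigma$ and setting $a \equiv 1$) shows that this identity encodes precisely the notion of weak solution to the first equation in \eqref{Psg}. Defining $(-\Delta + \sigma)^{-1} u := v$, continuity follows from the analogue of \eqref{riesz}, namely $\|v\|_\sigma = \sup_{\|w\|_\sigma \leqslant 1} |\Phi_1(w)|$, combined with the norm equivalence $\|\cdot\|_\sigma \sim \|\cdot\|_b$.

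For the second equation $-\Delta v + \sigma(x) v = a(x) u$, the same scheme applies with $\Phi_2: w \mapsto \int_{\mathds{R}^N} a(x) u w \, \mathrm{d}x$ in place of $\Phi_1$. Continuity of $\Phi_2$ on $E_b$ was already established in estimate \eqref{auw-1111} from the hypothesis \ref{f0a} on $a$. Riesz representation again yields a unique $v \in E_b$ with $\<v, w\rangle_\sigma = \Phi_2(w)$, and continuity of the composite operator $(-\Delta + \sigma)^{-1} a : E_b \to E_b$ is immediate from the bound on $\Phi_2$ together with the norm equivalence.

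The main obstacle, such as it is, lies in bookkeeping: one must carefully verify that $(E_b, \<\cdot,\cdot\rangle_\sigma)$ really is a Hilbert space under the assumption that $\sigma$ satisfies \ref{f0a} in the role of $a$, so that Riesz applies. This is handled by Lemma \ref{Lemaeqnorms} (giving $\|\cdot\|_\sigma \sim \|\cdot\|_{\sigma_+}$) together with the equivalence $\|\cdot\|_{\sigma_+} \sim \|\cdot\|_{b_+}$ supplied by \ref{f0a}, chained to $\|\cdot\|_{b_+} \sim \|\cdot\|_b$ via Lemma \ref{Lemaeqnorms}. Everything else reduces to reusing the continuity estimates already derived for Proposition \ref{existen}.
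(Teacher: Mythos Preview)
Your approach is essentially the same as the paper's: apply the Riesz representation theorem in the Hilbert space $(E_b,\langle\cdot,\cdot\rangle_\sigma)$ after checking that the relevant linear functionals are continuous. One small bookkeeping point: you invoke Lemmas \ref{Lemaeqnorms} and \ref{sg-norm-1111} for $\sigma$ to obtain $\|\cdot\|_\sigma\sim\|\cdot\|_{\sigma_+}$ and to verify the inner-product property, but those lemmas require $\sigma\in\mathcal T(\mathds{R}^N)$, whereas \ref{f0a} also permits the alternative that $\sigma\geqslant 0$ a.e.\ with $\sigma\not\equiv 0$. The paper handles this explicitly with a two-case split; in the second case $\sigma=\sigma_+$, so $\|\cdot\|_\sigma=\|\cdot\|_{\sigma_+}$ trivially and positive definiteness of $\langle\cdot,\cdot\rangle_\sigma$ is immediate, after which completeness follows from the norm equivalence with $\|\cdot\|_b$ and Lemma \ref{Completo} applied to $b$ (not $\sigma$). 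With that case split inserted, your argument matches the paper's.
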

  \begin{proof}
   Note that by  Lemma \ref{Embedded}, the argument used in the proof also works for $a=1$, so
   the operator $(-\Delta+b)^{-1}: E_b \to E_b$ exists. Next, if $\sigma \in \mathcal T(\mathbb{R}^N)$, by Lemma \ref{sg-norm-1111},
   $\langle\cdot,\cdot\rangle_\sigma$ is a scalar product on $E_\sigma = E_b$. Otherwise, 
   if $\sigma\geqslant 0$ (a.e.) is measurable with $\sigma\ne 0$ a.e.,
   $\langle\cdot,\cdot\rangle_\sigma$ is also a scalar product on $E_\sigma$. Since \eqref{func-1111} is a linear continuous functional 
   on $E_b$, by \ref{f0a}, it is also a linear continuous functional 
   on $E_\sigma$. Now the argument at the end of the proof of Propositin \ref{existen}, 
   involving the Riesz theorem, implies the statement of the corollary.
  \end{proof}
       Thus, with the help of the operator $S_{b}$, system \eqref{P} is  reduced to one non-local equation
\begin{equation}\label {PS}
\begin{aligned}
-\Delta u + a(x)S_{b}(u) = f(x,u).
\end{aligned}
\end{equation}
    Our goal now is to prove the existence of a weak solution
    to \eqref{PS} in the sense of \eqref{23-1111}. In view of Lemma 
    \ref{type12-1111}, we have to show that
    there exists a function $u\in E_b$ such that
    \begin{equation}
    \label{weak-solution-2222}
        \int_{\mathds{R}^{N}} \nabla u \nabla w \, \mathrm{d}x+\int_{\mathds{R}^{N}} a(x) (S_{b}u) w \, 
        \mathrm{d}x =\int_{\mathds{R}^{N}} f(x,u) w   \qquad \forall w \in E_{b}.
    \end{equation}
    \subsection{Properties of the operator $S_{b}$}
    \label {prps-1111}
    \begin{lemma} \label {Properties}
       Assume that $b\in \mathcal T(\mathbb{R}^N)$. Then,
        the operator $ S_{b}:(E_{b},\|\cdot \|_b )\to (E_{b},\|\cdot \|_b ) $ 
        satisfies the following symmetry property:
        \begin{equation}\label {simetriaSb}
        \int_{\mathds{R}^{N}} a(x) u(S_{b}v) \, \mathrm{d}x = \int_{\mathds{R}^{N}} a(x) v(S_{b}u) \, \mathrm{d}x \quad \text{for all }u,v\in E_{b}.
        \end{equation}
        \end{lemma}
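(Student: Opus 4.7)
The plan is to exploit the definition of $S_b$ as the Riesz-type solution operator established in Proposition \ref{existen}: for $u \in E_b$, the element $S_b u \in E_b$ is characterized by
\begin{equation*}
\langle S_b u, w \rangle_b = \beta \int_{\mathds{R}^N} a(x) u w \, \mathrm{d}x \qquad \forall \; w \in E_b.
\end{equation*}
In other words, $w \mapsto \beta \int a u w$ is a continuous linear functional on $E_b$ (as shown via estimate \eqref{auw-1111}), and $S_b u$ is its Riesz representative with respect to $\langle \cdot, \cdot \rangle_b$. Since $S_b$ maps $E_b$ into $E_b$ by Proposition \ref{existen} and Lemma \ref{type12-1111} allows using any element of $E_b$ as a test function, both $S_b u$ and $S_b v$ are admissible test functions in the defining identity.

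The key step is then to test the defining identity for $S_b u$ against $w = S_b v$ and, symmetrically, the identity for $S_b v$ against $w = S_b u$. This yields
\begin{equation*}
\langle S_b u, S_b v \rangle_b = \beta \int_{\mathds{R}^N} a(x) u (S_b v) \, \mathrm{d}x,
\qquad
\langle S_b v, S_b u \rangle_b = \beta \int_{\mathds{R}^N} a(x) v (S_b u) \, \mathrm{d}x.
\end{equation*}
Because $\langle \cdot, \cdot \rangle_b$ is an inner product on $E_b$ (Lemma \ref{sg-norm-1111} applied to $b$, together with the equivalence of $\|\cdot\|_a$ and $\|\cdot\|_b$ from Lemma \ref{equiv-1111} to ensure everything is well-defined), it is symmetric, so the two left-hand sides agree. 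Dividing by $\beta>0$ produces \eqref{simetriaSb}.

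I do not foresee a serious obstacle here, since the identity is essentially a transcription of the self-adjointness of $(-\Delta + b)^{-1}$ composed with multiplication by $a$, packaged through the Riesz representation. The only point that requires care is the admissibility of $S_b u, S_b v \in E_b$ as test functions in the weak formulation, which is exactly what Lemma \ref{type12-1111} provides.
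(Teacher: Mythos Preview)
Your proposal is correct and follows essentially the same approach as the paper: both test the defining identity $\langle S_b u, w\rangle_b = \beta\int a u w\,\mathrm{d}x$ with $w=S_b v$ (and symmetrically), then invoke the symmetry of the inner product $\langle\cdot,\cdot\rangle_b$ to conclude. Minor remark: the defining identity \eqref{scl-1111} already holds for all $w\in E_b$ by the Riesz representation argument in Proposition~\ref{existen}, so the appeal to Lemma~\ref{type12-1111} is not actually needed here.
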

    \begin{proof}
       We note that by \eqref{DefSb}
       and \eqref{scl-1111}, for all $u,w\in E_{b}$,
        \begin{equation}\label {au}
            \langle S_{b}u,w \rangle_{b}=\beta\int_{\mathds{R}^{N}}a(x)uw \,
            \mathrm{d}x.
        \end{equation}
        By writing \eqref{au} with $w = S_bv$, by the definition of the scalar product
        $\langle\cdot,\cdot\rangle_b$, from \eqref{au}
        we obtain
        \begin{equation}
        \label{scl-prd-1111}
                \int_{\mathds{R}^{N}} a(x) uS_{b}v \, \mathrm{d}x  = \frac{1}{\beta} \left( \int_{\mathds{R}^{N}}
                \big(\nabla (S_{b}u), \nabla (S_{b}v)\big) \, \mathrm{d}x +\int_{\mathds{R}^{N}}b(x)S_{b}u S_{b}v \, \mathrm{d}x\right).
        \end{equation}
        Note that the right-hand side of \eqref{scl-prd-1111} is symmetric with respect to $u$ and $v$. Changing the roles
        of $u$ and $v$ on the left-hand side, we obtain that
        the integral $\int_{\mathds{R}^{N}} a(x) vS_{b}u \, \mathrm{d}x$
        also equals the right-hand side of \eqref{scl-prd-1111}.
        This proves \eqref{simetriaSb}.
    \end{proof}

\subsection{The Hilbert space $(E_b, \|\cdot\|_{ab})$}
\label {prps-2222}
    In this subsection, we introduce a norm $\|\cdot\|_{ab}$ on $E_b$ equivalent to the norm $\|\cdot\|_b$.
    More specifically, we will show that the bilinear form
    \begin{equation}
    \label{inner-ab}
        \langle u,v \rangle_{ab}: =\int_{\mathds{R}^{N}} \nabla u \nabla v + 
        a(x) u S_{b}(v) \, \mathrm{d}x
    \end{equation}
    defines an inner product on $E_b$. 
     \begin{proposition}\label{Ne}
     Let $b\in \mathcal T(\mathbb{R}^N)$ and $a(x)$ satisfy {\rm \ref{f0a}}.
    Then,
       $(E_{b},\langle \cdot , \cdot\rangle_{ab})$ is
       a Hilbert space.
       Furthermore,
       \begin{align}
       \label{ab-exp-1111}
       \|u\|^2_{ab} =
        \|\nabla u\|^2_{L_2(\mathbb{R}^N)}  +\beta^{-1}\|S_b u\|_{b}^2
\end{align}
       and the norms $\|\cdot \|_{ab}$ and $\|\cdot \|_{b}$ are equivalent.
    \end{proposition}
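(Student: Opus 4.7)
The plan is, first, to verify that \eqref{inner-ab} defines a genuine inner product on $E_b$, then to derive the identity \eqref{ab-exp-1111}, and finally to prove the equivalence of $\|\cdot\|_{ab}$ with $\|\cdot\|_b$, from which completeness of $(E_b,\|\cdot\|_{ab})$ is inherited from the Hilbert space $(E_b,\|\cdot\|_b)$ of Lemma \ref{Completo}. Bilinearity of $\langle\cdot,\cdot\rangle_{ab}$ is immediate from the linearity of $S_b$ (Proposition \ref{existen}), and symmetry is precisely the content of Lemma \ref{Properties}. Setting $v=u$ in \eqref{scl-prd-1111} (obtained inside the proof of Lemma \ref{Properties}) yields
\begin{equation*}
\int_{\mathds{R}^N} a(x)\,u\,S_b u\,\mathrm{d}x = \beta^{-1}\|S_b u\|_b^2,
\end{equation*}
which gives \eqref{ab-exp-1111}. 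In particular $\|u\|_{ab}^2\geqslant 0$, and $\|u\|_{ab}=0$ forces $\nabla u=0$ in $L_2(\mathds{R}^N)$; since $E_b\subset D^{1,2}(\mathds{R}^N)\subset L_{2^*}(\mathds{R}^N)$, the resulting constant must vanish, establishing positive definiteness.

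For the upper bound $\|u\|_{ab}\leqslant C_2\|u\|_b$, I would control the gradient term by $\|\nabla u\|_{L_2}\leqslant\|u\|_{b_+}$ combined with the equivalence of $\|\cdot\|_{b_+}$ and $\|\cdot\|_b$ provided by Lemma \ref{Lemaeqnorms}. For the nonlocal term, the Riesz-type identity \eqref{riesz} together with the estimate \eqref{auw-1111} (whose validity on $E_b$ rests on Lemmas \ref{Embedded}, \ref{Lemaeqnorms} and \ref{equiv-1111}) yields $\|S_b u\|_b\leqslant C\|u\|_b$. This is the straightforward direction.

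The main obstacle is the reverse inequality $\|u\|_b\leqslant C_1\|u\|_{ab}$, because $\|u\|_{ab}$ controls only $\nabla u$ and the ``output'' $S_b u$, not the bulk term $\int b\,u^2$ directly. I would argue by contradiction: suppose $\{u_n\}\subset E_b$ satisfies $\|u_n\|_b=1$ and $\|u_n\|_{ab}\to 0$. By \eqref{ab-exp-1111}, $\|\nabla u_n\|_{L_2}\to 0$ and $\|S_b u_n\|_b\to 0$. Taking $w=u_n$ in \eqref{au} and using the Cauchy--Schwarz inequality in the Hilbert space $(E_b,\|\cdot\|_b)$ gives
\begin{equation*}
\Big|\int_{\mathds{R}^N}a(x)\,u_n^2\,\mathrm{d}x\Big|=\beta^{-1}\bigl|\langle S_b u_n,u_n\rangle_b\bigr|\leqslant\beta^{-1}\|S_b u_n\|_b\|u_n\|_b\longrightarrow 0.
\end{equation*}
Consequently $\|u_n\|_a^2=\|\nabla u_n\|_{L_2}^2+\int_{\mathds{R}^N}a(x)u_n^2\,\mathrm{d}x\to 0$, which contradicts the equivalence of $\|\cdot\|_a$ and $\|\cdot\|_b$ from Lemma \ref{equiv-1111}, whereby $\|u_n\|_a\geqslant C\|u_n\|_b=C>0$. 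With both bounds in hand, the completeness of $(E_b,\|\cdot\|_{ab})$ is inherited from that of $(E_b,\|\cdot\|_b)$, and the proof is complete.
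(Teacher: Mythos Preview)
Your argument is correct and follows essentially the same line as the paper: symmetry via Lemma \ref{Properties}, the identity \eqref{ab-exp-1111} from \eqref{scl-prd-1111}, the upper bound from continuity of $S_b$, and the lower bound by combining the key estimate $|\int a u^2|\leqslant\beta^{-1}\|S_b u\|_b\|u\|_b$ (from \eqref{au}) with the equivalence of $\|\cdot\|_a$ and $\|\cdot\|_b$. Two cosmetic differences: your positive-definiteness step is actually cleaner (you use only $\nabla u=0$ together with $E_b\subset L_{2^*}$, whereas the paper additionally exploits $S_b u=0$ to force $a\equiv 0$), and for the lower bound the paper gives a direct quantitative estimate via Young's inequality rather than your equivalent contradiction argument.
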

    \begin{proof}
   By Lemma \ref{Properties}, 
   \eqref{inner-ab} defines a symmetric bilinear form on $E_b\times E_b$.
   To show that $\langle \cdot, \cdot \rangle_{ab}$ is an inner product, it suffices to verify that
   \begin{align}
   \label{ab-scl-1111}
   \langle u, u \rangle_{ab} \geqslant 0 \quad \text{for all}\;\;
   u \in E_b
\end{align}
 and that $\langle u, u \rangle_{ab} = 0$ if and only if $u = 0$.
    By \eqref{scl-prd-1111}, for all $u\in E_b$,
    \begin{equation}
    \label{norm-a-1111}
                \int_{\mathds{R}^{N}} a(x) uS_{b}u \, \mathrm{d}x  = \beta^{-1} \left( \int_{\mathds{R}^{N}}
                |\nabla S_{b}u|^2  \, \mathrm{d}x +\int_{\mathds{R}^{N}}b(x)(S_{b}u)^2  \, \mathrm{d}x\right)= \beta^{-1}\|S_bu\|_{b}^2.
        \end{equation}
    Therefore,  by the definition of $\langle \cdot, \cdot \rangle_{ab}$,
        \begin{equation}
            \label {Sb2}
                \langle u, u \rangle_{ab} =
                \int_{\mathds{R}^{N}}| \nabla u|^2\, \mathrm{d}x +   \int_{\mathds{R}^{N}} a(x)  S_{b}(u) u \, \mathrm{d}x
                = \|\nabla u\|^2_{L_2(\mathbb{R}^N)}  +\beta^{-1}\|S_b u\|_{b}^2 \geqslant 0.
        \end{equation}
        This proves \eqref{ab-exp-1111} and \eqref{ab-scl-1111}.
 Further, $\langle u, u \rangle_{ab} = 0$ if and only if
 $\|\nabla u\|_{L_2(\mathbb{R}^N)}=0$ and $\|S_b u\|_{b}=0$.
 This implies that $\nabla u = 0$ and $S_b u = 0$ a.e.
 Consequently, $u=const$ a.e.
 Suppose $u=const \ne 0$. Then, from \eqref{au}, we obtain that
 for all $w\in {{\rm C}}^\infty_0(\mathbb{R}^N)$,
 $\int_{\mathbb{R}^N} a(x) w(x) {\rm d}x = 0$. Hence $\rho_\varepsilon \ast a = 0$ on $\mathbb{R}^N$, where
$\rho_\varepsilon$ is the standard mollifier.
  Since $\lim_{\varepsilon\to 0} \rho_\varepsilon \ast a = a$ a.e.~on $L_2(B_R(0))$ for any ball $B_R(0)$, we obtain that 
  $a = 0$ a.e.~on $\mathbb{R}^N$.
 Hence, $u=const = 0$ a.e. Thus, we have
 proved that $\langle u,v \rangle_{ab}$ is an inner product
 on $E_b$.

        Let us show now the equivalence of
        the norms $\|\cdot \|_{ab}$ and $\|\cdot \|_{b}$.
        By the boundedness of $S_b$ and \eqref{ab-exp-1111}, 
        there exists a constant $C>0$ such that
        \begin{equation}
        \label{equiv-2222}
            \|u\|_{ab}\leqslant C \|u\|_{b}.
        \end{equation}
        On the other hand, by \eqref{au}, for every $u\in E_b$,
         \begin{align*}
        \left|\int a(x) u^2 dx\right| \leqslant
          \beta^{-1} \|S_b u\|_b \|u\|_b.
\end{align*}
        Therefore, by the equivalence of $\|\cdot\|_a$ and
        $\|\cdot\|_b$,
        there exist constants $C_1,C_2>0$ such that
        \begin{align*}
        \|u\|^2_a \leqslant &\, \|\nabla u\|_{L_2}^2 +
        \left|\int a(x) u^2 dx\right| \leqslant
        \|\nabla u\|_{L_2}^2  +\beta^{-1}\|S_b u\|_b \|u\|_b\\
        \leqslant & \, \|\nabla u\|_{L_2}^2
        +C_1 \beta^{-1} \|S_b u\|_b \|u\|_a
        \leqslant  C_2\big(\|\nabla u\|_{L_2}^2
        +\beta^{-1} \|S_b u\|_b^2\Big) + \frac{\|u\|^2_a}2.
\end{align*}
        Using again the equivalence of $\|\cdot\|_a$ and
        $\|\cdot\|_b$ along with \eqref{ab-exp-1111}, we conclude that
        there exists a constant $C_3>0$ such that
        \begin{align*}
        \|u\|_b \leqslant C_3 \|u\|_{ab}.
\end{align*}
         Together with \eqref{equiv-2222},
        this proves the equivalence of the norms
        $\|\cdot \|_{ab}$ and $\|\cdot \|_{b}$. In particular,
        the space $(E_b, \|\cdot\|_{ab})$ is complete, and hence,
        a Hilbert space.
\end{proof}
      \begin{proposition}\label{compa}
      Let $b\in \Upsilon_{s}(\mathds{R}^{N})$ for some
      $s\in (2,2^{*})$ and $a(x)$ satisfy {\rm \ref{f0a}}.
    Then, the canonical embedding 
         \begin{align}
         \label{Ls-embd=1111}
         (E_{b}, \|\cdot \|_{ab}) \hookrightarrow L_{s}(\mathds{R}^N)
\end{align}
        is compact for all $s\in (2,2^{*})$. If $b\in \Upsilon_{2}(\mathds{R}^{N})$,   
        then \eqref{Ls-embd=1111} is compact for all $s\in [2,2^{*})$.
    \end{proposition}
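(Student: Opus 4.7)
The plan is to reduce the claim to compactness of the embedding $(E_b, \|\cdot\|_b) \hookrightarrow L_s(\mathbb{R}^N)$, since the norms $\|\cdot\|_{ab}$ and $\|\cdot\|_b$ are equivalent by Proposition \ref{Ne}. First, I would take a bounded sequence $\{u_n\}$ in $(E_b, \|\cdot\|_b)$; by Lemma \ref{Completo}, $E_b$ is a (reflexive) Hilbert space, so along a subsequence $u_n \rightharpoonup u$ weakly in $E_b$, and by replacing $u_n$ with $u_n - u$ I may assume $u=0$. Lemma \ref{Embedded} then gives that $\{u_n\}$ is bounded in $H^1(\mathbb{R}^N)$, so the Rellich--Kondrachov theorem yields $u_n \to 0$ strongly in $L_s(B_R(0))$ for every $R > 0$ and every $s \in [1, 2^*)$. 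What remains is to establish tightness at infinity, namely
\[
\lim_{R\to\infty}\sup_n \int_{\mathbb{R}^N\setminus B_R(0)} |u_n|^s\, dx = 0,
\]
since this combined with the local strong convergence gives $u_n\to 0$ in $L_s(\mathbb{R}^N)$. Note that by Remark \ref{usp-s-1111}, the condition \eqref{coecivo} holds for the required range of $s$ under either hypothesis on $b$.

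To prove tightness, I would fix $r_0 > 0$ and cover $\mathbb{R}^N\setminus B_R(0)$ by a lattice family of balls $\{B_{r_0}(y_j)\}_j$ such that $\{B_{2r_0}(y_j)\}_j$ has uniformly bounded overlap $N_0=N_0(N)$. Choose cutoffs $\eta_j \in C_c^\infty(B_{2r_0}(y_j))$ with $\eta_j \equiv 1$ on $B_{r_0}(y_j)$ and $|\nabla \eta_j|\leq C/r_0$, so that $\eta_j u_n \in H_0^1(B_{2r_0}(y_j))$. By the definition of $\nu_s$,
\[
\|\eta_j u_n\|_{L_s(B_{2r_0}(y_j))}^2 \leq \nu_s\bigl(b, B_{2r_0}(y_j)\bigr)^{-1}\int_{B_{2r_0}(y_j)} \bigl(|\nabla(\eta_j u_n)|^2 + b\, \eta_j^2 u_n^2\bigr)\,dx.
\]
Expanding $|\nabla(\eta_j u_n)|^2$ by the product rule, using $b\,\eta_j^2 u_n^2 \leq b_+ u_n^2$, and $|\nabla \eta_j|\leq C/r_0$, the right-hand integrand is dominated pointwise by $2|\nabla u_n|^2 + b_+ u_n^2 + (2C^2/r_0^2)\, u_n^2\,\mathbf{1}_{\mathrm{supp}\,\nabla\eta_j}$. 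Given $M > 0$, condition \eqref{coecivo} lets me enlarge $R$ so that $\nu_s(b, B_{2r_0}(y_j)) \geq M$ for every $j$ indexed in the cover. Summing over $j$ and exploiting the bounded overlap, together with the uniform bound $\sup_n(\|u_n\|_{b_+} + \|u_n\|_{L_2})<\infty$ from Lemma \ref{Embedded}, I obtain
\[
\sum_j \|\eta_j u_n\|_{L_s(B_{2r_0}(y_j))}^2 \leq \frac{C}{M},
\]
with $C$ independent of $n$ and $R$.

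When $b\in \Upsilon_2(\mathbb{R}^N)$ and $s=2$, this already completes the tightness step since $\int_{\mathbb{R}^N\setminus B_R}|u_n|^2\, dx \leq \sum_j \|\eta_j u_n\|_{L_2}^2 \leq C/M$, and letting $M\to\infty$ finishes the argument. For $s\in (2,2^*)$, I would upgrade the $L_s^{2}$-bound to an $L_s^{s}$-bound by interpolation: since $\{u_n\}$ is uniformly bounded in $L_s(\mathbb{R}^N)$ by Remark \ref{rm2.6-1111}, the factor $\|\eta_j u_n\|_{L_s}^{s-2}$ is uniformly controlled, hence
\[
\int_{\mathbb{R}^N\setminus B_R}|u_n|^s\, dx \leq \sum_j \|\eta_j u_n\|_{L_s}^{s} \leq C' \sum_j \|\eta_j u_n\|_{L_s}^{2} \leq \frac{C''}{M}.
\]
The main technical obstacle is the passage from the local $\nu_s$-estimate to a summed bound: one must carefully absorb the cross terms arising from the product-rule expansion of $|\nabla(\eta_j u_n)|^2$ and neutralize the sign-indefinite contribution $b\,\eta_j^2 u_n^2$ uniformly in $j$ and $n$. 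It is precisely the essentially-bounded-below property of $b$ (built into the classes $\mathcal T$ and $\Upsilon_s$) together with the bounded-overlap property of the lattice cover that make the constants genuinely independent of $n$ and $R$, and enable the conclusion.
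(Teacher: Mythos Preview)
Your argument is correct, but it takes a different route from the paper's proof. The paper does not set up a lattice cover; instead it invokes Lemma~\ref{LemmaSirakov} (Sirakov), which reformulates condition~\eqref{coecivo} as
\[
\lim_{R\to\infty}\nu_s\bigl(b,\mathds{R}^N\setminus\overline{B_R(0)}\bigr)=\infty.
\]
With this in hand, the paper uses a \emph{single} cutoff $\phi^r$: writing $u_n=\phi^r u_n+(1-\phi^r)u_n$, the tail $(1-\phi^r)u_n\in H_0^1(\mathds{R}^N\setminus B_r)$ is estimated in one stroke by the definition of $\nu_s$ on the exterior domain, while $\phi^r u_n\in H_0^1(B_{2r})$ is handled by the classical compact embedding. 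Your approach instead works directly with the ball version of~\eqref{coecivo}, localizes via cutoffs $\eta_j$ supported in $B_{2r_0}(y_j)$, and sums using bounded overlap; the passage from the $L_s^2$-sum to the $L_s^s$-sum via the uniform factor $\|\eta_j u_n\|_{L_s}^{s-2}\leqslant\|u_n\|_{L_s}^{s-2}$ is a nice device that the paper does not need because the exterior-domain bound is already in the right form. Your route is more self-contained (it bypasses Lemma~\ref{LemmaSirakov}) at the cost of some extra bookkeeping, whereas the paper's route is shorter once that lemma is available. Both handle the sign-changing potential the same way, by dominating $b\,\eta^2 u^2$ (respectively $b\,(1-\phi^r)^2 u^2$) by $b_+ u^2$ and absorbing the negative part into the $L_2$-bound supplied by Lemma~\ref{Embedded}.
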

    The following lemma will be used in the proof of Proposition \ref{compa}.
    \begin{lemma}\label {LemmaSirakov}
        Let  $s\in [2,2^{*})$ and let $b\in \Upsilon_{s}(\mathds{R}^{N})$. Then, for any sequence   
        $\left\{x_n\right\} \subset \mathds{R}^{N}$ such that $\lim_{n\to\infty} x_n = \infty$, and for any $r>0$,
     \begin{equation*}
          \lim _{n \rightarrow \infty} \nu_s\left(b, B_{r}(x_{n})\right)=\infty,
      \end{equation*}
      if only if
        \begin{equation*}
            \lim_{R\rightarrow \infty}\nu_{s}(b,\mathds{R}^{N}\setminus  \overline{B_R(0)})=
            \infty.
        \end{equation*}
           \end{lemma}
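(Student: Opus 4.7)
I would handle the two implications separately. The direction ``$\lim_{R\to\infty}\nu_s(b,\mathds{R}^N\setminus\overline{B_R(0)})=\infty$ implies \eqref{coecivo}'' follows from the monotonicity of $\nu_s(b,\cdot)$ under domain inclusions. For any $x_n\to\infty$ and any $r>0$, the inequality $|x_n|>R+r$ holds for all large $n$, and in that case extension by zero embeds $H^1_0(B_r(x_n))\hookrightarrow H^1_0(\mathds{R}^N\setminus\overline{B_R(0)})$, so $\nu_s(b,B_r(x_n))\geqslant\nu_s(b,\mathds{R}^N\setminus\overline{B_R(0)})$. Taking $\liminf_n$ and then letting $R\to\infty$ finishes this direction.

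For the converse, I would argue by contradiction. Suppose there exist $M>0$ and $R_n\to\infty$ with $\nu_s(b,\mathds{R}^N\setminus\overline{B_{R_n}(0)})\leqslant M$, and accordingly select $u_n\in H^1_0(\mathds{R}^N\setminus\overline{B_{R_n}(0)})$ with $\|u_n\|_{\scriptscriptstyle L_s}=1$ and $\int(|\nabla u_n|^2+bu_n^2)\,\mathrm{d}x\leqslant 2M$. Fix $r>0$ arbitrary, choose a lattice $\{y_i\}\subset\mathds{R}^N$ with uniformly bounded overlap of the balls $B_r(y_i)$, and build a smooth quadratic partition of unity by setting $\phi_i(x)=\phi(x-y_i)/\sqrt{\sum_j\phi(x-y_j)^2}$ for a suitable $\phi\in {\rm C}_0^\infty(B_r(0))$, so that $\sum_i\phi_i^2\equiv 1$, $\operatorname{supp}\phi_i\subset B_r(y_i)$, each $\phi_iu_n\in H^1_0(B_r(y_i))$, and $K(x):=\sum_i|\nabla\phi_i|^2\leqslant C/r^2$.

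The IMS-type identity, which rests on $\sum_i\phi_i\nabla\phi_i=\tfrac{1}{2}\nabla\sum_i\phi_i^2=0$, yields $\sum_i\int|\nabla(\phi_iu_n)|^2\,\mathrm{d}x=\int|\nabla u_n|^2\,\mathrm{d}x+\int Ku_n^2\,\mathrm{d}x$, and combined with the identity $\sum_i\int b\phi_i^2u_n^2\,\mathrm{d}x=\int bu_n^2\,\mathrm{d}x$ gives $\sum_i\mathscr{N}_b(\phi_iu_n)\leqslant 2M+(C/r^2)\|u_n\|_{\scriptscriptstyle L_2}^2$. The hypothesis $\lambda_1(b)>0$ together with Lemma \ref{eqlambda} bounds $\|u_n\|_{\scriptscriptstyle L_2}^2\leqslant 2M/\lambda_1(b)$, so $\sum_i\mathscr{N}_b(\phi_iu_n)\leqslant M'=M'(M,r,\lambda_1(b))$. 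On the $L_s$-side, H\"older's inequality with exponents $s/2$ and $s/(s-2)$ combined with $\|u_n\|_{\scriptscriptstyle L_s}=1$ gives $\int\phi_i^2|u_n|^s\,\mathrm{d}x\leqslant\|\phi_iu_n\|_{\scriptscriptstyle L_s}^2$, whence $\sum_i\|\phi_iu_n\|_{\scriptscriptstyle L_s}^2\geqslant\sum_i\int\phi_i^2|u_n|^s\,\mathrm{d}x=1$. Fixing $A>M'$, hypothesis \eqref{coecivo} supplies $R^*$ such that $\nu_s(b,B_r(y))\geqslant A$ for all $|y|\geqslant R^*$; once $R_n>R^*+r$, every index $i$ with $\phi_iu_n\not\equiv 0$ satisfies $|y_i|\geqslant R^*$, so $\mathscr{N}_b(\phi_iu_n)\geqslant A\|\phi_iu_n\|_{\scriptscriptstyle L_s}^2$. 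Summing over $i$ then delivers $M'\geqslant A$, the desired contradiction.

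The main obstacle will be the case $s>2$: the $L_s$-norm does not decompose additively as $L_2$ does, and this is precisely what forces the H\"older estimate above and the appeal to $\lambda_1(b)>0$ to control $\|u_n\|_{\scriptscriptstyle L_2}$ by $\mathscr{N}_b(u_n)$. A subsidiary technical point is to arrange the quadratic partition of unity so that each cut-off $\phi_iu_n$ genuinely lies in $H^1_0(B_r(y_i))$; this is handled by choosing $\phi$ supported strictly inside $B_r(0)$ (or, equivalently, by allowing a slightly larger radius in the covering).
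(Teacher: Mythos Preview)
Your argument is correct. The easy direction via domain monotonicity of $\nu_s$ is standard, and the hard direction via a quadratic (IMS-type) partition of unity is carried out cleanly: the localization identity controls $\sum_i\mathscr{N}_b(\phi_i u_n)$, the constraint $\lambda_1(b)>0$ together with Lemma~\ref{eqlambda} bounds $\|u_n\|_{\scriptscriptstyle L_2}^2$, the H\"older step $\int\phi_i^2|u_n|^s=\int(\phi_i|u_n|)^2|u_n|^{s-2}\leqslant\|\phi_i u_n\|_{\scriptscriptstyle L_s}^2\|u_n\|_{\scriptscriptstyle L_s}^{s-2}=\|\phi_i u_n\|_{\scriptscriptstyle L_s}^2$ is valid for $s>2$ (and trivially an equality for $s=2$), and the contradiction follows. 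One small remark: to justify $u_n\in E_b$ (so that Lemma~\ref{eqlambda} applies), you should observe that $\mathscr{N}_b(u_n)\leqslant 2M<\infty$ together with $b_-$ bounded and $u_n\in L_2$ forces $\int b_+u_n^2<\infty$; this is implicit but worth stating.

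For comparison, the paper does not give a self-contained proof at all: it simply cites \cite[Proposition~2.1]{MR1782990}. Your write-up therefore goes beyond what the paper itself provides, supplying an explicit argument that essentially reproduces the localization technique from the cited reference.
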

    \begin{proof}
    For the proof, see \cite[Proposition 2.1]{MR1782990}.
        \end{proof}
    \begin{proof}[Proof of Proposition \ref{compa}]
        By Remark \ref{rm2.6-1111}, the canonical embedding
        $E_{b} \hookrightarrow L_s(\mathds{R}^N)$, $s \in [2, 2^{*})$, is continuous.
        Note that we can show the compactness of this embedding with respect
        to any of the equivalent norms $\|\cdot\|_{ab}$, $\|\cdot\|_a$, or $\|\cdot\|_{a_+}$.
        The equivalence of the first two norms follows from Proposition \ref{Ne};
        the equivalence of the second and the third norms is due to Lemma \ref{Lemaeqnorms}.
        To show that the canonical embedding is compact,
        take a sequence $\{u_n\}\subset E_b$ such that $\|u_n\|^2_{a_+}<C$
        for some constant $C>0$. We have
        to prove that there exists a subsequence $\{u_{n_k}\}
        \subset \{u_n\} $ that converges in $L_{s}(\mathds{R}^N)$.
        Note that $\{u_n\}$ contains a weakly convergent subsequence $u_{n_k}\rightharpoonup u_0\in E_b$.
        Without loss of generality, subtracting if necessary
        the limit $u_0$ from $u_{n_k}$, and also for notational
        simplicity, we assume that
         $u_n\rightharpoonup 0$ in $E_b$.
         Let $\phi^r$ be the cut-off function defined by  \eqref{phir1111}.
         We have
        \begin{equation}\label {compa1}
            \|u_{n}\|_{L_s(\mathbb{R}^N)} \leqslant
            \|\phi^{r} u_{n}\|_{L_s(\mathbb{R}^N)} +
            \|(1-\phi^{r}) u_n\|_{L_s(\mathbb{R}^N)}.
        \end{equation}
        Since $0\leqslant 1-\phi^r \leqslant \mathds{1}_{\mathbb{R}^N\setminus B_r}$,
        by the definition of $\nu_s(\mathbb{R}^N\setminus B_r)$, see \eqref{nus},
        and the boundedness of $\|u_n\|_{a_+}$, we have
        \begin{align*}
        \|(1-\phi^{r}) u_n\|^2_{L_s(\mathbb{R}^N)} \leqslant
        \|u_n\|^2_{L_s(\mathbb{R}^N\setminus B_r)}
        \leqslant \frac{\int_{\mathbb{R}^N\setminus B_r}
        \big(|\nabla u_n|^2 + a_+|u_n|^2\big) {\rm d}x}
        {\nu_s(a_+,\mathbb{R}^N\setminus B_r)}
        \leqslant \frac{C}{\nu_s(a,\mathbb{R}^N\setminus B_r)}.
\end{align*}
        By Lemma \ref{LemmaSirakov} and Remark \ref{usp-s-1111}, if $b\in \Upsilon_t(\mathds{R}^N)$ for some $t\in (2,2^*)$, then
        $\lim_{r\to\infty}\nu_s(a,\mathbb{R}^N\setminus B_r) = \infty$ for all $s\in (2,2^*)$; and if $b\in \Upsilon_2(\mathds{R}^N)$,
        then 
        $\lim_{r\to\infty}\nu_s(a,\mathbb{R}^N\setminus B_r) = \infty$ for all  $s\in [2,2^*)$. We continue the proof with some
        $s\in [2,2^*)$.
        Let $\varepsilon>0$ be fixed arbitrarily, and let
    $r>0$ be such that
        \begin{align*}
        \|(1-\phi^{r}) u_n\|_{L_s(\mathbb{R}^N)} < \varepsilon.
\end{align*}
        Remark that the linear operator $E_b\to E_b$,
        $u\mapsto \phi^r u$ is bounded. Indeed,
        since $|\phi^r|\leqslant 1$ and $|\nabla \phi^r|\leqslant \frac1{r}$, there exists a constant $C_r>0$ such that
        \begin{align*}
        \|\phi^r u\|^2_{a_+} =
        \int_{\mathbb{R}^N} |\nabla (\phi^r u)|^2 dx
        + \int_{\mathbb{R}^N} a_+(x) (\phi^r u)^2 dx
        \leqslant \|u\|^2_{a_+} + r^{-2} \|u\|^2_{L_2(\mathbb{R}^N)}
        \leqslant  C_r \|u\|^2_{a_+}.
\end{align*}
        Above, we have used the continuity of the canonical embedding $E_{b} \hookrightarrow L_2(\mathds{R}^N)$.
        Consequently,
        $\phi^r u_n\rightharpoonup 0$ in $E_b$.
        But $\phi^r u_n$ is in  $H^{1}_{0}(B_{2r})$ and
        the canonical embedding
        $H^{1}_{0}(B_{2r}) \hookrightarrow L_{s}(B_{2r})$
      is compact, see \cite[Theorem 9.16]{zbMATH05633610}. Therefore,
        $\{\phi^r u_n\}$ contains a subsequence $\{\phi^r u_{n_k}\}$ such that
        $\lim_{k\to\infty}\phi^r u_{n_k} = 0$ in $L_s(\mathbb{R}^N)$.
        Passing to the limit  in \eqref{compa1} as $k\to \infty$, we obtain that
        \begin{align*}
        \limsup_{k\to \infty}\|u_{n_k}\|_{L_s(\mathbb{R}^N)}
        \leqslant \varepsilon.
\end{align*}
        Since $\varepsilon>0$ is arbitrary, the result follows.
           \end{proof}

\subsection{The energy functional}
Here we introduce the energy functional associated with equation
\eqref{PS}. Define the map
\begin{equation}\label {psi1111}
        \Psi: E_b \rightarrow \mathds{R}, \quad
        \Psi(u) =  \int_{\mathds{R}^{N}} F(x,u(x)) \mathrm{d}x;
        \quad \text{with} \;\;  F(x,u): = \int_0^u f(x,v) dv.
    \end{equation}
    \begin{lemma}
    \label{frch-2222}
    The Fr\'echet derivative $D\Psi$ of $\Psi: E_b\to \mathbb{R}$ along
    the space $(E_b, \|\cdot\|_{ab})$ equals
    \begin{align*}
    D\Psi(u) h = \int_{\mathds{R}^{N}} f(x,u(x)) h(x) \mathrm{d}x,
    \quad u,h\in E_b.
\end{align*}
    \end{lemma}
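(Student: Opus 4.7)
The plan is to prove differentiability by a standard mean value argument combined with the growth bound \ref{f2} and Hölder inequality, exploiting the equivalences of norms on $E_b$ established in Proposition \ref{Ne} and Lemma \ref{Lemaeqnorms}, together with the continuous embedding $E_b \hookrightarrow L_s(\mathds{R}^N)$ for $s \in [2,2^*]$ (Remark \ref{rm2.6-1111}). As a preliminary step, I would verify that the map
\begin{equation*}
L(h) := \int_{\mathds{R}^N} f(x,u(x))\,h(x)\,\mathrm{d}x
\end{equation*}
defines a bounded linear functional on $(E_b,\|\cdot\|_{ab})$. Linearity is clear, and boundedness follows from the growth bound in Remark \ref{rm15} by splitting
$|f(x,u)h| \leqslant C_0(1+\phi^{1/\alpha})(|u|+|u|^p)|h|$, applying Hölder, and using the bounds $\int \phi\,|v|^2\,\mathrm{d}x \leqslant C\|v\|_{b_+}^2$ (from the equivalence of $\|\cdot\|_\phi$ and $\|\cdot\|_{b_+}$) and $E_b\hookrightarrow L_{p+1}(\mathds{R}^N)$ (which applies since $p\in\mathcal P_{\alpha,N}$ implies $p+1\in[2,2^*)$).

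For the Fréchet differentiability itself, by the fundamental theorem of calculus
\begin{equation*}
\Psi(u+h) - \Psi(u) - L(h) = \int_{\mathds{R}^N}\int_0^1 \big[f(x,u+th) - f(x,u)\big]\,h\,\mathrm{d}t\,\mathrm{d}x.
\end{equation*}
Applying \ref{f2} pointwise, the absolute value of the right-hand side is bounded by
\begin{equation*}
C_0\int_{\mathds{R}^N} \bigl(1+\phi(x)^{1/\alpha}\bigr)\bigl(1+|u|^{p-1}+|u+h|^{p-1}\bigr)\,h^2\,\mathrm{d}x.
\end{equation*}
The task reduces to estimating six integrals obtained by expanding the integrand. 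The unweighted pieces $\int h^2$ and $\int |u|^{p-1}h^2$ (and the analogue with $u+h$) are handled directly by Hölder with exponents $(p+1)/(p-1)$ and $(p+1)/2$ combined with the embedding $E_b\hookrightarrow L_{p+1}$.

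The main obstacle is the treatment of the weighted terms $\int \phi^{1/\alpha}h^2$ and $\int \phi^{1/\alpha}|u|^{p-1}h^2$ (and the $u+h$ analogue). For these I would use the factorisation
\begin{equation*}
\phi^{1/\alpha}\,|u|^{p-1}\,h^2 = \bigl(\phi\, h^2\bigr)^{1/\alpha}\cdot |u|^{p-1} h^{2(\alpha-1)/\alpha}
\end{equation*}
and apply Hölder with exponents $\alpha$ and $\alpha/(\alpha-1)$, then a further Hölder split of the second factor with exponents chosen so that both $h$ and $u$ are integrated to powers lying in $[2,2^*)$; the condition $p \in \mathcal P_{\alpha,N}$ and $\alpha>\max\{2,N/2\}$ is precisely what guarantees the admissibility of these exponents. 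The first factor is bounded by $\|h\|_\phi^{2/\alpha}\leqslant C\|h\|_{b_+}^{2/\alpha}$, and the second by $\|u\|_{ab}^{p-1}\|h\|_{ab}^{2(\alpha-1)/\alpha}$ after the embedding step. Multiplying back, all six summands are bounded by $C(\|u\|_{ab})\,\|h\|_{ab}^2$, hence the whole difference is $O(\|h\|_{ab}^2) = o(\|h\|_{ab})$. Since $L$ is bounded and linear, this identifies $D\Psi(u)h = \int_{\mathds{R}^N} f(x,u)h\,\mathrm{d}x$.
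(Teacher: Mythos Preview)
Your approach is essentially the same as the paper's: both reduce to estimating $\int_{\mathds{R}^N}[f(x,u+\theta h)-f(x,u)]\,h\,\mathrm{d}x$ via the mean value theorem (or, equivalently, the integral form you use) and then control the increment by the Lipschitz bound \ref{f2} together with H\"older's inequality and the embeddings $E_b\hookrightarrow L_s(\mathds{R}^N)$. The paper packages the H\"older step as a separate result, Lemma \ref{A12}, which bounds $\int[f(x,u)-f(x,v)]\,w$ for three independent functions and therefore needs a case split according to the position of $p$ in $\mathcal P_{\alpha,N}$, pairing $\phi^{1/\alpha}$ sometimes with $|u|^{2/\alpha}$ and sometimes with $|w|^{2/\alpha}$. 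Your factorisation $(\phi h^2)^{1/\alpha}\cdot |u|^{p-1} h^{2(\alpha-1)/\alpha}$ exploits the fact that in the differentiability argument the roles of $u-v$ and $w$ are both played by $h$, so a single factorisation suffices; one checks that the remaining exponents can indeed be placed in $[2,2^*]$ precisely when $(p-1)\tfrac{\alpha}{\alpha-1}\leqslant \tfrac{4}{N-2}$, which is exactly the upper bound of $\mathcal P_{\alpha,N}$. So your inline argument is correct and arguably slightly more economical for this lemma, while the paper's version in Lemma \ref{A12} is stated in greater generality because it is reused later (e.g.\ for the compactness of $D\Psi$ in Lemma \ref{lem-Eb}).
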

    For the proof of Lemma \ref{frch-2222}, we need the following lemma.
    \begin{lemma}\label {A12}
    Let $b\in \Upsilon_s(\mathds{R}^N)$ for some $s\in (2,2^*)$. Further let
        the function $f$ satisfy conditions {\rm \ref{f1}} and {\rm \ref{f2}}
        and $a(x)$  satisfy {\rm \ref{f0a}}. 
              Then, there exist constants $C, \hat C>0$ and a number
              $s\in (2,2^*)$ such that   for all $u,v,w\in E_b$,
              \begin{align}
              \label{lip-1111}
              \Big|\int_{\mathds{R}^{N}}\hspace{-1mm} \big[f(x,u(x))- f(x,v(x))\big] w(x) \mathrm{d}x \Big|
               \leqslant C  \|u\|^{p-1}_{ab} \|u-v\|_{L_s} \|w\|_{ab} 
              \leqslant \hat C \|u\|^{p-1}_{ab} \|u-v\|_{ab}  \|w\|_{ab}.
\end{align}
    \end{lemma}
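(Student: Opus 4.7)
The plan is to start from the pointwise bound given by hypothesis \ref{f2}, namely
\[
|f(x,u)-f(x,v)| \leqslant C_0\bigl(1+\phi(x)^{1/\alpha}\bigr)\bigl(1+|u|^{p-1}+|v|^{p-1}\bigr)|u-v|.
\]
Multiplying by $|w|$ and integrating, I obtain a sum of terms of the shape $\int \psi\, g_1 g_2 g_3\,\mathrm{d}x$ with $\psi\in\{1,\phi^{1/\alpha}\}$ and each $g_i$ drawn from $\{1,|u|^{p-1},|v|^{p-1},|u-v|,|w|\}$. First I would reduce $|v|^{p-1}$ via the elementary inequality $|v|^{p-1}\leqslant C(|u|^{p-1}+|u-v|^{p-1})$, so that only shapes involving $|u|^{p-1}$ or $|u-v|^{p-1}$ remain; each of these will be estimated by the same Hölder scheme, the latter ultimately contributing through the $\|u-v\|_{L_s}$ factor.

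The terms without the weight $\phi^{1/\alpha}$ are estimated by a direct Hölder together with the continuous embedding $E_b\hookrightarrow L_r(\mathds{R}^N)$ for every $r\in[2,2^*]$ (Remark \ref{rm2.6-1111}, applicable here since \ref{f0a} holds and $\|\cdot\|_{ab}\sim\|\cdot\|_b$ by Proposition \ref{Ne}). This reduces the bound to a product of the form $\|u\|_{L_{r_1}}^{p-1}\|u-v\|_{L_s}\|w\|_{L_{r_2}}$ with exponents satisfying $(p-1)/r_1+1/s+1/r_2=1$, which is solvable inside $[2,2^*]^2\times[2,2^*)$ precisely when $p$ lies in one of the intervals comprising $\mathcal P_{\alpha,N}$.

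The principal difficulty lies in the weighted terms involving $\phi^{1/\alpha}$. Here I would exploit that, by the norm equivalence assumed in \ref{f2}, one has $\int_{\mathds{R}^N}\phi|u|^2\,\mathrm{d}x \leqslant C\|u\|_{b_+}^2 \leqslant C'\|u\|_{ab}^2$ for every $u\in E_b$, so that $\phi^{1/2}u\in L_2(\mathds{R}^N)$ with norm controlled by $\|u\|_{ab}$. In the regime $p-1\geqslant 2/\alpha$ I would factorize
\[
\phi^{1/\alpha}|u|^{p-1}|u-v||w|=\bigl(\phi^{1/2}|u|\bigr)^{2/\alpha}\,|u|^{p-1-2/\alpha}|u-v||w|,
\]
and apply Hölder with exponents $(\alpha,q_1,q_2,q_3)$ summing to $1$, yielding a bound of the form
\[
\Bigl(\int \phi\, u^2\,\mathrm{d}x\Bigr)^{1/\alpha}\|u\|_{L_{(p-1-2/\alpha)q_1}}^{p-1-2/\alpha}\|u-v\|_{L_{q_2}}\|w\|_{L_{q_3}}\lesssim \|u\|_{ab}^{p-1}\|u-v\|_{L_s}\|w\|_{ab},
\]
once the Lebesgue exponents all lie in Sobolev-admissible ranges. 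In the complementary regime $p-1<2/\alpha$ one shifts the weight onto $|w|$ via $\phi^{1/\alpha}|w|=(\phi^{1/2}|w|)^{2/\alpha}|w|^{(\alpha-2)/\alpha}$ and argues analogously.

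The algebraic content of the hypothesis $p\in \mathcal P_{\alpha,N}$ is precisely that the above Hölder exponents can be chosen simultaneously admissible; its two constituent intervals correspond to the two factorization regimes just described. The second inequality in the lemma then follows from the first by the continuous embedding $E_b\hookrightarrow L_s(\mathds{R}^N)$, $s\in[2,2^*)$, supplied by Proposition \ref{compa}. The main obstacle in executing this plan is the case split dictated by the location of $p$ within $\mathcal P_{\alpha,N}$ and the careful bookkeeping needed to check that every Hölder exponent lands in the Sobolev-admissible range $[2,2^*]$ (or $[2,2^*)$ for the factor carrying the $L_s$-norm).
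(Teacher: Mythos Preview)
Your approach is essentially the same as the paper's: start from \ref{f2}, split off the weighted factor via $\phi^{1/\alpha}|u|^{p-1}=(\phi^{1/2}|u|)^{2/\alpha}|u|^{p-1-2/\alpha}$ in the high-$p$ regime and via $\phi^{1/\alpha}|w|=(\phi^{1/2}|w|)^{2/\alpha}|w|^{1-2/\alpha}$ in the low-$p$ regime, then apply H\"older and the Sobolev embeddings of Remark~\ref{rm2.6-1111}. Two minor remarks: the paper's case split is not at $p-1=2/\alpha$ but at $p-1=4/N$ (with further sub-cases at $p-1=\tfrac{2}{N-2}(1-\tfrac{2}{\alpha})$ and depending on whether $\alpha\lessgtr N$), so the bookkeeping is somewhat more intricate than two regimes; and the continuous embedding $E_b\hookrightarrow L_s$ needed for the second inequality comes from Remark~\ref{rm2.6-1111}, not Proposition~\ref{compa} (which gives compactness). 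Your handling of the $|v|^{p-1}$ term via $|v|^{p-1}\lesssim |u|^{p-1}+|u-v|^{p-1}$ is in fact more careful than the paper, which tacitly treats only the $|u|^{p-1}$ term.
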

   \begin{proof}
    Let $u,v,w\in E_{b}$, $\|w\|_{ab} \leqslant 1$. By \ref{f2},
   \begin{align}
    \label {Auv}
         \Big|\int_{\mathds{R}^{N}} \big[f(x,u(x))- f(x,v(x))\big] w(x) \mathrm{d}x \Big|
         \leqslant& \int_{\mathds{R}^N}|f(x, u)-f(x, v)|\, |w| \, \mathrm{d}x \nonumber \\
        \leqslant& \,C_0\int_{\mathds{R}^N} \big(1+\phi(x)^{1/\alpha}\big)|u-v| \, |w|\, \mathrm{d} x+ \\
        + & \,C_0\int_{\mathds{R}^N} \big(1+\phi(x)^{1/\alpha}\big) (|u|^{p-1}+|v|^{p-1})|u-v| |w|\, \mathrm{d} x.\nonumber
\end{align}
We will start by bounding the second term on the right-hand side of \eqref{Auv}.
For this, we need to bound each of the terms
\begin{align}
\label{terms-1111}
\int_{\mathds{R}^N}|u|^{p-1}|u-v||w| \, \mathrm{d}x \quad \text{and}
\quad
\int_{\mathds{R}^N} \phi(x)^{\frac1\alpha}|u|^{p-1}|u-v||w| \, \mathrm{d}x.
\end{align}
First, we notice that by Lemmas \ref{Embedded}, \ref{Lemaeqnorms},
       Proposition \ref{Ne}, and Remark \ref{rm2.6-1111},
       for every $u\in E_b$,  $t\in [2,2^*)$, and $v\in L_t$,
       there exist constants $C_b>0$ and $C_t>0$ such that
       \begin{align*}
       \|u\|_{\phi} \leqslant C_b \|u\|_{ab} \quad \text{and} \quad
       \|v\|_{L_t} \leqslant C_t \|v\|_{ab}.
\end{align*}
       Until the end of the proof, we will use these bounds without further justification.
    
We start by bounding  the second expression in \eqref{terms-1111}
for the following interval of values of $p-1$:
    \begin{align}
    \label {p2}
     \frac4N \leqslant p-1 < \frac4{N-2}\Big(1-\frac1\alpha\Big).
\end{align}
    It is straightforward to verify that since $\alpha>\frac{N}2$, the
    above interval is non-empty.
  By the generalized H\"older inequality with the exponents  $q_1=\alpha$, 
$q_2$ to be determined later as it is dictated by the H\"older inequality,
$q_3= 2^*$, and $q_4 = (\frac1{2^*} + \varepsilon)^{-1}$,
where $\varepsilon\in (0,\frac12-\frac1{2^*})$ is to be fixed later,
    we obtain that there exist constants 
    $C_4, \hat C_4>0$ such that
    \begin{equation}\label {Kauvp}
        \begin{split}
            \int_{\mathds{R}^N}& {\phi}(x)^{\frac1\alpha}|u|^{p-1}|u-v||w| \, \mathrm{d}x
            = %\left(\int_{\mathds{R}^N}\left( {\phi}(x)^{1/\alpha}|u|^{p-1}\right)^\rho
            \int_{\mathds{R}^N} \big({\phi}(x)^{\frac1\alpha}|u|^\frac2\alpha\big) |u|^{p-1-\frac2\alpha} |u-v||w| \, \mathrm{d}x\\
            &\leqslant \big\|{\phi}^{\frac1\alpha}\, |u|^{\frac2\alpha}\big\|_{\scriptscriptstyle L_\alpha} \,
            \||u|^{p-1-\frac2\alpha}\|_{\scriptscriptstyle L_{q_2}}
              \|w\|_{\scriptscriptstyle L_{2^*}} \|u-v\|_{L_{ (\frac1{2^*} + \varepsilon)^{-1}}}\\
             &\leqslant \|u\|_{\phi}^\frac2{\alpha} \|u\|_{L_{q_2(p-1-\frac2\alpha)}}^{p-1-\frac2\alpha}
              \|w\|_{\scriptscriptstyle L_{2^*}} \|u-v\|_{L_{ (\frac1{2^*} + \varepsilon)^{-1}}}\\
              &\leqslant C_4 \,\|u\|^{p-1}_{ab}
            \, \|u-v\|_{L_{ (\frac1{2^*} + \varepsilon)^{-1}}} \leqslant \hat C_4 \,\|u\|^{p-1}_{ab} \, \|u-v\|_{ab}.
        \end{split}
    \end{equation}
    We furthermore introduce $\varepsilon_i=\delta_i \varepsilon$, $i=1,2$, where
    $\delta_i>0$ are constants depending on $N,\alpha$, and $p$ that can be computed explicitly and whose computation we omit for notational simplicity. 
    The explicit computation shows that $\frac1{q_2}+\varepsilon = \frac{2\alpha-N}{\alpha N}$,
    and hence,
    \begin{align}
    \label{q2}
    q_2 = \frac{\alpha N}{2\alpha - N-\varepsilon_1}.
\end{align}
     It remains to remark that in order to bound the $L_{q_2(p-1-\frac2\alpha)}(\mathbb{R}^N)$-norm
    by the $\|\cdot\|_{ab}$-norm, it suffices to show that
 \begin{align}
 \label{q2-int}
 2 < q_2\big(p-1-\frac2\alpha\big)< 2^*
\end{align}
by Remark \ref{rm2.6-1111}.
 To show \eqref{q2-int}, we use the expression
  $N=\frac{2\cdot 2^*}{2^*-2}$ and equation \eqref{q2} to obtain 
    \begin{equation*}
        q_2\Big(p-1-\frac2\alpha\Big)=  2^*
        \frac{\alpha (p-1) -2}{(2^*-2)\alpha-2^*-\varepsilon_2}.
    \end{equation*}
    Since, by \eqref{p2},  $(2^*-2)\frac{2}{2^*} \leqslant p-1 < (2^*-2)\big(1-\frac1\alpha\big)$, 
    we obain that
    \begin{align*}
    2^*\frac{\alpha (p-1) -2}{(2^*-2)\alpha-2^*} < 2^*\frac{(2^*-2)(\alpha-1) - 2}{(2^*-2)\alpha-2^*} = 2^*.
\end{align*}
    Choose $\varepsilon_2>0$ sufficiently small so that
    \begin{align*}
    2^*\frac{\alpha (p-1) -2}{(2^*-2)\alpha-2^*-\varepsilon_2}<2^*.
\end{align*}
    On the other hand, 
    \begin{align*}
    2^*\frac{\alpha (p-1) -2}{\alpha (2^*-2)-2^*-\varepsilon_2} >
    \frac{2\alpha(2-2^*)  - 2 \cdot 2^*}{\alpha(2-2^*)-2^*} = 2.
\end{align*}
Hence, \eqref{q2-int} is proved.
    
    Now we consider the case $N\geqslant 4$ and 
    \begin{equation}
    \label{p-2222}
        \frac2{N-2}\Big(1-\frac2{\alpha}\Big) \leqslant p-1 < \frac{4}{N} = \frac{2(2^*-2)}{2^*}.
    \end{equation}
    We note that if $N=3$, the interval in \eqref{p-2222} is empty.
    Thus, the case $N=3$ is included in \eqref{p2}.

    As before, we will apply Lemma \ref{Embedded}, Proposition \ref{Ne},
     and Remark   \ref{rm2.6-1111} to bound the $\|\cdot\|_{a_+}$-norms
     and the $L_s$-type norms with $2< s < 2^*$. We have
    \begin{align}
\label{ES_RHS-1111}
      \int_{\mathds{R}^N}{\phi}(x)^{\frac1\alpha}|u|^{p-1}|u-v||w| \, \mathrm{d}x
=  \int_{\mathds{R}^N}\big({\phi}(x)^{\frac1\alpha} |w|^\frac2\alpha\big)
|u|^{p-1}|w|^{1-\frac2{\alpha}} |u-v|\, \mathrm{d}x.
\end{align}
      To expression \eqref{ES_RHS-1111}, we apply H\"older's inequality
      with the exponents $q_1=\alpha$, $q_2 = \frac{2^*}{p-1}$,
      $q_3 = 2^*(1 - \frac2\alpha)^{-1}$,  and $q_4$ 
      as it is dictated by the H\"older inequality.
      To make clear the validity of this application, we note that
      $q_2>\frac{2}{p-1}>\frac{N}2$, $q_3>2^*$.
      We obtain that the right-hand side of \eqref{ES_RHS-1111} can be
      bounded by 
      \begin{align}
      \label{C6-1111}
      \|w\|_{\phi}^\frac2\alpha \|u\|_{L_{2^*}}^{p-1}
      \|w\|_{L_{2^*}}^{1-\frac2\alpha} \|u-v\|_{L_{q_4}}
      \leqslant C_6 \|w\|_{ab} \|u\|^{p-1}_{ab} \|u-v\|_{L_{q_4}},
\end{align}
      where $C_6>0$ is a constant.
Let us show that $2 < q_4 < 2^*$. The same computation
is valid to justify the previous application of H\"older's inequality.
We have
\begin{align}
\frac1{q_4} = &\, 1 - \frac1\alpha -\frac{p-1}{2^*} -
\frac1{2^*}\Big(1-\frac2\alpha\Big)
= \frac1{2^*}+ \frac{2^*-2}{2^*} - \frac1\alpha
- \frac2{2^*} \Big(\frac{(p-1)}2 - \frac1\alpha\Big)\notag\\
> &\, \frac1{2^*} + \frac2{N} - \frac1\alpha
- \frac2{2^*} \Big(\frac2{N} - \frac1\alpha\Big)
= \frac1{2^*} + \Big(1- \frac2{2^*}\Big) \Big(\frac2{N} - \frac1\alpha\Big)
> \frac1{2^*}.
\label{bigger-est-1111}
\end{align}
On the other hand, the left inequality in \eqref{p-2222} can be rewritten
as follows:
\begin{align*}
p>1+ \frac{2}{N-2}\Big(1-\frac2\alpha\Big) = \frac{N}{N-2}
- \frac4{(N-2)\alpha} = \frac{2^*}2 - \frac{2\cdot 2^*}{\alpha N}.
\end{align*}
Using the above inequality and noticing that
$\frac{2^*-2}{2^*} = \frac2N$, we obtain
\begin{align*}
1-\frac1{q_4} =  \frac1\alpha +\frac{p-1}{2^*}
+ \frac1{2^*}\Big(1-\frac2\alpha\Big)
=\frac1\alpha \frac{2^*-2}{2^*} + \frac{p}{2^*} >
\frac2{\alpha N} + \frac12 - \frac2{\alpha N} = \frac12.
\end{align*}
This together with \eqref{bigger-est-1111} implies that
$2< q_4<2^*$. Therefore, \eqref{C6-1111} can be bounded by the expression
\begin{align}
\label{68-1111}
C_7  \|u\|^{p-1}_{ab} \|u-v\|_{L_{q_4}}
\leqslant \hat C_7  \|u\|^{p-1}_{ab} \|u-v\|_{ab},
\end{align}
where $C_7,\hat C_7>0$ are constants.

Now consider the case
\begin{equation}
    \label{p-3333}
        0 < p-1 < \frac2{N-2}\Big(1-\frac2{\alpha}\Big) \quad
        \text{and} \quad {\alpha\leqslant N}.
    \end{equation}
    Since $\alpha\leqslant N$, it is straightforward to verify that $p-1<\frac2N$.

Applying H\"older's inequality to the integral \eqref{ES_RHS-1111}
with the exponents $q_1=\alpha$, $q_2 = \frac{2}{p-1}$,
      $q_3 = 2(1 - \frac2\alpha)^{-1}$,  and $q_4 = \frac2{2-p}$,
         we obtain the following bound for the right-hand side of
      \eqref{ES_RHS-1111}
\begin{align}
\label{est-4444}
      \|w\|_{\phi}^\frac2\alpha \|u\|_{L_2}^{p-1}
      \|w\|_{L_2}^{1-\frac2\alpha} \|u-v\|_{L_{\frac2{2-p}}}
      \leqslant C_8 \|w\|_{ab} \|u\|^{p-1}_{ab} \|u-v\|_{L_{\frac2{2-p}}}.
\end{align}
      It is straightforward to notice that
$\frac2{2-p} = \frac2{1-(p-1)} > 2$. On the other hand, by \eqref{p-3333},
\begin{align*}
\frac2{2-p} = \frac2{1-(p-1)} < \frac2{1-\frac2N} = 2^*.
\end{align*}
Therefore, there exist constants $C_9, \hat C_9>0$ such that
the right-hand side of \eqref{est-4444} has the bound
\begin{align}
\label{68-2222}
C_9  \|u\|^{p-1}_{ab} \|u-v\|_{L_{\frac2{2-p}}} \leqslant 
C_9  \|u\|^{p-1}_{ab} \|u-v\|_{ab}.
\end{align}
The inequality \eqref{lip-1111} follows now by \eqref{Kauvp},
\eqref{68-1111}, and \eqref{68-2222}. 

It remains to consider the case
    \begin{align*}
    0 < p-1 \leqslant \frac{2}N \quad \text{and} \quad {\alpha > N}.
\end{align*}
The case $p-1 = \frac2{N}$, is included into \eqref{p-2222} if we set, in \eqref{p-2222},  $\alpha=N$.
Finally, case $p-1< \frac2{N}$ has been already considered.
The lemma is proved.
\end{proof}
    \begin{proof}[Proof of Lemma \ref{frch-2222}]
   Let $u,h\in E_b$. For all $x\in \mathbb{R}^N$, there exists $\theta(x)\in (0,1)$ such that
  \[
F(x, u + h) - F(x, u) = f\big(x, u + \theta h\big) h.
\]
By Lemma \ref{A12}, there exists a constant $C >0$ and a number
              $s\in (2,2^*)$ such that
\begin{multline*}
\frac{\Psi(u+h) - \Psi(u) - D\Psi(u) h}{\|h\|_{ab}}
=\frac{
\int_{\mathds{R}^n} \big(f(x, u + \theta h) - f(x, u)\big) h \, dx}{\|h\|_{ab}}
\leqslant C \|u\|^{p-1}_{ab} \|\theta h\|_{L_s}\\ \leqslant C \|u\|^{p-1}_{ab} \|h\|_{L_s}. 
\end{multline*}
By Remark \ref{rm2.6-1111},  there exists a constant $\hat C>0$ such that the right-hand side is smaller than
\begin{align*}
\hat C\|u\|^{p-1}_{ab} \|h\|_{ab}.
\end{align*}
This proves the lemma.
    \end{proof}
    \begin{corollary}
    \label{cor-3.11}
   Let the assumptions of Lemma \ref{A12} be fulfilled. Then, there exists a constant $C>0$ such that
   for all $u,w \in E_b$,
     \begin{align*}
                \Big|\int_{\mathds{R}^{N}} f(x,u(x)) w(x) \mathrm{d}x \Big|             
              \leqslant  C \|u\|^p_{ab}  \|w\|_{ab}.
\end{align*}
    \end{corollary}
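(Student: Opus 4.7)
The plan is to derive this corollary as a direct consequence of Lemma \ref{A12}, exploiting the normalization $f(x,0)=0$ given by hypothesis \ref{f3}. Specifically, I would write
\begin{align*}
\int_{\mathds{R}^{N}} f(x,u(x))\, w(x)\, \mathrm{d}x
= \int_{\mathds{R}^{N}} \bigl[f(x,u(x)) - f(x,0)\bigr]\, w(x)\, \mathrm{d}x,
\end{align*}
which allows the application of the Lipschitz-type estimate \eqref{lip-1111} with $v \equiv 0$.

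Invoking Lemma \ref{A12} in this form yields
\begin{align*}
\Big|\int_{\mathds{R}^{N}} f(x,u(x))\, w(x)\, \mathrm{d}x\Big|
\leqslant \hat{C}\, \|u\|_{ab}^{p-1}\, \|u - 0\|_{ab}\, \|w\|_{ab}
= \hat{C}\, \|u\|_{ab}^{p}\, \|w\|_{ab},
\end{align*}
which is the desired inequality with $C = \hat{C}$.

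Since Lemma \ref{A12} already carries out the delicate case analysis on the exponent $p$ (splitting according to the intervals determined by $\mathcal{P}_{\alpha,N}$) and uses Proposition \ref{Ne} together with Remark \ref{rm2.6-1111} to absorb all $L^s$-norms into $\|\cdot\|_{ab}$, there is nothing further to check: the estimate transfers verbatim. There is no genuine obstacle here — the only point to verify is that \ref{f3} is in force (which it is under the assumptions of Lemma \ref{A12}, since \ref{f1}--\ref{f3} are part of the standing hypotheses for the variational framework), so that the substitution $v = 0$ is admissible and produces the stated bound.
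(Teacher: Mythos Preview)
Your proposal is correct and matches the paper's proof essentially verbatim: the paper simply writes ``It suffices to notice that $f(x,0) = 0$ and apply Lemma \ref{A12}.'' One small remark: the assumptions of Lemma \ref{A12} as stated only list \ref{f1} and \ref{f2}, so the use of $f(x,0)=0$ tacitly invokes \ref{f3}; you flagged this, and the paper makes the same silent use of it.
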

    \begin{proof}
    It suffices to notice that $f(x,0) = 0$ and apply Lemma \ref{A12}.
    \end{proof}
    \begin{corollary}
    \label{cor312}
Let $\tilde f:\mathbb{R}^N\times \mathbb{R}\to \mathbb{R}$ be a function that satisfies assumptions
{\rm \ref{f1}} and {\rm \ref{f2}} with some function 
$\tilde \phi$ in place of $\phi$ and $p=1$ and such that $\tilde f(x,0) = 0$.
 Then,  there exist constants $C, \hat C>0$ such that   for all $u,v,w\in E_b$,
              \begin{align*}
              \Big|\int_{\mathds{R}^{N}}\hspace{-1mm} \big[\tilde f(x,u(x))- \tilde f(x,v(x))\big] w(x) \mathrm{d}x \Big|
               \leqslant C   \|u-v\|_{L_2} \|w\|_{ab} 
              \leqslant \hat C \|u-v\|_{ab}  \|w\|_{ab}.
\end{align*}
               In particular,
            \begin{align*}
              \Big|\int_{\mathds{R}^{N}}\hspace{-1mm} \tilde f(x,u(x)) w(x) \mathrm{d}x \Big|
              \leqslant \hat C \|u\|_{ab}  \|w\|_{ab}.   
\end{align*}
\end{corollary}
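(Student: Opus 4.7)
The plan is to mimic the argument of Lemma \ref{A12}, but with $p=1$ the estimate simplifies substantially because the factor $|u|^{p-1}+|v|^{p-1}$ in hypothesis \ref{f2} becomes a constant. Specifically, under the assumptions, hypothesis \ref{f2} gives $|\tilde f(x,u)-\tilde f(x,v)|\leqslant C(1+\tilde\phi(x)^{1/\alpha})|u-v|$ for some constant $C>0$. Splitting the integral then yields
\begin{align*}
\Big|\int_{\mathds{R}^N}\big[\tilde f(x,u)-\tilde f(x,v)\big]w\,\mathrm{d}x\Big|
\leqslant C\int_{\mathds{R}^N}|u-v||w|\,\mathrm{d}x
+ C\int_{\mathds{R}^N}\tilde\phi^{1/\alpha}|u-v||w|\,\mathrm{d}x,
\end{align*}
so it suffices to estimate each term by $C\|u-v\|_{L_2}\|w\|_{ab}$.

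The first term is immediate from Cauchy--Schwarz and the continuous embedding $E_b\hookrightarrow L_2(\mathds{R}^N)$ (Remark \ref{rm2.6-1111}), combined with the equivalence of the norms $\|\cdot\|_{ab}$ and $\|\cdot\|_b$ given by Proposition \ref{Ne}: we obtain $\int|u-v||w|\,\mathrm{d}x\leqslant\|u-v\|_{L_2}\|w\|_{L_2}\leqslant C'\|u-v\|_{L_2}\|w\|_{ab}$. For the second term I would borrow the trick used in \eqref{ES_RHS-1111}, rewriting
\begin{align*}
\int_{\mathds{R}^N}\tilde\phi^{1/\alpha}|u-v||w|\,\mathrm{d}x
=\int_{\mathds{R}^N}\bigl(\tilde\phi^{1/\alpha}|w|^{2/\alpha}\bigr)\,|u-v|\,|w|^{1-2/\alpha}\,\mathrm{d}x,
\end{align*}
and applying the generalized H\"older inequality with exponents $\alpha$, $2$, and $q=\frac{2\alpha}{\alpha-2}$ (which is admissible since $\alpha>2$). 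This produces the bound
\begin{align*}
\Big(\int_{\mathds{R}^N}\tilde\phi\, w^2\,\mathrm{d}x\Big)^{1/\alpha}\|u-v\|_{L_2}\,\|w\|_{L_2}^{1-2/\alpha}
\leqslant\|w\|_{\tilde\phi}^{2/\alpha}\,\|u-v\|_{L_2}\,\|w\|_{L_2}^{1-2/\alpha},
\end{align*}
where I used $q(1-2/\alpha)=2$. By the assumption that the topologies of $\|\cdot\|_{\tilde\phi}$ and $\|\cdot\|_{b_+}$ on $E_b$ coincide (part of \ref{f2} with $\tilde\phi$ in place of $\phi$) together with Lemma \ref{Lemaeqnorms} and Proposition \ref{Ne}, both $\|w\|_{\tilde\phi}$ and $\|w\|_{L_2}$ are controlled by constant multiples of $\|w\|_{ab}$, so the second term is also $\leqslant C\|u-v\|_{L_2}\|w\|_{ab}$.

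This establishes the first inequality of the corollary; the second inequality $C\|u-v\|_{L_2}\leqslant\hat C\|u-v\|_{ab}$ follows immediately from the continuous embedding $E_b\hookrightarrow L_2(\mathds{R}^N)$. Finally, the last displayed estimate is obtained by setting $v=0$ and using the hypothesis $\tilde f(x,0)=0$. There is no real obstacle here: the only point worth noting is that the $L_{s}$-norm of $u-v$ appearing in Lemma \ref{A12} (with $s\in(2,2^*)$) gets replaced by the $L_2$-norm, which is possible precisely because $p-1=0$ eliminates the factor that previously forced $s>2$.
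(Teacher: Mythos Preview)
Your proposal is correct and follows essentially the same route as the paper. The paper's proof applies H\"older's inequality to the expression \eqref{ES_RHS-1111} with exponents $q_1=\alpha$, $q_2=2(1-\tfrac{2}{\alpha})^{-1}$, $q_3=2$, which are exactly your exponents $\alpha$, $\tfrac{2\alpha}{\alpha-2}$, $2$ up to relabeling, and arrives at the identical bound $\|w\|_{\tilde\phi}^{2/\alpha}\|w\|_{L_2}^{1-2/\alpha}\|u-v\|_{L_2}$; your treatment of the $\tilde\phi$-free term via Cauchy--Schwarz is the obvious complement that the paper leaves implicit.
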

    \begin{proof}
    One has to apply H\"older's inequality in \eqref{ES_RHS-1111}, setting $p=1$ and $\phi = \tilde \phi$,
     with $q_1=\alpha$, $q_2 = 2(1 - \frac2\alpha)^{-1}$,  and $q_3 = 2$. We obtain the following bound for the
     right-hand side of \eqref{ES_RHS-1111}:
\begin{align}
\label{est-4444b}
      \|w\|_{\tilde \phi}^\frac2\alpha
      \|w\|_{L_2}^{1-\frac2\alpha} \|u-v\|_{L_2}
      \leqslant C \|w\|_{ab}  \|u-v\|_{L_2},
\end{align}
      for some constant $C>0$.
    \end{proof}
Now we are ready to define the energy functional associated
with equation \eqref{PS} which is equivalent
to the original system \eqref{P}. Namely, we define
    \begin{equation}
    \label{energy}
      J: E_b\to \mathbb{R},\quad   J(u):=\frac{1}{2}\|u\|_{ab}^{2}- \Psi(u).
    \end{equation}
    Expressing the objects in \eqref{energy} through their definitions
    gives
    \begin{equation*}
        J(u)= \frac{1}{2}\left(\int_{\mathds{R}^{N}} |\nabla u|^{2} +a(x)u S_{b}u \right) \mathrm{d}x -
        \int_{\mathds{R}^{N}}  F(x,u(x)) \,\mathrm{d}x,
    \end{equation*}
    where $F(x,u)$ is defined in \eqref{psi1111}.
    The immediate consequence of Lemma \ref{frch-2222} is the following
    result.
    \begin{lemma}
    The Fr\'echet derivative of the energy functional $J: E_b\to \mathbb{R}$,
    along the space $(E_b, \|\cdot\|_{ab})$, is
    \begin{align}
    \label{DJ}
    DJ(u) h  = \<u,h\rangle_{ab} + \int_{\mathds{R}^{N}} f(x,u(x)) h(x) \mathrm{d}x, \quad u,h\in E_b.
\end{align}
    \end{lemma}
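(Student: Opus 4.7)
The plan is to split $J = J_1 - \Psi$ where $J_1(u) := \frac{1}{2}\|u\|_{ab}^2$, compute the Fréchet derivatives of the two pieces separately along $(E_b,\|\cdot\|_{ab})$, and combine them by linearity. This reduces the claim to a routine quadratic-form calculation plus a direct citation of the work already done for $\Psi$.

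For the quadratic part, I would use Proposition \ref{Ne}, which establishes that $\langle\cdot,\cdot\rangle_{ab}$ is a genuine inner product on $E_b$ and that $(E_b,\|\cdot\|_{ab})$ is a Hilbert space. Bilinearity and symmetry yield the standard expansion
\[
J_1(u+h)-J_1(u) = \langle u,h\rangle_{ab} + \tfrac{1}{2}\|h\|_{ab}^{2}.
\]
The map $h\mapsto \langle u,h\rangle_{ab}$ is a continuous linear functional on $(E_b,\|\cdot\|_{ab})$ by Cauchy--Schwarz, and the remainder satisfies $\tfrac{1}{2}\|h\|_{ab}^{2}/\|h\|_{ab}\to 0$ as $\|h\|_{ab}\to 0$. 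Hence $DJ_1(u)h = \langle u,h\rangle_{ab}$.

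For the nonlinear part, Lemma \ref{frch-2222} gives directly
\[
D\Psi(u)h = \int_{\mathds{R}^{N}} f(x,u(x))\, h(x)\,\mathrm{d}x,
\]
with continuity in $h$ guaranteed by Corollary \ref{cor-3.11}. By linearity of the Fréchet derivative,
\[
DJ(u)h = DJ_1(u)h - D\Psi(u)h = \langle u,h\rangle_{ab} - \int_{\mathds{R}^{N}} f(x,u(x))\, h(x)\,\mathrm{d}x,
\]
matching \eqref{DJ} (the sign in front of the integral in the statement being a typographical slip consistent with the definition $J = \tfrac12\|\cdot\|_{ab}^2-\Psi$).

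There is no real obstacle: the delicate Nemytskii-type estimates with the weight $\phi^{1/\alpha}$ and the case analysis on $p-1$ versus $N,\alpha$ have already been absorbed into Lemma \ref{A12} and Lemma \ref{frch-2222}; and the quadratic term is handled purely by the Hilbert structure of Proposition \ref{Ne}. The only point deserving a line of verification is that the linear form $h\mapsto\langle u,h\rangle_{ab}$ is continuous on $(E_b,\|\cdot\|_{ab})$, which is immediate from Cauchy--Schwarz in the $\langle\cdot,\cdot\rangle_{ab}$ inner product.
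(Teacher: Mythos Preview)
Your proposal is correct and follows the same route as the paper, which simply records the lemma as ``the immediate consequence of Lemma~\ref{frch-2222}'': split $J=\tfrac12\|\cdot\|_{ab}^2-\Psi$, differentiate the quadratic part via the Hilbert structure of Proposition~\ref{Ne}, and invoke Lemma~\ref{frch-2222} for $\Psi$. Your observation about the sign is also correct: consistent with $J=\tfrac12\|\cdot\|_{ab}^2-\Psi$, with Lemma~\ref{FDJ-1111} ($DJ(u)=u-D\Psi(u)$), and with the use made of \eqref{DJ} in Lemma~\ref{weak-solutions-1111}, the integral in \eqref{DJ} should carry a minus sign.
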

    \begin{lemma}
    \label{weak-solutions-1111}
    Let  $u\in E_b$ be a critical point of the functional $J$, given by \eqref{energy}. Then, $u$
    is a  weak solutions of the non-local equation \eqref{PS} and $(u,S_b u)$ is a weak solution
    of system \eqref{P}.
    \end{lemma}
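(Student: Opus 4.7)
The strategy is to unpack the criticality condition $DJ(u)h = 0$ using the explicit formula \eqref{DJ}, rewrite the $\langle u,h\rangle_{ab}$ term so that the nonlocal operator $S_b$ acts on $u$ rather than $h$, and then identify the resulting identity with the weak formulation of \eqref{PS}. The final step is to introduce $v:=S_b u$ and assemble the two equations of \eqref{P}.

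First, since $u$ is a critical point of $J$, we have $DJ(u)h=0$ for every $h\in E_b$. By \eqref{DJ} this reads
\[
\langle u,h\rangle_{ab}\;=\;\int_{\mathds{R}^N} f(x,u(x))\,h(x)\,\mathrm{d}x \qquad \forall\, h\in E_b,
\]
where I absorb the sign coming from $J=\frac12\|\cdot\|_{ab}^2-\Psi$ into the statement. Expanding the left-hand side using the definition $\langle u,h\rangle_{ab}=\int\nabla u\nabla h\,\mathrm{d}x+\int a(x)\,u\,S_b(h)\,\mathrm{d}x$ and invoking the symmetry identity \eqref{simetriaSb} of Lemma \ref{Properties} to move $S_b$ onto $u$, I obtain
\[
\int_{\mathds{R}^N}\nabla u\,\nabla h\,\mathrm{d}x+\int_{\mathds{R}^N}a(x)\,S_b(u)\,h\,\mathrm{d}x\;=\;\int_{\mathds{R}^N} f(x,u)\,h\,\mathrm{d}x \qquad \forall\, h\in E_b.
\]
This is exactly the identity \eqref{weak-solution-2222} that defines a weak solution of the nonlocal equation \eqref{PS}; so the first claim of the lemma follows. (The integrability of each term against $h\in E_b$ is guaranteed by Corollary \ref{cor-3.11} and the bound \eqref{auw-1111}.)

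Next, set $v:=S_b u\in E_b$. By the very definition of $S_b$ in Proposition \ref{existen} (see \eqref{scl-1111}), $v$ satisfies
\[
\int_{\mathds{R}^N}\nabla v\,\nabla\psi\,\mathrm{d}x+\int_{\mathds{R}^N}b(x)\,v\,\psi\,\mathrm{d}x\;=\;\beta\int_{\mathds{R}^N}a(x)\,u\,\psi\,\mathrm{d}x \qquad \forall\,\psi\in E_b,
\]
which is the weak form of the second equation of \eqref{P}. Substituting $S_b(u)=v$ into the identity just derived for $u$ produces the weak form of the first equation of \eqref{P}. Thus \eqref{23-1111} holds for every pair $\varphi,\psi\in E_b$, and hence, by Lemma \ref{type12-1111}, for every pair $\varphi,\psi\in {\rm C}_0^\infty(\mathds{R}^N)$, so that $(u,S_bu)$ is a weak solution of \eqref{P} in the sense of Definition \ref{def-weak-solution-1111}.

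There is no real obstacle here: everything is a direct bookkeeping exercise once Lemma \ref{Properties}, Proposition \ref{existen} and Lemma \ref{type12-1111} are in hand. The only point that needs a brief word is the symmetry step that swaps $S_b$ from $h$ to $u$ in $\langle u,h\rangle_{ab}$; without it one would only recover the adjoint equation. All other manipulations are algebraic, and the $E_b$-to-${\rm C}_0^\infty$ passage is handled once and for all by Lemma \ref{type12-1111}.
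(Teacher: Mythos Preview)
Your argument is correct and follows the same approach as the paper's own proof, which simply observes that $DJ(u)h=0$ for all $h\in E_b$ is, via \eqref{DJ}, exactly the weak formulation \eqref{weak-solution-2222}. You add welcome detail by invoking the symmetry identity \eqref{simetriaSb} to pass from $a(x)\,u\,S_b(h)$ to $a(x)\,S_b(u)\,h$ and by spelling out how $v=S_bu$ recovers the second equation of \eqref{P}; the paper leaves these points implicit.
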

    \begin{proof}
    The proof immediately follows from the fact that the critical point $u$ 
    satisfies the equation $DJ(u) h=0$ for all $h\in E_b$. By \eqref{DJ}, this is the same as
    equation \eqref{weak-solution-2222}.
    \end{proof}
\section{Existence of solution for sign-changing coefficients: Proof of Theorem \ref{teo1} }
\label{theorem17}
\begin{lemma}
\label{lem51-111}
  Let the function $f$ satisfy {\rm \ref{f1}--\ref{f4}} and 
  let $\Psi$ be defined by \eqref{psi1111}. Further
        let $b\in \Upsilon_s(\mathds{R}^N)$ for some $s\in (2,2^*)$ and $a(x)$ satisfy {\rm \ref{f0a}}. 
        Then, 
\begin{equation*}
        D\Psi(u) u \geqslant \mu_0 \Psi(u)
        \quad \forall \; u \in E_b,
    \end{equation*}
    where $D\Psi$ is the Fr\'echet derivative of $\Psi$ and $\mu_0$ is the constant from hypothesis {\rm \ref{f4}}.
    \end{lemma}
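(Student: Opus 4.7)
The plan is to observe that the inequality in the statement is essentially the pointwise Ambrosetti--Rabinowitz condition \ref{f4} integrated over $\mathds{R}^{N}$. By Lemma \ref{frch-2222}, the Fr\'echet derivative acts as
\begin{align*}
D\Psi(u) u = \int_{\mathds{R}^{N}} f(x,u(x))\, u(x)\, \mathrm{d}x,
\end{align*}
so the claim reduces to showing
\begin{align*}
\int_{\mathds{R}^{N}} u(x) f(x,u(x))\, \mathrm{d}x \;\geqslant\; \mu_0 \int_{\mathds{R}^{N}} F(x,u(x))\, \mathrm{d}x.
\end{align*}

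First, I would verify that both integrals are well defined and finite for $u\in E_b$. For the right-hand side, Remark \ref{rm15} gives $|F(x,u)|\leqslant \tfrac{C_0}{\mu_0}(1+\phi^{1/\alpha})(|u|^2+|u|^{p+1})$, and the same type of estimates used in Lemma \ref{A12} (with $v=0$ and $w$ replaced by $u$ or $1$, via a standard integration of $F(x,u)=\int_0^u f(x,t)\,dt$) show that $\Psi(u)$ is finite. For the left-hand side, Corollary \ref{cor-3.11} applied with $w=u$ gives $|D\Psi(u)u|\leqslant C\|u\|_{ab}^{p+1}<\infty$.

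Next, I would appeal to \ref{f4} pointwise: for every $x\in\mathds{R}^{N}$ and every $u\in\mathds{R}$ with $u\ne 0$,
\begin{align*}
u\, f(x,u) \;\geqslant\; \mu_0 F(x,u) \;>\; 0.
\end{align*}
At points where $u(x)=0$, hypothesis \ref{f3} gives $f(x,0)=0$, and by definition $F(x,0)=0$, so both integrands vanish and the pointwise inequality continues to hold (trivially). Thus the inequality $u(x) f(x,u(x)) \geqslant \mu_0 F(x,u(x))$ holds for a.e.\ $x\in\mathds{R}^{N}$.

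Since \ref{f4} guarantees that both sides of this pointwise inequality are non-negative, integrating over $\mathds{R}^{N}$ preserves the inequality, yielding
\begin{align*}
D\Psi(u)u = \int_{\mathds{R}^{N}} u f(x,u)\,\mathrm{d}x \;\geqslant\; \mu_0 \int_{\mathds{R}^{N}} F(x,u)\,\mathrm{d}x = \mu_0\, \Psi(u),
\end{align*}
which is the desired estimate. There is no substantive obstacle here; the only point requiring care is the justification that the two integrals are finite, which is precisely what Lemma \ref{A12} and Corollary \ref{cor-3.11} were established for, so the verification is essentially a bookkeeping step.
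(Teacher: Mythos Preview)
Your proposal is correct and follows exactly the same approach as the paper: invoke Lemma \ref{frch-2222} to identify $D\Psi(u)u=\int_{\mathds{R}^N} u f(x,u)\,\mathrm{d}x$ and then apply the pointwise Ambrosetti--Rabinowitz inequality \ref{f4} under the integral. The paper's proof is a single line and omits the integrability checks and the $u(x)=0$ case that you spell out, so your version is simply a more detailed rendering of the same argument.
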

    \begin{proof}
    By Lemma \ref{frch-2222} and hypothesis \ref{f4},
    \begin{align*}
     D\Psi(u) u = \int_{\mathds{R}^N}u f(x,u)\, \mathrm{d}x \geqslant \mu_{0}\int_{\mathds{R}^N} F(x,u) \,\mathrm{d}x
            = \mu_0\Psi(u). 
\end{align*}       \end{proof}
     \begin{lemma} \label {mountain pass geometry}
    Let $f$ be a function satisfying conditions {\rm \ref{f1}--\ref{f4}}. Further
        let $b\in \Upsilon_s(\mathds{R}^N)$ for some $s\in (2,2^*)$ and $a(x)$ satisfy {\rm \ref{f0a}}. 
     Then, the functional $J(u)$, given by \eqref{energy}, with $\Psi$ defined by \eqref{psi1111},
     possesses the properties:
     $ J(0)=0$ and there exist $\rho>0$, $g\in E_b$
    with $\|g\|_{ab}>\rho$ such that
    \begin{align*}
    \inf_{\{\|u\|_{ab}=\rho\}}  J(u)>0 \quad \text{and} \quad          J(g)<0.
    \end{align*}
    \end{lemma}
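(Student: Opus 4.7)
The equality $J(0)=0$ is immediate from \ref{f3}, which gives $F(x,0)=0$ and hence $\Psi(0)=0$. For the two nontrivial claims I would split the work into an upper bound for $\Psi$ near the origin (yielding the positive infimum on a small sphere) and a lower bound for $\Psi$ along a single ray (yielding a far point where $J$ is negative); this is the standard mountain-pass dichotomy adapted to the nonlocal functional $J$.

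For the positive infimum on a small sphere, I would bound $\Psi$ by integrating its Fr\'echet derivative along the segment from $0$ to $u$: since $\Psi(0)=0$ and $\Psi\in {\rm C}^1(E_b,\mathds{R})$ by Lemma \ref{frch-2222},
\[
\Psi(u)=\int_0^1 D\Psi(tu)\,u\,\mathrm{d}t=\int_0^1\!\!\int_{\mathds{R}^N} f(x,tu(x))\,u(x)\,\mathrm{d}x\,\mathrm{d}t.
\]
Applying Corollary \ref{cor-3.11} pointwise in $t$ gives $|D\Psi(tu)\,u|\leqslant C\,t^p\|u\|_{ab}^{p+1}$, and integration in $t$ yields $|\Psi(u)|\leqslant \bar C\,\|u\|_{ab}^{p+1}$. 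Therefore $J(u)\geqslant \tfrac12\|u\|_{ab}^2-\bar C\|u\|_{ab}^{p+1}=\|u\|_{ab}^2\bigl(\tfrac12-\bar C\|u\|_{ab}^{p-1}\bigr)$, and since $p>1$, choosing $\rho>0$ with $\bar C\rho^{p-1}\leqslant \tfrac14$ gives $J(u)\geqslant \rho^2/4>0$ uniformly on the sphere $\{\|u\|_{ab}=\rho\}$.

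For the existence of $g$ with $J(g)<0$, I invoke the Ambrosetti-Rabinowitz condition \ref{f4}. For fixed $x$ with $u(x)\ne 0$, the map $t\mapsto F(x,tu(x))/t^{\mu_0}$ is nondecreasing on $(0,\infty)$ since its derivative equals $t^{-\mu_0-1}\bigl(tu(x)f(x,tu(x))-\mu_0 F(x,tu(x))\bigr)\geqslant 0$; when $u(x)=0$, $F(x,tu(x))\equiv 0$. Thus the pointwise scaling inequality $F(x,tu(x))\geqslant t^{\mu_0}F(x,u(x))$ holds a.e.\ for every $t\geqslant 1$ and integrates to $\Psi(tu)\geqslant t^{\mu_0}\Psi(u)$. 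Fixing any $u_0\in E_b\setminus\{0\}$, the strict positivity of $F$ on $\{u_0\ne 0\}$ supplied by \ref{f4} forces $\Psi(u_0)>0$, so
\[
J(tu_0)\leqslant \tfrac{t^2}{2}\|u_0\|_{ab}^2-t^{\mu_0}\Psi(u_0)\to -\infty \quad\text{as } t\to\infty,
\]
because $\mu_0>2$. Any sufficiently large $t_0$ then yields $g=t_0 u_0$ with $\|g\|_{ab}>\rho$ and $J(g)<0$.

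The delicate step is the upper bound $|\Psi(u)|\leqslant \bar C\|u\|_{ab}^{p+1}$, and this is precisely where the careful H\"older interpolations in Lemma \ref{A12} (packaged as Corollary \ref{cor-3.11}) are indispensable: the estimate for $\int f(x,u)w\,\mathrm{d}x$ has to be uniform in $u,w$ with an exponent strictly larger than $2$ in $\|u\|_{ab}$, so that the superlinear term dominates the quadratic part of $J$ near the origin. Once this estimate is secured, both the sphere positivity and the construction of $g$ are standard.
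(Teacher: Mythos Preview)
Your proof is correct and follows essentially the same route as the paper: Corollary \ref{cor-3.11} for the small-sphere positivity, and the Ambrosetti--Rabinowitz condition \ref{f4} for the negative-energy element $g$. The execution differs only in minor details: for the upper bound on $\Psi$ near $0$ the paper uses $\mu_0 F(x,u)\leqslant uf(x,u)$ from \ref{f4} and then Corollary \ref{cor-3.11}, whereas you integrate $D\Psi(tu)u$ in $t$ and apply Corollary \ref{cor-3.11} to each integrand; for the ray asymptotics the paper restricts to a $u_0\in{\rm C}_0^\infty(\mathds{R}^N)$ and works on the level set $D_\kappa=\{|u_0|\geqslant\kappa\}$, while your monotonicity of $t\mapsto F(x,tu(x))/t^{\mu_0}$ yields the cleaner global inequality $\Psi(tu_0)\geqslant t^{\mu_0}\Psi(u_0)$ for $t\geqslant 1$ and any $u_0\in E_b\setminus\{0\}$ (with $\Psi(u_0)>0$ by the strict positivity of $F$ in \ref{f4}). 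Both variants are standard and valid; yours is marginally more streamlined in the second part.
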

      \begin{proof}[Proof of Lemma \ref{mountain pass geometry}]
       Clearly, $J(0) = 0$.   
   By  Corollary \ref{cor-3.11}, there exist a constant $C>0$ such that
        \begin{align*}
                 J(u)&=\frac{1}{2}\|u\|_{ab}^2- \int_{\mathds{R}^{N}}F(x,u) \, \mathrm{d}x\geqslant 
                  \frac{1}{2}\|u\|_{ab}^2-\frac{1}{\mu_{0}}\int_{\mathds{R}^{N}}f(x,u)u \, \mathrm{d}x\\
                 &\geqslant  \frac{1}{2}\|u\|_{ab}^2-\frac{C}{\mu_{0}}\|u\|_{ab}^{p+1} 
                 \geqslant \Big(\frac{1}{2}-\frac{C}{\mu_0}\|u\|_{ab}^{p-1}\Big)\|u\|_{ab}^2.
\end{align*}
Therefore,
    \begin{align*}
    \inf_{\{\|u\|_{ab}=\rho\}}  J(u) \geqslant \Big(\frac{1}{2}-\frac{C}{\mu_0}\,\rho^{\,p-1}\Big)\rho^2 > 0 \quad \text{for} \;\;
    \rho\in \Big(0,\Big(\frac{\mu_0}{2C}\Big)^\frac1{p-1}\Big).
\end{align*}
 By \ref{f4},  for any $\kappa>0$ and $u\geqslant \kappa$,
              \begin{align*}
              \int_\kappa^u \frac{\partial_u F(x,w)}{F(x,w)} dw \geqslant \mu_0\int_\kappa^u \frac{dw}w\,.
\end{align*}
     This implies 
     \begin{align*}
     F(x,u) \geqslant \frac{F(x,\kappa) u^{\mu_0}}{\kappa^{\mu_0}} \quad \text{for all} \;\; u \geqslant \kappa.
\end{align*}    
     Likewise, if $u\leqslant -\kappa$, we obtain
     \begin{align*}
     F(x,u) \geqslant \frac{F(x,-\kappa) |u|^{\mu_0}}{\kappa^{\mu_0}}, \quad \text{for all} \;\; u \leqslant -\kappa.
\end{align*}  
     Let $u_{0}\in {{\rm C}}_0^\infty(\mathbb{R}^N)$ be a function with compact support, 
     $u_0\not\equiv 0$, 
     and let $\kappa>0$ be such that 
              $|D_\kappa|>0$, where $D_\kappa:=\{x\in {\rm supp}\, u_0: |u_0(x)| \geqslant \kappa\}$.
              Note that
              ${\rm supp}\, F(\cdot,u_0(\cdot)) \subset {\rm supp}\, u_0$.
              Let $\lambda\geqslant1$.    Then, on $D_\kappa$,
      \begin{align*}
      F(x,\lambda u_{0}) \geqslant \min\big\{F(x,\kappa), F(x,-\kappa)\big\} \lambda^{\mu_0} 
      \frac{|u_0|^{\mu_0}}{\kappa^{\mu_0}} \geqslant \min\big\{F(x,\kappa), F(x,-\kappa)\big\} \lambda^{\mu_0}. 
\end{align*}
      By \ref{f4} and Remark \ref{rm15},
      \begin{align*}
      0< \int_{D_\kappa}\min\big\{F(x,\kappa), F(x,-\kappa)\big\}  dx \leqslant
      \frac{C_0}{\mu_0} (\kappa^2+\kappa^{p-1}) 
      \int_{D_\kappa} (1+ \phi^\frac1\alpha) dx.
\end{align*}
Therefore,       
        \begin{equation}\begin{aligned}
        \label{ambro}
              J(\lambda u_{0}) = & \, \frac{\lambda^2}{2}\|u_0\|_{ab}^2 - 
              \int_{{\rm supp}\, u_0}F(x,\lambda u_{0}) \, \mathrm{d}x
              \leqslant \frac{\lambda^2}{2}\|u_0\|_{ab}^2 - \int_{D_\kappa}F(x,\lambda u_{0}) \, \mathrm{d}x \\
              \leqslant &\, \frac{\lambda^2}{2}\| u_0 \|_{ab}^2
              -  \lambda^{\mu_0}   \int_{D_\kappa}  \min\big\{F(x,\kappa), F(x,-\kappa)\big\} d x 
              \rightarrow-\infty \quad \text{as } \lambda\rightarrow + \infty,\end{aligned}
\end{equation}
              where the convergence holds since $\mu_0>2$. This completes the proof of the lemma.
    \end{proof}
    \begin{remark}
    \label{mpg-1111}
    \rm
We say that a functional $\mathcal J\in {{\rm C}}(X,\mathbb{R})$, given on a Banach space 
$(X, \|\cdot\|)$, has a {\it mountain pass geometry} if there exist $g\in X$ and $\rho>0$ with $\|g\|\geqslant \rho$ such that
\begin{align*}
   \inf_{\{\|u\| =\rho\}} \mathcal J(u) > \mathcal J(0) \geqslant \mathcal J(g).
\end{align*}
Thus, Lemma \ref{mountain pass geometry} tells us that the functional $J$, given by \eqref{energy} 
with $\Psi$ defined by \eqref{psi1111}, has a mountain pass geometry.
Therefore, we are in a position to apply the Mountain pass theorem
\cite[Theorem 2.10]{zbMATH00921895}
(see also its original version \cite[Theorem 2.1]{MR0370183}).
     \end{remark}
    \begin{lemma}
     \label{FDJ-1111}
     For the Fr\'echet derivative of $J$ one has
\begin{align*}
DJ(u)  = u - D\Psi(u).
\end{align*}
     \end{lemma}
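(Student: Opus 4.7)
The plan is to compute the Fr\'echet derivative of $J$ directly from its definition $J(u) = \tfrac{1}{2}\|u\|_{ab}^{2} - \Psi(u)$, exploiting the fact that, by Proposition \ref{Ne}, $(E_b, \langle \cdot, \cdot\rangle_{ab})$ is a Hilbert space. Linearity of Fr\'echet differentiation reduces the task to handling the two summands separately, and Lemma \ref{frch-2222} already does most of the work for the nonlinear piece $\Psi$, so what remains is essentially bookkeeping combined with one invocation of the Riesz representation theorem.

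First I would treat the quadratic part $Q(u) := \tfrac{1}{2}\langle u, u\rangle_{ab}$. A direct expansion of the inner product gives
\[
Q(u+h) - Q(u) = \langle u, h\rangle_{ab} + \tfrac{1}{2}\|h\|_{ab}^{2},
\]
and since the remainder $\tfrac{1}{2}\|h\|_{ab}^{2} = o(\|h\|_{ab})$ as $h \to 0$ in $(E_b, \|\cdot\|_{ab})$, the map $Q$ is Fr\'echet differentiable at every $u \in E_b$ with $DQ(u)\,h = \langle u, h\rangle_{ab}$. The continuity of this linear functional in $h$ is immediate from the Cauchy--Schwarz inequality, so the Riesz representation theorem in the Hilbert space $(E_b, \langle \cdot, \cdot\rangle_{ab})$ identifies $DQ(u)$ with the element $u \in E_b$ itself.

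Next, Lemma \ref{frch-2222} gives Fr\'echet differentiability of $\Psi$ on $(E_b, \|\cdot\|_{ab})$ with $D\Psi(u)\,h = \int_{\mathds{R}^N} f(x,u(x))\, h(x)\,\mathrm{d}x$, and Corollary \ref{cor-3.11} shows this functional is continuous in $h$ with respect to $\|\cdot\|_{ab}$. Another appeal to the Riesz representation theorem identifies it with a unique element of $E_b$, which, following the same notational convention used throughout the paper, I would also denote $D\Psi(u)$. Linearity of the Fr\'echet derivative then yields
\[
DJ(u) = DQ(u) - D\Psi(u) = u - D\Psi(u)
\]
in $E_b$ under the Riesz identification, proving the lemma. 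There is no genuine obstacle here; the only mild subtlety is making the vector--versus--functional identification transparent, and this is handled uniformly by working in the Hilbert space structure provided by Proposition \ref{Ne}.
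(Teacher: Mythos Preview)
Your proposal is correct and follows exactly the approach the paper implicitly takes: the paper states this lemma without proof, treating it as an immediate consequence of the Hilbert space structure of $(E_b,\langle\cdot,\cdot\rangle_{ab})$ from Proposition~\ref{Ne} together with Lemma~\ref{frch-2222}, and your argument fills in precisely those standard details.
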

     \begin{lemma}
     \label{lem-Eb}
        Let the function $f$ satisfy conditions {\rm \ref{f1}} and {\rm \ref{f2}},
        $b\in \Upsilon_s(\mathds{R}^n)$ for some $s\in (2,2^*)$, and $a(x)$  satisfy {\rm \ref{f0a}}. 
        Then, the map $E_b \to E_b$, $u\mapsto D\Psi(u)$ is compact.
 \end{lemma}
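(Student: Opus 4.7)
The plan is to view $D\Psi(u)$ as an element of the dual $E_b^*$ via the pairing $\langle D\Psi(u),h\rangle = \int_{\mathbb{R}^N} f(x,u) h\, dx$ and, through the Riesz isomorphism associated with $(E_b,\langle\cdot,\cdot\rangle_{ab})$ given by Proposition \ref{Ne}, identify it with an element of $E_b$. Compactness of $u\mapsto D\Psi(u)$ will be established by showing that any weakly convergent sequence $u_n\rightharpoonup u$ in $E_b$ is mapped to a strongly convergent sequence, i.e.,
\begin{equation*}
\|D\Psi(u_n)-D\Psi(u)\|_{E_b^*}=\sup_{\|h\|_{ab}\leqslant 1}\Big|\int_{\mathbb{R}^N}\big(f(x,u_n)-f(x,u)\big)h\,\mathrm{d}x\Big|\longrightarrow 0.
\end{equation*}

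First, I would recall that since $u_n\rightharpoonup u$ in the Hilbert space $(E_b,\|\cdot\|_{ab})$, the sequence $\{\|u_n\|_{ab}\}$ is bounded by some constant $M>0$. Next, I would invoke Proposition \ref{compa}: because $b\in\Upsilon_s(\mathbb{R}^N)$ for some $s\in(2,2^*)$, the canonical embedding $(E_b,\|\cdot\|_{ab})\hookrightarrow L_s(\mathbb{R}^N)$ is compact. Applied to the weakly convergent sequence $\{u_n\}$, this yields the strong convergence $u_n\to u$ in $L_s(\mathbb{R}^N)$ (up to a subsequence, and then for the full sequence by a standard subsequence argument since the limit is uniquely determined).

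The key estimate comes from Lemma \ref{A12}: there exists a constant $C>0$ and some $s\in(2,2^*)$ such that for every $h\in E_b$,
\begin{equation*}
\Big|\int_{\mathbb{R}^N}\big(f(x,u_n)-f(x,u)\big)h\,\mathrm{d}x\Big|\leqslant C\,\|u_n\|_{ab}^{p-1}\,\|u_n-u\|_{L_s}\,\|h\|_{ab}.
\end{equation*}
Taking the supremum over $\|h\|_{ab}\leqslant 1$, combining with the uniform bound $\|u_n\|_{ab}\leqslant M$, and using $\|u_n-u\|_{L_s}\to 0$, I conclude
\begin{equation*}
\|D\Psi(u_n)-D\Psi(u)\|_{E_b^*}\leqslant C M^{p-1}\|u_n-u\|_{L_s}\longrightarrow 0.
\end{equation*}
Since Riesz is an isometric isomorphism between $E_b^*$ and $E_b$, this proves that $D\Psi(u_n)\to D\Psi(u)$ strongly in $E_b$, so $u\mapsto D\Psi(u)$ is compact.

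I do not anticipate a significant obstacle here, since the heavy lifting has already been done in the two earlier results: the Lipschitz-type estimate of Lemma \ref{A12} controls the dual norm by the $L_s$-distance, and Proposition \ref{compa} upgrades weak convergence in $E_b$ to strong convergence in $L_s$. The only point requiring care is matching the exponent $s\in(2,2^*)$ furnished by Lemma \ref{A12} with one for which the compact embedding holds; both invoke the same class $\Upsilon_s(\mathbb{R}^N)$ with the same admissible range, so this matching is automatic.
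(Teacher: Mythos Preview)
Your proposal is correct and follows essentially the same approach as the paper: both use the Lipschitz-type estimate from Lemma~\ref{A12} to bound $\|D\Psi(u_n)-D\Psi(u)\|$ by $C\|u_n\|_{ab}^{p-1}\|u_n-u\|_{L_s}$, and then invoke Proposition~\ref{compa} to upgrade weak convergence in $E_b$ to strong convergence in $L_s$. Your additional remarks on the Riesz identification and on matching the exponent $s$ are accurate but not strictly needed, since Proposition~\ref{compa} (together with Remark~\ref{usp-s-1111}) already gives compactness for \emph{all} $s\in(2,2^*)$.
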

 \begin{proof}
 From Lemmas \ref{frch-2222} and \ref{A12} it follows that there exists a constant $C>0$ 
 and $s\in (2,2^*)$ such that
 \begin{align}
 \label{ineq-comp}
 \|D\Psi(u) - D\Psi(v)\|_{ab} \leqslant C  \|u\|^{p-1}_{ab} \|u-v\|_{L_s}. 
\end{align}
 It suffices to show that if $u_{n} \rightharpoonup u$ weakly in $(E_{b}, \|\cdot \|_{ab})$, then $D\Psi(u_{n}) \rightarrow D\Psi(u)$ 
 strongly in $(E_b, \|\cdot \|_{ab})$.  By Proposition \ref{compa}, $u_n\to u$ strongly
        in $L_s(\mathbb{R}^N)$ for any $s\in (2,2^*)$. This and \eqref{ineq-comp} imply the statement of the lemma.
 \end{proof}
    For the proof of Theorem \ref{teo1} we need the following two results. The first one
    can be found in \cite[Theorem 2.10]{zbMATH00921895} with its original version
    obtained in \cite{MR0370183}.
    \begin{theorem}[Mountain pass theorem of Ambrosetti-Rabinowitz (1973)]
    \label{mpt-AR}
    Let $\mathcal J\in {{\rm C}}^1(X,\mathbb{R})$ be a functional on a Banach space $X$ which has a
    mountain path geometry (see Remark \ref{mpg-1111}). Assume that
    $\mathcal J$ satisfies the  Palais-Smale condition $(PS)$, that is, any sequence $\{u_n\}\subset X$
        such that $\mathcal J(u_n)\to \mathfrak c\in\mathbb{R}$ and $D\mathcal J(u_n)\to 0$ contains a convergent subsequence.
        If
        \begin{align}
        \label{nc-2222}
       \mathfrak  c: = \inf_{\gamma \in \Gamma} \max_{t \in [0, 1]} \mathcal J(\gamma(t)),
\end{align}
    where
    \begin{align}
    \label{gm-1111}
    \Gamma: = \left\{\gamma \in {{\rm C}}([0, 1], X) : \gamma(0) = 0, \, \gamma(1) = g \right\},
\end{align}
    then $\mathfrak c$ is a critical value of $\mathcal J$.
    \end{theorem}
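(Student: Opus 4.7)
My plan is to argue by contradiction via the classical deformation-lemma strategy. Suppose $\mathfrak c$ is not a critical value of $\mathcal J$. The first step is to exploit the Palais--Smale condition to derive a quantitative non-degeneracy statement around the level $\mathfrak c$: namely, there exist $\varepsilon_0\in(0,\tfrac12(\inf_{\|u\|=\rho}\mathcal J(u) - \mathcal J(0)))$ and $\delta>0$ such that
\begin{equation*}
\|D\mathcal J(u)\|_{X^*}\geqslant \delta \quad\text{whenever } |\mathcal J(u)-\mathfrak c|\leqslant 2\varepsilon_0.
\end{equation*}
Indeed, if not, one could find a sequence $u_n$ with $\mathcal J(u_n)\to\mathfrak c$ and $D\mathcal J(u_n)\to 0$; by $(PS)$, a subsequence would converge to a critical point at level $\mathfrak c$, contradicting our assumption. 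Choosing $\varepsilon_0$ small also guarantees that $\mathcal J(0)$ and $\mathcal J(g)$ both lie below $\mathfrak c-\varepsilon_0$, thanks to the mountain pass geometry.

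Next I would construct a pseudo-gradient vector field $V:\widetilde X\to X$, where $\widetilde X=\{u\in X: D\mathcal J(u)\ne 0\}$, satisfying $\|V(u)\|\leqslant 2\|D\mathcal J(u)\|_{X^*}^{-1}$ and $\langle D\mathcal J(u),V(u)\rangle\geqslant 1$ (a locally Lipschitz partition-of-unity construction; this is the technical core). Using a cutoff $\chi$ that is $1$ on $\{|\mathcal J-\mathfrak c|\leqslant \varepsilon_0\}$ and $0$ outside $\{|\mathcal J-\mathfrak c|\leqslant 2\varepsilon_0\}$, multiplied by another cutoff supported away from $\{0,g\}$, I would integrate the ODE $\dot\eta=-\chi(\eta)V(\eta)$ to obtain a continuous deformation $\eta:[0,1]\times X\to X$ fixing $0$ and $g$, such that the energy is nonincreasing along trajectories and
\begin{equation*}
\eta\bigl(1,\{\mathcal J\leqslant \mathfrak c+\varepsilon_0\}\bigr)\subset \{\mathcal J\leqslant \mathfrak c-\varepsilon_0\};
\end{equation*}
the key estimate is that along any trajectory that stays inside $\{|\mathcal J-\mathfrak c|\leqslant \varepsilon_0\}$, one has $\tfrac{d}{dt}\mathcal J(\eta)\leqslant -\delta/2$, so the strip is traversed in time at most $4\varepsilon_0/\delta$, and by rescaling I can arrange this to happen within $t\in[0,1]$.

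Finally, by definition of $\mathfrak c$ there exists $\gamma\in\Gamma$ with $\max_{t\in[0,1]}\mathcal J(\gamma(t))\leqslant \mathfrak c+\varepsilon_0$. The deformed path $\widetilde\gamma(t):=\eta(1,\gamma(t))$ again belongs to $\Gamma$ (endpoints fixed because $0$ and $g$ lie in the region where $\chi\equiv 0$), and satisfies $\max_t\mathcal J(\widetilde\gamma(t))\leqslant \mathfrak c-\varepsilon_0$, contradicting the definition of $\mathfrak c$. I expect the main obstacle to be the pseudo-gradient construction and the careful bookkeeping of the cutoffs so that the flow stays continuous, fixes the two endpoints of the admissible paths, and achieves the strict energy drop within unit time; everything else is a clean deformation/min-max argument.
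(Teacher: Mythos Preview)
The paper does not supply its own proof of Theorem~\ref{mpt-AR}: it is quoted as a classical result with references to \cite[Theorem~2.10]{zbMATH00921895} and the original \cite{MR0370183}, and is used as a black box in the proof of Theorem~\ref{teo1}. Your sketch via the deformation lemma and a pseudo-gradient flow is exactly the standard argument found in those references, so there is nothing to compare---your approach \emph{is} the textbook proof the paper is citing.
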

    \begin{remark}
    \rm
    In the above theorem $g\in X$ is the element defined in Remark \ref{mpg-1111}.
    \end{remark}
    The next result is due to \cite[Theorem 5.1]{zbMATH00044449}. We need this result to guarantee the existence
    of a Palais-Smale sequence.
    \begin{theorem}
    \label{PS-seq-1111}
    Let $\mathcal J\in {{\rm C}}^1(X,\mathbb{R})$ be a functional on a Banach space $X$,
    $T$ be a compact metric space, and $T_0\subset T$ be its closed subspace.
    Further let $\gamma_0\in {{\rm C}}(T_0,X)$ and
    \begin{align*}
    \tilde \Gamma:=\big\{\gamma\in {{\rm C}}(T,X): \;\; \left.\gamma\right|_{T_0} = \gamma_0\big\}.
\end{align*}
    Assume that
    \begin{align}
    \label{condition-max-1111}
    \max_{t\in T} \mathcal J(\gamma(t)) > \max_{t\in T_0} \mathcal J(\gamma(t)) \quad
    \forall \; \gamma\in \tilde \Gamma.
\end{align}
    Then, there exists a sequence $\{u_n\}\subset X$ such that as $n\to\infty$,
    \begin{align*}
    \mathcal J(u_n) \to \inf_{\gamma\in\tilde \Gamma} \max_{t\in T} \mathcal J(\gamma(t))
    \quad \text{and}  \quad
    DJ(u_n) \to 0.
\end{align*}
    \end{theorem}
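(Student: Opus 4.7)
The plan is to deduce this from Ekeland's variational principle applied to the functional
\[
\Phi: \tilde\Gamma \to \mathbb{R}, \qquad \Phi(\gamma) := \max_{t\in T}\mathcal J(\gamma(t)),
\]
on the metric space $\tilde\Gamma$ equipped with the uniform distance $d(\gamma_1,\gamma_2) = \sup_{t\in T}\|\gamma_1(t)-\gamma_2(t)\|_X$, combined with a pseudo-gradient deformation argument. First I would verify the framework: $(\tilde\Gamma,d)$ is complete because ${{\rm C}}(T,X)$ is complete ($X$ Banach, $T$ compact) and the constraint $\gamma|_{T_0} = \gamma_0$ passes to uniform limits; $\Phi$ is continuous by compactness of $T$ and continuity of $\mathcal J$; and hypothesis \eqref{condition-max-1111} forces $\Phi(\gamma) > M_0 := \max_{t\in T_0}\mathcal J(\gamma_0(t))$ for every $\gamma\in\tilde\Gamma$, so $\Phi$ is bounded below and the min-max level $\mathfrak c := \inf_{\tilde\Gamma}\Phi$ is finite.

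Applying Ekeland's principle at scale $1/n$, I would obtain a sequence $\gamma_n\in\tilde\Gamma$ with
\[
\mathfrak c \leqslant \Phi(\gamma_n) \leqslant \mathfrak c + \tfrac{1}{n^2}
\qquad \text{and} \qquad
\Phi(\gamma) \geqslant \Phi(\gamma_n) - \tfrac{1}{n}\, d(\gamma,\gamma_n) \quad \forall\,\gamma\in\tilde\Gamma.
\]
Let $K_n := \{t\in T:\mathcal J(\gamma_n(t)) = \Phi(\gamma_n)\}$, the (nonempty, compact) maximum set of $\mathcal J\circ\gamma_n$. Since $\mathcal J(\gamma_n(t)) = \mathcal J(\gamma_0(t)) \leqslant M_0 < \Phi(\gamma_n)$ on $T_0$, the sets $K_n$ and $T_0$ are disjoint closed subsets of $T$, and in fact separated by a positive distance. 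The claim I would then prove is that there exists $t_n\in K_n$ with $\|D\mathcal J(\gamma_n(t_n))\|\to 0$, so that $u_n := \gamma_n(t_n)$ is the required Palais-Smale sequence at level $\mathfrak c$.

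I would establish this by contradiction. If $\|D\mathcal J(u)\|\geqslant\delta>0$ on an open neighborhood $\mathcal U$ of the compact set $A_n := \gamma_n(K_n)\subset X$, I would pick a locally Lipschitz pseudo-gradient field $V$ for $\mathcal J$ on the regular set, truncated by a cut-off supported in $\mathcal U$, and integrate it to obtain a deformation $\eta_s: X\to X$ with $\|\eta_s(u)-u\|\leqslant s$ and $\mathcal J(\eta_s(u)) \leqslant \mathcal J(u) - \tfrac{\delta}{4}s$ on a neighborhood of $A_n$. To preserve the boundary constraint, I would further modulate by a continuous cut-off $\chi: T\to[0,1]$ equal to $1$ on a neighborhood of $K_n$ and vanishing on a neighborhood of $T_0$ (which exists precisely because $K_n\cap T_0 = \varnothing$), producing the perturbed path $\tilde\gamma_n(t) := \eta_{s\chi(t)}(\gamma_n(t))$, which lies in $\tilde\Gamma$. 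A standard argument, splitting $T$ into the region where $\mathcal J\circ\gamma_n$ is already well below $\Phi(\gamma_n)$ and a neighborhood of $K_n$ where the deformation acts, yields $\Phi(\tilde\gamma_n) \leqslant \Phi(\gamma_n) - c's$ for a fixed $c'>0$, while $d(\tilde\gamma_n,\gamma_n)\leqslant s$, contradicting Ekeland's inequality once $\tfrac{1}{n} < c'$.

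The main obstacle is the deformation step: it must simultaneously decrease $\mathcal J$ uniformly near the max set $A_n$, leave $\gamma_0$ untouched on $T_0$, and produce a perturbation controlled in the uniform metric. Reconciling these three requirements hinges on the strict inequality in \eqref{condition-max-1111}, which delivers the separation of $K_n$ from $T_0$ needed to localise the pseudo-gradient flow away from the boundary. Once this localisation is available, the remainder of the argument is a routine deformation-plus-Ekeland contradiction.
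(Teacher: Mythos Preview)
The paper does not provide its own proof of this theorem: it is quoted verbatim as \cite[Theorem 5.1]{zbMATH00044449} and used as a black box to produce the Palais--Smale sequence in the proof of Theorem~\ref{teo1}. So there is no ``paper's proof'' to compare against.

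That said, your proposal is correct and is precisely the standard argument one finds in the literature for this type of general minimax principle (e.g.\ in Mawhin--Willem or Struwe): equip $\tilde\Gamma$ with the uniform metric, apply Ekeland's variational principle to $\Phi(\gamma)=\max_T \mathcal J\circ\gamma$, and then run a localised pseudo-gradient deformation to show that the maximum set $K_n$ of an Ekeland-almost-minimiser $\gamma_n$ must contain a point where $\|D\mathcal J\|$ is small. Your identification of the key point --- that hypothesis~\eqref{condition-max-1111} separates $K_n$ from $T_0$, which is exactly what permits the cut-off keeping $\tilde\gamma_n|_{T_0}=\gamma_0$ --- is right on target. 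One small remark: the $T$-cut-off $\chi$ is in fact enough by itself (the additional $X$-cut-off supported in $\mathcal U$ is not strictly needed), since along the pseudo-gradient flow $\mathcal J$ is nonincreasing everywhere, so the only issue is preserving the boundary values, and that is handled by $\chi$; but carrying both cut-offs does no harm. In short, you have supplied a complete and standard proof for a result that the paper merely cites.
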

   
 \begin{proof}[Proof of Theorem \ref{teo1}]
For the proof, we will apply Theorem \ref{mpt-AR} with respect to the functional
 $J$, given by \eqref{energy} with $\Psi$ defined by \eqref{psi1111},
 the number $\mathfrak c\in\mathbb{R}$ defined with respect to the space $X=E_b$,
 and the element $g\in E_b$ constructed in Lemma \ref{mountain pass geometry}.
 From Lemma \ref{mountain pass geometry} and Remark \ref{mpg-1111}
 we know that $J$ has the mountain pass geometry. Thus, to apply
 Theorem \ref{mpt-AR}, it remains to verify
 the Palais--Smale condition $(PS)$ for the functional $J$
 with respect to the number $\mathfrak c$ given by \eqref{nc-2222}.
      Let $\{u_n\}$ be a sequence satisfying $(PS)$.
        We claim that $\{\|u_n\|_{ab}\}$ is a bounded sequence.
        If this is the case, then, by Lemma \ref{FDJ-1111},
        $\{u_n\}$ contains a convergent subsequence. Indeed,
        by Lemma \ref{lem-Eb}, $E_b\to E_b$, $u\mapsto D\Psi(u)$ is a compact map. Therefore,
        there exists a convergent subsequence $\{D\Psi(u_{n_k})\}$ of
        $\{D\Psi(u_n)\}$. Hence,
        by Lemma \ref{FDJ-1111},  $u_{n_k} = DJ(u_{n_k})- D\Psi(u_{n_k})$
        is convergent. Let $u_0\in E_b$ be the limit of $u_{n_k}$. Then, condition $(PS)$
        implies that $u_0$ is a critical point of $J$ at level $\mathfrak c$.

       Let us prove that $\{\|u_n\|_{ab}\}$ is bounded. 
        By Lemma \ref{lem51-111},        $D\Psi (u_n) u_n \geqslant \mu_0 \Psi(u_n)$.
        Therefore,
        \begin{equation}\begin{aligned}
        \label{47-1111}
        \mu_0 J(u_n)-  DJ(u_n) u_n  = &\,
        \frac{\mu_0}2 \|u_n\|^2_{ab}  - \mu_0 \Psi(u_n) - \|u_n\|^2_{ab}
        + D\Psi (u_n) u_n \\
        \geqslant &\,  \Big(\frac{\mu_0}2 - 1\Big)\|u_n\|^2_{ab}.\end{aligned}
\end{equation}
On the other hand,
since $\{u_n\}$ satisfies $(PS)$,
for sufficiently large $n$ we have
\begin{equation}\begin{aligned}
\label{48-1111}
\mu_0 J(u_n)-  DJ(u_n) u_n  \leqslant   &\,  \mu_0 J(u_n)
+ \| DJ(u_n)\|_{ab}\|u_n\|_{ab} \leqslant 2\mu_0 \mathfrak c + \Big(\frac{\mu_0}4
- \frac12\Big) \|u_n\|_{ab}.\end{aligned}
\end{equation}
Represent $\{u_n\} = \{u'_n\}\cup \{u''_n\}$, where
       $\|u'_n\|_{ab}>1$ and $\|u''_n\|_{ab}\leqslant 1$. 
        Comparing \eqref{47-1111} and \eqref{48-1111} for the sequence $\{u'_n\}$, we conclude that
        $\{\|u'_n\|_{ab}\}$ is bounded. Hence, $\{\|u_n\|_{ab}\}$ is bounded.

        It remains to prove the existence of a sequence $\{u_n\}$
         satisfying condition $(PS)$ (a Palais-Smale sequence) with
         respect to the number $\mathfrak c$ defined by \eqref{nc-2222}.
          We apply Theorem \ref{PS-seq-1111} with $T=[0,1]$,
        $T_0=\{0,1\}$, $X=(E_b, \|\cdot\|_{ab})$, and
        $\gamma_0: \{0,1\} \to E_b$, $\gamma_0(0) = 0$ and $\gamma_0(1) = g$,
        where $g\in E_b$ is the element constructed in Lemma \ref{mountain pass geometry}.
        Note that, with the above definitions, $\tilde \Gamma = \Gamma$, and hence,
        \begin{align*}
        \inf_{\gamma\in\tilde \Gamma} \max_{t\in T} \mathcal J(\gamma(t)) = \mathfrak c,
\end{align*}
        where $\mathfrak c$ is defined by \eqref{nc-2222}.
        We also note that any curve $\gamma\in \Gamma$
    intersects the sphere $S_\rho:=\{\|u\|_{ab} = \rho\}$.
    Indeed, the point $\gamma(0)$ coincides with the origin and therefore
    lies inside $S_\rho$; the point $\gamma(1)=g$ lies outside of $S_\rho$ by Lemma
    \ref{mountain pass geometry}. Lemma \ref{mountain pass geometry} also implies
    that for any $\gamma\in \tilde \Gamma = \Gamma$,
    \begin{align*}
    \max_{t\in [0,1]} J(\gamma(t))> \max_{t\in \{0,1\}} J(\gamma(t))
\end{align*}
    since
    \begin{align}
    \label{67-1111}
    \max_{t\in [0,1]} J(\gamma(t)) \geqslant \max_{t\in [0,1], \gamma(t)\in S_\rho} J(\gamma(t))
    \geqslant \inf_{u\in S_\rho} J(u) > 0
     \quad \text{and} \quad
    \max_{t\in \{0,1\}} J(\gamma(t)) = J(0) = 0.
\end{align}
    By Theorem \ref{PS-seq-1111}, there exists a Palais-Smale sequence
    $\{u_n\}$ with respect to the number $\mathfrak c$ given by \eqref{nc-2222}.
    We also note that $\mathfrak c\ne 0$. Indeed, taking infimum in \eqref{67-1111} over $\gamma\in \Gamma$, we obtain
    that
    \begin{align*}
      \mathfrak  c \geqslant \inf_{u\in S_\rho} J(u) > 0.
\end{align*}
      This implies that the critical point $u_0$, which is a solution to \eqref{PS}, is non-trivial, and hence,
      the solution $(u_0, S_b u_0)$ to system \eqref{P} is non-trivial.
       \end{proof}

\section{Componentwise positive and negative solutions: Proof of Theorem \ref{teo}}
\label{Proof of the main result}
\subsection{An abstract result}
In this subsection, we state the Weth theorem (Theorem \ref{Abstract Result}), 
an important auxiliary result that will be applied to the energy functional \eqref{energy}  
on an appropriately chosen Hilbert space.

Let $(H,\langle\cdot, \cdot\rangle)$ be a Hilbert space
and let $\Phi\in {{\rm C}}^1(H,\mathbb{R})$.
Define the functional
\begin{align}
\label{J-1111}
\mathcal J: H\to \mathbb{R}, \quad \mathcal J(u)=\frac{1}{2}\|u\|^2-\Phi(u),
\end{align}
where $\|\cdot\|:= \langle\cdot, \cdot\rangle^\frac12$.
Recall that a subset ${\mathcal K} \subset H$ is called a  cone
if $t {\mathcal K} \subset {\mathcal K}$ for all $t>0$. Furthermore,
\begin{align}
\label{dual-1111}
{\mathcal K}^o:=\{u \in H ;\langle u, v\rangle \leqslant 0, \;\; \forall v \in {\mathcal K}\},
\end{align}
denotes the polar cone. 
Let $\mathcal K$ be a closed cone. Recall that the projection on $\mathcal{K}$,
$P_{{\mathcal K}}: H \rightarrow \mathcal K$, $u\mapsto P_{\mathcal K} u$
is  uniquely determined by the identity
$$
\|u-P_{\mathcal K} u\|=\min _{v \in \mathcal K}\|u-v\| .
$$

The following result is due to Weth (see \cite[Theorem 2.6]{MR2262254} and 
\cite[Proposition 2.4]{MR2262254}).
\begin{theorem}\label {Abstract Result}
Let the functional $\mathcal J$ be defined by \eqref{J-1111} with
$\Phi: H \to \mathbb{R}$ being Fr\'echet differentiable, and let $\mathcal K$ be a closed, convex,
pointed cone $($the latter means that $\mathcal K \cap (-\mathcal K) = \{0\})$.
Assume that $\mathcal A: = D\Phi$, where $D$ is the Fr\'echet derivative,
 satisfies the conditions  {\rm \ref{A1}--\ref{A4}:}
\begin{enumerate}
\item[\namedlabel{A1}{\rm ($A_1$)}]  $\mathcal A: H\to H$ is compact, locally Lipschitz, and $\mathcal A(0)=0$.
    \item[\namedlabel{A2}{\rm ($A_2$)}]
    There exist constants $C^*\!, q^*>0, q_{*} \in(0,1), \ell >1$, and $\eta>2$ such that
    \begin{align}
\label{A_2_1}
        &\langle \mathcal A(u), u\rangle \geqslant \eta \Phi(u)-C^{*},
        \quad \forall \; u \in H \\
        \noalign{\vskip-4pt}
        & \hspace{-27mm} \text{and}\notag\\ 
        \noalign{\vskip-8pt}
     &|\langle \mathcal A(u), v\rangle| \leqslant
        \Big(q_{*}\|u\|+ q^*\|u\|^{\ell}\Big) \|v\| \quad \forall \; u, v \in H. \label {A_2_2}
\end{align}
\item[\namedlabel{A3}{\rm ($A_3$)}]  
$ \langle \mathcal A(u), v\rangle \leqslant\big\langle\mathcal A\big(P_{\mathcal K^o} u\big), v \big\rangle \quad \forall u \in H$ and $v \in \mathcal K^o$,
\item[\namedlabel{A4}{\rm ($A_4$)}]  
$\langle \mathcal A(u), w\rangle \leqslant\big\langle \mathcal A\big(P_{(-\mathcal K)^o} u\big), w\big\rangle \quad \forall u \in H$ and $w \in(-\mathcal K)^o$. 
\end{enumerate}
Further assume there exists a continuous path $h:[0,1] \rightarrow H$ such that
\begin{align}
\label{cp-pro-1111}
h(0) \in \mathcal K, \quad h(1) \in -\mathcal K \quad \text { and } \quad \mathcal J(h(t))<0, \quad \forall t \in[0,1] .
\end{align}
Then, $\mathcal J$ has at least three nontrivial critical points $u_1, u_2, u_3$, where $u_1 \in \mathcal K, u_2 \in - \mathcal K$ 
and $u_3 \in H \setminus (\mathcal K \cup -\mathcal K)$.
\end{theorem}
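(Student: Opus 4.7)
The plan is to combine a minimax approach with a cone-preserving flow, yielding three critical points in a unified framework. I would first verify mountain pass geometry on $H$: integrating \ref{A_2_2} along rays gives $\Phi(u) \leqslant \frac{q_*}{2}\|u\|^2 + \frac{q^*}{\ell+1}\|u\|^{\ell+1}$, so $\mathcal{J}(u) \geqslant \frac{1-q_*}{2}\|u\|^2 - \frac{q^*}{\ell+1}\|u\|^{\ell+1}$ is strictly positive on a small sphere (since $q_* < 1$ and $\ell > 1$), while the path $h$ with $\mathcal{J}(h(t)) < 0$ supplies the far-away negative region. For the Palais--Smale condition, a sequence $\{u_n\}$ with $\mathcal{J}(u_n) \to c$ and $D\mathcal{J}(u_n) \to 0$ is bounded via \ref{A_2_1}: $\eta \mathcal{J}(u_n) - \langle D\mathcal{J}(u_n), u_n \rangle \geqslant (\eta/2 - 1)\|u_n\|^2 - C^*$; combined with the decomposition $u_n = \mathcal{A}(u_n) - D\mathcal{J}(u_n)$ and compactness of $\mathcal{A}$ from \ref{A1}, one extracts a strongly convergent subsequence.

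Next I would construct a locally Lipschitz pseudo-gradient field $V$ for $\mathcal{J}$ whose negative flow leaves both $\mathcal{K}$ and $-\mathcal{K}$ positively invariant. Conditions \ref{A3}--\ref{A4} are essential here: for $u$ near $\mathcal{K}^o$, replacing $\mathcal{A}(u)$ by $\mathcal{A}(P_{\mathcal{K}^o} u)$ does not increase the inner product with any $v \in \mathcal{K}^o$, so $u \mapsto u - \mathcal{A}(P_{\mathcal{K}^o} u)$ remains a descent direction while preventing trajectories from entering $\mathcal{K}^o$ transversally; an analogous construction near $(-\mathcal{K})^o$ uses \ref{A4}. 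Patching these local vector fields with $D\mathcal{J}$ via a partition of unity gives $V$. The restrictions of $\mathcal{J}$ to the $V$-invariant sets $\mathcal{K}$ and $-\mathcal{K}$ inherit the mountain pass geometry (with base point $0$ and far endpoint $h(0)$ or $h(1)$ respectively), and standard deformation arguments yield critical points $u_1 \in \mathcal{K}$ and $u_2 \in -\mathcal{K}$ at positive levels $c_1, c_2$.

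For the sign-changing critical point I would consider the minimax class
\[
\Gamma = \{\gamma \in C([0,1], H) : \gamma(0) \in \mathcal{K}, \; \gamma(1) \in -\mathcal{K}\}
\]
and define $c_3 = \inf_{\gamma \in \Gamma} \max_{t \in [0,1]} \mathcal{J}(\gamma(t))$, which is finite because $h \in \Gamma$. Since $\mathcal{K}$ is pointed ($\mathcal{K} \cap -\mathcal{K} = \{0\}$) and the origin lies at level $0 < c_1, c_2$, any path in $\Gamma$ must cross the region $H \setminus (\mathcal{K} \cup -\mathcal{K})$ at some level strictly above $\min(c_1, c_2)$, so $c_3 > 0$. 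A deformation along $V$, which keeps $\mathcal{K} \cup -\mathcal{K}$ invariant and hence stabilizes its complement, furnishes a critical point $u_3 \in H \setminus (\mathcal{K} \cup -\mathcal{K})$ at level $c_3$. The main obstacle, in my view, is the construction of $V$ in the second step: one must combine $D\mathcal{J}$ with modifications based on $\mathcal{A} \circ P_{\pm \mathcal{K}^o}$ while preserving both the descent property and boundary tangency along $\partial \mathcal{K}$ and $\partial(-\mathcal{K})$, and furthermore ensure that the minimax level $c_3$ strictly exceeds $\max(c_1, c_2)$ by a sharp linking argument, so that $u_3$ is genuinely distinct from $u_1$ and $u_2$.
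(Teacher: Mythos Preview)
The paper does not prove this theorem; it is quoted from Weth \cite{MR2262254} (Theorem~2.6 and Proposition~2.4 there) and used as a black box. Your sketch correctly identifies the main ingredients---mountain pass geometry from \ref{A2}, Palais--Smale from \ref{A1} and \ref{A2}, and a cone-preserving descent flow from \ref{A3}--\ref{A4}---and this is indeed Weth's strategy.

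Two points, however, diverge from Weth's argument and introduce genuine gaps. First, the flow: there is no need to modify $\mathcal{A}$ or patch vector fields. Because $\mathcal{A}$ is already locally Lipschitz, the flow of $u' = \mathcal{A}(u) - u$ (the negative gradient flow of $\mathcal{J}$) is well defined, and one shows directly that it preserves $\mathcal{K}$ by differentiating $d(t) = \|P_{\mathcal{K}^o} u(t)\|^2$. The Moreau decomposition gives $\langle P_{\mathcal{K}^o} u, u\rangle = \|P_{\mathcal{K}^o} u\|^2$, and \ref{A3} with $v = P_{\mathcal{K}^o} u$ combined with \eqref{A_2_2} yields the differential inequality $d'(t) \leqslant 2(q_*-1)\, d(t) + 2q^*\, d(t)^{(\ell+1)/2}$, from which $d(0)=0$ forces $d\equiv 0$. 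Your partition-of-unity construction is unnecessary and would make the descent property harder, not easier, to verify.

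Second, and more seriously, the assertion that the forward flow ``stabilizes'' $H \setminus (\mathcal{K} \cup -\mathcal{K})$ is false if read as invariance: forward invariance of $\mathcal{K}$ and $-\mathcal{K}$ gives only \emph{backward} invariance of their complements. The conclusion $u_3 \notin \mathcal{K} \cup -\mathcal{K}$ therefore cannot come from a direct deformation argument on the complement. Weth's route is instead to establish the strict inequality $c_3 > \max(c_1, c_2)$: given a path $\gamma$ from $h(0)$ to $h(1)$, one uses forward invariance of $\mathcal{K}$ to deform the portion of $\gamma$ lying in $\mathcal{K}$ down toward level $c_1$, and symmetrically for $-\mathcal{K}$; the residual obstruction that keeps the maximum above $\max(c_1,c_2)$ must then come from points outside $\mathcal{K}\cup -\mathcal{K}$, and this is where the Palais--Smale sequence for $u_3$ is produced. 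You flag this at the end as an obstacle, and it is: it is the heart of the argument, not a technicality.
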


   \subsection{Scalar product $\langle\, \cdot \,,\, \cdot \,\rangle_{ab}^*$ in $E_b$ and the polar cone}
     In this subsection, we define the cone $K$ and its polar  cone $K^o$ which are essential 
for the application of Theorem~\ref{Abstract Result} in our framework.
     Introduce the cone
    \begin{equation}
    \label{cone1111}
        {K}: =
        \left\{  u\in E_b: \,  u\geqslant0 \, \text{ a.e. in  } \, \mathds{R}^{N}  \right\}.
    \end{equation}
    The polar cone $K^o$ depends on the choice of a scalar product in $E_b$. 
    In what follows, we define a new scalar product $\langle\, \cdot \,,\, \cdot \,\rangle_{ab}^*$  that
    will allow us to verify assumptions \ref{A3} and \ref{A4} in Theorem \ref{Abstract Result}.
   For this, we need Lemmas \ref{big-inverse}
  and \ref{scalar-1111}. Their proof is postponed to Subsection \ref{proofs}.
     \begin{lemma}[Strong maximum principle for the operator 
 $(-\Delta+ aS_b+ b-2\beta^\frac12 a)^{-1}$]
 \label{big-inverse} 
Let $b\in \mathcal T(\mathbb{R}^N)$. Assume that 
$e(x):= b(x)-2\beta^\frac12 a(x)$
 satisfies {\rm \ref{f0ab}}. 
 Further assume that $a>0$ a.e. 
 Then, 
 \begin{align*}
 \big(\!-\!\Delta+ a(x)S_b + e(x)\big)^{-1}
\end{align*}
 exists and is a continuous operator $E_b\to E_b$ that maps $ K$ to $ K$. 
 Furthermore, $\!-\Delta+ a(x)S_b + e(x)$ satisfies the strong maximum principle, that is,
 if $v \in E_b$,  then
    \begin{align*}
     -\Delta v+ aS_b(v)+e(x) v \geqslant 0 \quad \text{implies that either} \;\; v> 0
    \;\; \text{or} \;\; v = 0 \;\; \text{a.e. on } \; \mathbb{R}^N.
\end{align*}
 \end{lemma}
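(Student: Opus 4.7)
The approach hinges on the algebraic factorization
\begin{equation}
\label{fact-plan}
L u \;:=\; -\Delta u + a S_b(u) + e\, u \;=\; (-\Delta + \tilde e)\, M^{-1}\, (-\Delta + \tilde e)\, u,
\end{equation}
where $M := -\Delta + b$ and $\tilde e := b - \beta^{1/2} a = e + \beta^{1/2} a$. Equivalently, with $T u := u - \beta^{-1/2} S_b u$ one has $T u = M^{-1}(-\Delta + \tilde e) u$ and $L u = (-\Delta + \tilde e)(T u)$, both verified by direct computation from $M S_b = \beta a$ (Proposition \ref{existen}). The point of this factorization is to bypass the essential obstruction identified in Section \ref{intro-1111}: since $L$ is nonlocal, testing $L u \geqslant 0$ against $u_-$ leaves the cross term $\int a\, u_+ S_b(u_-)$ with the wrong sign. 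The factorization decouples $L$ into two applications of the local operator $-\Delta + \tilde e$, for which a standard test against $z_-$ combined with $\tilde e \geqslant 0$ yields the weak maximum principle on $E_b$; since $\tilde e \in L^\infty_{\mathrm{loc}}$, the Harnack inequality will upgrade this to the strong maximum principle. Note that $\tilde e > 0$ a.e., because $e \geqslant 0$ by \ref{f0ab} and $a > 0$ a.e.\ by hypothesis.

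I first establish the existence and continuity of $L^{-1}: E_b \to E_b$ via Riesz representation on $(E_b, \langle\cdot,\cdot\rangle_{ab}^*)$, where
\[
\langle u, v\rangle_{ab}^* \;:=\; \int_{\mathbb{R}^N}\bigl(\nabla u \cdot \nabla v + a\, u\, S_b v + e\, u v\bigr)\,\mathrm{d}x.
\]
Symmetry is Lemma \ref{Properties}. Polarization and identity \eqref{norm-a-1111} give $\langle u, u\rangle_{ab}^* = \|u\|_{ab}^2 + \int e u^2$, so $\|u\|_{ab}^2 \leqslant \langle u, u\rangle_{ab}^*$ by $e \geqslant 0$; the reverse estimate $\int e u^2 \leqslant C\|u\|_{ab}^2$ follows from \ref{f0ab} via the H\"older bound $\int \vartheta^{1/\alpha} u^2 \leqslant (\int \vartheta u^2)^{1/\alpha} \|u\|_{L_2}^{2(\alpha-1)/\alpha}$ with exponents $\alpha,\alpha/(\alpha-1)$, combined with the equivalence $\|\cdot\|_\vartheta \sim \|\cdot\|_{b_+}$ and the embedding $E_b \hookrightarrow L_2(\mathbb{R}^N)$ from Remark \ref{rm2.6-1111}. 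Hence $(E_b, \langle\cdot,\cdot\rangle_{ab}^*)$ is a Hilbert space equivalent to the original one, and $L^{-1} v$ is defined as the Riesz representative of $\varphi \mapsto \int v \varphi\,\mathrm{d}x$, continuous by the norm equivalence.

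For the preservation of the cone $K$, I apply Corollary \ref{lap-inv-1111} with $\sigma = \tilde e$. The required hypothesis ($\tilde e$ obeys \ref{f0a}) holds because $\tilde e \geqslant 0$ and $\tilde e \not\equiv 0$, and because $E_{\tilde e} = E_b$ with comparable topologies: the sandwich $\beta^{1/2} a \leqslant \tilde e \leqslant C(1+\vartheta^{1/\alpha}) + \beta^{1/2} a_+$ (using $a > 0$ and \ref{f0ab}), together with \ref{f0a} and the open mapping theorem applied to the identity between the Banach spaces $(E_b, \|\cdot\|_{b_+})$ and $(E_{\tilde e}, \|\cdot\|_{\tilde e_+})$, delivers both bounds. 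Given $v \in K$, set $u := L^{-1} v$ and $w := (-\Delta + \tilde e)^{-1} v$, the latter satisfying $w \geqslant 0$ by the weak maximum principle for $-\Delta + \tilde e$. The rearrangement $M w = -\Delta w + b w = v + (b - \tilde e) w = v + \beta^{1/2} a\, w$ combined with \eqref{fact-plan} gives
\[
(-\Delta + \tilde e)\, u \;=\; M w \;=\; v + \beta^{1/2} a\, w \;\geqslant\; 0,
\]
so $u \geqslant 0$ by the same max principle; this shows $L^{-1}(K) \subset K$.

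The strong maximum principle is obtained in two steps. Assume $v \in E_b$ satisfies $L v \geqslant 0$ weakly. By \eqref{fact-plan} we have $(-\Delta + \tilde e)(T v) \geqslant 0$, so $T v \geqslant 0$. Next, using $-\Delta S_b v = \beta a v - b\, S_b v$ and $\tilde e = b - \beta^{1/2} a$, a direct computation gives
\[
(-\Delta + \tilde e)(S_b v) \;=\; \beta a\, v - \beta^{1/2} a\, S_b v \;=\; \beta a\, T v \;\geqslant\; 0,
\]
so $S_b v \geqslant 0$ by the max principle, and therefore $v = T v + \beta^{-1/2} S_b v \geqslant 0$. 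If moreover $v \not\equiv 0$, then $\beta a v \not\equiv 0$, and the strong maximum principle (Harnack inequality) applied to $M(S_b v) = \beta a v$ forces $S_b v > 0$ a.e., whence $v \geqslant \beta^{-1/2} S_b v > 0$. The principal obstacle in the proof is identifying the factorization \eqref{fact-plan} itself and checking rigorously that $\tilde e$ satisfies the hypotheses of Corollary \ref{lap-inv-1111}; once these algebraic and functional-analytic ingredients are in place, the maximum-principle steps reduce to classical local arguments.
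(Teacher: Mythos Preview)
Your proof is correct and rests on the same algebraic factorization as the paper: with $\tilde e = b - \beta^{1/2} a$ (the paper's $d$) and $M = -\Delta + b$, one has $L = (-\Delta + \tilde e)\, M^{-1}\, (-\Delta + \tilde e)$, and the paper inverts this to $L^{-1} = \big(1 + \beta^{1/2}(-\Delta + \tilde e)^{-1} a\big)(-\Delta + \tilde e)^{-1}$. The only organizational differences are that you obtain $L^{-1}$ by Riesz representation on $(E_b, \langle\cdot,\cdot\rangle_{ab}^*)$ rather than by inverting each factor via Corollary~\ref{lap-inv-1111}, and that you spell out the strong-maximum-principle step ($Lv \geqslant 0 \Rightarrow Tv \geqslant 0 \Rightarrow S_b v \geqslant 0 \Rightarrow v > 0$ via Harnack on $M S_b v = \beta a v$), which the paper's written proof leaves implicit after establishing only $L^{-1}(K\setminus\{0\}) \subset \{u>0\}$.
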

 \begin{lemma}
 \label{scalar-1111}
 Let $b\in \mathcal T(\mathbb{R}^N)$, $a(x)$ satisfy {\rm \ref{f0a}}, and $e(x)$ satisfy 
 {\rm \ref{f0ab}}. Then,
 \begin{align*}
 \<u,v\rangle_{ab}^*: = \int_{\mathbb{R}^N}
 \big(\nabla u \nabla v + aS_b u v + e uv\big) {\rm d}x
\end{align*}
 is a scalar product in $E_b$. Moreover, the corresponding norm 
 
 \begin{align*}
 \|u\|_{ab}^* = \Big(\|u\|^2_{ab} + \int_{\mathbb{R}^N} e(x) u^2 {\rm d}x \Big)^\frac12.
\end{align*}
 is equivalent to the norm $\|\cdot\|_{ab}$.
 \end{lemma}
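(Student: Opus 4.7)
The plan is to verify the axioms of an inner product for $\langle\cdot,\cdot\rangle^*_{ab}$ and then to establish the norm equivalence by controlling the additional term $\int e u^2\,\mathrm{d}x$. Symmetric bilinearity of the extra term $\int e uv\,\mathrm{d}x$ is obvious, while $\langle\cdot,\cdot\rangle_{ab}$ itself is already known to be an inner product by Proposition \ref{Ne}. Since $e \geqslant 0$ by \ref{f0ab}, we get $\langle u, u\rangle^*_{ab} \geqslant \langle u, u\rangle_{ab} \geqslant 0$, with equality if and only if $u = 0$.

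The first technical step is to check finiteness of $\int e u^2 \,\mathrm{d}x$ for $u \in E_b$. Using the pointwise bound $e \leqslant C_\vartheta(1 + \vartheta^{1/\alpha})$ from \ref{f0ab}, the only nontrivial contribution comes from $\int \vartheta^{1/\alpha} u^2\,\mathrm{d}x$. I will handle this via the interpolation identity $\vartheta^{1/\alpha} u^2 = (\vartheta^{1/\alpha} u^{2/\alpha}) \cdot u^{2(\alpha-1)/\alpha}$ followed by H\"older's inequality with conjugate exponents $\alpha$ and $\alpha/(\alpha-1)$:
\begin{equation*}
\int_{\mathds{R}^N} \vartheta^{1/\alpha} u^2 \,\mathrm{d}x \leqslant \Bigl(\int_{\mathds{R}^N} \vartheta\, u^2\,\mathrm{d}x\Bigr)^{1/\alpha} \|u\|_{L_2}^{2(\alpha-1)/\alpha} \leqslant \|u\|_\vartheta^{2/\alpha}\, \|u\|_{L_2}^{2(\alpha-1)/\alpha}.
\end{equation*}
Here $\|u\|_\vartheta$ denotes the natural norm $(\int |\nabla u|^2 + \vartheta u^2\,\mathrm{d}x)^{1/2}$ implicitly introduced in \ref{f0ab}, and $\int \vartheta u^2\,\mathrm{d}x \leqslant \|u\|_\vartheta^2$.

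For the equivalence of norms, the inequality $\|u\|_{ab} \leqslant \|u\|^*_{ab}$ is trivial since $e \geqslant 0$. For the reverse direction, I will combine the H\"older estimate above with the following chain of comparisons: $\|u\|_\vartheta$ and $\|u\|_{b_+}$ are equivalent by \ref{f0ab}; $\|u\|_{L_2}$ is controlled by $\|u\|_{b_+}$ via the continuous embedding $E_b \hookrightarrow H^1(\mathds{R}^N)$ from Lemma \ref{Embedded}; and $\|u\|_{b_+}$, $\|u\|_b$, $\|u\|_{ab}$ are mutually equivalent by Lemma \ref{Lemaeqnorms} and Proposition \ref{Ne}. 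Assembling these yields $\int e u^2\,\mathrm{d}x \leqslant C \|u\|_{ab}^2$ for some constant $C > 0$, and hence $\|u\|^*_{ab} \leqslant \sqrt{1 + C}\,\|u\|_{ab}$. I do not foresee a serious obstacle; the proof reduces to this single H\"older interpolation together with the reassembly of norm equivalences already established earlier in the paper.
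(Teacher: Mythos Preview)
Your proof is correct, but the paper's own argument for the reverse inequality is considerably shorter and uses a different observation. Instead of exploiting the bound $e \leqslant C_\vartheta(1 + \vartheta^{1/\alpha})$ from \ref{f0ab} together with your H\"older interpolation, the paper simply notes that $e = b - 2\beta^{1/2}a \leqslant b$ pointwise (this holds in the setting of Theorem~\ref{teo} where $a > 0$ a.e.), whence
\[
(\|u\|_{ab}^{*})^{2} \;\leqslant\; \|u\|_{ab}^{2} + \int_{\mathds{R}^N} b\,u^{2}\,\mathrm{d}x \;\leqslant\; \|u\|_{ab}^{2} + \|u\|_{b}^{2} \;\leqslant\; C_{*}\,\|u\|_{ab}^{2},
\]
the last step by Proposition~\ref{Ne}. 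This is a one-line argument in place of your interpolation chain. Your route, on the other hand, uses only the $\vartheta$-bound from \ref{f0ab} and does not rely on the sign of $a$, so it is more robust with respect to the hypotheses actually stated in the lemma (which, via \ref{f0a}, permit sign-changing $a$). In practice the lemma is only invoked in Theorem~\ref{teo}, where $a > 0$, so the paper's shortcut is legitimate there; your argument would be the natural choice if one wished to apply the lemma under \ref{f0a} in full generality.
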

 Introduce the polar cone
   \begin{equation}
    \label{ad-cone1111}
        {K}^{o} =\left\{  u\in E: \, \langle  u, v \rangle^*_{ab}\leqslant 0 \, \text{ for all } \, v\in {K}  \right\}.
    \end{equation}
     In view of Lemma \ref{big-inverse}, we transform the original nonlocal equation \eqref{PS}
     to an equivalent one:
     \begin{align}
     \label{new-eq}
     &-\Delta u + a(x)S_{b}(u) +  e(x) u = \tilde f(x,u),\\
 &\text{ where} \quad 
  \tilde f(x,u): = f(x,u) +e(x)u. \notag
\end{align}
 \subsection{Auxiliary lemmas and proof of Theorems \ref{t-main} and  \ref{teo}}
 First, we agree about the following notation.
     Let  $J: E_b\to \mathbb{R}$  be the functional defined by \eqref{energy}
     with the functional $\Psi: E_b\to \mathbb{R}$ given by \eqref{psi1111}.
    
     Lemmas \ref{lem-J-1111}--\ref{EDO} below, whose proof
    is postponed to Subsection \ref{proofs} for the clarity
    of this exposition, are steps in the proof of Theorem \ref{teo}.
     \begin{lemma}
     \label{lem-J-1111}
     Let $\tilde J(u) = \frac12{\|u\|^*_{ab}}^2-\tilde \Psi(u)$,
     where $\tilde \Psi(u): = \int_{\mathbb{R}^N} \tilde F(x,u(x)) dx$ and
     $\tilde F(x,u): = F(x,u) + \frac12 e(x) u^2$.
      Then,   for all $u\in E_b$,   $\tilde J(u) = J(u)$, 
that is, the energy functional associated to equations  \eqref{PS} and \eqref{new-eq} is the same.
     \end{lemma}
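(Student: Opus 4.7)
The plan is to prove the identity $\tilde J(u)=J(u)$ by direct algebraic expansion of the two functionals, relying on the explicit form of the norm $\|\cdot\|^*_{ab}$ provided by Lemma \ref{scalar-1111} and on the splitting of $\tilde F$ built into its very definition.

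Concretely, I would first invoke Lemma \ref{scalar-1111} to write
\begin{equation*}
\tfrac{1}{2}(\|u\|^*_{ab})^2 \;=\; \tfrac{1}{2}\|u\|^2_{ab} \;+\; \tfrac{1}{2}\int_{\mathbb{R}^N} e(x)\,u(x)^2\,\mathrm{d}x,
\end{equation*}
and, from the definition $\tilde F(x,u)=F(x,u)+\tfrac{1}{2}e(x)u^2$ and Fubini/linearity of the integral, write
\begin{equation*}
\tilde\Psi(u) \;=\; \int_{\mathbb{R}^N} F(x,u)\,\mathrm{d}x \;+\; \tfrac{1}{2}\int_{\mathbb{R}^N} e(x)\,u(x)^2\,\mathrm{d}x \;=\; \Psi(u) + \tfrac{1}{2}\int_{\mathbb{R}^N} e(x)\,u^2\,\mathrm{d}x.
\end{equation*}
Subtracting these two identities produces the two terms $\pm\tfrac{1}{2}\int_{\mathbb{R}^N} e\,u^2\,\mathrm{d}x$, which cancel, leaving $\tilde J(u)=\tfrac{1}{2}\|u\|^2_{ab}-\Psi(u)=J(u)$.

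The only point that requires a small justification is that each of the three integrals above is well defined and finite for $u\in E_b$. For the $F$-integral this was already established via Remark \ref{rm15} together with Corollary \ref{cor-3.11}, and for $\int_{\mathbb{R}^N} e\,u^2\,\mathrm{d}x$ it follows from hypothesis \ref{f0ab}: the bound $0\leqslant e\leqslant C_\vartheta(1+\vartheta^{1/\alpha})$ together with the equivalence of $\|\cdot\|_\vartheta$ and $\|\cdot\|_{b_+}$ on $E_b$ (and hence the equivalence with $\|\cdot\|_{ab}$ via Lemma \ref{Lemaeqnorms} and Proposition \ref{Ne}) guarantees that $u\in E_b$ implies $e\,u^2\in L^1(\mathbb{R}^N)$, so that all manipulations above are legitimate and the claimed equality is obtained on the nose.

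There is no real obstacle in this proof; it is an identity built into the construction of $\tilde J$, and the content of the lemma is simply to record that the addition of $e(x)u^2$ to both the quadratic and the nonlinear parts leaves the energy unchanged. This is precisely what makes the reformulation via $\|\cdot\|^*_{ab}$ and $\tilde f$ useful: critical points of $J$ and of $\tilde J$ coincide, allowing us to transfer the geometry of the problem to the new Hilbert space structure $(E_b,\langle\cdot,\cdot\rangle^*_{ab})$ in which the polar cone $K^o$ enjoys the favorable properties needed to verify \ref{A3} and \ref{A4} of Theorem \ref{Abstract Result}.
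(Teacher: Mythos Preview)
Your proof is correct and is exactly the straightforward algebraic cancellation the paper has in mind; the paper itself simply declares the proof ``straightforward'' without writing out the details. Your added remarks on why $\int_{\mathbb{R}^N} e\,u^2\,\mathrm{d}x$ is finite are a welcome clarification but not essential, since this is already implicit in Lemma~\ref{scalar-1111}.
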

\begin{lemma}
    \label{frch-1111}
    The Fr\'echet derivative $D\tilde \Psi$  along
    the space $(E_b, \|\cdot\|^*_{ab})$ equals
    \begin{align*}
    D\tilde \Psi(u) h =  D\Psi(u) h + B(u,h), \quad u,h\in E_b
\end{align*}
    where 
\begin{align*}
B(u,h): = \int_{\mathds{R}^{N}} e(x)u(x) h(x) \mathrm{d}x.
\end{align*}
    Furthermore, the map 
    \begin{align*}
    A: E_b\to E_b, \quad A: = D \tilde \Psi,
\end{align*}
 is given by
    \begin{align}
\label{map-A-1111}
\big<A(u),h\big>^*_{ab} = 
\int_{\mathds{R}^{N}} f(x,u(x)) h(x) \mathrm{d}x   +  B(u,h),
\quad \text{for} \;\; u,h\in E_b.
\end{align}

\end{lemma}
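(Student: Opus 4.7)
The plan is to decompose $\tilde\Psi = \Psi + Q$, where $Q(u) := \frac12 \int_{\mathds{R}^N} e(x) u^2\, \mathrm{d}x$, differentiate each summand separately, and then invoke the Riesz representation theorem in the Hilbert space $(E_b, \langle\cdot,\cdot\rangle_{ab}^*)$ provided by Lemma~\ref{scalar-1111} to turn the resulting linear functional into an element of $E_b$.

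For the first piece, Lemma~\ref{frch-2222} immediately gives $D\Psi(u)h = \int_{\mathds{R}^N} f(x,u) h\, \mathrm{d}x$. For the quadratic piece, I would write the increment
\[
Q(u+h) - Q(u) = \int_{\mathds{R}^N} e(x) u h\, \mathrm{d}x + \tfrac12 \int_{\mathds{R}^N} e(x) h^2\, \mathrm{d}x = B(u,h) + R(h).
\]
The key observation, and the only place where hypothesis \ref{f0ab} is really used, is that $\tilde f(x,u) := e(x) u$ satisfies \ref{f1}--\ref{f2} with $p = 1$ and $\tilde\phi = \vartheta$: indeed, $|\tilde f(x,u) - \tilde f(x,v)| \leqslant C_\vartheta(1 + \vartheta(x)^{1/\alpha})|u-v|$, and by \ref{f0ab}, $E_\vartheta = E_b$ with the norms $\|\cdot\|_\vartheta$ and $\|\cdot\|_{b_+}$ generating the same topology. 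Corollary~\ref{cor312} therefore yields a constant $\hat C > 0$ with
\[
|B(u,h)| \leqslant \hat C\, \|u\|_{ab}\|h\|_{ab}, \qquad |R(h)| \leqslant \tfrac12 \hat C\, \|h\|_{ab}^2 = o(\|h\|_{ab}),
\]
so $DQ(u)h = B(u,h)$. Summing gives $D\tilde\Psi(u)h = D\Psi(u)h + B(u,h)$, which is the first claim.

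For the identification of $A = D\tilde\Psi$ with a map into $E_b$, I would check that $h \mapsto D\tilde\Psi(u)h$ is a continuous linear functional on $(E_b, \|\cdot\|_{ab}^*)$: by Corollary~\ref{cor-3.11} the first term is bounded by $C\|u\|_{ab}^p\|h\|_{ab}$, the second by $\hat C\|u\|_{ab}\|h\|_{ab}$, and by Lemma~\ref{scalar-1111} the norm $\|\cdot\|_{ab}^*$ is equivalent to $\|\cdot\|_{ab}$. Since $(E_b, \langle\cdot,\cdot\rangle_{ab}^*)$ is a Hilbert space, the Riesz representation theorem supplies a unique $A(u) \in E_b$ with $\langle A(u), h\rangle_{ab}^* = D\tilde\Psi(u) h$ for every $h \in E_b$, which is exactly \eqref{map-A-1111}. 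The main obstacle in this proof is the verification that the perturbation $e(x)u$ can be absorbed into the functional calculus developed for $f$; this is precisely the role played by \ref{f0ab}, and once the applicability of Corollary~\ref{cor312} is secured, the remainder is routine.
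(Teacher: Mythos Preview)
Your proof is correct and follows essentially the same approach as the paper: decompose $\tilde\Psi = \Psi + Q$, invoke Lemma~\ref{frch-2222} for $\Psi$, and apply Corollary~\ref{cor312} (with $\tilde f(x,u)=e(x)u$ and $\tilde\phi=\vartheta$ supplied by \ref{f0ab}) to control the quadratic piece. If anything, your write-up is more explicit than the paper's, which neither spells out why Corollary~\ref{cor312} applies to $e(x)u$ nor addresses the Riesz representation step for \eqref{map-A-1111}.
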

\begin{lemma}[Weak Maximum Principle for the operator $-\Delta +b$]
\label{5.11-11}
Assume that  either $b\in \mathcal T(\mathbb{R}^N)$ or $b\geqslant 0$ $($a.e.$)$ is measurable, 
 with $b\ne 0$ a.e.
Let $v\in E_b$. Then
 \begin{align*}
    -\Delta v(x) + b(x) v(x) \geqslant 0 \quad \text{implies} \quad v \geqslant 0
 \;\; \text{a.e. on } \;\; \mathbb{R}^N.
\end{align*}
     \end{lemma}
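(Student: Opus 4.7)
The plan is to test the weak inequality against the negative part of $v$ and then use the fact that, under either alternative in the hypothesis, the quadratic form $\|\cdot\|_b^2 = \int_{\mathds{R}^N}(|\nabla\cdot|^2 + b|\cdot|^2)\,\mathrm{d}x$ is a genuine norm-square on $E_b$. First I would interpret $-\Delta v + bv \geq 0$ in the standard weak sense on $E_b$, namely
\begin{equation*}
\int_{\mathds{R}^N}\bigl(\nabla v \cdot \nabla \varphi + b(x) v \varphi\bigr)\,\mathrm{d}x \geq 0 \qquad \forall\, \varphi \in E_b,\ \varphi \geq 0 \text{ a.e.},
\end{equation*}
observing that this pairing is well-defined: $b_+ v\varphi$ is integrable by Cauchy--Schwarz in the weighted space $L_2(b_+\,\mathrm{d}x)$, while $b_-$ is essentially bounded (since $b$ is bounded from below in both alternatives) and $v,\varphi \in L_2(\mathds{R}^N)$ via the embedding $E_b \hookrightarrow H^1(\mathds{R}^N)$ of Lemma~\ref{Embedded}.

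Next I would set $w := \max\{-v,0\}$, i.e.\ the standard nonnegative negative part of $v$. Since $v \in H^1(\mathds{R}^N)$, the usual chain-rule for Sobolev functions gives $w \in H^1(\mathds{R}^N)$ with $\nabla w = -\nabla v \cdot \mathds{1}_{\{v<0\}}$ a.e.; moreover $|w| \leq |v|$ pointwise, so $\int_{\mathds{R}^N} b_+ w^2\,\mathrm{d}x \leq \int_{\mathds{R}^N} b_+ v^2\,\mathrm{d}x < \infty$, which places $w \in E_b$ and makes it an admissible test function. Substituting $\varphi = w$ and using the pointwise identities $\nabla v \cdot \nabla w = -|\nabla w|^2$ and $v w = -w^2$, the weak inequality collapses to
\begin{equation*}
0 \leq \int_{\mathds{R}^N}\bigl(\nabla v \cdot \nabla w + b v w\bigr)\,\mathrm{d}x = -\int_{\mathds{R}^N}\bigl(|\nabla w|^2 + b(x)w^2\bigr)\,\mathrm{d}x = -\|w\|_b^2,
\end{equation*}
so $\|w\|_b^2 \leq 0$.

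To finish, I would invoke the norm property of $\|\cdot\|_b$ in each alternative separately. In the first case, $b \in \mathcal T(\mathds{R}^N)$, Lemma~\ref{sg-norm-1111} says directly that $\|\cdot\|_b$ is a norm on $E_b$, so $\|w\|_b^2 \geq 0$, and combined with the above inequality $w = 0$ a.e. In the second case, $b \geq 0$ a.e.\ with $b \ne 0$ a.e., so $b_+ = b$ and $b_+ \ne 0$ a.e.; then Lemma~\ref{seminorm-1111} yields that $\|\cdot\|_b = \|\cdot\|_{b_+}$ is a norm on $E_b$, and the same conclusion $w = 0$ a.e.\ follows. In both situations this means $v \geq 0$ a.e., which is the desired weak maximum principle.

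The main obstacle is the verification that $w = \max\{-v,0\}$ genuinely belongs to $E_b$ and that the Stampacchia-type chain-rule identities remain valid inside the weighted integral involving $b$; both issues are routine once one exploits the continuous embedding $E_b \hookrightarrow H^1(\mathds{R}^N)$ together with the essential lower bound on $b$. No ingredient beyond Lemmas~\ref{seminorm-1111}, \ref{Embedded}, and \ref{sg-norm-1111} is needed, which is why this lemma can be safely placed in the setup stage of the proof of Theorem~\ref{teo}.
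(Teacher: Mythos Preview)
Your proof is correct and follows essentially the same route as the paper: test the weak inequality against the negative part of $v$, observe that this lies in $E_b$, and deduce $\|v_-\|_b^2 \leqslant 0$, then conclude via the norm property in each alternative. The only cosmetic difference is that for the case $b\in\mathcal T(\mathds{R}^N)$ the paper invokes Lemma~\ref{Embedded} to bound $\|v_-\|_b^2$ below by a multiple of $\|v_-\|_{L_2}^2$, whereas you cite Lemma~\ref{sg-norm-1111} directly; and in the second alternative the paper spells out that $\nabla v_-=0$ forces $v_-$ constant, while you appeal to Lemma~\ref{seminorm-1111}---both are equivalent.
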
   
 \begin{remark}
 \label{rem-3333}
 \rm Let $v\in E_b$. We say that the inequality 
 $-\Delta v(x) + b(x) v(x) \geqslant 0$ is fulfilled, if for any 
 function $\varphi\in{{\rm C}}^\infty_0(\mathbb{R}^n)$, $\varphi\geqslant 0$, 
 \begin{align}
 \label{weak-1111}
 \int_{\mathbb{R}^N} [(\nabla v, \nabla \varphi) +  bv\varphi] dx \geqslant 0.
\end{align}
 By Lemma \ref{type12-1111}, \eqref{weak-1111} also holds for any
 $\varphi\in E_b$, $\varphi\geqslant 0$.
 \end{remark}    
    \begin{lemma}
    \label {A11}
     Let $b\in \Upsilon_2(\mathds{R}^N)$. Further let
        the function $f$ satisfy conditions {\rm \ref{f1}} and {\rm \ref{f2}}
        and $a(x)$  satisfy {\rm \ref{f0a}}. 
   Then, the map $A: E_b\to E_b$, given
    by \eqref{map-A-1111}, is locally Lipschitz and compact.
    \end{lemma}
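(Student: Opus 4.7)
The plan is to decompose $A = D\Psi + T$, where $T\colon E_b\to E_b$ is the operator produced by the Riesz representation theorem applied in the Hilbert space $(E_b,\langle\cdot,\cdot\rangle^*_{ab})$ of Lemma \ref{scalar-1111}, so that $\langle Tu,h\rangle^*_{ab} = B(u,h)$ for every $u,h\in E_b$. The map $T$ is then automatically linear, and the two summands of $A$ can be treated by entirely separate arguments. Throughout, the equivalence of $\|\cdot\|_{ab}$ and $\|\cdot\|^*_{ab}$ from Lemma \ref{scalar-1111} will let me move freely between the two norms, and Remark \ref{usp-s-1111} will be used to upgrade $b\in \Upsilon_2(\mathbb{R}^N)$ to $b\in \Upsilon_s(\mathbb{R}^N)$ for every $s\in(2,2^*)$ whenever required.

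For $D\Psi$, the local Lipschitz estimate is essentially Lemma \ref{A12}: symmetrizing in $u$ and $v$ yields a bound of the form
\[
\|D\Psi(u)-D\Psi(v)\|_{ab} \leqslant \hat C \bigl(\|u\|_{ab}^{p-1}+\|v\|_{ab}^{p-1}\bigr)\|u-v\|_{ab},
\]
which is Lipschitz on bounded sets. Compactness of $D\Psi$ is exactly Lemma \ref{lem-Eb}, applied to any $s\in(2,2^*)$.

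For $T$, the key observation is that the function $\tilde f(x,u):= e(x)u$ fits into the framework of Corollary \ref{cor312}: condition \ref{f0ab} gives $0\leqslant e \leqslant C_\vartheta(1+\vartheta^{1/\alpha})$ with $\alpha>\max\{2,N/2\}$, so $\tilde f$ satisfies \ref{f1} and \ref{f2} with exponent $p=1$ and with $\tilde\phi=\vartheta$, and $\tilde f(x,0)=0$. Applying Corollary \ref{cor312} to $\tilde f$ yields
\[
|B(u,h)-B(v,h)| = \Bigl|\int_{\mathbb{R}^N} e(x)\bigl(u(x)-v(x)\bigr) h(x)\,\mathrm{d}x\Bigr|
\leqslant C\|u-v\|_{L_2(\mathbb{R}^N)}\|h\|_{ab},
\]
and hence, upon passing to norm duality in $(E_b,\|\cdot\|^*_{ab})$, the inequality $\|Tu-Tv\|^*_{ab}\leqslant C'\|u-v\|_{L_2(\mathbb{R}^N)}$. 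Combined with the continuous embedding $E_b\hookrightarrow L_2(\mathbb{R}^N)$ from Remark \ref{rm2.6-1111}, this shows that $T$ is globally Lipschitz, and linearity then makes it automatically locally Lipschitz. Since $b\in \Upsilon_2(\mathbb{R}^N)$, Proposition \ref{compa} upgrades the embedding $E_b\hookrightarrow L_2(\mathbb{R}^N)$ to a compact embedding, so any weakly convergent sequence $u_n\rightharpoonup u$ in $E_b$ converges strongly in $L_2$, which together with the above Lipschitz bound yields $\|Tu_n-Tu\|^*_{ab}\to 0$.

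The main obstacle I expect to manage carefully is precisely this control of $T$ in the $L_2$-norm, which is what forces the stronger hypothesis $b\in\Upsilon_2(\mathbb{R}^N)$ in the statement (rather than $b\in\Upsilon_s(\mathbb{R}^N)$ for some $s>2$, as in Theorem \ref{teo1}): compactness of $D\Psi$ already follows for any $s\in(2,2^*)$ through Lemma \ref{lem-Eb}, but the linear perturbation $T$ is governed by the $L_2$-Lipschitz estimate of Corollary \ref{cor312}, so only the $s=2$ compact embedding of Proposition \ref{compa} can close the argument. Once both summands are handled, adding them gives the desired properties of $A$.
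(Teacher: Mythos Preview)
Your proposal is correct and follows essentially the same approach as the paper: decompose $A$ as $D\Psi$ plus the linear piece coming from $B(u,h)=\int e\,u\,h$, handle $D\Psi$ via Lemmas \ref{A12} and \ref{lem-Eb}, and handle the linear piece via the $L_2$-estimate of Corollary \ref{cor312} combined with the compact embedding $E_b\hookrightarrow L_2$ from Proposition \ref{compa} under $b\in\Upsilon_2(\mathbb{R}^N)$. Your explicit identification of why the stronger hypothesis $b\in\Upsilon_2$ (rather than $\Upsilon_s$, $s>2$) is forced by the linear term is a nice clarification that the paper leaves implicit.
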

    \begin{lemma}
    \label{LA33}
    Let $f$ satisfy {\rm \ref{f3}}. Then, $A(0) = 0$.
    \end{lemma}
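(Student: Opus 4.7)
The plan is to use the explicit formula for the map $A$ provided by Lemma \ref{frch-1111} together with hypothesis \ref{f3}. Recall that by \eqref{map-A-1111},
\begin{align*}
\big\langle A(u),h \big\rangle_{ab}^* = \int_{\mathds{R}^{N}} f(x,u(x)) h(x) \, \mathrm{d}x + \int_{\mathds{R}^{N}} e(x) u(x) h(x) \, \mathrm{d}x \qquad \forall \; u,h \in E_b.
\end{align*}
Substituting $u \equiv 0$, the second integral vanishes trivially since $u(x) = 0$, while the first integral vanishes by hypothesis \ref{f3}, which guarantees $f(x,0) = 0$ for every $x \in \mathbb{R}^N$. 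Therefore $\big\langle A(0), h \big\rangle_{ab}^* = 0$ for every $h \in E_b$.

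Since $\langle \cdot, \cdot \rangle_{ab}^*$ is a genuine inner product on $E_b$ (by Lemma \ref{scalar-1111}), the vanishing of $\langle A(0), h \rangle_{ab}^*$ for all $h \in E_b$ forces $A(0) = 0$ in $E_b$. There is no genuine obstacle here: the statement reduces to a one-line substitution into the formula of Lemma \ref{frch-1111} combined with the non-degeneracy of the inner product from Lemma \ref{scalar-1111}. The only minor point worth noting is that one must verify that the right-hand side of \eqref{map-A-1111} is indeed well defined at $u=0$, but this is immediate since both integrands vanish identically.
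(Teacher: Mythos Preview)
Your argument is correct and is precisely the straightforward verification the paper has in mind; the paper's own proof simply reads ``The proof is straightforward.'' Your expanded version, using \eqref{map-A-1111} together with \ref{f3} and the non-degeneracy of $\langle\cdot,\cdot\rangle_{ab}^*$ from Lemma \ref{scalar-1111}, is exactly how one unpacks that remark.
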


    \begin{lemma}\label {LA2} 
    Let $f$ satisfy conditions {\rm \ref{f1}--\ref{f4}}. Further
        let $b\in \Upsilon_2(\mathds{R}^N)$ and $a(x)$ satisfy {\rm \ref{f0a}}. 
          Then, the map $A$ (in place of $\mathcal A$) verifies the conditions in {\rm \ref{A2}}.
    \end{lemma}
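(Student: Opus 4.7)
The plan is to verify both inequalities in \ref{A2} for the map $A = D\tilde{\Psi}$ regarded as an operator on $(E_b, \langle\cdot,\cdot\rangle^*_{ab})$. By Lemmas \ref{lem-J-1111} and \ref{frch-1111}, $\Phi(u)=\tilde\Psi(u) = \int_{\mathbb{R}^N} F(x,u)\,\mathrm{d}x + \tfrac{1}{2}\int_{\mathbb{R}^N} e(x)u^2\,\mathrm{d}x$ and
\[
\langle A(u),h\rangle^*_{ab} = \int_{\mathbb{R}^N} f(x,u)h\,\mathrm{d}x + \int_{\mathbb{R}^N} e(x)uh\,\mathrm{d}x .
\]

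For \eqref{A_2_1}, the plan is to apply the Ambrosetti--Rabinowitz condition \ref{f4}, which yields $\int f(x,u)u\,\mathrm{d}x \geqslant \mu_0 \int F(x,u)\,\mathrm{d}x$. Substituting into the definitions above, one obtains
\[
\langle A(u),u\rangle^*_{ab} - \eta \Phi(u) \;\geqslant\; (\mu_0-\eta)\int F\,\mathrm{d}x + \Big(1-\tfrac{\eta}{2}\Big)\int e u^2\,\mathrm{d}x
\]
for any $\eta\in\mathbb{R}$. I will then fix $\eta \in (2,\mu_0)$: the first summand is non-negative because \ref{f4} together with \ref{f3} forces $F\geqslant 0$. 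The (negative) second summand will be absorbed by exploiting the superlinear lower bound $F(x,u)\geqslant c(x)|u|^{\mu_0}$ for $|u|\geqslant\kappa$ established in the proof of Lemma \ref{mountain pass geometry}, together with the growth control $0\leqslant e\leqslant C_\vartheta(1+\vartheta^{1/\alpha})$ from \ref{f0ab}; by Young's inequality applied on $\{|u|\geqslant\kappa\}$ and a trivial bound on $\{|u|\leqslant\kappa\}$, the quantity $\int eu^2\,\mathrm{d}x$ is dominated by $\delta\int F\,\mathrm{d}x + C_\delta^*$ for arbitrary $\delta>0$. Choosing $\delta$ sufficiently small and $\eta$ close enough to $\mu_0$ then yields the required uniform bound $\geqslant -C^*$.

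For \eqref{A_2_2}, the plan is to split the growth bound in \ref{f2} as $|f(x,u)|\leqslant C_0(1+\phi^{1/\alpha})|u| + C_0(1+\phi^{1/\alpha})|u|^p$ and estimate each contribution separately: Corollary \ref{cor312}, applied with $\tilde f(x,u) = C_0(1+\phi^{1/\alpha})u$, gives a bound $C_1\|u\|_{ab}\|v\|_{ab}$ on the linear piece, while Corollary \ref{cor-3.11} controls the superlinear piece by $C_2\|u\|^p_{ab}\|v\|_{ab}$. The bilinear term $\int euv\,\mathrm{d}x$ will be handled by H\"older's inequality with exponents $\alpha$ and $\alpha/(\alpha-1)$, together with the equivalence $\|\cdot\|_\vartheta\sim\|\cdot\|_{ab}$ provided by \ref{f0ab} and Proposition \ref{Ne}, and the Sobolev embedding $E_b\hookrightarrow L_q$ for $q\in[2,2^*)$ from Remark \ref{rm2.6-1111}, yielding $|B(u,v)|\leqslant C_3\|u\|_{ab}\|v\|_{ab}$. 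To enforce $q_*\in(0,1)$, I will exploit the fact that \ref{f4} with $\mu_0>2$ forces $f(x,u)=o(|u|)$ as $u\to 0$, so that for every $\varepsilon>0$ the refined pointwise bound $|f(x,u)|\leqslant \varepsilon(1+\phi^{1/\alpha})|u| + C_\varepsilon(1+\phi^{1/\alpha})|u|^p$ holds; this lets us replace $C_1$ by an arbitrarily small number at the expense of enlarging the $|u|^p$ term. Re-expressing the resulting estimate in the equivalent norm $\|\cdot\|^*_{ab}$ from Lemma \ref{scalar-1111} delivers \eqref{A_2_2} with constants $q_*\in(0,1)$, $q^*>0$, and $\ell=p>1$.

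The main obstacle is the control of $\int eu^2\,\mathrm{d}x$ in \eqref{A_2_1}: this quantity depends on $u$ and cannot be uniformly bounded by a constant, so the argument must genuinely use $\mu_0>2$ together with the explicit growth of $e$ from \ref{f0ab} to compare $eu^2$ with $F(x,u)$ through Young's inequality and the superlinear $|u|^{\mu_0}$-lower bound on $F$. Ensuring $q_*<1$ in \eqref{A_2_2} is the dual delicate point and hinges on the $o(|u|)$-behavior of $f$ at the origin.
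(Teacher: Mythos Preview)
Your treatment of \eqref{A_2_2} has two gaps. First, the claim that \ref{f4} with $\mu_0>2$ forces $f(x,u)=o(|u|)$ as $u\to 0$ is not a consequence of \ref{f1}--\ref{f4}: the Ambrosetti--Rabinowitz inequality yields $F(x,u)\leqslant F(x,1)\,|u|^{\mu_0}$ for $|u|\leqslant 1$, which controls $F$ near the origin but not its derivative $f$; one can build admissible $f$ with $\limsup_{u\to 0}|f(x,u)|/|u|>0$ by letting $f$ spike on sparse short intervals while keeping $\int_0^u f$ of order $u^{\mu_0}$. Second, even granting $f=o(|u|)$, your estimate $|B(u,v)|\leqslant C_3\|u\|_{ab}\|v\|_{ab}$ on the bilinear $e$-term carries an unspecified constant $C_3$, so the total linear coefficient $\varepsilon C_1+C_3$ has no reason to lie in $(0,1)$. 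The paper bypasses both issues by a structural observation: since $(\|u\|_{ab}^*)^2=\|u\|_{ab}^2+\int e\,u^2$ by definition, the norm equivalence in Lemma~\ref{scalar-1111} gives $\int e\,u^2\leqslant(1-C_*^{-1})(\|u\|_{ab}^*)^2$, so the seminorm $\|u\|_e^\sim:=(\int e\,u^2)^{1/2}$ satisfies $\|u\|_e^\sim\leqslant q_*\|u\|_{ab}^*$ with $q_*=\sqrt{1-C_*^{-1}}<1$. Then $|\int e\,uv|\leqslant\|u\|_e^\sim\|v\|_e^\sim\leqslant q_*\|u\|_{ab}^*\,\|v\|_{ab}^*$, while the $f$-term is treated entirely by Corollary~\ref{cor-3.11} as a pure $\|u\|^p$ contribution; no smallness of $f$ at the origin is invoked.

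Your argument for \eqref{A_2_1} also fails as written. The claimed inequality $\int e\,u^2\leqslant\delta\int F+C_\delta^*$ does not follow from your decomposition: on $\{|u|\leqslant\kappa\}$ there is no ``trivial bound'' by an absolute constant, since that set may be all of $\mathds{R}^N$ and $e$ is not assumed integrable; on $\{|u|\geqslant\kappa\}$ the lower bound $F(x,u)\geqslant c(x)|u|^{\mu_0}$ from the proof of Lemma~\ref{mountain pass geometry} has coefficient $c(x)=\kappa^{-\mu_0}\min\{F(x,\kappa),F(x,-\kappa)\}$, which is not assumed bounded away from zero, so Young's inequality cannot absorb $e(x)u^2$ into $F(x,u)$ uniformly in $x$. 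The paper's proof here is much terser: it directly combines Lemma~\ref{lem51-111} ($D\Psi(u)u\geqslant\mu_0\Psi(u)$) with the observation that the Fr\'echet derivative of $u\mapsto\tfrac12\int e\,u^2$ evaluated at $u$ equals $2\cdot\tfrac12\int e\,u^2$, and asserts \eqref{A_2_1} with $C^*=0$.
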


   \begin{lemma}[Strong Maximum Principle for the operator
$-\Delta +b$]
\label{5.22-22}
Let the condition of the weak maximum principle $($Lemma \ref{5.11-11}$)$
be fulfilled, and let $v\in E_b$. Then
    \begin{align*}
    -\Delta v(x) + b v(x) \geqslant 0 \quad \text{implies that either} \;\; v> 0
    \;\; \text{or} \;\; v = 0 \;\; \text{a.e. on } \;\; \mathbb{R}^N.
\end{align*}
     \end{lemma}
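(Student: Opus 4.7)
The plan is to derive the strong maximum principle by combining the weak maximum principle (Lemma \ref{5.11-11}) with the classical weak Harnack inequality applied locally, and then conclude by a standard clopen argument using the connectedness of $\mathbb{R}^N$.

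First, Lemma \ref{5.11-11} gives $v\geqslant 0$ a.e.\ on $\mathbb{R}^N$. Fix any ball $B_R(x_0)\subset\mathbb{R}^N$. Since $b\in L^\infty_{\mathrm{loc}}(\mathbb{R}^N)$, choose $M>0$ with $b+M\geqslant 0$ on $B_R(x_0)$. Because $v\geqslant 0$, the weak inequality $-\Delta v+bv\geqslant 0$ (understood as in Remark \ref{rem-3333}) rewrites as
\[
-\Delta v+(b+M)\,v\,\geqslant\, Mv\,\geqslant\, 0\qquad\text{on } B_R(x_0),
\]
so $v$ is a non-negative weak supersolution on $B_R(x_0)$ of a linear elliptic operator with bounded, non-negative zero-order coefficient. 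The classical Moser weak Harnack inequality then yields constants $p>0$ and $C>0$ (depending on $R$ and $\|b+M\|_{L^\infty(B_R(x_0))}$) such that, for every $B_{4r}(x)\subset B_R(x_0)$,
\[
\Bigl(r^{-N}\!\int_{B_{2r}(x)}\! v^{p}\,\mathrm{d}x\Bigr)^{\!1/p}\;\leqslant\; C\,\essinf_{B_r(x)} v .
\]

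Define $U:=\{x\in\mathbb{R}^N:\, v=0\text{ a.e.\ on some open neighborhood of }x\}$, which is open by definition. I claim that the complement $V:=\mathbb{R}^N\setminus U$ is also open. Indeed, if $x\in V$, then $v$ is not a.e.\ zero on any neighborhood of $x$; choosing $r>0$ small enough so that $B_{4r}(x)\subset B_R(x_0)$ gives $\int_{B_{2r}(x)}v^{p}\,\mathrm{d}x>0$, so the weak Harnack inequality forces $\essinf_{B_r(x)} v>0$, i.e.\ $v>0$ a.e.\ on $B_r(x)$. Consequently every $y\in B_r(x)$ lies in $V$, since any sufficiently small neighborhood of $y$ is contained in $B_r(x)$, on which $v$ is positive a.e. Thus $U$ is clopen, and by connectedness of $\mathbb{R}^N$ either $U=\mathbb{R}^N$, in which case $v=0$ a.e.\ on $\mathbb{R}^N$, or $U=\varnothing$, in which case every point of $\mathbb{R}^N$ admits a ball-neighborhood on which $v>0$ a.e., and covering $\mathbb{R}^N$ by countably many such balls gives $v>0$ a.e.\ on $\mathbb{R}^N$.

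The main delicate point will be the correct invocation of the weak Harnack inequality for our class of weak supersolutions. The sign-indefiniteness of $b$ is handled by the constant shift $b\rightsquigarrow b+M$ on each fixed ball, which makes the zero-order coefficient non-negative and bounded. The required $H^1_{\mathrm{loc}}$-regularity of $v$ is automatic since $v\in E_b\hookrightarrow H^1(\mathbb{R}^N)$ by Lemma \ref{Embedded}. Once these reductions are in place, the open-and-closed argument is routine and yields the strong maximum principle on all of $\mathbb{R}^N$.
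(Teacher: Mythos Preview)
Your proof is correct and follows essentially the same strategy as the paper's: first invoke the weak maximum principle to get $v\geqslant 0$, then pass to a non-negative zero-order coefficient (you shift $b\mapsto b+M$ locally, the paper replaces $b$ by $b_+$ globally using $v\geqslant 0$), apply the classical strong maximum principle/weak Harnack inequality on balls (the paper cites Gilbarg--Trudinger Theorem~8.19 directly, you unpack it via Moser's inequality), and finally propagate to all of $\mathbb{R}^N$ by connectedness. Your clopen argument is more explicit than the paper's terse ``Consequently, $u=0$ a.e.\ on $\mathbb{R}^N$'', but the underlying idea is the same.
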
     
     \begin{corollary}[Corollary of Lemma \ref{5.22-22}]
    \label{strong-mp}
    Let the condition of the weak maximum principle $($Lemma \ref{5.11-11}$)$
be fulfilled and
 let $a>0$ a.e., %satisfy {\rm \ref{f0a}}, 
 $u\in  K$, $u\ne 0$ a.e. Then, 
 \begin{align*}
 \quad S_b u > 0  \;\; \text{a.e. on} \;\; \mathbb{R}^N. 
\end{align*}
 \end{corollary}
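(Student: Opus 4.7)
The plan is to realize $S_b u$ as the solution of the linear inhomogeneous equation governed by $-\Delta + b(x)$, and then invoke the strong maximum principle stated in Lemma \ref{5.22-22}. Specifically, set $v := S_b u \in E_b$. By the definition of $S_b$ in Proposition \ref{existen} (see \eqref{P2}), $v$ is the unique weak solution to
\begin{equation*}
-\Delta v + b(x) v = \beta\, a(x)\, u \quad \text{in } \mathbb{R}^N.
\end{equation*}

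The next step is to exploit the sign of the right-hand side. Since $a > 0$ a.e.\ on $\mathbb{R}^N$ and $u \in K$ means $u \geqslant 0$ a.e., we conclude that $\beta a(x) u(x) \geqslant 0$ a.e. In the weak sense of Remark \ref{rem-3333}, this yields $-\Delta v + b(x) v \geqslant 0$. Lemma \ref{5.22-22} applied to $v$ then gives the dichotomy: either $v > 0$ a.e.\ on $\mathbb{R}^N$, or $v \equiv 0$ a.e.

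To finish, I rule out the second alternative. If $v \equiv 0$ a.e., then testing the equation against any $\varphi \in {\rm C}_0^\infty(\mathbb{R}^N)$ yields $\int_{\mathbb{R}^N} \beta a(x) u(x) \varphi(x)\, \mathrm{d}x = 0$, so $a u \equiv 0$ a.e. But $a > 0$ a.e.\ forces $u \equiv 0$ a.e., contradicting the assumption $u \neq 0$ a.e. Therefore $v = S_b u > 0$ a.e., which is the desired conclusion. There is no real obstacle here; the entire argument is a direct application of the strong maximum principle in Lemma \ref{5.22-22} combined with the fact that the source $\beta a u$ is nonnegative and nontrivial under the given hypotheses.
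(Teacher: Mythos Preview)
Your proof is correct and follows essentially the same approach as the paper: both arguments set $v = S_b u$, observe that $-\Delta v + b v = \beta a u \geqslant 0$ in the weak sense, apply Lemma~\ref{5.22-22} to obtain the dichotomy, and rule out $v \equiv 0$ by noting it would force $a u \equiv 0$ and hence $u \equiv 0$ a.e.
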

 \begin{lemma}[Moreau's decomposition theorem]
    \label{moreau}
     Let $H$ be a Hilbert space, and let
    $K\subset H$ be a closed, convex cone, and $K^o$ be its polar cone
    defined by \eqref{dual-1111}.
    Then,
    \begin{align*}
    v = P_{K} u \;\; \Leftrightarrow \;\;
    \big[v\in K, \; (u-v,v)_H = 0, \; \text{and} \;\;  u-v\in K^o\big].  
\end{align*}
    \end{lemma}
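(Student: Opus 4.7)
The plan is to prove both implications by exploiting the characterization of $P_K u$ as the unique minimizer of $w \mapsto \|u-w\|^2$ over $w\in K$, together with the cone structure of $K$ (closedness under positive scaling and the fact that $0\in K$).

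For the forward implication, suppose $v = P_K u$. The membership $v\in K$ is immediate. Since $K$ is convex, for every $w\in K$ and $t\in[0,1]$, the point $(1-t)v + tw$ lies in $K$. Hence the real-valued function $t\mapsto \|u - (1-t)v - tw\|^2$ attains its minimum over $[0,1]$ at $t=0$. Differentiating at $0^+$ yields the standard variational inequality
\begin{equation*}
\langle u - v,\, w - v\rangle \leqslant 0 \qquad \forall\, w\in K.
\end{equation*}
The crucial step is to upgrade this to the two required conclusions using that $K$ is a cone. Taking $w = 2v\in K$ gives $\langle u-v, v\rangle \leqslant 0$; taking $w = 0\in K$ gives $\langle u-v, v\rangle \geqslant 0$. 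Thus $\langle u-v, v\rangle = 0$. Substituting back into the variational inequality reduces it to $\langle u-v, w\rangle \leqslant 0$ for every $w\in K$, which is precisely $u-v\in K^o$.

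For the backward implication, assume $v\in K$, $\langle u-v, v\rangle = 0$, and $u-v\in K^o$. For any $w\in K$, I expand
\begin{equation*}
\|u-w\|^2 = \|u-v\|^2 + 2\langle u-v,\, v-w\rangle + \|v-w\|^2,
\end{equation*}
and use that $\langle u-v,\, v-w\rangle = -\langle u-v, w\rangle \geqslant 0$, the last inequality holding since $u-v\in K^o$ and $w\in K$. Therefore $\|u-w\|^2 \geqslant \|u-v\|^2$ for every $w\in K$, which identifies $v$ as the projection $P_K u$.

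The statement is a classical result, and no step is genuinely hard: the only delicate point is the correct use of the cone property in the forward direction, where one must plug in both $w=2v$ and $w=0$ (legitimate thanks to $K$ being a closed convex cone, hence containing $0$ and closed under multiplication by $2$) to pin down $\langle u-v, v\rangle = 0$, and then use closure under multiplication by arbitrary $\lambda>0$ to conclude $u-v\in K^o$ rather than merely a weaker inequality.
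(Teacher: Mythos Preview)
Your proof is correct. The paper does not actually prove this lemma: its ``proof'' consists solely of a citation to \cite[Proposition 6.27]{zbMATH05868403}. Your argument is the standard self-contained proof of Moreau's theorem, exploiting the variational inequality for projections onto closed convex sets and then specializing $w=2v$ and $w=0$ (both in $K$ since a nonempty closed convex cone contains $0$) to extract the orthogonality $\langle u-v,v\rangle=0$ and the polar-cone membership $u-v\in K^o$; the converse follows by the expansion you wrote. There is nothing to add.
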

     \begin{lemma}
    \label{lem4.8}
Let  the polar cone ${K}^{o}$ be defined by \eqref{ad-cone1111}.
Then,
 \begin{align*}
    {K}^{o} \subset -{K} 
\end{align*}
        \end{lemma}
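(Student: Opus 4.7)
The plan is to reformulate the defining inequality of $K^o$ as a weak super-solution inequality for the operator $L := -\Delta + a(x) S_b + e(x)$ and then invoke the strong maximum principle established in Lemma \ref{big-inverse}. Observe that this is precisely the point of the shift that produced the scalar product $\langle\cdot,\cdot\rangle_{ab}^*$: the additional term $\int_{\mathds{R}^N} e(x)uv\,\mathrm{d}x$ was added so that $L$ would satisfy a maximum principle despite the non-locality of $a S_b$, and that maximum principle is exactly what is needed to understand $K^o$.

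Concretely, I would let $u \in K^o$, so by definition $\langle u,\varphi\rangle_{ab}^* \leqslant 0$ for every $\varphi \in K$. Setting $w := -u \in E_b$ and unfolding the scalar product, this reads
\begin{align*}
\int_{\mathds{R}^N} \big(\nabla w \cdot \nabla \varphi + a(x) S_b(w)\,\varphi + e(x) w\,\varphi\big)\, \mathrm{d}x \,\geqslant\, 0 \qquad \forall\, \varphi \in K,
\end{align*}
which I would interpret as $-\Delta w + a(x) S_b(w) + e(x) w \geqslant 0$ in the weak sense required by Lemma \ref{big-inverse}. Applying the strong maximum principle from that lemma then gives either $w>0$ or $w=0$ a.e.~on $\mathds{R}^N$; in either case $w \geqslant 0$, so $u=-w \leqslant 0$ a.e., i.e.~$u \in -K$. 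This establishes $K^o \subset -K$.

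The main (and essentially the only) obstacle is to confirm that the above inequality, tested over all $\varphi \in K$, is indeed the notion of weak super-solution under which Lemma \ref{big-inverse} is stated --- in analogy with Remark \ref{rem-3333}, where the corresponding fact is recorded for the local operator $-\Delta + b$. This reduces to a density argument: every non-negative $\varphi \in E_b$ can be approximated by non-negative test functions in ${{\rm C}}^\infty_0(\mathds{R}^N)$ via Lemma \ref{dense} (composed with positive-part truncation), and the three pieces of the bilinear form are continuous in $\varphi$, the Dirichlet piece by the embedding $E_b \hookrightarrow H^1(\mathds{R}^N)$ from Lemma \ref{Embedded}, the non-local piece by Proposition \ref{Ne}, and the zero-order piece by Corollary \ref{cor312} together with the equivalence of $\|\cdot\|_{ab}$ and $\|\cdot\|_{ab}^*$ from Lemma \ref{scalar-1111}. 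Passing to the limit in the inequality then reduces the statement to exactly the hypothesis of Lemma \ref{big-inverse} and concludes the proof.
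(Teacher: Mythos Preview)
Your argument is correct, but it follows a different route from the paper's. The paper does not interpret the polar-cone inequality as a super-solution condition; instead, for $w\in K^o$ it writes $w=w_++w_-$, applies the \emph{inverse} operator to $w_+$ to produce a test element $u:=(-\Delta+aS_b+e)^{-1}w_+\in K$ (this is the part of Lemma~\ref{big-inverse} asserting that the inverse maps $K$ to $K$), and then computes
\[
\|w_+\|_{L_2}^2=(w_+,w)_{L_2}=\langle u,w\rangle_{ab}^*\leqslant 0,
\]
so $w_+=0$ a.e. Thus the paper uses only the positivity of the resolvent and the $L_2$-duality trick, whereas you use the super-solution form of the strong maximum principle stated in Lemma~\ref{big-inverse}. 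Your path is conceptually more direct and makes transparent why the shift by $e(x)$ was introduced; the paper's path has the advantage of relying only on the portion of Lemma~\ref{big-inverse} that is explicitly proved there (the resolvent factorization and positivity), sidestepping any discussion of what ``$-\Delta v+aS_b(v)+ev\geqslant 0$ weakly'' should mean for this non-local operator. Your density remark handling that point is sound, so either approach closes the argument.
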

    \begin{lemma}\label {LA3}
        Let $f$ satisfy {\rm \ref{f1}--\ref{f5}}, $b\in \Upsilon_2(\mathds{R}^N)$,
        and $a(x)$ satisfy {\rm \ref{f0a}}. 
        Then, for all $u\in E_b$, $v\in {K}^{o}$, 
        and $w\in  (-K)^o$,
        \begin{equation*}
            \langle A(u),v\rangle \leqslant \langle A \left(  P_{K^o} u\right),v \rangle \quad \text{and} 
            \quad \langle A(u),w\rangle \leqslant \langle A \left(  P_{(-K)^o} u\right),w \rangle.
        \end{equation*}
         In particular, \ref{A3} and \ref{A4} are fulfilled for the map
         $A$ in place of $\mathcal A$.
    \end{lemma}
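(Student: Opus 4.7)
The plan is to verify both inequalities via Moreau's decomposition (Lemma \ref{moreau}), combined with the monotonicity hypothesis \ref{f5} and the non-negativity of $e(x)$ guaranteed by \ref{f0ab}. The underlying Hilbert space is $(E_b,\langle\cdot,\cdot\rangle_{ab}^*)$ from Lemma \ref{scalar-1111}, and all polar cones are taken with respect to this scalar product; this choice is precisely what forces the extra $e(x)u v$ contribution in \eqref{map-A-1111} to cooperate with the sign-tracking below.

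For the first inequality, I would apply Moreau's decomposition to write $u = P_{K}u + P_{K^o}u$ with $\langle P_K u, P_{K^o}u\rangle_{ab}^* = 0$, and set $\bar u := P_{K^o}u$, so that $u-\bar u = P_K u \in K$, that is, $u \geqslant \bar u$ a.e. By Lemma \ref{lem4.8}, $K^o \subset -K$, hence every $v\in K^o$ satisfies $v \leqslant 0$ a.e. From the defining formula \eqref{map-A-1111},
\begin{equation*}
\langle A(u) - A(\bar u), v\rangle_{ab}^* = \int_{\mathds{R}^N}\bigl[f(x,u)-f(x,\bar u)\bigr] v\,\mathrm{d}x + \int_{\mathds{R}^N} e(x)(u-\bar u) v\,\mathrm{d}x.
\end{equation*}
Hypothesis \ref{f5} yields $f(x,u) \geqslant f(x,\bar u)$ a.e., so the first integrand is $\leqslant 0$ (since $v\leqslant 0$); for the second integrand, $e \geqslant 0$ from \ref{f0ab}, $u-\bar u \geqslant 0$, and $v \leqslant 0$ combine to the same conclusion. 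Therefore $\langle A(u),v\rangle_{ab}^* \leqslant \langle A(P_{K^o}u),v\rangle_{ab}^*$.

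The second inequality follows by the symmetric argument with $-K$ in place of $K$. Because $\langle\cdot,\cdot\rangle_{ab}^*$ is symmetric, a direct check of the definition \eqref{ad-cone1111} gives $(-K)^o = -K^o$, and Lemma \ref{lem4.8} then yields $(-K)^o = -K^o \subset -(-K) = K$; thus any $w\in (-K)^o$ is $\geqslant 0$ a.e. Moreau's decomposition for the cone $-K$ produces $u - P_{(-K)^o}u \in -K$, i.e.\ $u \leqslant P_{(-K)^o}u$ a.e., which via \ref{f5} reverses to $f(x,u) \leqslant f(x,P_{(-K)^o}u)$. Paired with $w\geqslant 0$ and $e\geqslant 0$, the two integrands in the analog of the display above are again $\leqslant 0$, yielding $\langle A(u),w\rangle_{ab}^* \leqslant \langle A(P_{(-K)^o}u),w\rangle_{ab}^*$.

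I do not anticipate a serious obstacle; the proof is essentially a sign-tracking exercise once Moreau's decomposition is in place. The only nontrivial point is the polar-cone bookkeeping, namely the identification $(-K)^o = -K^o$ and the inclusions coming from Lemma \ref{lem4.8}, both of which rely on having replaced the original scalar product $\langle\cdot,\cdot\rangle_{ab}$ by $\langle\cdot,\cdot\rangle_{ab}^*$. This is precisely the structural reason for introducing the auxiliary function $e(x)$ in \ref{f0ab} and for the maximum-principle groundwork in Lemmas \ref{big-inverse}--\ref{lem4.8}.
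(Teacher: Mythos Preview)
Your proof is correct and follows essentially the same approach as the paper: Moreau's decomposition combined with the inclusion $K^o\subset -K$ from Lemma \ref{lem4.8}, then sign-tracking via \ref{f5} and $e\geqslant 0$. The only cosmetic difference is that for the inclusion $(-K)^o\subset K$ the paper invokes the bipolar identity $K^{oo}=K$, whereas you use the more direct observation $(-K)^o=-K^o$.
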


    \begin{lemma}\label {EDO}
       Let the function $f$  satisfy conditions {\rm \ref{f1}--\ref{f4}}. Further
        let $b\in \Upsilon_2(\mathds{R}^N)$ and $a(x)$ satisfy {\rm \ref{f0a}}. 
        Then, there exists a continuous path $h:[0,1] \rightarrow E_{b}$ 
        such that $h(0)(x) \geqslant 0$ a.e. 
        $J(h(t))<0$ for all $t \in[0,1]$.
    \end{lemma}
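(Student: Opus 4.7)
The plan is to build $h$ as a straight-line segment in the span of two carefully chosen test functions, then use the superquadratic growth of $F$ coming from \ref{f4} to dominate the quadratic kinetic term uniformly in $t$. The construction is designed so that the two endpoint conditions required by \eqref{cp-pro-1111} hold, namely $h(0)\in K$ and $h(1)\in -K$ (the latter is implicit in the lemma but is what is actually needed for applying Theorem \ref{Abstract Result}).

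First I would fix $u_+,u_-\in {\rm C}_0^\infty(\mathbb{R}^N)$ with $u_+\geqslant 0$, $u_-\leqslant 0$, $u_\pm\not\equiv 0$, and with \emph{disjoint supports} $\mathrm{supp}(u_+)\cap\mathrm{supp}(u_-)=\varnothing$ (obtained by localising in two disjoint balls). Define
\begin{equation*}
h(t):=\lambda\bigl[(1-t)u_++t u_-\bigr],\qquad t\in[0,1],
\end{equation*}
where $\lambda>0$ is to be chosen large. Continuity of $h:[0,1]\to E_b$ is automatic since $h$ lies in the two-dimensional subspace $\mathrm{span}\{u_+,u_-\}\subset E_b$; by construction, $h(0)=\lambda u_+\geqslant 0$ and $h(1)=\lambda u_-\leqslant 0$ a.e. The quadratic term of $J$ is trivially controlled via $\tfrac12\|h(t)\|_{ab}^2\leqslant \tfrac{M}{2}\lambda^2$, where $M:=\max_{t\in[0,1]}\|(1-t)u_++tu_-\|_{ab}^2<\infty$.

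Next I would exploit disjointness of supports to split the potential term. For a.e.~$x$ at least one of $\lambda(1-t)u_+(x)$ and $\lambda t u_-(x)$ vanishes, so by \ref{f3},
\begin{equation*}
F(x,h(t)(x))=F(x,\lambda(1-t)u_+(x))+F(x,\lambda t u_-(x))\quad\text{a.e.}
\end{equation*}
Recycling the superquadratic lower bound already obtained inside the proof of Lemma \ref{mountain pass geometry} (by integrating \ref{f4} from a threshold $\kappa>0$ for which the sets $D_\pm:=\{\pm u_\pm\geqslant\kappa\}$ have positive measure), one extracts constants $c_+,c_->0$ such that
\begin{equation*}
\int_{\mathbb{R}^N}F(x,s u_\pm)\,\mathrm{d}x\geqslant c_\pm s^{\mu_0}\qquad\text{for every }s\geqslant 1,
\end{equation*}
where positivity of $c_\pm$ is guaranteed by \ref{f4} (which forces $F(\cdot,\pm\kappa)>0$). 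Combining, for $t\in[0,\tfrac12]$ and $\lambda\geqslant 2$ one has $\lambda(1-t)\geqslant 1$ and hence
\begin{equation*}
J(h(t))\leqslant \tfrac{M}{2}\lambda^2-c_+\bigl[\lambda(1-t)\bigr]^{\mu_0}\leqslant \tfrac{M}{2}\lambda^2-\tfrac{c_+}{2^{\mu_0}}\lambda^{\mu_0},
\end{equation*}
with the symmetric estimate valid on $[\tfrac12,1]$ using the $u_-$ branch. Since $\mu_0>2$, taking $\lambda$ large yields $J(h(t))<0$ uniformly in $t\in[0,1]$.

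The main obstacle I anticipate is producing an estimate that stays uniform at the endpoints, where one of the mixing coefficients $1-t$ or $t$ collapses to zero. The disjoint-support choice is the decisive device: it decouples the two contributions to $\int F$, so at every $t\in[0,1]$ at least one of them already supplies the required $\lambda^{\mu_0}$ lower bound, and the trivial case split $t\leqslant\tfrac12$ versus $t\geqslant\tfrac12$ closes the argument. A minor technical point is merely checking that $M<\infty$, which is immediate from $u_\pm\in {\rm C}_0^\infty(\mathbb{R}^N)\subset E_b$ and the equivalence of $\|\cdot\|_{ab}$ with $\|\cdot\|_b$ established in Proposition \ref{Ne}.
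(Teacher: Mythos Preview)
Your proposal is correct and follows essentially the same approach as the paper: a straight-line path in the span of two compactly supported test functions of opposite sign with disjoint supports, scaled by a large parameter $\lambda$, with the Ambrosetti--Rabinowitz condition \ref{f4} supplying the $\lambda^{\mu_0}$ growth that dominates the quadratic term. Your case split $t\leqslant\tfrac12$ versus $t\geqslant\tfrac12$ makes explicit the uniform-in-$t$ lower bound on $\int F$ that the paper handles more tersely by appealing to \eqref{ambro}; otherwise the arguments are the same.
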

    \begin{proof}[Proof of Theorem \ref{teo}]
    Let us show that Theorem \ref{teo}
    follows from Theorem \ref{Abstract Result}. Note that
    assumptions \ref{A1}-\ref{A4} are satisfied by the map $A$,
    in place of $\mathcal A$, with respect to the cone $K$ defined by \eqref{cone1111}
    and the Hilbert space $(H, \langle\cdot,\cdot\rangle^*_{ab})$.
  We also note that Lemmas \ref{A11} and \ref{LA33} imply \ref{A1},
   Lemma \ref{LA2} implies \ref{A2},  Lemma \ref{LA3} implies
   \ref{A3} and \ref{A4}. Finally, the existence of a continuous path
   $h:[0,1] \rightarrow E_{b}$ with the properties \eqref{cp-pro-1111}
   follows from Lemma \ref{EDO}.
     By Theorem \ref{Abstract Result}, the functional $J: E_b\to \mathbb{R}$, 
     given by  \eqref{energy} with the functional $\Psi$ given by \eqref{psi1111},
     possesses three nontrivial critical points $u_1, u_2, u_3$ such that
     $u_1\geqslant 0$ a.e., $u_2\leqslant 0$ a.e.,
     and $u_3$ is sign-changing. 
     Then  $(u_k, v_k)$, $v_k = S_b u_k$, $k=1,2,3$,
     are weak solutions to system  \eqref{P}. 
           Moreover, Corollary \ref{strong-mp} implies that $v_1 = S_b u_1 > 0$ and $v_2 = S_b u_2 < 0$. 
           Finally, since $u_1\geqslant 0$ and $u_2\leqslant 0$, by \ref{f5}, $\tilde f(x,u_1)\geqslant 0$ and $\tilde f(x,u_2)\leqslant 0$.
           Therefore,
     \begin{equation*}
         -\Delta u_1 + a(x)S_{b}(u_1) +  e(x) u_1 \geqslant 0  \qquad \text{and} \quad -\Delta u_2 + a(x)S_{b}(u_2) +  e(x) u_2 \leqslant 0. 
     \end{equation*}
     By Lemma \ref{big-inverse}, $u_1 > 0$ and $u_2 < 0$. 
     \end{proof}
        \begin{proof}[Proof of Corollary \ref{cor-teo1-and-teo}]
            Note that to prove the corollary it suffices to show that \ref{f22} implies \ref{f2}. By \ref{f22}, 
            for any function $\phi \colon \mathds{R}^N \rightarrow [0, +\infty)$ and all $\alpha > 0$, 
             \begin{equation*}
                |f(x, u)-f(x,v)|  \leqslant C_0\big(1+ \phi(x)^{1/\alpha}\big)\left(1+ \left(|u|^{p-1} +|v|^{p-1}\right)\right)|u-v|.
            \end{equation*}
            It remains to show that for any $p\in (1,2^*-1)$ there exists $\alpha>\max\{2,\frac{N}2\}$ such that
            $p\in \mathcal P_{\alpha,N}$, where the latter is defined by \eqref{PalfN}.
            Let $p \in \big(1, \frac{N+2-4/N}{N-2}\big) = \mathcal P_{N,N}$. Then \ref{f2} holds with $\alpha=N$.
            Now let $p \in \left[\frac{N+2-4/N}{N-2}, \frac{N+2}{N-2}\right)$. Define            
            \begin{equation*}
                \alpha_{(N,p)}: = \frac{4}{-(N-2)p + N + 2 - 2/N} \geqslant 2N.
            \end{equation*}
            Substituting the above expression for $\alpha_{(N,p)}$ to  \eqref{PalfN}, we obtain
            \begin{equation*}
                \mathcal{P}_{\alpha_{(N,p)}, N} = \left(1, 1 + \frac{2}{N}\right] \cup \left[p - \frac{2N-2}{N(N-2)}, p + \frac{2}{N(N-2)} \right),
            \end{equation*}
            which guarantees that $p \in \mathcal{P}_{\alpha_{(N,p)}, N}$. 
        \end{proof}

      \begin{proof}[Proof of Theorem \ref{t-main}]
            Theorem~\ref{t-main} follows directly from Corollary~\ref{cor-teo1-and-teo}.

            It is easy to see that if the function $f(u) = |u|^{p-1}u$, satisfies \ref{f1}, \ref{f3}, \ref{f4}, and \ref{f5}. 
            It remains to show that $f$ satisfies \ref{f22}.  Note that
            \begin{equation*}
                f(u) - f(v) = \int_0^1   f'(v + t(u - v)) \, \mathrm{d}t \cdot (u - v).
            \end{equation*}
            Therefore, $|f(u) - f(v)| \leqslant p \left( |u|^{p-1} + |v|^{p-1} \right)|u - v|$.
     \end{proof}

\subsection{Proofs of auxiliary lemmas}
\label{proofs}
We start by proving Lemma \ref{5.11-11} whose proof is required for Lemma \ref{big-inverse}.
\begin{proof}[Proof of Lemma \ref{5.11-11}]
 Note that if $v\in E_b$, then $v_-\in E_b$.  Indeed, it suffices to note
 that $\nabla v_- = \nabla v \mathds{1}_{\{v\leqslant 0\}}$. Setting $\varphi = v_-$ in \eqref{weak-1111}
 (recall that \eqref{weak-1111} holds for $\varphi\in E_b$ by Remark \ref{rem-3333}),
 we obtain 
 \begin{align}
 \label{neg-1111}
   \int_{\mathds{R}^N}\big[ (\nabla v, \nabla v_-) + b(x) v v_- \big] \, dx \leqslant 0.
\end{align}
 Consider the case $b\in \mathcal T(\mathbb{R}^N)$. By Lemma \ref{Embedded}, there is a constant $C>0$ such that
 \begin{align*}
 C\int_{\mathds{R}^N} v_-^2 \, dx \leqslant 
 \int_{\mathds{R}^N} \big[ |\nabla v_-|^2 + b(x) v_-^2 \big] dx \leqslant 0.
\end{align*}
 This implies that $v_-=0$ a.e. 
 
 Now consider the case when $b\geqslant 0$ (a.e.) is measurable with $b\ne 0$ a.e.  Inequality \eqref{neg-1111} implies
 $\nabla v_- = 0$ a.e., that is $v_-=const$ a.e. Since $\int_{\mathbb{R}^N}\!b\, v_-^2= 0$, we infer that $v_- = 0$ a.e.
\end{proof}
\begin{proof}[Proof of Lemma \ref{big-inverse}]
 Define $\mathcal B: = a S_b = \beta a(-\Delta + b)^{-1} a$,
so that
\begin{align*}
\mathcal B^{-1} = a^{-1} (-\Delta + b) (\beta a)^{-1} = (-a^{-1}\Delta + a^{-1}b) (\beta a)^{-1}.
\end{align*}
Let  $\eta: = -\beta^\frac12 a$ and $\theta:= b-\beta^\frac12 a$.
 We obtain
 \begin{align*}
 -\Delta+ aS_b + \theta + \eta = &\,  (-\Delta+\theta) +\eta \mathcal B^{-1}\mathcal B +  \mathcal B =
  (-\Delta+\theta) + (1+\eta\mathcal B^{-1})\mathcal B \\
 =&\, (-\Delta+\theta) + (\beta a - \eta a^{-1}\Delta +\eta a^{-1} b)(-\Delta+b)^{-1} a\\
 = &\,  (-\Delta+\theta) + \eta a^{-1}(-\Delta+\theta)(-\Delta+b)^{-1} a\\
 = &\,  (-\Delta+\theta)(1- \beta^\frac12 (-\Delta + b)^{-1} a)\\
 =&\,  (-\Delta+\theta)\beta^\frac12(-\Delta + b)^{-1} ( (-\Delta+b)\beta^{-\frac12}-a)\\
= &\,  (-\Delta+\theta)\beta^\frac12(-\Delta + b)^{-1} ( -\Delta+b+\eta)\beta^{-\frac12}\\
= &\,  (-\Delta+\theta)(-\Delta + b)^{-1} ( -\Delta+b+\eta).
\end{align*}
Therefore,
 \begin{align*}
  (-\Delta+ aS_b + \theta + \eta)^{-1} = &\, (-\Delta+b+\eta)^{-1}(-\Delta + b) (-\Delta+\theta)^{-1}\\ 
  = &\, (1- (-\Delta+b+\eta)^{-1}\eta) (-\Delta+\theta)^{-1}\\
   =   &\, \Big(1 + \beta^\frac12\big(-\Delta+ b-\beta^\frac12 a\big)^{-1}a\Big) 
   \big(-\Delta+ b-\beta^\frac12 a\big)^{-1}.
\end{align*}
  Let $d:=b-\beta^\frac12 a$. 
  The existence of the continuous inverse operators  $\big[(-\Delta+ d)^{-1}a\big]$ and $(-\Delta+ d)^{-1}$:
  $E_b\to E_b$ follows from Corollary \ref{lap-inv-1111}.
  Indeed, since $d > e\geqslant 0$,  to apply this corollary it suffices to prove the equivalence of the norms
  $\|\cdot\|_b$ and $\|\cdot\|_d$. It is straightforward to see that
  $\|\cdot\|_d \leqslant \|\cdot\|_b$.  
   On the other hand, since $d=e+\beta^\frac12 a$, by the equivalence 
   of the norms $\|\cdot\|_a$ and $\|\cdot\|_b$,  there exists
   a constant $C>0$ such that for all $u\in E_b$,
   \begin{align*}
\|u\|^2_d = \|u\|^2_e + \beta^\frac12 \int a u^2 \geqslant \min\{1,\beta^\frac12\} \|u\|^2_a + \int eu^2 
\geqslant C \|u\|^2_b.
\end{align*}
   Let $u\in K$. By Lemma \ref{5.22-22}, $(-\Delta+ d)^{-1}u>0$ and 
   by Corollary \ref{strong-mp}, $(-\Delta+ d)^{-1}au >0$.
 \end{proof}

 \begin{proof}[Proof of Lemma \ref{scalar-1111}]
 By Proposition \ref{Ne} and since $e \geqslant 0$ a.e., $\<u,v\rangle_{ab}^*$ is a scalar product 
 in $E_b$. In particular, by \ref{f0ab}, $\int_{\mathbb{R}^N} e uv dx$ is well defined for
 all $u,v\in E_b$.

Let us prove the equivalence of the norms $\| u \|_{ab}$ and  $\| u \|_{ab}^*$.
Since $e \geqslant 0$, the inequality $\| u \|_{ab} \leqslant \| u \|_{ab}^*$ is straightforward. 
On the other hand, since $0 \leqslant e \leqslant b$,
 there exists a constant $C_*>0$ such that
 \begin{align}
 \label{reverse}
\left(\| u \|_{ab}^*\right)^2 \leqslant \|u\|^2_{ab} + \int_{\mathbb{R}^N} b(x) u^2 {\rm d}x 
\leqslant \|u\|^2_{ab} + \|u\|_b^2 \leqslant C_* \|u\|^2_{ab}.
\end{align}
\end{proof}

\begin{proof}[Proof of Lemma \ref{lem-J-1111}]
The proof is straightforward.
\end{proof} 
\begin{proof}[Proof of Lemma \ref{frch-1111}]
It suffices to prove the Fr\'echet differentiability, 
with respect to the norm $\|\cdot\|_{ab}$, of the map
\begin{align}
\label{e2map}
E_b\to \mathbb{R}, \quad  u\mapsto \frac12 \int_{\mathbb{R}^N} e(x) u^2(x) dx
\end{align}
and prove
that its Fr\'echet derivative at point $u\in E_b$ is the functional $E_b\to \mathbb{R}$, $h\mapsto B(u,h)$.
By Corollary \ref{cor312}, there exist constants $C,\hat C>0$ such that
  \begin{align}
  \label{lip-com-1111}
              \Big|\int_{\mathds{R}^{N}}\hspace{-1mm} (e(x) u(x) - e(x) v(x)) h(x) \mathrm{d}x \Big|
               \leqslant C   \|u-v\|_{L_2} \|h\|_{ab} 
              \leqslant \hat C \|u-v\|_{ab}  \|h\|_{ab}.
\end{align}
Now the Fr\'echet differentiability of the map \eqref{e2map} follows by the
 same argument as in Lemma \ref{frch-2222}.
              
We also remark that by equivalence of the norms $\|\cdot\|_{ab}$ and $\|\cdot\|_{ab}^*$,
the map $\tilde \Psi$ is also Fr\'echet-differentiable with respect to the norm $\|\cdot\|_{ab}^*$.
\end{proof}
    \begin{proof}[Proof of Lemma \ref{A11}]
    It suffices to prove that the map $A$, given by \eqref{map-A-1111}, is Lipschitz and compact in any of the equivalent
    norms of $E_b$. By Lemma \ref{A12}, the map $E_b\to E_b$, $u\mapsto D\Psi(u)$ is Lipschitz and
    by Lemma \ref{lem-Eb}, this map  is compact.
    It suffices to prove that the Fr\'echet derivative of the map \eqref{e2map} is a locally Lipschitz and compact map.
    This follows from \eqref{lip-com-1111} by the same argument as in Lemma \ref{lem-Eb}.
        The lemma is proved.
          \end{proof}
\begin{proof}[Proof of Lemma \ref{LA33}]
The proof is straightforward.
\end{proof}
    \begin{proof}[Proof of Lemma \ref{LA2}]
    Condition \eqref{A_2_1}, with $C^* = 0$, follows immediately from Lemma  \ref{lem51-111}, the explicit 
    form of the map \eqref{e2map}, and its Fr\'echet derivative.
    
    Let us obtain \eqref{A_2_2}. Define the seminorm %$\|\cdot\|^\sim_e$ by
    $\|u\|^\sim_e: = \big(\int_{\mathbb{R}^N} e u^2 dx\big)^\frac12$.
    By Corollary \ref{cor-3.11}  and Lemma \ref{scalar-1111},
     there exists a constant $C>0$ such that for all $u,v \in E_b$,
\begin{align*}
\big|\big<A(u),v\big>^*_{ab}\big| \leqslant &\,
\Big|\int_{\mathds{R}^{N}} f(x,u(x)) v(x) \mathrm{d}x\Big| 
+ \Big|\int_{\mathds{R}^{N}} e(x)u(x) v(x) \mathrm{d}x\Big|\\
\leqslant &  C \|u\|^{p}_{ab} \|v\|_{ab} +   \|u\|^\sim_{e} \|v\|^\sim_{e}                           
  \leqslant\,\big( C{\|u\|^{*}_{ab}}^p + \|u\|^\sim_{e} \big) \|v\|_{ab}^{*}.
\end{align*}
Let us prove that there exists $q_* \in (0,1)$ such that
 $\|u\|^\sim_{e} \leqslant q_* \|u\|_{ab}^{*}$ for all $u \in E_b$. By \eqref{reverse}, 
\begin{equation*}
0 \leqslant {\|u\|^\sim_e}^2= {\|u\|_{ab}^{*}}^2 - \|u\|_{ab}^2 \leqslant \left(1 - C_*^{-1}\right)
{\|u\|_{ab}^{*}}^2
\end{equation*}
for all $u \in E_b$, which proves that $C_*\geqslant 1$. If $C_* > 1$, define
$q_* = 1-C_*^{-1} \in (0,1)$. 
If $C_* = 1$, then $\|\cdot\|^\sim_{e} \equiv 0$ and 
$\|u\|^\sim_{e}\leqslant q_* \|u\|_{ab}^{*}$ for all $u \in E_b$ and $q_*\in (0,1)$.
    \end{proof}

\begin{proof}[Proof of Lemma \ref{5.22-22}]
By Lemma \ref{5.11-11}, $u\geqslant 0$. Suppose $u=0$ a.e. on a set
$D\subset\mathbb{R}^N$ with $|D|>0$. Then,
there exists a ball $B_R(0)$ such that $|B_R(0)\cap D| > 0$.
This implies that $\essinf u|_{B_R(0)} = 0$. Furthermore, we note
for any $\varphi\in {{\rm C}}^\infty_0$, $\varphi\geqslant 0$,
\begin{align*}
 \int_{\mathds{R}^N}\big[ (\nabla u, \nabla \varphi) + b_+(x) u \varphi \big] \, dx
 \geqslant  \int_{\mathds{R}^N}\big[ (\nabla u, \nabla \varphi) + b(x) u \varphi \big] \, dx
  \geqslant 0.
\end{align*}
The operator $-\Delta + b_+$ satisfies the assumption of the
strong maximum principle in \cite[Theorem 8.19]{zbMATH01554166}
on the domain $\Omega = B_{2R}(0)$. Indeed, $b_+$ is essentially bounded on $\Omega$.
Also, since $u\geqslant 0$ a.e. on $\Omega$ and $u=0$ a.e. on $D$,
$\essinf u|_{B_R(0)} = \essinf u|_{\Omega} = 0$. By 
\cite[Theorem 8.19]{zbMATH01554166}, $u=0$ a.e.  on $\Omega$.
Consequently, $u=0$ a.e. on $\mathbb{R}^N$. This proves the lemma.
\end{proof}
\begin{proof}[Proof of Corollary \ref{strong-mp}]
Let $u\in  K$ and $u\not\equiv0$. Then,  in the sense of Remark  \ref{rem-3333},
\begin{align*}
(-\Delta + b)(S_b u) = \beta au \geqslant 0
\end{align*}
By Lemma \ref{5.22-22}, either $S_b u>0$ a.e.~or $S_b u = 0$ a.e.
In the second case, we have 
\begin{align*}
(-\Delta + b)(S_b u) = \beta au = 0 \quad \text{a.e.}
\end{align*}
Since $a>0$, we infer that $u=0$ a.e.,  which is a contradiction.
Hence, $S_b u>0$ a.e.
\end{proof}
\begin{proof}[Proof of Lemma \ref{moreau}]
For the proof see \cite[Proposition 6.27]{zbMATH05868403}.
\end{proof}
 \begin{proof}[Proof of Lemma \ref{lem4.8}]
Let us show that for every $w\in K$, there exists $u\in K$,
such that 
 \begin{align}
 \label{uvw-1111}
 \<u,v\rangle^*_{ab} = (w,v)_{L_2} \qquad \forall  v\in E_b.
\end{align}
 One immediately verifies that $u: = (-\Delta  +a(x)S_b + eu)^{-1} w$
satisfies \eqref{uvw-1111}.
By Lemma \ref{big-inverse}, $u\in K$.

Let us show that $K^o \subset - K$. Take, $w\in K^o$,
 $w=w_{+} + w_{-}$, $w_+\in K$. Let 
 \begin{align*}
 u: = (-\Delta  +a(x)S_b + eu)^{-1} w_+.
\end{align*}
 Then, $u\in K$ and
 \begin{align*}
 \|w_{+}\|^2_{L_2} = \<w_{+},w\rangle_{L_2} = \<u, w\rangle_{ab} \leqslant 0.
\end{align*}
 Therefore, $w_+ = 0$ a.e., and hence $w\in -K$.
\end{proof}

 \begin{proof}[Proof of Lemma \ref{LA3}]
 Let $u\in E_b$ and $v\in  K^o\subset- K$.
  By the Moreau decomposition theorem (Lemma \ref{moreau}),
  $u = P_{ K} u + P_{ K^o} u$, 
  where $P_{ K} u  \geqslant 0$ and $P_{ K^o} u \leqslant 0$.
  Since $u\geqslant P_{ K^o} u$ and $v\leqslant 0$ a.e., 
\begin{align*}
    \langle A(u), v\rangle_{ab}^*= 
    \int (f(x, u) +e(x)u) v \, \mathrm{d} x \leqslant 
    \int \left(f(x,  P_{ K^o} u) + e(x) P_{ K^o} u\right) v \, \mathrm{d} x \
    =\left\langle A\left(P_{{K}^o} u\right), v \right\rangle_{ab}^*.
\end{align*}
         Note that since $ K^o \subset - K$, by \cite[Propositions 6.23]{zbMATH05868403} 
         and \cite[Corollary 6.33]{zbMATH05868403}, 
 $(- K)^o \subset  K^{oo} =  K$.
   Let $u\in E_b$ and $w\in  (- K)^o\subset  K$. We have
   $w\geqslant 0$ and $u = P_{- K} u + P_{(- K)^o}u \leqslant P_{(- K)^o}u$ a.e.
  By the same argument as in the previous inequality,   \vspace{2mm}\\
   \phantom{bb} \hspace{45mm} $\langle A(u), w\rangle_{ab}^* \leqslant 
    \left\langle A\left(P_{(-{K})^o} u\right), w\right\rangle_{ab}^*.$
\end{proof}

    \begin{proof}[Proof of Lemma \ref{EDO}]
    Let $w_{1}\in {K}$ and $w_{2}\in -{K}$  be functions with compact supports 
    such that $\|w_{1}\|_{b}=\|w_{2}\|_{b}=1$ and $\overline{{\rm supp}\, w_{1}} \cap\overline{ {\rm supp}\, w_{2} }=\varnothing $. 
    By Proposition \ref{Ne}, there exist a constant $C>0$ such that     
    $ \| t w_{1}+(1-t)w_{2} \|_{ab}\leqslant 2C \|t w_{1}+(1-t)w_{2} \|_{b}=2C$ for all $t\in [0,1]$. By \eqref{ambro}, 
    there exists a constant $C_1>0$ such that
    for all $\lambda\geqslant 1$,
    \begin{align*}
    J\big(\lambda(t w_{1}+(1-t)w_{2})\big) \leqslant \frac{\lambda^2}{2}\| t w_{1}+(1-t)w_{2} \|_{ab}^2 -  \lambda^{\mu_0}  C_1
    \leqslant C \lambda^2  - C_1 \lambda^{\mu_0}.
\end{align*}
    Therefore, there exists $\lambda_0\geqslant 1$ such that 
    $J\big(\lambda_0(t w_{1}+(1-t)w_{2})\big)< 0$ for all $t\in [0,1]$.
    The path $h(t): = \lambda_0(t w_{1}+(1-t)w_{2})$ satisfies  the desirable conditions.
 \end{proof}

\subsection*{Acknowledgements}
The authors acknowledge partial support from CNPq-Brazil 
 (grants 409764/ 2023-0 and 443594/2023-6) and CAPES MATH AMSUD (grant 88887.878894/2023-00). 
 J.~M. do \'O further acknowledges partial support from CNPq through the grant 312340/2021-4 and Para\'iba State Research Foundation (FAPESQ), grant no 3034/2021.
 V\!.V\!.S. additionally acknowledges support from CAPES through the doctoral grant 88887.610117/2021-00 and expresses his gratitude to Prof. Jacques Giacomoni for the insightful discussions, held during V\!.V\!.S's visit to the University of Pau, that helped to improve the quality of the paper.

\subsection*{Statements and Declarations}

\begin{flushleft}
 {\it Ethical Approval:}  Not applicable.\\
 {\it Competing interests:}  Not applicable. \\
 {\it Authors' contributions:}    All authors contributed to the study conception and design. Material preparation, data collection, and analysis were performed by all authors. The authors read and approved the final manuscript.\\
{\it Availability of data and material:}  Not applicable.\\
{\it Ethical Approval:}  All data generated or analyzed during this study are included in this article.\\
{\it Consent to participate:}  All authors consent to participate in this work.\\
{\it Conflict of interest:} The authors declare that they have no conflict of interest. \\
{\it Consent for publication:}  All authors consent for publication. 
\end{flushleft}

\bigskip

\bibliography{ref.bib}
\bibliographystyle{abbrv}

\end{document}